\documentclass[12pt, a4paper]{amsart}

\usepackage{amssymb,amsmath,amsthm,mathrsfs}
\usepackage{enumitem}
\usepackage{graphicx,tikz}
\usepackage[colorlinks=true,linkcolor=black,citecolor=black,urlcolor=black]{hyperref}
\usepackage{url}

\numberwithin{equation}{section}
\newcommand{\e}{e}
\newcommand{\ep}{\varepsilon}
\newcommand{\Z}{\mathbb{Z}}
\newcommand{\Q}{\mathbb{Q}}
\newcommand{\R}{\mathbb{R}}
\newcommand{\F}{\mathbb{F}}

\renewcommand{\Re}{\mathrm{Re}}
\renewcommand{\bar}[1]{\overline{#1}}
\renewcommand{\le}{\leqslant}

\renewcommand{\ge}{\geqslant}

\theoremstyle{plain}
\newtheorem{thm}{Theorem}
\newtheorem{lem}{Lemma}[section]
\newtheorem{prop}[lem]{Proposition}
\newtheorem{cor}[lem]{Corollary}

\title[Central non-vanishing of Dirichlet $L$-functions]{Central non-vanishing of Dirichlet $L$-functions}

\author[B. Durkan]{Benjamin Durkan}
\address{Department of Mathematics, The University of Manchester, Oxford Road, Manchester, M13 9PL}
\email{benjamin.durkan@manchester.ac.uk}

\author[A. Pearce-Crump]{Andrew Pearce-Crump}
\address{School of Mathematics, University of Bristol, Fry Building, Woodland Road, Bristol, BS8 1UG, United Kingdom}
\email{andrew.pearce-crump@bristol.ac.uk}

\subjclass[2020]{Primary 11M06; Secondary 11M20, 11M26}
\date{}

\begin{document}

\begin{abstract}
We prove that, for every sufficiently large modulus $q\not\equiv2\bmod4$, at least $3/8-o(1)$ of the primitive Dirichlet characters $\chi$ modulo $q$ have $L(1/2,\chi)\ne0$. We also establish stronger proportions for moduli $q$ satisfying certain arithmetic conditions.
\end{abstract}
\maketitle

\section{Introduction}\label{sec:introduction}

A conjecture attributed to Chowla predicts that $L(1/2,\chi)\ne0$ for every primitive Dirichlet character $\chi$; see \cite{CechMat}.

Let $\varphi^*(q)$ be the number of primitive characters modulo $q$. For $q\not\equiv2\bmod4$, let
$$
\kappa(q)=\frac{1}{\varphi^*(q)}
\#\left\{\chi\bmod q:\chi\hbox{ primitive},\ L(1/2,\chi)\ne0\right\}.
$$
For $j\in\{0,1\}$, let
$$
\mathcal X^*(q;j)=
\{\chi\bmod q:\chi\hbox{ primitive},\ \chi(-1)=(-1)^j\}.
$$
For a nonempty parity class $\mathcal X^*(q;j)$, write
$$
\kappa_j(q)=\frac{1}{\#\mathcal X^*(q;j)}
\#\{\chi\in\mathcal X^*(q;j):L(1/2,\chi)\ne0\}.
$$
\looseness=-1 The excluded congruence class has no primitive characters once $q>2$; we call the remaining moduli admissible. Classical first and second moments give only $\gg\varphi^*(q)/\log q$ non-zero values \cite{Paley}, while Balasubramanian--Murty obtained the first positive proportion \cite{BM}.

Michel and VanderKam's two-piece mollifier \cite{MV}, following Iwaniec--Sarnak \cite{IS}, has a root-number piece whose cross terms limit the lengths. Bui treated general conductors \cite{Bui}. Stronger bilinear Kloosterman estimates extended the lengths for prime conductors \cite{KhanNgo,KMN}; Leung improved the proportion for smooth squarefree conductors \cite{Leung}.

For general conductors, Qin--Wu \cite{QW} reduce these cross terms to fourth moments of Kloosterman sums by primitive orthogonality and Poisson summation. We bound those moments uniformly in the prime-power factorisation. The table records earlier proportions.

\begin{center}
\small
\begin{tabular}{@{}p{6.1cm}p{5.2cm}p{3.1cm}@{}}
\hline
Work & Conductors & Proportion \\
\hline
Balasubramanian--Murty \cite{BM} (1992)
  & prime & $0.04$ \\
Iwaniec--Sarnak \cite{IS} (1999)
  & general & $1/3-o(1)$ \\
Michel--VanderKam \cite{MV} (2000)
  & general & $1/3-o(1)$ \\
Bui \cite{Bui} (2012)
  & general & $0.3411-o(1)$ \\
Khan--Ngo \cite{KhanNgo} (2016)
  & prime & $3/8-o(1)$ \\
Khan--Mili\'cevi\'c--Ngo \cite{KMN} (2022)
  & prime & $5/13-o(1)$ \\
Leung \cite{Leung} (2025)
  & $q^\eta$-smooth squarefree & $35.9\%$ \\
Qin--Wu \cite{QW} (2025)
  & general & $7/19-o(1)$ \\
\hline
\end{tabular}
\end{center}

The conductor restrictions prevent direct comparison of all rows. Our first theorem raises the general bound from $7/19-o(1)$ to $3/8-o(1)$, first reached for prime moduli by Khan--Ngo.

\begin{thm}\label{thm:allq-three-eighths}
For every $\ep>0$, every sufficiently large integer $q\not\equiv2\bmod4$ satisfies
$$
\kappa(q)\ge\frac38-\ep.
$$
For each $j\in\{0,1\}$ such that $\mathcal X^*(q;j)$ is nonempty,
$$
\kappa_j(q)\ge\frac38-\ep.
$$
\end{thm}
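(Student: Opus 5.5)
We will follow the Michel--VanderKam two-piece mollifier method in the form of Qin--Wu, and deduce the proportion from the standard mollified moments. For each parity class $j$ with $\mathcal X^*(q;j)$ nonempty, set
$$
M(\chi)=\sum_{m\le q^{\theta}}\frac{\chi(m)x_m}{\sqrt m}
+\epsilon_\chi\sum_{m\le q^{\theta}}\frac{\bar\chi(m)x_m}{\sqrt m},
$$
where $\epsilon_\chi$ is the root number and $x_m=\mu(m)P(\log(q^\theta/m)/\log q^\theta)$ with $P(0)=0$, $P(1)=1$. Cauchy--Schwarz gives
$$
\kappa_j(q)\ge\frac{\bigl|\sum_{\chi\in\mathcal X^*(q;j)}L(1/2,\chi)M(\chi)\bigr|^2}{\#\mathcal X^*(q;j)\sum_{\chi\in\mathcal X^*(q;j)}|L(1/2,\chi)M(\chi)|^2}.
$$
Choosing $P$ optimally, the ratio computed in Michel--VanderKam, Bui and Khan--Ngo is $\theta/(1+2\theta)-o(1)$ when both the first and second mollified moments have their expected main terms and error $o(\#\mathcal X^*(q;j))$. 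The value $\theta=3/2$ gives $3/8$, while $\theta=1/2$ gives $1/4$. The statement for $\kappa(q)$ follows by summing over $j$. Since the $j$-th class has size $\varphi^*(q)/2+O(q^{1/2+\ep})$ whenever it is nonempty, both classes contribute when both exist, and when only one exists it carries all primitive characters. So the whole task is to establish the moments for every admissible $q$ with $\theta$ arbitrarily close to $3/2$, uniformly in the factorisation of $q$.

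We would separate the terms as follows. The first moment and the ``diagonal'' pieces of the second moment (the $\chi\cdot\chi$ and $\epsilon\bar\chi\cdot\epsilon\bar\chi$ terms) use the approximate functional equation together with primitive-character orthogonality
$$
\sum_{\chi\in\mathcal X^*(q;j)}\chi(a)\bar\chi(b)=\frac12\sum_{\substack{d\mid q\\ d\mid a\mp b}}\varphi(d)\mu(q/d).
$$
Here the off-diagonal contributions are controlled by the classical large-sieve and divisor arguments of Iwaniec--Sarnak and Bui, which hold for any $\theta<2$ and general $q$. The main-term evaluation is then identical to the prime case by the usual contour-integral computation.

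The real obstacle is the cross term, in which the root number $\epsilon_\chi$ appears. This term is
$$
\sum_\chi \epsilon_\chi\,\chi(\cdots)\bar\chi(\cdots),
$$
so it involves Gauss sums. Following Qin--Wu, orthogonality turns it into sums of Kloosterman sums $S(a,b;d)$ over $d\mid q$. After Poisson summation in the long variables, the sums become bilinear forms in Kloosterman sums. The bilinear form must be bounded nontrivially beyond the range $\theta=1$. For prime $q$, Khan--Ngo reach $\theta$ near $3/2$ by applying H\"older, which reduces the problem to the complete fourth moment
$$
\sum_{a\bmod d}\Bigl|\sum_{n}\alpha_n S(a,n;d)\Bigr|^{4}.
$$
For composite $d$ we would factor $S(\cdot,\cdot;d)$ by twisted multiplicativity into prime-power moduli. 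We then need
$$
\sum_{a,b\bmod p^k}\Bigl|\sum_{c\bmod p^k}S(a,c;p^k)\overline{S(b,c;p^k)}\Bigr|^{2}\ll p^{4k+o(k)}
$$
(square-root cancellation in the correlation), uniformly in $k$.

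For $k=1$ this follows from Deligne/Katz as in Khan--Ngo. For $k\ge2$ we would evaluate $S(a,c;p^k)$ explicitly by stationary phase (Sali\'e-type formulas), which vanish unless $ac$ is a square mod $p$. The correlation then becomes an explicit character/exponential sum that can be counted directly. The loss from exceptional $p$, namely small primes and degenerate $a$ or $b$, is absorbed by a factor $q^{o(1)}$ through the divisor bound. With this bound the cross-term error is $O(q^{-\delta})$ relative to the main term for $\theta<3/2$. Letting $\theta\to3/2$ yields the theorem.
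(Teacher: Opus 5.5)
Your outline has a genuine gap at the step you identify as the real obstacle: the Kloosterman input for the powerful part of $q$.

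\textbf{The displayed estimate is false and is not the needed object.} It fails for every $k\ge1$, including $k=1$. Summing over all $c$ gives $\sum_{c\bmod p^k}S(a,c;p^k)\overline{S(b,c;p^k)}=p^k\,c_{p^k}(a-b)$. Hence your left side equals $p^{4k}\varphi(p^k)\asymp p^{5k}$.

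What the H\"older step actually needs is the average $A_{q_2}(B)$ over \emph{short} $b_i\le B\approx q^{\theta_2}$. The bound must lose at most a tiny power relative to $B^4q_2^{5/2}+B^2q_2^3$.

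\textbf{Sali\'e evaluation gives counting, not cancellation.} For $q_2=p^2$ the evaluation you propose produces no cancellation in the $h$-sum. It gives $G_{p^2}(\mathbf b)=\frac{p^4}{2}\sum_c\sum_{z_i^2\equiv c\overline{b_i}}c_{p^2}(2\sum_iz_i)$. This vanishes unless $\sum_iz_i\equiv0\bmod p$, and on that locus it is of size up to $p^6$. So the problem becomes counting $\mathbf b\in[1,B]^4$ with $p\mid R(\mathbf b)$, where $R$ is the degree-$12$ polynomial of Section~\ref{sec:p2-kloosterman}. These are not degenerate cases that a divisor bound can absorb.

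For a fixed prime, the only available count is essentially the trivial $O(B^3)$. That gives $A_{p^2}(B)\ll p^6B^3$, and inserting it in the transfer forces $\theta_1+\theta_2<1/2$, i.e.\ only the old proportion $1/3$. The paper obtains the needed count only on average over $p$, outside an exceptional set (Theorem~\ref{thm:p2-average}). For a general powerful part $s$ it only has $s^2B^4+s^3B^3$ (Proposition~\ref{prop:general-fourth}).

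Even for squarefree $q$, the pairing loci cost $D_q(\mathbf b)^{1/2}$, which can be as large as $q^{1/2}$. This is controlled by counting congruences (Proposition~\ref{prop:D-count}), not by a factor $q^{o(1)}$.

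\textbf{How the paper avoids this.} It never proves Theorem~\ref{thm:allq-three-eighths} through a single uniform fourth moment. Taking $\theta_1+\theta_2\to3/5$, it splits according to $P(q)$:
\begin{enumerate}
\item If $P(q)\le q^{1/5+\rho/2}$, the fourth moment with root-configuration counting (Propositions~\ref{prop:nsf-noloss} and~\ref{prop:small-nsf-moment}) loses only $P(q)^{1/2}$ on a cubic term, which is affordable.
\item If $P(q)$ has a divisor $s\in(q^{1/5+\rho/2},q^{2/5+\rho}]$, the Poisson-summed bilinear form is bounded directly by the reciprocal $q$-van der Corput estimate of Proposition~\ref{prop:reciprocal-factorable}, with no fourth moment.
\item If a prime $p>q^{2/5+\rho}$ divides $P(q)$, then $p^2\Vert q$. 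A Schatten-$4$ operator bound for the prime-square Kloosterman matrix handles every $p$, with no exceptional set. It is proved by counting points on modular parabolas and transported by CRT (Propositions~\ref{prop:p2-unequal-local-operator} and~\ref{prop:p2-crt-operator}).
\end{enumerate}
Your proposal has no substitute for the last two cases. For example, $q=p^2$ with $p$ in the exceptional set is not covered.

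\textbf{Normalisation.} With both pieces of length $q^\theta$, the ratio is $2\theta/(1+2\theta)$, so $3/8$ needs $\theta\to3/10$; equal lengths are harmless here. The self-product moments are only available for $\theta<1/2$, not $\theta<2$.
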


\looseness=-1 The proportions below depend on the repeated prime factors of $q$. Its powerful part is
\begin{equation}\label{eqn:powerful_part_definition}
P(q)=\prod_{\substack{\ell^\nu\Vert q\\\nu\ge2}}\ell^\nu,
\end{equation}
the product being empty, so $P(q)=1$, exactly when $q$ is squarefree.

\begin{thm}\label{thm:sqfree-nonvanishing}
For every $\ep>0$, all sufficiently large squarefree admissible moduli $q$ satisfy
$$
\kappa(q)\ge \frac5{13}-\ep.
$$
\end{thm}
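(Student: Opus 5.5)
We follow the two-piece mollifier method of Michel--VanderKam and Khan--Mili\'cevi\'c--Ngo, restricted to primitive characters modulo squarefree $q$. For $\chi$ primitive modulo $q$, write $\epsilon_\chi$ for the root number. Set
$$
M(\chi)=\sum_{m\le q^{\theta_1}}\frac{x_m\chi(m)}{\sqrt m}+\bar{\epsilon_\chi}\sum_{m\le q^{\theta_2}}\frac{y_m\bar\chi(m)}{\sqrt m},
$$
with $x_m=\mu(m)P_1(\log(q^{\theta_1}/m)/\log q^{\theta_1})$ and an analogous $y_m$ built from $P_2$. By Cauchy--Schwarz,
$$
\kappa(q)\ge\frac{|\sum_\chi L(1/2,\chi)M(\chi)|^2}{\varphi^*(q)\sum_\chi|L(1/2,\chi)M(\chi)|^2}.
$$
It therefore suffices to compute the mollified first and second moments asymptotically for $\theta_1,\theta_2$ as large as possible. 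The known optimisation of KMN then turns admissible lengths $\theta_1=\theta_2=\theta$ (in the appropriate normalisation) into the proportion $5/13$. We will take that optimisation over the polynomials $P_1,P_2$ as a black box. The task is to reproduce, for squarefree $q$, the range of lengths that KMN obtain for prime $q$.

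We proceed in three steps.

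\textbf{First moment and diagonal terms.} These go through for any $q$, as in Bui and Qin--Wu. Primitive orthogonality $\sum^*_{\chi}\chi(a)\bar\chi(b)=\sum_{d\mid(q,a-b)}\varphi(d)\mu(q/d)$ shows that, for squarefree $q$, the divisors $d<q$ contribute $O(q^{-\delta})$ relative to the main term whenever the lengths are below $q^{1-\ep}$. The same holds for the pure $x$--$x$ and $y$--$y$ pieces of the second moment, using the approximate functional equation for $|L|^2$.

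\textbf{Cross terms.} The $x$--$y$ cross terms are where the root number enters. Opening $\epsilon_\chi$ as a normalised Gauss sum and applying primitive orthogonality and Poisson summation, following Qin--Wu, reduces these terms to bilinear forms in Kloosterman sums $S(m,n;d)$ for $d\mid q$. We need these to be $\ll q^{-\delta}$ times the main term in the KMN range. For prime $q$, that range comes from the bilinear Kloosterman bounds of Kowalski--Michel--Sawin, which rest on sheaf-theoretic fourth moment estimates.

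\textbf{Squarefree moduli.} For squarefree $d$, the Chinese remainder theorem factors $S(m,n;d)$ as a product of twisted Kloosterman sums modulo each prime $\ell\mid d$. The complete fourth-moment estimates needed (sums over shifts $h$ of $\prod_i\mathrm{Kl}(a(x+h_i))$) then factor as products of local complete sums. Each local factor is bounded by $\ell^{2}$ times a diagonal indicator, up to an absolute constant $C$ from Deligne--Katz. This gives the multiplicative bound $C^{\omega(q)}\ll q^{\ep}$. We then feed this into the amplification/H\"older argument of KMN, run with $q$ squarefree in place of $q$ prime, which yields the same admissible lengths up to $q^{\ep}$ losses.

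The main obstacle is the last step. The KMN bilinear bound for prime moduli uses the full sheaf-theoretic sum-product structure (the $+\,$shift independence of Kloosterman sheaves). We would need to check that the degenerate configurations of shifts, counted prime by prime, do not accumulate beyond $q^{\ep}$. The plan is to stratify the shift tuples by the set of primes $\ell\mid q$ at which they are degenerate. At each such prime we accept the trivial bound, and we verify that the counting loss exactly compensates in the completion step. Once this bound holds uniformly in squarefree $q$, the KMN optimisation gives $\kappa(q)\ge 5/13-\ep$.
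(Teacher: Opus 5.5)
Your architecture (Michel--VanderKam mollifier, Qin--Wu reduction of the cross terms, a H\"older step, CRT over the primes of $q$) matches the paper's. There is a genuine gap, however: the step that is the actual proof for squarefree moduli is only announced, and the one concrete claim you make about it is wrong.

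After Poisson summation in $n_1$, the cross terms contain $S(h,\bar m_2;q_2)$ with $h\equiv kn_2\overline{rm_1q_1}\pmod{q_2}$. H\"older in $h$ with exponents $4/3$ and $4$ then reduces everything to the \emph{multiplicative}-shift correlations $G_{q_2}(\mathbf b)=\sum_{h\bmod q_2}\prod_{i=1}^4S(h,\bar b_i;q_2)$, summed over $b_i\le 2M_2$. No additive shifts $\mathrm{Kl}(a(x+h_i))$ and no sum-product input are needed. Locally, at a prime $\ell$ where some $\bar b_i$ occurs only once, the correlation is $O(\ell^{5/2})$; it does not vanish, so there is no ``diagonal indicator''. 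Where the four residues pair up, it is only $O(\ell^3)$. Multiplying local bounds therefore gives $|G_q(\mathbf b)|\ll q^{5/2+o(1)}D_q(\mathbf b)^{1/2}$, where $D_q(\mathbf b)$ is the product of the primes at which $\mathbf b$ has no singleton. This loss can be as large as $q^{1/2}$, and it is not a $C^{\omega(q)}$ factor.

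What must be proved is the averaged bound $\sum_{\mathbf b\in[1,B]^4}D_q(\mathbf b)^{1/2}\ll q^{o(1)}(B^4+B^2q^{1/2})$. This gives $A_q(B)\ll q^{\ep}(B^4q^{5/2}+B^2q^3)$, the same shape as for prime moduli. Your stratification by degenerate primes has to deal with the fact that a tuple can pair up in different ways at different primes, so ``degenerate at every $\ell\mid d$'' is not a single congruence condition. The paper handles this as follows:
\begin{itemize}
\item it majorises $D^{1/2}\le\sum_{d\mid D}d^{1/2}$;
\item it assigns one of $(12)(34)$, $(13)(24)$, $(14)(23)$ to each prime and writes $d=d_{12}d_{13}d_{14}$;
\item it counts by CRT, with $b_2-b_1$ in one class modulo $d_{12}$, $b_3-b_1$ in one class modulo $d_{13}d_{14}$, and $b_4-b_1$ in one class modulo $d$, obtaining $\ll B^4/d^2+B^2$;
\item summing $3^{\omega(d)}d^{1/2}(B^4/d^2+B^2)$ over $d\mid q$ gives the claim.
\end{itemize}

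Your length bookkeeping is also off. Inserting this bound yields $4\theta_1+6\theta_2<3$ and $2\theta_1+\theta_2<1$, up to $\sigma$, and the proportion is $(\theta_1+\theta_2)/(1+\theta_1+\theta_2)$. So $5/13$ requires the asymmetric choice $\theta_1\to3/8$, $\theta_2\to1/4$. Equal lengths force $\theta<3/10$ and give only $3/8$.
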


Theorem~\ref{thm:sqfree-nonvanishing} is the case $P(q)=1$ of Theorem~\ref{thm:density-one-main}. Its proof in Section~\ref{sec:squarefree} is the model for the general argument.

\begin{thm}\label{thm:density-one-main}
For every $\ep>0$, every sufficiently large admissible modulus $q$ with
$$
P(q)\le q^{1/2}
$$
also satisfies
$$
\kappa(q)\ge\frac5{13}-\ep .
$$
Consequently, for all $X\ge2$,
$$
\#\{q\le X:q\not\equiv2\bmod4,\ \kappa(q)<5/13-\ep\}\ll_\ep X^{3/4},
$$
an exponent independent of $\ep$.
\end{thm}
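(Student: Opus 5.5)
We follow the two-piece mollifier method of Michel--VanderKam in the form used by Khan--Mili\'cevi\'c--Ngo for prime moduli, adapted to general $q$ through the Qin--Wu reduction. For $\chi$ primitive modulo $q$ we mollify with
$$
M(\chi)=\sum_{m\le y_1}\frac{x_m\chi(m)}{m^{1/2}}+\epsilon_\chi\sum_{m\le y_2}\frac{\tilde x_m\bar\chi(m)}{m^{1/2}},\qquad y_i=q^{\theta_i},
$$
where $\epsilon_\chi$ is the root number. We then compute the first moment $\sum L(1/2,\chi)M(\chi)$ and the second moment $\sum|L(1/2,\chi)M(\chi)|^2$ over each parity class $\mathcal X^*(q;j)$ and apply Cauchy--Schwarz. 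The optimisation of the quadratic forms is purely combinatorial and does not depend on $q$. Once the two lengths can be taken to satisfy $\theta_1=\theta_2$ with the cross-term range $\theta_1+\theta_2<1$ (i.e.\ the first piece of length $q^{1/2-\ep}$ in both pieces and the full Kloosterman range), the resulting proportion is $5/13-\ep$, exactly as in \cite{KMN}. So the arithmetic content is to show that every error term is admissible at those lengths whenever $P(q)\le q^{1/2}$.

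The first step is the diagonal and off-diagonal analysis of the pure terms, such as $\sum|L M_1|^2$. Primitive orthogonality, through M\"obius inversion over divisors $d\mid q$, reduces these to shifted sums, which are controlled for lengths up to $q^{1/2-\ep}$ for every $q$, as in Theorem~\ref{thm:allq-three-eighths}. The second step treats the cross terms $\sum \epsilon_\chi L(1/2,\chi)^2\chi(m)\chi(n)$-type sums. Following Qin--Wu, these become bilinear sums of Kloosterman sums $S(m,\bar n;d)$ over $d\mid q$, and after Poisson summation and H\"older's inequality they reduce to fourth moments of Kloosterman sums. I would factor $q=q_0P(q)$ with $q_0$ squarefree and $(q_0,P(q))=1$, and use the twisted multiplicativity of Kloosterman sums to split each sum into a squarefree part and a powerful part. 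On the squarefree part, Weil's bound and the bilinear Kloosterman estimates used for Theorem~\ref{thm:sqfree-nonvanishing} (Section~\ref{sec:squarefree}) give square-root cancellation. On the powerful part we take only the trivial or stationary-phase bound; this loses at most a factor about $P(q)^{1/2}$ relative to the squarefree case.

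\textbf{Main obstacle.} The hardest step is checking that this loss is absorbed. I would show that the powerful part enters the cross-term error only as $P(q)^{c}$ against a saving $q^{\delta}$, where $\delta$ is the slack available from choosing lengths just below the \cite{KMN} thresholds. The condition $P(q)\le q^{1/2}$ should then suffice, which pins down the constant in the hypothesis. If this direct bookkeeping fails, I would fall back on restricting the bilinear variables to be coprime to $P(q)$ before applying the estimate. This should be harmless for the main terms, since the mollifier coefficients can be supported on $(m,q)=1$.

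The counting statement is elementary. If $q$ fails the hypothesis, then $P(q)>q^{1/2}\ge$ for $q\le X$, so $q$ is divisible by a powerful number $P>q^{1/2}$. We count pairs: for dyadic $Q\le X$, the number of $q\in(Q,2Q]$ whose powerful part exceeds $Q^{1/2}$ is at most
$$
\sum_{\substack{P\text{ powerful}\\ P>Q^{1/2}}}\frac{2Q}{P}\ll Q\cdot Q^{-1/4},
$$
since powerful numbers up to $t$ number $\ll t^{1/2}$, which gives $\sum_{P>T}1/P\ll T^{-1/2}$. Summing over dyadic $Q$ gives $\ll X^{3/4}$. The finitely many small $q$ below the threshold of the first part contribute $O_\ep(1)$, so the bound holds with an exponent independent of $\ep$.
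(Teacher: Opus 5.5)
There is a genuine gap in the analytic part, at exactly the step you flag as the main obstacle. Your count of exceptional moduli is fine; it is Lemma~\ref{lem:large-nsf-part-density}.

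\textbf{The target lengths are misstated.} The method gives $\kappa(q)\ge(\theta_1+\theta_2)/(1+\theta_1+\theta_2)$, so $\theta_1=\theta_2\approx1/2$ would give $1/2$, not $5/13$. The value $5/13$ comes from $\theta_1\approx3/8$, $\theta_2\approx1/4$. This is the corner of the two constraints $4\theta_1+6\theta_2<3$ and $2\theta_1+\theta_2<1$ in Proposition~\ref{prop:qw-transfer}. Both are tight there, so the only slack is the $O(\rho)$ you give up by shortening the lengths.

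\textbf{The powerful-part loss cannot be absorbed.} A pointwise bound on the powerful part $s=P(q)$ (Weil--Estermann or stationary phase in each factor) gives only $|G_s(\mathbf b)|\ll s^{3+\ep}$. Combined with the squarefree count, this yields $A_{q_2}(B)\ll q^{\ep}(s^{1/2}B^4q_2^{5/2}+B^2q_2^3)$, i.e.\ $\omega_4=\gamma/2$ when $P(q)\le q^\gamma$.
\begin{itemize}
\item The quartic condition becomes $4\theta_1+6\theta_2+2\sigma+\gamma<3$.
\item Its optimum is $\theta_1+\theta_2=(5-\gamma)/8$, giving proportion $(5-\gamma)/(13-\gamma)$.
\item At $\gamma=1/2$ this is $9/25$, below even $3/8$.
\end{itemize}
A loss as large as $q^{1/4}$ cannot be absorbed by a saving $q^{O(\rho)}$ with $\rho\to0$. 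Your bookkeeping reaches $5/13-\ep$ only when $P(q)\le q^{O(\ep)}$. The exceptional set is then of size $X^{1-O(\ep)}$, so the exponent depends on $\ep$, contrary to the statement. Nothing in your argument ``pins down'' the constant $1/2$.

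The fallback is vacuous. The variables $b_i=m_2$ are already coprime to $q$; the loss comes from the lack of cancellation in the $h$-sum, not from common factors.

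\textbf{The missing idea is to average over $\mathbf b$ before bounding the powerful part.} For odd $s$, Lemma~\ref{lem:global-root} and Proposition~\ref{prop:general-root-reduction} write
\[
G_s(\mathbf b)=s^2\,2^{-\omega(s)}\sum_c\sum_{z_i^2\equiv c\bar b_i}\prod_i\chi_s(z_i)\,c_s\Bigl(2\sum_iz_i\Bigr).
\]
The Gauss signs drop out of the $u$-sum because $\psi_s(u)^4=1$. Now bound $|c_s(n)|\le\sum_{D\mid s}D\,1_{D\mid n}$. The condition $D\mid\sum_iz_i$ fixes $z_4$, hence $b_4$, modulo $D$, so the weight $D$ is paid for by the $B/D+1$ admissible values of $b_4$.

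Doing this jointly, via CRT, with the pairing count on the squarefree part gives Proposition~\ref{prop:nsf-noloss}:
\[
A_q(B)\ll q^\ep\bigl(B^4q^{5/2}+B^3q^{5/2}s^{1/2}+B^2q^3\bigr).
\]
The factor $s^{1/2}$ now multiplies only a cubic term. This produces the extra condition $4\theta_1+4\theta_2+2\sigma+\gamma<3$ of Proposition~\ref{prop:small-nsf-moment}. At $\theta_1+\theta_2=5/8$ it is exactly $\gamma<1/2$, which is where the hypothesis $P(q)\le q^{1/2}$ comes from.
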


Theorem~\ref{thm:density-one-main} applies to admissible $q=mr$ with $m$ fixed and $r$ squarefree, coprime to $m$ and sufficiently large, since $P(q)\le m\le q^{1/2}$. It also applies to $q=rp^2$ with $p$ an odd prime, $r$ odd and squarefree, $p\nmid r$ and $r\ge p^2$, since $P(q)=p^2\le q^{1/2}$. Together with Theorem~\ref{thm:allq-three-eighths}, it shows that every sufficiently large admissible modulus $q$ satisfies $\kappa(q)\ge5/13-\ep$ if $P(q)\le q^{1/2}$, and $\kappa(q)\ge3/8-\ep$ otherwise.

Proposition~\ref{prop:reciprocal-factorable} is strongest when the chosen divisor of $P(q)$ has size $q^{1/3}$. This gives a proportion exceeding $5/13$ for the following family.

\begin{thm}\label{thm:two-fifths}
For every $\ep>0$ there is $\eta>0$ such that all sufficiently large moduli $q=rp^2$, where $p$ is an odd prime, $r$ is odd and squarefree, $p\nmid r$ and
$$
p^{4-\eta}\le r\le p^{4+\eta},
$$
satisfy
$$
\kappa(rp^2)\ge\frac25-\ep .
$$
\end{thm}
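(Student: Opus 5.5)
We plan to run the Michel--VanderKam two-piece mollifier argument, in the form used in Section~\ref{sec:squarefree} and Proposition~\ref{prop:reciprocal-factorable}, with lengths tuned to the factorisation $q=rp^2$. Fix a parity class $\mathcal X^*(q;j)$. Take the mollifier
$$
M(\chi)=\sum_{m\le y_1}\frac{a_m\chi(m)}{\sqrt m}+\ep_\chi\sum_{m\le y_2}\frac{b_m\bar\chi(m)}{\sqrt m},
$$
where $\ep_\chi$ is the root number and $y_i=q^{\theta_i}$. By Cauchy--Schwarz,
$$
\#\{\chi:L(1/2,\chi)\ne0\}\ge \frac{|\sum_\chi L(1/2,\chi)M(\chi)|^2}{\sum_\chi|L(1/2,\chi)M(\chi)|^2}.
$$
With the standard optimal choice of coefficients, the diagonal analysis gives the ratio $(\theta_1+\theta_2)/(1+\theta_1+\theta_2)-o(1)$ whenever every term in the second moment is asymptotically evaluable. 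The value $2/5$ corresponds to $\theta_1+\theta_2=2/3$, for example $\theta_1=\theta_2=1/3$. The proof therefore reduces to admissibility of these lengths.

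The diagonal pieces, namely the first moment and the $|LM_1|^2$ and $|LM_2|^2$ terms, follow from the twisted second moment of $L(1/2,\chi)$ over primitive characters with Möbius inversion over $d\mid q$. This is admissible for $\theta_i<1/2$, uniformly in $q$, and is presumably already recorded earlier in the paper. The main obstacle is the cross term $\sum_\chi \ep_\chi |L(1/2,\chi)|^2\chi(m)\chi(n)$ for $m\le y_1$ and $n\le y_2$. After approximate functional equations, primitive orthogonality and Poisson summation, as in Qin--Wu, this becomes bilinear sums of Kloosterman sums modulo divisors of $q$. Their size is governed by the fourth moment bounds that are uniform in the prime-power factorisation.

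Proposition~\ref{prop:reciprocal-factorable} is exactly the tool for this step. Apply it with the chosen divisor $p^2$ of $P(q)=p^2$. The factorisation $q=r\cdot p^2$ lets us split the Kloosterman modulus via the Chinese remainder theorem and twisted multiplicativity into a factor modulo $r$ and a factor modulo $p^2$. The squarefree part is handled by the bilinear Kloosterman estimates, as in the Khan--Mili\'cevi\'c--Ngo type input used for Theorem~\ref{thm:sqfree-nonvanishing}. The $p^2$ part is handled by explicit evaluation of Kloosterman sums to prime-square moduli, via stationary phase, which makes them highly structured.

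The remark before the theorem states that the proposition is strongest when the divisor has size $q^{1/3}$. Since $p^2=q^{1/3}$ exactly when $r=p^4$, the hypothesis $p^{4-\eta}\le r\le p^{4+\eta}$ places $p^2=q^{1/3+O(\eta)}$. We will then check that the admissible range for $\theta_1+\theta_2$ given by the proposition at divisor size $q^{\delta}$ is continuous in $\delta$ and equals $2/3$ at $\delta=1/3$. Choosing $\eta$ small in terms of $\ep$ then allows $\theta_1+\theta_2=2/3-\ep'$.

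The delicate step is verifying that the error terms of Proposition~\ref{prop:reciprocal-factorable} remain $o(\varphi^*(q))$ uniformly at this endpoint. In particular, the contributions from divisors $d\mid q$ arising from the primitive orthogonality must not lose the saving when $d$ is divisible by $p$ but not $p^2$. Finally, the parity restriction and the conditions that $r$ is odd and $p\nmid r$ only serve to keep $q$ admissible and the CRT splitting clean. Letting $\eta\to0$ with $\ep$ yields $\kappa(rp^2)\ge 2/5-\ep$.
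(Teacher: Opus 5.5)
Your proposal follows the paper's route: Theorem~\ref{thm:two-fifths} is Corollary~\ref{cor:third-divisor} applied with $s=p^2=q^{1/3+O(\eta)}$, that is, Proposition~\ref{prop:reciprocal-factorable} inserted into the Qin--Wu cross terms with $\theta_1+\theta_2$ just below $2/3$. Your symmetric split $\theta_1=\theta_2=1/3-\omega$ works as well as the paper's $1/4-\omega,\ 5/12-\omega$, because the factorable exponents depend only on $\theta_1+\theta_2$, and apart from this one needs only $2\theta_1+\theta_2<1$ and $\theta_i<1/2$.

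A few minor corrections. Your worry about divisors exactly divisible by $p$ does not arise, since every auxiliary modulus contains $p^2$ by Lemma~\ref{lem:primitive-factorisation}. Auxiliary moduli $Q\le q^{1-\delta}$, where the hypothesis $V<Q$ may fail, are not covered by the factorable estimate; the paper handles them with the complete bound of Lemma~\ref{lem:complete-ratio-operator}. Finally, inside Proposition~\ref{prop:reciprocal-factorable} the $p^2$ factor is frozen by Cauchy's inequality over classes $v\bmod p^2$, and the squarefree part is handled by Poisson summation and Weil's bound. Neither stationary phase nor the fourth-moment input of Section~\ref{sec:squarefree} is used there.
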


This is the largest proportion in the paper, and there is no exceptional set. This follows from Corollary~\ref{cor:third-divisor}, since $P(rp^2)=p^2=q^{1/3+O(\eta)}$. The remark preceding that corollary explains the optimal size $q^{1/3}$ and the limitation to $3/8$ for general $q$.

For prime-square conductors we obtain a power-saving exceptional set of primes.

\begin{thm}\label{thm:p2-nonvanishing}
For every $\ep>0$ there are $\xi_1>0$ and a set of primes $\mathcal E$ satisfying
$$
\#\{p\le X:p\in\mathcal E\}
\ll_\ep \frac{X^{1-\xi_1}}{\log X},
$$
such that every sufficiently large prime $p\notin\mathcal E$ satisfies
$$
\kappa(p^2)\ge \frac5{13}-\ep.
$$
\end{thm}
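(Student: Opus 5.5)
Theorem~\ref{thm:p2-nonvanishing} will be deduced from the same two-piece mollifier argument that gives $5/13$ in Theorem~\ref{thm:density-one-main}. That theorem cannot be quoted directly, because $P(p^2)=p^2=q>q^{1/2}$. For $q=p^2$ the mollified second moment and the main terms of the first moment are computed exactly as in the general case. The lengths are $\theta_1$ for the ordinary piece and $\theta_2$ for the root-number piece. The $5/13$ proportion requires the Khan--Mili\'cevi\'c--Ngo lengths, so the only issue is the off-diagonal cross terms between the two pieces. By the Qin--Wu reduction these are bilinear sums of Kloosterman sums $S(m,n;p^2)$, or equivalently fourth moments of such sums twisted by the mollifier coefficients.

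\emph{Step 1: explicit evaluation.} Modulo a prime square, Kloosterman sums are evaluated by stationary phase. For $p\nmid mn$, $S(m,n;p^2)$ vanishes unless $mn$ is a square modulo $p$. When $mn\equiv u^2 \bmod p$, it equals
\[
p\sum_{\pm}\Bigl(\frac{\pm u}{p}\Bigr)^{\!*}\,e\!\left(\frac{\pm2u}{p^2}\right),
\]
up to a standard Gauss-sum factor. The cross terms therefore become exponential sums of the shape $\sum_{m,n}\alpha_m\beta_n\, e(2\sqrt{mn}/p^2)$, where the square root is taken modulo $p^2$. I would rewrite these as sums over solutions of $x^2\equiv mn \pmod{p^2}$, that is, as bilinear forms in the modular square-root phase. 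For a single modulus this is no easier than the prime case.

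\emph{Step 2: averaging over $p$.} The exceptional set is where the saving comes from. Fix $X$ and consider the primes $p\in[X,2X]$. The goal is to show that the normalised cross-term error, a quantity $E(p)$ whose smallness (say $E(p)\le p^{-\delta}$) gives $\kappa(p^2)\ge 5/13-\ep$, satisfies
\[
\sum_{X<p\le 2X}E(p)^2\ll X^{1-2\delta-\xi_1}.
\]
Chebyshev's inequality then gives the bound on $\#\mathcal E$. Here $\delta$ is chosen in terms of $\ep$ so that the lengths $\theta_i$ are within $\ep$ of the $5/13$ configuration. Opening the square produces sums over pairs of points $(m,n),(m',n')$ and over primes $p$ of the phases $e(2(\sqrt{mn}-\sqrt{m'n'})/p^2)$. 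Using the large sieve over the moduli $p^2$, with Baier--Zhao-type large sieve inequalities for square moduli, the $p$-average is controlled by the spacing of the fractions $a/p^2$. This gives a power saving whenever $mn\ne m'n'$; the diagonal terms are handled trivially. The outcome is an average bilinear Kloosterman bound for moduli $p^2$ that is stronger than the pointwise Weil-type bound, and this is what allows $\theta_2$ to be extended to the $5/13$ range.

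\emph{Main obstacle.} The hard step is Step 2. The square-modulus large sieve must beat the trivial bound by a power $X^{-\xi_1}$ in the precise ranges $M,N\approx q^{\theta_i}$ needed for $5/13$. In particular the square-moduli large sieve loses in the range where $MN$ is near $p^{2}$. If it fails to do so there, I would first try to replace it by an average of the Weil bound over $p$ combined with a Heath-Brown-type bound for square moduli. The final step inserts the averaged bound into the mollifier optimisation of Section~\ref{sec:squarefree} and takes $\mathcal E$ to be the set of primes where $E(p)$ is large.
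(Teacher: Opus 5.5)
Your outer architecture (the Qin--Wu reduction, an average over $p\in(X,2X]$, and Chebyshev/Markov to define $\mathcal E$) matches the paper. The gap is in Step~2, which carries the whole proof: it has no working mechanism.

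Large sieve inequalities for square moduli bound sums like $\sum_{p}\sum_{a\bmod p^2}\bigl|\sum_{N}c_N\e(aN/p^2)\bigr|^2$. There the phases are linear in an integer variable $N$ that does not depend on $p$. After stationary phase, your phases are $\e(2x_p/p^2)$, where $x_p$ is a root of a quadratic congruence modulo $p^2$. That numerator varies with $p$ in a non-linear, arithmetic way, so the spacing of the fractions $a/p^2$ tells you nothing about it. The asserted power saving ``whenever $mn\ne m'n'$'' therefore has no derivation.

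In your own formulation it is in fact false. Take $mn=\ell^2$ and $m'n'=\ell'^2$ with $\ell\ne\ell'$ small and $p\nmid\ell\ell'$. The roots are $\pm\ell$ and $\pm\ell'$ for every $p$, so $S(m,n;p^2)=2p\cos(4\pi\ell/p^2)$, and the paired phases $\e(2(\pm\ell\mp\ell')/p^2)$ do not oscillate in $p$. Your fallback of averaging the Weil bound over $p$ cannot help either, since an average of a pointwise bound is no better than the bound itself.

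What is missing is the arithmetic step that makes the average over $p$ effective. The paper never averages the bilinear form. H\"older in Proposition~\ref{prop:qw-preinsertion} reduces the cross terms to the \emph{complete} fourth moment $A_{p^2}(B)$ with $B=2M_2$, and completeness of the $h$-sum allows an exact evaluation. By Lemma~\ref{lem:p2-root-reduction},
$\sum_h\prod_iS(h,\overline{b_i};p^2)=\frac{p^4}{2}\sum_c\sum_{\mathbf z}c_{p^2}(2\sum_iz_i)$ with $z_i^2\equiv c\overline{b_i}$. So a non-zero term forces $\sum_is_i/\sqrt{b_i}=0$ in $\overline{\F}_p$ for some signs, i.e.\ $p\mid R(\mathbf b)$ for the degree-$12$ eliminant of Lemma~\ref{lem:eliminant-identity}. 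The pairings $z_i+z_j\equiv0\bmod{p^2}$ contribute $O(B^2p^6)$.

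The average over $p\in(P,2P]$ is then a divisor count, not a large sieve. Integer zeros of $R$ in $[1,B]^4$ number $O(B^{2+\ep})$, by linear independence of square roots of squarefree integers. A non-zero value has $|R(\mathbf b)|\ll B^{12}\le P^6$, hence only $O(1)$ prime factors in $(P,2P]$. Markov's inequality over dyadic $B\le p^{1/2-\delta}$ gives an exceptional set, independent of $B$, outside which $A_{p^2}(B)\ll p^\ep(B^4p^5+B^2p^6)$. This is the squarefree-strength bound, and Proposition~\ref{prop:qw-transfer} then admits $\theta_1=3/8-\rho$, $\theta_2=1/4-\rho$. The restriction the averaging imposes is $B\le p^{1/2-\delta}$, i.e.\ $\theta_2<1/4$ on $M_2$ alone, not a condition on $MN$ near $p^2$ as you anticipated.
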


The prime-square argument also applies after adjoining a fixed coprime factor.

\begin{thm}\label{thm:fixed-m-main}
Let $m\ge1$ be fixed, with $m\not\equiv2\bmod4$. For every $\ep>0$ there are $\xi_1>0$ and a set of primes $\mathcal E$ satisfying
$$
\#\{p\le X:p\in\mathcal E\}\ll_\ep\frac{X^{1-\xi_1}}{\log X},
$$
such that every sufficiently large prime $p\notin\mathcal E$ satisfies
$$
\kappa(mp^2)\ge \frac{5}{13}-\ep.
$$
\end{thm}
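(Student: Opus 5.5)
The approach is to run the two-piece mollifier argument for Theorem~\ref{thm:p2-nonvanishing} with modulus $q=mp^2$, treating the fixed factor $m$ as a bounded perturbation throughout. We use the Iwaniec--Sarnak/Michel--VanderKam mollifier $M(\chi)=M_1(\chi)+\bar{\epsilon(\chi)}M_2(\bar\chi)$ with lengths $q^{\theta_1},q^{\theta_2}$. The mollified first moment $\sum_{\chi}^{*}L(1/2,\chi)M(\chi)$ and the diagonal parts of the mollified second moment are computed exactly as in the squarefree model of Section~\ref{sec:squarefree}. By primitive orthogonality modulo $mp^2=m\cdot p^2$ with $(m,p)=1$, the sum over primitive characters factors as the primitive sum mod $m$ times the primitive sum mod $p^2$. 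The Möbius-type expansion $\sum_{d\mid q,\ d\mid (a-b)}\varphi(d)\mu(q/d)$ then splits multiplicatively, and the $m$-part contributes only a bounded number of terms with $O_m(1)$ factors. Hence the main terms take the same shape as in the prime-square case, with $\log q$ in place of $\log p^2$. Choosing $\theta_1,\theta_2$ up to the admissible limits and optimising the quadratic form gives $5/13$, exactly as for prime moduli in \cite{KMN}, once both pieces can be taken of the required length.

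The key step is to bound the cross term between the $M_1$ and $M_2$ pieces, since this is what restricts the lengths. Following Qin--Wu \cite{QW}, primitive orthogonality and Poisson summation reduce it to bilinear sums of Kloosterman sums $S(a,b;mp^2)$. Twisted multiplicativity factors these as $S(\bar{p}^2a,\bar p^2b;m)\,S(\bar m a,\bar m b;p^2)$. The first factor is a bounded, periodic-mod-$m$ weight, so we split the bilinear variables into residue classes mod $m$ and absorb it. That leaves bilinear forms in $S(\cdot,\cdot;p^2)$ twisted by congruence conditions of bounded modulus, and these are precisely the objects treated in the proof of Theorem~\ref{thm:p2-nonvanishing}. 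The twists cost only a constant depending on $m$, or a fixed additive character mod $m$, which the prime-square estimates tolerate because they are uniform in such shifts.

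The main obstacle is that the prime-square bilinear Kloosterman bound does not hold for every $p$. For $S(a,b;p^2)$ there is an explicit stationary-phase evaluation: it vanishes unless $ab$ is a square mod $p$, and otherwise it equals $p\,(\ast)\,e\bigl(2\sqrt{ab}/p^2\bigr)$. The cross term therefore becomes an exponential sum with phase $e(2\sqrt{ab}/p^2)$. The power saving needed to reach the $5/13$ lengths in this sum is available only for primes outside an exceptional set $\mathcal E$ of density $X^{-\xi_1}$, coming from an averaged large-sieve or equidistribution statement over $p$. My plan is to reuse that exceptional set essentially verbatim. The residue-class splitting mod $m$ replaces the phase by $e(2\sqrt{\bar m^2 ab}/p^2)$ with $a,b$ in fixed progressions. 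This is a finite union, over $O_m(1)$ classes, of instances of the same averaged estimate. Taking $\mathcal E$ to be the union of the exceptional sets for these classes keeps the count $\ll_\ep X^{1-\xi_1}/\log X$, after shrinking $\xi_1$ if needed.

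Finally, for $p\notin\mathcal E$ sufficiently large, the cross term is $o(\varphi^*(mp^2))$ at the $5/13$-admissible lengths. Cauchy--Schwarz on the mollified moments then gives $\kappa(mp^2)\ge 5/13-\ep$. The condition $m\not\equiv2\bmod4$ together with $p$ odd makes $mp^2$ admissible, and for $p\mid m$ we handle the finitely many such primes by enlarging $\mathcal E$.
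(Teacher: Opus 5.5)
Your proposal is correct and is essentially the paper's argument; the paper treats every auxiliary modulus $q_2=dp^2$ with $d\mid m$ (you mention only $mp^2$, but the same reasoning applies) via $G_{dp^2}=G_dG_{p^2}$ and the trivial bound on $|G_d|$ (Lemma~\ref{lem:crt-stability}), giving $A_{q_2}(B)\le C_mA_{p^2}(B)$, which is the cleaner form of your residue-class splitting. Because $A_{p^2}(B)$ already takes absolute values tuple by tuple, the prime-square exceptional set is reused unchanged, with no union over classes; that set comes from primes dividing non-zero values of the polynomial $R$, not from a large sieve. The fixed factor $m$ only changes constants in the length condition $2M_2\le p^{1/2-\delta}$.
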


The cofactor may instead vary through a range of squarefree integers.

\begin{thm}\label{thm:hybrid-interpolation}
Fix $0\le a<1$ and let $\ep>0$. There are $\xi_1>0$ and a set of primes $\mathcal E$ such that
$$
\#\{p\le X:p\in\mathcal E\}\ll_{a,\ep}\frac{X^{1-\xi_1}}{\log X},
$$
and, for every sufficiently large odd prime $p\notin\mathcal E$ and every odd squarefree $r\le p^a$ with $p\nmid r$,
$$
\kappa(rp^2)\ge \frac{2a+5}{6a+13}-\ep.
$$
\end{thm}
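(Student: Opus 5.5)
The bound will come from the Michel--VanderKam two-piece mollifier, with the usual ratio of squared first moment to mollified second moment. The only new input is an admissible mollifier length adapted to the factorisation $q=rp^2$. As in the proof of Theorem~\ref{thm:sqfree-nonvanishing}, I take
$$
M(\chi)=\sum_{m\le q^{\theta_1}}\frac{\chi(m)x_m}{\sqrt m}+\epsilon_\chi\sum_{m\le q^{\theta_2}}\frac{\bar\chi(m)y_m}{\sqrt m},
$$
with $\epsilon_\chi$ the root number. Cauchy--Schwarz gives $\kappa(q)\ge |\sum M L|^2/(\varphi^*(q)\sum|ML|^2)$. The standard optimisation of the smooth mollifier coefficients then gives a proportion which, when the cross term limits the lengths, depends only on the admissible length $\theta$. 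In the prime case $\theta=1/2$ gives $3/8$, and the $5/13$ range corresponds to the improved length used earlier. Write the proportion as $f(\theta)$. The target $(2a+5)/(6a+13)$ should equal $f(\theta(a))$ for a $\theta(a)$ that interpolates linearly between the $p^2$ exponent and the squarefree exponent. My first step is to solve for $\theta(a)$, using the closed form of $f$ already implicit in Theorem~\ref{thm:density-one-main}.

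The main terms (diagonal and off-diagonal of the same-piece contributions) follow as in the squarefree model. Primitive orthogonality modulo $rp^2$ factors multiplicatively over $r$ and $p^2$, and the error terms are power-saving once the lengths stay below the cross-term threshold. The work is in the cross terms $\sum\epsilon_\chi\bar\chi(m_1m_2)|L|^2$. Following Qin--Wu, after Poisson summation these become bilinear sums of Kloosterman sums modulo $q$. These sums factor by the Chinese remainder theorem into a part modulo $r$ and a part modulo $p^2$.

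For the modulus $p^2$ the Kloosterman sums are evaluated explicitly by stationary phase, which produces an exponential-sum structure in $p$. For all primes $p$ outside a sparse set $\mathcal E$, this part will be handled by the same mechanism as in Theorem~\ref{thm:p2-nonvanishing}: average over $p$, apply large-sieve or dispersion input, and estimate by Chebyshev with a power saving, so that $\#\mathcal E\ll X^{1-\xi_1}/\log X$. The set $\mathcal E$ must not depend on $r$. To ensure this, I define the exceptional condition through a bound uniform in the twisting residue classes coming from the $r$-part, taking the maximum over $r\le p^a$. This costs at most a factor $p^a$, which is absorbed because $a<1$ is fixed and $\xi_1$ can be shrunk accordingly.

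For the modulus $r$ I use Weil/Deligne square-root cancellation for the squarefree factor, as in Section~\ref{sec:squarefree}; this is uniform in $r$. Combining the two factors, the admissible length is determined by balancing the $p^2$ saving (length exponent from Theorem~\ref{thm:p2-nonvanishing}) against the $r$ saving (exponent from Theorem~\ref{thm:sqfree-nonvanishing}), weighted by $\log r/\log q\approx a/(a+2)$. This weighting should give exactly the linear interpolation $\theta(a)$, and plugging it into $f$ yields $(2a+5)/(6a+13)$ up to $\ep$.

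I expect the main obstacle to be the cross term: keeping the $p$-exceptional set independent of the varying cofactor $r$, and checking that the mixed CRT pieces cost no more than the stated interpolation. That is, I need the worst-case bilinear bound to be multiplicative in the two moduli without extra losses.
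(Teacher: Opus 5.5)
Your proposal has two genuine gaps. First, it never locates the constraint that produces $(2a+5)/(6a+13)$, and the interpolation heuristic you offer cannot produce it. The proportion is $z/(1+z)$ with $z=\theta_1+\theta_2$, so $3/8$ corresponds to $z=3/5$ and $5/13$ to $z=5/8$. Theorems~\ref{thm:sqfree-nonvanishing} and~\ref{thm:p2-nonvanishing} use the same lengths $\theta_1\to3/8$, $\theta_2\to1/4$. Any weighted interpolation of these two endpoints therefore gives $5/13$ for every $a$, whereas the target is strictly smaller for $a>0$.

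The real obstruction is the range of Theorem~\ref{thm:p2-average}, which holds only for $B\le p^{1/2-\delta}$. This is because Proposition~\ref{prop:prime-average} needs $0<|R(\mathbf a)|\ll B^{12}\le P^6$, so that $R(\mathbf a)$ has only $O(1)$ prime divisors in $(P,2P]$. In the cross term $B=2M_2\asymp q^{\theta_2}=p^{(2+s)\theta_2}$ with $s=\log r/\log p\le a$. This forces $\theta_2<1/(2(2+a))$. Together with $2\theta_1+\theta_2<1$ it gives $z\to\frac12+\frac1{4(2+a)}=\frac{2a+5}{4(a+2)}$, which is exactly the stated proportion.

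Your handling of the squarefree factor also does not work as described. Modulo primes dividing $r$, the correlations have no square-root cancellation on the pairing loci. Merging the pairing count of Proposition~\ref{prop:D-count} with the root count $N_c(B;p)$ would require a joint count, which you do not supply. The paper instead bounds $|G_d(\mathbf b)|\ll d^{3+\ep}$ pointwise, using Lemma~\ref{lem:sqfree-pointwise} with $D_d(\mathbf b)\le d$. This gives $A_{dp^2}(B)\ll(dp^2)^{\ep}\bigl(d^{1/2}p^\lambda B^4(dp^2)^{5/2}+p^\lambda B^2(dp^2)^3\bigr)$ for every auxiliary modulus $dp^2$ with $d\mid r$. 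The resulting quartic condition is worst at $d=r$. At the chosen lengths it becomes $(2+s)/(2+a)\le1$ up to $O(\omega)$ terms, so it is tight at the same point as the range condition.

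Second, your mechanism for making $\mathcal E$ independent of $r$ fails as stated. A union, or a Markov average, over up to $p^a$ cofactors multiplies the exceptional count by up to $X^a$. A saving $X^{-\xi_1}$ absorbs this only if $\xi_1>a$. But $\xi_1<\lambda$, and $\lambda$ must be tiny because it enters the exponent inequalities additively, so shrinking $\xi_1$ makes matters worse. Placing the factor $p^a$ in the moment bound instead would lower the admissible lengths.

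No union is needed. After H\"older you are working with complete correlations, and Lemma~\ref{lem:kloo-mult} gives $G_{dp^2}(\mathbf b)=G_d(\mathbf b)G_{p^2}(\mathbf b)$ with no twist, because the CRT unit factors are absorbed by permuting $h$. Hence $A_{dp^2}(B)\ll d^{3+\ep}A_{p^2}(B)$. The single set $\mathcal E$ of Theorem~\ref{thm:p2-average}, already independent of $B$, therefore serves every $r$ and every $d\mid r$ at once.
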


As $a\to0$ the proportion tends to $5/13$, since
$$
\frac5{13}-\frac{2a+5}{6a+13}=\frac{4a}{13(6a+13)},
$$
so choosing $a>0$ with $4a/(13(6a+13))<\ep/2$ and applying Theorem~\ref{thm:hybrid-interpolation} with error $\ep/2$ gives $\kappa(rp^2)\ge5/13-\ep$ throughout its stated range.

The squarefree and prime-square theorems rest on fourth-moment estimates. Throughout we use the notation $e(x)=\exp(2\pi ix)$ and
$$
S(a,b;c)=\sum_{\substack{x\bmod c\\(x,c)=1}}
e\left(\frac{ax+b\bar{x}}{c}\right),
$$
where a bar denotes inversion in the relevant modulus. For $B\ge1$, denote
$$
A_q(B)=
\sum_{\substack{1\le b_1,b_2,b_3,b_4\le B\\
(b_1b_2b_3b_4,q)=1}}
\left|
\sum_{h\bmod q}\prod_{i=1}^4S(h,\overline{b_i};q)
\right|.
$$
These complete correlations arise on expanding the fourth power in the mollifier calculation. We prove the following bounds.
\begin{thm}\label{thm:sqfree-average}
For every $\ep>0$, every squarefree $q$, and every $B\ge1$,
$$
A_q(B)\ll_\ep q^\ep(B^4q^{5/2}+B^2q^3).
$$
\end{thm}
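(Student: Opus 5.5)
The plan is to use the Chinese remainder theorem to factor the complete correlation into local pieces at the primes $\ell\mid q$, bound each local piece by a case analysis, and then count the tuples $(b_1,\dots,b_4)$ that land in each case.

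\emph{Factorisation.} Since $q$ is squarefree, write $q=\prod_{\ell\mid q}\ell$. By the twisted multiplicativity of Kloosterman sums, for $q=q_1q_2$ with $(q_1,q_2)=1$ we have
$$S(h,\bar b;q)=S(h\bar q_2,\bar b\,\bar q_2;q_1)\,S(h\bar q_1,\bar b\,\bar q_1;q_2).$$
Write $h$ by the Chinese remainder theorem and change variables $h\mapsto h q_2$ locally. Since the twist $\bar q_2$ multiplies all four $\bar b_i$ by the same unit, the correlation factors as a product over $\ell\mid q$ of
$$C_\ell(b)=\sum_{h\bmod \ell}\prod_{i=1}^4 S(h,\overline{b_i u_\ell};\ell),$$
with $u_\ell$ a unit. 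Substituting $h\mapsto h u_\ell^{-1}$ and using $S(a,b;\ell)=S(ac,b\bar c;\ell)$ removes $u_\ell$, so $C_\ell$ depends only on $b_i\bmod\ell$.

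\emph{Local bounds.} For each prime $\ell$ I will show:
\begin{enumerate}[label=(\roman*)]
\item $|C_\ell|\le c\,\ell^{5/2}$ in general, by Deligne's theorem. The $h=0$ term contributes $\prod_i S(0,\bar b_i;\ell)=1$. For the rest, the sum over $h$ of a product of four Kloosterman sheaves $\mathcal{K}l$ pulled back by $h\mapsto h\bar b_i$, that is by scalings $a\mapsto b_i a$ up to the $S(h,\bar b;\ell)=S(h\bar b,1;\ell)$ identity, is $O(\ell^{1/2}\cdot\ell^2)$ unless the sheaf $\bigotimes_i[\times \bar b_i]^*\mathcal{K}l$ contains the trivial summand.
\item By the Katz / Fouvry–Kowalski–Michel classification, such a trivial summand (for $\mathcal{K}l_2$ with a geometrically irreducible $SL_2$ monodromy and no nontrivial scaling isomorphisms) occurs only when the $\bar b_i\bmod \ell$ pair up: $\{b_1,b_2,b_3,b_4\}$ coincide in pairs mod $\ell$. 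In that case I will use the trivial bound $|C_\ell|\le c\,\ell^3$, coming from $\sum_h |S|^4\approx 2\ell^3$.
\end{enumerate}
Thus $|C_\ell(b)|\le c\,\ell^{5/2}\,\ell^{\frac12\delta_\ell(b)}$, where $\delta_\ell(b)=1$ if the $b_i$ pair up mod $\ell$ and $0$ otherwise.

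\emph{Counting.} Multiplying over $\ell$ gives
$$\Bigl|\sum_h\prod_i S\Bigr|\le c^{\omega(q)}\,q^{5/2}\,d^{1/2},\qquad d=\prod_{\ell:\ \delta_\ell=1}\ell,$$
where $d$ is the largest divisor of $q$ modulo which the $b_i$ pair up. Note that $c^{\omega(q)}\ll q^\ep$. Summing over $b$, I split by $d\mid q$ and by which of the three pairings occurs at each prime of $d$; there are at most $3^{\omega(d)}$ choices, so I can write $d=d_1d_2d_3$. The number of $b$ in $[1,B]^4$ with, for example, $b_1\equiv b_2 \pmod{d_1}$, $b_1\equiv b_3\pmod{d_2}$, $b_1\equiv b_4\pmod{d_3}$, and the complementary congruences, is bounded by $B^4/d+B^2$ up to $q^\ep$. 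The reason is that each pairing imposes two congruences modulo the respective $d_j$. The diagonal count is $B^2$, which is attained when the $b_i$ are equal in pairs as integers. The count with actual congruences is roughly $B^2(1+B/d_j)^2$-type, and over combinations it is at most $q^\ep(B^4/d+B^2)$, a bound I will verify by bounding solutions of $b_1-b_2\equiv0$ and $b_3-b_4\equiv 0$ modulo divisors.

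Then
$$\sum_{d\mid q} q^{5/2}d^{1/2}\Bigl(\frac{B^4}{d}+B^2\Bigr)\ll q^{\ep}\bigl(B^4q^{5/2}+B^2q^3\bigr),$$
using $d^{-1/2}\le1$ and $d^{1/2}\le q^{1/2}$.

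\emph{Main obstacle.} The main obstacle is the local step: justifying that the four-fold scaled Kloosterman correlation has square-root cancellation except in the paired configurations. I would quote Katz's result that $\mathcal{K}l_2$ has monodromy $SL_2$ and that $[\times a]^*\mathcal{K}l_2\not\cong\mathcal{K}l_2$ for $a\ne1$, giving Goursat-type independence. Small primes $\ell$ where this fails are absorbed into the constant $c$.
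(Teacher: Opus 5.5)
Your proposal is correct and takes essentially the same route as the paper. The paper uses CRT multiplicativity of the correlation (Lemma~\ref{lem:kloo-mult}), the FGKM local bound (Proposition~\ref{prop:fgkm-prime-local}), which gives $\ell^{5/2}$ unless the residues pair up and $\ell^3$ otherwise, and a count of the pairing congruences over $d\mid q$ with $3^{\omega(d)}$ assignments (Proposition~\ref{prop:D-count}). Your count $B^4/d+B^2$ is weaker than the paper's $B^4/d^2+B^2$ but still suffices after weighting by $d^{1/2}$; the one imprecision is that Goursat--Kolchin--Ribet needs $[\times a]^*\mathcal{K}l_2\not\cong\mathcal{K}l_2\otimes\mathcal L$ for every rank-one $\mathcal L$, not just $\mathcal L$ trivial, which is exactly what the cited Katz/FGKM results supply.
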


\begin{thm}\label{thm:p2-average}
For all $\ep,\delta>0$ and $0<\xi<\min(\ep,1)$ there is a set of primes $\mathcal E$ satisfying
$$
\#\{p\le X:p\in\mathcal E\}\ll_{\ep,\delta,\xi}\frac{X^{1-\xi}}{\log X}
$$
such that, for every odd prime $p\notin\mathcal E$ and uniformly for $1\le B\le p^{1/2-\delta}$,
$$
A_{p^2}(B)\ll_{\ep,\delta,\xi}p^\ep(B^4p^5+B^2p^6).
$$
The set $\mathcal E$ does not depend on $B$.
\end{thm}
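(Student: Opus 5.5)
We need to bound $A_{p^2}(B)$ for $q=p^2$. The plan is to evaluate the Kloosterman sums modulo $p^2$ explicitly, reduce the complete $h$-sum to an exponential sum in the square roots, and control the resulting quantity on average over primes; the exceptional set $\mathcal E$ is produced by Markov's inequality.

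\textbf{Step 1: explicit formula for $S(h,\bar b;p^2)$.} For $p$ odd and $(hb,p)=1$, stationary phase gives
$$
S(h,\bar b;p^2)=p\sum_{\substack{y\bmod p^2\\ y^2\equiv h\bar b\ (p^2)}} \ep(y)\,e\!\left(\frac{2\bar b y}{p^2}\right),
$$
where $\ep(y)$ is a root-of-unity weight given by a Legendre-symbol/Gauss-sum factor. The sum vanishes unless $hb$ is a square modulo $p$. If $p\mid h$ the sum is a Ramanujan-type sum; this part should contribute trivially at most $p^4\cdot$(small), and I would handle it separately.

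\textbf{Step 2: the $h$-sum.} Insert the formula into the product of four Kloosterman sums. For the product to be nonzero, all $hb_i$ must be squares, and we parametrise $h=b_1u^2$. The $h$-sum then becomes $p^4$ times a sum over $u\bmod p^2$ and over signs $\pm$ of
$$
e\!\left(\frac{2u\sum_i \pm\sqrt{b_1\bar b_i}\,\bar b_i}{p^2}\right),
$$
with $\sqrt{b_1\bar b_i}$ denoting a $p$-adic square root. Summing over $u$ produces $p^2$ times the indicator of the congruence
$$
\sum_i \pm \bar b_i^{3/2}\equiv 0 \bmod p^2,
$$
up to normalisation. It also produces a term of size $p\cdot$(lower order) when the congruence holds only modulo $p$. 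So $A_{p^2}(B)$ is bounded by $p^6$ times the number $N_p(B)$ of quadruples $b_i\le B$ and sign choices satisfying this $p$-adic congruence, plus a diagonal contribution. The diagonal consists of the quadruples that pair off, $b_1=b_2$, $b_3=b_4$ with opposite signs and permutations; there are $\ll B^2$ of them, giving $B^2p^6$. The near-diagonal terms where the congruence holds mod $p$ need separate bookkeeping, and I expect them to supply the $B^4p^5$ term after the trivial bound $B^4\cdot p^5$.

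\textbf{Step 3 (main obstacle): off-diagonal solutions, averaged over $p$.} For a single $p$ one cannot rule out many non-trivial solutions of $\sum\pm b_i^{-3/2}\equiv0 \pmod{p^2}$ with $b_i\le B\le p^{1/2-\delta}$. Instead I would fix a non-degenerate quadruple and sign pattern and consider the primes $p\in[X,2X]$ for which it is a solution. The condition says that $p^2$ divides the norm of the algebraic number $\sum\pm b_i^{-3/2}$, after clearing denominators, in the field $\Q(\sqrt{b_1},\dots,\sqrt{b_4})$. This norm is a nonzero integer of size $B^{O(1)}$, and by the degree-$\le16$ bound it is $\ll B^{C}$. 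Hence the number of primes $p\ge X$ whose square divides it is $O(C\log B/\log X)=O(1)$.

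Nonvanishing of the norm for non-paired quadruples is the delicate point. It requires linear independence of square roots: $\sum\pm b_i^{-3/2}=0$ over $\overline{\Q}$ forces pairing. Summing over quadruples gives
$$
\sum_{X<p\le 2X} N_p^{\rm off}(X^{1/2})\ll X^{2}.
$$
Markov's inequality then shows that $N_p^{\rm off}(B)\le p^{\ep}$, uniformly in dyadic $B\le p^{1/2-\delta}$, except for $\ll X^{1-\xi}$ primes. This is where $\xi<\ep$ enters, and $\delta$ makes the count work. Taking the union over dyadic $B$ costs a factor $\log X$, so $\mathcal E$ is independent of $B$. These off-diagonal terms then contribute $\le p^{6+\ep}$, which is absorbed into $B^2p^6$.
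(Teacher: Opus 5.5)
Your overall route is the paper's: explicit evaluation modulo $p^2$, a Ramanujan sum in the sum of the roots, reduction to a signed reciprocal square-root relation, nonvanishing of its norm (the eliminant $R$ of Lemma~\ref{lem:eliminant-identity}) away from exact solutions, $O(1)$ prime divisors in $(X,2X]$ of a nonzero integer of size $B^{O(1)}$, Markov, and a union over dyadic $B$.

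\textbf{The Markov step fails as you set it up.} The prime average gives only $\sum_{X<p\le 2X}N_p^{\rm off}(B)\ll B^4$. Markov then bounds the number of $p\in(X,2X]$ with $N_p^{\rm off}(B)>p^{\ep}$ by $\ll X^{-\ep}B^4$. This bound is vacuous once $B>X^{1/4+\ep/4}$, so for most of the range $B\le p^{1/2-\delta}$ nothing is proved, and a small $\delta$ does not help. The threshold must instead be $p^{\ep}(B^2+B^4/p)$. The number of failures is then $\ll X^{-\ep}B^4/(B^2+B^4/X)\le X^{1-\ep}$. The off-diagonal contribution becomes $p^6\cdot p^{\ep}(B^2+B^4/p)=p^{\ep}(B^2p^6+B^4p^5)$, which is exactly the target (Propositions~\ref{prop:prime-average} and~\ref{prop:almost-all-N}). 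So the term $B^4p^5$ must also absorb generic off-diagonal solutions, not only the frequencies with $p\Vert\sum_iz_i$. The paper does not split the two Ramanujan levels at all: it bounds the weight by $p^2$ and uses only $p\mid R(\mathbf a)$.

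\textbf{Exact solutions are not only pairings.} Write $b_i=r_ix_i^2$ with $r_i$ squarefree. Linear independence of square roots of distinct squarefree integers reduces $\sum_is_i/\sqrt{b_i}=0$ only to $\sum_{r_i=r}s_i/x_i=0$ for each $r$. When all four $r_i$ coincide there are non-paired solutions, e.g.\ $(b_i)=(4,144,9,16)$, where $\frac12+\frac1{12}-\frac13-\frac14=0$. Their norm is zero, so they occur for every prime and cannot be handled by counting prime divisors; they must be counted directly. For fixed $r$, signs and $x_1,x_2,x_3$ there is at most one $x_4$, giving $O(B^{3/2})$ tuples (Lemma~\ref{lem:integer-zero-count}), which fit into $B^2p^6$.

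\textbf{Step 1 contains a slip.} For $(hb,p)=1$ one has $S(h,\bar b;p^2)=p\sum_{y^2\equiv h\bar b}e(2y/p^2)$, with no Gauss factor at even exponent and no $\bar b$ in the phase. The relevant congruence is therefore $\sum_i\pm b_i^{-1/2}\equiv0$, not $\sum_i\pm b_i^{-3/2}\equiv0$. Also $S(h,\bar b;p^2)=0$ when $p\mid h$, so that part needs no separate treatment.
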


Several related problems use the same mollifiers or the same estimates for Kloosterman sums. \v{C}ech and Matom\"aki \cite{CechMatOptimal} show that the Michel--VanderKam mollifier is optimal in a wide class of balanced two-piece mollifiers. Bui, Pratt, Robles and Zaharescu \cite{BPRZ} treat twisted second moments at prime moduli, and mollifiers give positive proportions of zeros on the critical line for the Riemann zeta function \cite{Conrey} and for Dirichlet $L$-functions \cite{WuTwisted}. Fourth moments of Dirichlet $L$-functions are obtained at the central point in \cite{Young,WuCentral}, and on average over height and primitive characters in \cite{WuLine}.

Bui, Pratt and Zaharescu \cite{BPZ} prove non-vanishing results under the assumption that exceptional characters exist, and Drappeau, Pratt and Radziwi\l\l{} \cite{DPR} prove one-level density estimates on average over conductors, with consequences for non-vanishing under the generalised Riemann hypothesis. For modular $L$-functions see \cite{ISAuto,KMV}, and for related non-vanishing problems see \cite{Castillo, David, Diaconu, Sound}. Incomplete and bilinear Kloosterman sums are treated in \cite{FI,KMS,KSWX}.
\section{Overview of the proofs}\label{sec:outline}

\looseness=-1 We use the Michel--VanderKam two-piece mollifier and Qin--Wu's reduction of its mixed terms to Kloosterman sums. For admissible exponents $\theta_1,\theta_2$, Corollary \ref{cor:qw-nv} gives
$$
\kappa(q)\ge\frac{\theta_1+\theta_2}{1+\theta_1+\theta_2}+o(1).
$$
\looseness=-1 We therefore maximise $\theta_1+\theta_2$. Section \ref{sec:qw-reduction} gives the common moment calculation. For squarefree moduli, local correlations have square-root cancellation unless the four arguments form equal pairs; Section \ref{sec:squarefree} counts these congruences. For prime squares, the local evaluation gives a polynomial relation among reciprocal square roots. Section \ref{sec:p2-kloosterman} counts its integer zeros and averages its non-zero values over primes, with an exceptional set independent of the length. Both cases allow $\theta_1$ and $\theta_2$ just below $3/8$ and $1/4$, giving $5/13$.

Section \ref{sec:small-nsf} treats fixed multiples and the powerful part. Counting root configurations, rather than bounding each correlation separately, permits the squarefree lengths when $P(q)\le q^{1/2}$; the sparsity of powerful integers gives the density-one theorem.

For the uniform $3/8$ theorem, Section \ref{sec:allq-three-eighths} treats a small powerful part by the preceding fourth moment, an intermediate divisor by Proposition~\ref{prop:reciprocal-factorable}, and a large prime square by the operator bound of Proposition~\ref{prop:p2-crt-operator} and the Chinese remainder theorem. The lengths are just below $2/5$ and $1/5$; a divisor near $q^{1/3}$ gives $2/5$. Section \ref{sec:hybrid} makes the prime-square estimates simultaneous in the squarefree cofactor.

\section{Preliminaries}\label{sec:preliminaries}

We retain the notation of the Introduction. Write $\omega(n)$ for the number of distinct prime divisors and $\sum_{x\bmod c}^{\times}$ for summation over units. At modulus one, this sum has one term, $S(a,b;1)=1$, $A_1(B)\ll B^4$ and $\omega(1)=0$. Changes of variables give, for $(u,c)=1$ and $(r,s)=1$,
\begin{gather*}
S(a,b;c)=S(au,b\bar{u};c)=S(a\bar{u},bu;c),\displaybreak[0]\\
\overline{S(a,b;c)}=S(-a,-b;c)=S(a,b;c),\displaybreak[0]\\
S(a,b;rs)=S(a\bar{s},b\bar{s};r)S(a\bar{r},b\bar{r};s).
\end{gather*}
In particular, the sums are real. For an odd prime $p$, write $\operatorname{Kl}_2(t;p)=p^{-1/2}S(1,t;p)$.

\begin{lem}\label{lem:phistar-loss}
For every $\ep>0$ and every $q\not\equiv2\bmod4$ with $\varphi^*(q)>0$,
$$
\frac{q}{\varphi^*(q)}\ll_\ep q^\ep.
$$
\end{lem}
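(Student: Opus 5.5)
We will use the multiplicative formula for the number of primitive characters. Since $\varphi^*$ is multiplicative, with $\varphi^*(\ell)=\ell-2$ and $\varphi^*(\ell^\nu)=\ell^{\nu-2}(\ell-1)^2$ for $\nu\ge2$, we compare $\varphi^*(q)$ with $\varphi(q)$ one prime at a time. The bound then follows by the standard estimate $q/\varphi(q)\ll\log\log q\ll_\ep q^{\ep/2}$, combined with a separate bound on the correction factor $\varphi(q)/\varphi^*(q)$.

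First we write, for each prime power $\ell^\nu\Vert q$,
$$
\frac{\ell^\nu}{\varphi^*(\ell^\nu)}=\frac{\ell^\nu}{\varphi(\ell^\nu)}\cdot\frac{\varphi(\ell^\nu)}{\varphi^*(\ell^\nu)}.
$$
The second factor equals $(\ell-1)/(\ell-2)$ when $\nu=1$ and $\ell/(\ell-1)$ when $\nu\ge2$. The hypotheses $q\not\equiv2\bmod4$ and $\varphi^*(q)>0$ are exactly what exclude the factor $\ell^\nu=2$, where $\varphi^*(2)=0$. For $\ell=2$ we then have $\nu\ge2$, so the ratio is $2$. For odd $\ell$ the ratio is at most $(\ell-1)/(\ell-2)\le 2$, and it is at most $1+2/\ell$ once $\ell\ge4$. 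Hence
$$
\frac{q}{\varphi^*(q)}\le \prod_{\ell\mid q}\frac{\ell}{\ell-1}\cdot\prod_{\ell\mid q}\Bigl(1+\frac{2}{\ell}\Bigr)\cdot C
$$
for an absolute constant $C$ that absorbs $\ell=2,3$.

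Each product is at most $\prod_{\ell\mid q}(1+1/\ell)^{O(1)}$. This is bounded either crudely by $2^{O(\omega(q))}\le d(q)^{O(1)}\ll_\ep q^\ep$, using the divisor bound, or more sharply by $(\log\log q)^{O(1)}$ via Mertens. The crude route suffices and avoids any analysis.

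The only point needing care is verifying that no local factor degenerates. Specifically, we must check that $\varphi^*(\ell^\nu)\ge\tfrac14\varphi(\ell^\nu)$ in every admissible case, including $\ell=3$ with $\nu=1$, where $\varphi^*(3)=1$ and $\varphi(3)=2$. The remaining cases $\ell=2$ with $\nu\ge2$ and large $\ell$ are immediate. The bound $\varphi(\ell^\nu)/\varphi^*(\ell^\nu)\le2\le 2^{1}$ uniformly then gives $q/\varphi^*(q)\le 2^{2\omega(q)}\ll_\ep q^\ep$ directly.
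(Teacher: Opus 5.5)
Your argument is correct and is essentially the paper's proof. Both use the multiplicativity of $\varphi^*$ with the local formulas $\varphi^*(\ell)=\ell-2$ and $\varphi^*(\ell^\nu)=\ell^{\nu-2}(\ell-1)^2$, bound each local ratio $\ell^\nu/\varphi^*(\ell^\nu)$ by an absolute constant (the paper as $2(1-1/\ell)^{-2}$, you as $\frac{\ell}{\ell-1}\cdot\frac{\varphi(\ell^\nu)}{\varphi^*(\ell^\nu)}\le4$), and finish with $C^{\omega(q)}\ll_\ep q^\ep$. The only blemish is cosmetic: you announce the check $\varphi^*(\ell^\nu)\ge\frac14\varphi(\ell^\nu)$, but what you prove and use is the stronger $\varphi(\ell^\nu)/\varphi^*(\ell^\nu)\le2$, which is what yields $q/\varphi^*(q)\le4^{\omega(q)}$.
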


\begin{proof}
The function $\varphi^*(q)$ is multiplicative. For odd primes,
$$
\varphi^*(p)=p-2,\qquad \varphi^*(p^a)=p^{a-2}(p-1)^2\quad(a\ge2).
$$
 The allowed powers of $2$ satisfy the same lower bound up to an absolute constant. Hence
$$
\frac{q}{\varphi^*(q)}
\ll 2^{\omega(q)}\prod_{p\mid q}(1-1/p)^{-2}
\ll_\ep q^\ep.
$$
\end{proof}

\begin{lem}\label{lem:parity-count}
For $q\not\equiv2\bmod4$ and $j\in\{0,1\}$,
$$
\#\mathcal X^*(q;j)=\frac{\varphi^*(q)}2+O(1).
$$
More precisely,
$$
\#\mathcal X^*(q;j)
=\frac12\left(\varphi^*(q)+(-1)^j
\left(\mu(q)+1_{2\mid q}\mu(q/2)\right)\right).
$$
\end{lem}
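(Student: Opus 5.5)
We count the two parity classes through the character $\chi\mapsto\chi(-1)$ restricted to primitive characters. Since $\#\mathcal X^*(q;0)+\#\mathcal X^*(q;1)=\varphi^*(q)$, it suffices to evaluate the signed sum
$$
D(q)=\sum_{\chi\ \mathrm{primitive}\bmod q}\chi(-1)=\#\mathcal X^*(q;0)-\#\mathcal X^*(q;1),
$$
and then $\#\mathcal X^*(q;j)=\tfrac12(\varphi^*(q)+(-1)^jD(q))$. The first assertion follows from the exact formula, because $|\mu(q)+1_{2\mid q}\mu(q/2)|\le2$.

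To evaluate $D(q)$, we first compute the full sum over all characters modulo $q$ by orthogonality:
$$
F(q)=\sum_{\chi\bmod q}\chi(-1)=\varphi(q)\,1_{-1\equiv1\bmod q}=\varphi(q)\,1_{q\mid 2}.
$$
Every character modulo $q$ is induced by a unique primitive character modulo some $d\mid q$, and $\chi(-1)$ is unchanged by induction. Hence $F(q)=\sum_{d\mid q}D(d)$. Möbius inversion then gives
$$
D(q)=\sum_{d\mid q}\mu(q/d)F(d)=\mu(q)F(1)+1_{2\mid q}\,\mu(q/2)F(2).
$$
Here $F(1)=F(2)=1$, since there is one character at each of these moduli and it takes the value $1$ at $-1$. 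This is exactly $\mu(q)+1_{2\mid q}\mu(q/2)$.

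No step is a real obstacle. The only points needing care are the conventions at $d=1,2$: the trivial character modulo $1$ counts as primitive with $\chi(-1)=1$, and $q=1$ or $q\equiv2\bmod4$ cases are consistent. For example, when $q\equiv2\bmod4$ the formula gives $D(q)=\mu(q)+\mu(q/2)=0$, matching $\varphi^*(q)=0$. The hypothesis $q\not\equiv2\bmod4$ is not actually needed for the identity; it only ensures the classes are meaningful.
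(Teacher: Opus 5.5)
Your proposal is correct and follows essentially the same route as the paper: the parity projector reduces the count to $\frac12\bigl(\varphi^*(q)+(-1)^j\sum_{\chi}^{*}\chi(-1)\bigr)$, and your M\"obius inversion of the full orthogonality relation is just a derivation of the primitive orthogonality formula the paper quotes, with only $d=1$ and $d=2$ surviving. The only difference is that you prove this formula rather than cite it.
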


\begin{proof}
The parity projector gives
$$
\#\mathcal X^*(q;j)=\frac12\Bigl(\varphi^*(q)+(-1)^j\sum_{\chi\bmod q}^{*}\chi(-1)\Bigr).
$$
Primitive orthogonality gives $\sum_{d\mid q,\ d\mid 2}\mu(q/d)\varphi(d)$; only $d=1$ and, for even $q$, $d=2$ occur, with $\varphi(1)=\varphi(2)=1$.
\end{proof}

\begin{prop}\label{prop:fgkm-prime-local}
Let $p$ be an odd prime, and let $a_1,a_2,a_3,a_4\in\F_p^\times$. Set
$$
C(a_1,a_2,a_3,a_4)=
\sum_{h\bmod p}
\prod_{i=1}^4 \operatorname{Kl}_2(ha_i;p).
$$
Then
$$
C(a_1,a_2,a_3,a_4)\ll p.
$$
Unless the four residues form two equal pairs, the sharper bound
$$
C(a_1,a_2,a_3,a_4)\ll p^{1/2}
$$
holds. Thus if one residue occurs only once, then
$$
\sum_{h\bmod p}
S(h,a_1;p)S(h,a_2;p)S(h,a_3;p)S(h,a_4;p)
\ll p^{5/2}.
$$
In the remaining case the same unnormalised sum is $O(p^3)$. The constants are absolute.
\end{prop}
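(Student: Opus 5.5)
This is the Fouvry--Ganguly--Kowalski--Michel correlation bound for the Kloosterman sheaf, and I would deduce it from Deligne's Riemann Hypothesis together with Katz's monodromy computation.

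\textbf{Trivial bound.} Since $|\operatorname{Kl}_2|\le2$ by Weil, each term of $C$ is $O(1)$. Summing over $p$ values of $h$ gives $C\ll p$ with an absolute constant.

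\textbf{Cohomological setup.} The term $h=0$ contributes $O(1)$, since $\operatorname{Kl}_2(0;p)=-p^{-1/2}$. For $h\in\F_p^\times$, write $\operatorname{Kl}_2(ha_i;p)$ as the trace of Frobenius of $[\times a_i]^*\mathcal{K}\ell_2$. Here $\mathcal{K}\ell_2$ is Deligne's Kloosterman sheaf on $\mathbb{G}_m$: it is lisse of rank $2$, pure of weight $0$, tame at $0$, wildly ramified at $\infty$ with Swan conductor $1$, and has geometric monodromy $SL_2$.

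Set $\mathcal F=\bigotimes_{i=1}^4[\times a_i]^*\mathcal{K}\ell_2$, a sheaf of rank $16$. The Grothendieck--Lefschetz trace formula gives
$$\sum_{h\in\F_p^\times}\prod_i\operatorname{Kl}_2(ha_i;p)=\operatorname{tr}(\mathrm{Frob}\mid H^2_c)-\operatorname{tr}(\mathrm{Frob}\mid H^1_c).$$
The Euler--Poincaré formula bounds $\dim H^1_c$ by the Swan conductor at $\infty$, which is at most $16$, so $\dim H^1_c=O(1)$. Deligne's theorem gives weight $\le1$ on $H^1_c$, so that part is $O(p^{1/2})$. Finally $H^2_c$ equals the geometric coinvariants of $\mathcal F$ twisted by $(-1)$. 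Hence the main term is $p$ times the multiplicity of the trivial representation in $\mathcal F$ restricted to the geometric monodromy group.

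\textbf{Goldfeld--Katz independence (the main step).} I need that for pairwise distinct $b_1,\dots,b_k\in\F_p^\times$, the sheaves $[\times b_j]^*\mathcal{K}\ell_2$ are pairwise geometrically non-isomorphic, even after twisting by rank-one sheaves. Then Goursat--Kolchin--Ribet shows that the monodromy of $\bigoplus_j[\times b_j]^*\mathcal{K}\ell_2$ is $SL_2^k$.

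For the non-isomorphism I would use the local structure at $\infty$. The breaks there are $1/2$, and $[\times b]^*\mathcal{K}\ell_2$ determines $b$: its local monodromy at $\infty$ is induced from a character whose $2$-adic "leading term" involves $\sqrt{b}$ up to sign. More precisely, $\mathcal{K}\ell_2\otimes[\times b]^*\mathcal{K}\ell_2$ has $\infty$-slopes that are all $1/2$ unless $b=1$. The characteristic $p$ is odd, so the sign ambiguity causes no trouble. This is the step I expect to be the obstacle, and I would cite Katz's results on Kloosterman sheaves or FGKM for it rather than reprove it.

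\textbf{Counting invariants and deducing the bounds.} Group the $a_i$ into distinct values $b_j$ with multiplicities $m_j$. The invariant count is $\prod_j\dim(\mathrm{Std}^{\otimes m_j})^{SL_2}$. This factor is $0$ if some $m_j$ is odd, equals $1$ when $m_j=2$, and equals $2$ when $m_j=4$.

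So if some residue occurs exactly once, or if one residue occurs with multiplicity $3$ and another once, $H^2_c=0$. In that case $C\ll p^{1/2}$ with an absolute constant, because the conductor is bounded. The same holds for any pattern other than two equal pairs.

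For the unnormalised statement, multiply by $p^2$:
$$\sum_h\prod_i S(h,a_i;p)=p^2C.$$
In the single-occurrence case this is $\ll p^{5/2}$. In the remaining case the trivial bound $C\ll p$ gives $O(p^3)$.
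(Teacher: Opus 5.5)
Your proposal is correct and is essentially the paper's argument. The paper simply invokes \cite[Proposition~3.2]{FGKM} with $\gamma_i=\mathrm{diag}(a_i,1)$, noting that these pullbacks have bounded conductor, that $\gamma_i=\gamma_j$ in $\mathrm{PGL}_2$ exactly when $a_i=a_j$, and that the $h=0$ term changes $C$ only by $p^{-2}$. Your sketch (trace formula, Deligne's bound on $H^1_c$, Katz's $SL_2$ monodromy, Goursat--Kolchin--Ribet from the non-isomorphism of the dilates, and the invariant dimensions $0,1,0,2$ of $\mathrm{Std}^{\otimes m}$ for $m=1,2,3,4$) is the proof of that proposition specialised to diagonal $\gamma_i$, and it relies on the same independence input. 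One minor imprecision: Grothendieck--Ogg--Shafarevich gives $h^1_c=\mathrm{Swan}_\infty(\mathcal F)+h^2_c\le 8+16$, not $h^1_c\le\mathrm{Swan}_\infty$ in general. This is still an absolute bound, and it agrees with your statement in the case $H^2_c=0$, which is the only place you need it.
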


\begin{proof}
Apply \cite[Proposition 3.2]{FGKM} with $\kappa=4$ and
$$
\gamma_i=\begin{pmatrix}a_i&0\\0&1\end{pmatrix}.
$$
These pullbacks have bounded conductor on $\F_p^\times$, and equality in ${\rm PGL}_2$ is equivalent to equality of the $a_i$. The exceptional case is therefore when the four arguments can be paired into equal pairs. The convention $\operatorname{Kl}_{\rm FGKM}(t;p)=p^{-1/2}S(t,1;p)$ agrees with ours by $x\mapsto\bar x$. Moreover
$$
S(h,a_i;p)=p^{1/2}\operatorname{Kl}_2(ha_i;p),
$$
including $h=0$, when both sides are $-1$. Adding this term changes the normalised correlation by $p^{-2}$. Multiplying the cited bounds by $p^2$ gives the result.
\end{proof}

For background on this correlation estimate, see also \cite[Theorem~13.3]{FKMS}.

\begin{prop}\label{prop:weil-estermann}
For every modulus $n\ge1$,
$$
|S(a,b;n)|\le \tau(n)(a,b,n)^{1/2}n^{1/2}.
$$
Moreover, if $n$ is powerful and $(b,n)=1$, then
$$
S(h,b;n)=0
$$
unless $(h,n)=1$. Consequently, if $(ab,n)=1$, then
$$
|S(a,b;n)|\ll_\ep n^{1/2+\ep},
$$
and if $(b_1b_2b_3b_4,n)=1$ and $n$ is powerful, then
$$
\left|
\sum_{h\bmod n}\prod_{i=1}^4 S(h,\overline{b_i};n)
\right|
\ll_\ep n^{3+\ep}.
$$
\end{prop}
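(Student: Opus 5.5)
The plan is to reduce everything to prime-power moduli and then reassemble. For the first bound, the Weil--Estermann inequality, I will use the twisted multiplicativity $S(a,b;rs)=S(a\bar s,b\bar s;r)S(a\bar r,b\bar r;s)$ from the preliminaries. The quantity $(a,b,n)$ is multiplicative in the appropriate sense: $(a\bar s,b\bar s,r)=(a,b,r)$ when $(r,s)=1$. Likewise $\tau$ and $n^{1/2}$ are multiplicative. So it suffices to prove $|S(a,b;\ell^\nu)|\le\tau(\ell^\nu)(a,b,\ell^\nu)^{1/2}\ell^{\nu/2}$ for each prime power.

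For $\nu=1$ this is Weil's bound $2\sqrt{\ell}$ when $\ell\nmid(a,b)$. When $\ell\mid a,b$ the sum equals $\varphi(\ell)\le\ell$. For $\nu\ge2$ I would invoke Estermann's evaluation by stationary phase. Write $x=y(1+\ell^{\lceil\nu/2\rceil}z)$ and extract the gcd: if $\ell^k\|(a,b,\ell^\nu)$ with $k<\nu$, reduce to modulus $\ell^{\nu-k}$ with a factor $\ell^k$. In the coprime case the sum is supported on solutions of $ay^2\equiv b$ modulo a suitable power. There are at most $2$ of these, or $4$ when $\ell=2$, each contributing $\ell^{\nu/2}$, and this is absorbed by $\tau(\ell^\nu)=\nu+1\ge3$. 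This is standard (Estermann; see Iwaniec--Kowalski), and I would cite it rather than redo it.

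For the vanishing statement, take $n$ powerful and $(b,n)=1$. By multiplicativity it suffices to treat $n=\ell^\nu$ with $\nu\ge2$ and $\ell\mid h$. Substitute $x\mapsto x(1+\ell^{\nu-1}t)$, which permutes units. Then $\bar x\mapsto\bar x(1-\ell^{\nu-1}t)$, and $hx\ell^{\nu-1}t\equiv0$ because $\ell\mid h$. The sum therefore picks up the factor $e(-b\bar x t/\ell)$. Averaging over $t\bmod\ell$ gives zero, since $\ell\nmid b\bar x$. This uses $\nu\ge2$ only to make the substitution well defined on units, with $2(\nu-1)\ge\nu$ ensuring the quadratic term vanishes.

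The consequences then follow directly. If $(ab,n)=1$ the gcd factor is $1$, and $\tau(n)\ll_\ep n^\ep$. For the fourth-moment bound with $n$ powerful, the vanishing lemma restricts the $h$-sum to $(h,n)=1$. Each factor then has $(h\overline{b_i},n)=1$ and is $\ll n^{1/2+\ep}$. Summing the trivial bound over at most $n$ values of $h$ gives $n\cdot n^{2+4\ep}$, that is $n^{3+\ep}$ after renaming $\ep$.

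The only genuinely delicate step is the prime-power evaluation, especially for $\ell=2$ and odd $\nu$. There the stationary-phase argument needs a second-order Taylor term, and I would rely on the literature for it.
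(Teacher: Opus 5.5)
Your proposal is correct and is essentially the paper's proof. The first bound is quoted from Iwaniec--Kowalski, and the vanishing uses the same $\ell^{\nu-1}$-shift on units: your multiplicative shift $x\mapsto x(1+\ell^{\nu-1}t)$ runs through the paper's fibres $y_0+p^{\nu-1}t$ and produces the same nontrivial character in $t$. The consequences then follow, as in the paper, by restricting to unit $h$ and summing the pointwise bound. Your prime-power sketch of Weil--Estermann is only heuristic (its root count at $\ell=2$, $\nu=2$ is not absorbed by $\tau(4)=3$, although $|S(a,b;4)|\le2$ directly), but since you defer to the literature just as the paper does, this is harmless.
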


\begin{proof}
The first estimate is the Weil--Estermann bound \cite[Corollary 11.12, (11.16)]{IK}. If $p^\nu\Vert n$, $\nu\ge2$, $p\mid h$ and $p\nmid b$, write a unit as $y=y_0+p^{\nu-1}t$, $t\bmod p$. Since
$$
(y_0+p^{\nu-1}t)^{-1}\equiv \bar y_0-p^{\nu-1}t\bar y_0^2\pmod{p^\nu},
$$
the $t$-sum is a nontrivial additive sum with coefficient $-b\bar y_0^2\bmod p$, and vanishes. Multiplicativity proves the vanishing assertion. For unit $h,b$, Weil--Estermann gives $|S(h,b;n)|\ll_\ep n^{1/2+\ep}$; summing the product of four such bounds over at most $n$ values of $h$ gives $n^{3+\ep}$ after renaming $\ep$.
\end{proof}

The following prime-power evaluations are used in Sections~\ref{sec:p2-kloosterman}, \ref{sec:small-nsf} and \ref{sec:allq-three-eighths}.

For an odd prime $p$, a unit $A$, and $\nu\ge2$, let
\begin{equation}\label{eq:reciprocal-gauss-sign}
 \tau_p(A;p^\nu)=
 \begin{cases}
  1,&2\mid\nu,\\
  \left(\dfrac Ap\right),&2\nmid\nu,\ p\equiv1\bmod4,\\
  i\left(\dfrac Ap\right),&2\nmid\nu,\ p\equiv3\bmod4.
 \end{cases}
\end{equation}
Let $p\nmid m$. For every non-zero square modulo $p$, choose one of its two square roots. Hensel lifting gives a compatible square-root function $z\mapsto z_{1/2}$ on the square units modulo every $p^\nu$. Thus the chosen root modulo $p^\nu$ is determined by its reduction modulo $p$. For $am$ a square unit, define
\begin{equation}\label{eq:reciprocal-odd-branch}
 S_{p^\nu}^{\epsilon}(a,m)
 =p^{\nu/2}\tau_p
   \left(\epsilon(am)_{1/2};p^\nu\right)
  \e\left(\frac{2\epsilon(am)_{1/2}}{p^\nu}\right),
 \qquad \epsilon\in\{+1,-1\},
\end{equation}
and set it equal to zero otherwise. The stationary-phase evaluation of Kloosterman sums modulo odd prime powers gives
\begin{equation}\label{eq:reciprocal-odd-branch-sum}
 S(a,m;p^\nu)=S_{p^\nu}^{+}(a,m)+S_{p^\nu}^{-}(a,m).
\end{equation}
This follows from \cite[Lemmas~12.2 and~12.3]{IK} with trivial character and the quadratic Gauss-sum evaluation; see also \cite[Lemma~22]{BM2015}. The root choice only labels the two terms in \eqref{eq:reciprocal-odd-branch-sum}.

For $2^\nu$ with $\nu=6$ or $\nu\ge8$ and $am\equiv1\bmod8$, let $(am)_{1/2}$ be the unique root class modulo $2^{\nu-1}$ which is congruent to $1\bmod4$ and whose square is $am\bmod {2^\nu}$. These choices are compatible under reduction. Define the two-primary Gauss factor by
\begin{equation}\label{eq:reciprocal-two-gauss-sign}
 \tau_2(A;2^\nu)=
 \begin{cases}
  \{1+\e(A/4)\}/\sqrt2,&2\mid\nu,\\
  \e(A/8),&2\nmid\nu.
 \end{cases}
\end{equation}
For odd $a,m$, define
\begin{equation}\label{eq:reciprocal-two-branch}
 S_{2^\nu}^{\epsilon}(a,m)=
 \begin{cases}
  2^{(\nu+1)/2}\tau_2
   \left(\epsilon(am)_{1/2};2^\nu\right)
   \e\left(\dfrac{2\epsilon(am)_{1/2}}{2^\nu}\right),
       &a\equiv m\bmod8,\\
  0,&a\not\equiv m\bmod8.
 \end{cases}
\end{equation}
Again the sum over $\epsilon$ is $S(a,m;2^\nu)$; see \cite[Section~3.1]{MQW}. At $\nu\in\{2,3,4,5,7\}$ we retain the full factor $S(a,m;2^\nu)$ and bound it directly; these exceptional moduli are bounded.

\section{Reduction of the mollifier cross terms to quartic Kloosterman sums}\label{sec:qw-reduction}

For $0<\theta<1$ define
$$
a_\theta(m)=
\begin{cases}
\mu(m)\left(1-\frac{\log m}{\theta\log q}\right),
&1\le m\le q^\theta,\\
0,&m>q^\theta.
\end{cases}
$$
For primitive $\chi$ of parity $j$, write $\tau_\chi=\sum_{a\bmod q}\chi(a)\e(a/q)$ and set $\epsilon_\chi=\tau_\chi/(i^j\sqrt q)$, so $|\epsilon_\chi|=1$, and define
\begin{equation}\label{eq:two-piece-mollifier}
M^{(j)}(\chi)
=
c_1\sum_{m\le q^{\theta_1}}\frac{a_{\theta_1}(m)\chi(m)}{\sqrt m}
+
c_2\overline{\epsilon_\chi}
\sum_{m\le q^{\theta_2}}\frac{a_{\theta_2}(m)\overline\chi(m)}{\sqrt m}.
\end{equation}
Define the normalised moments
$$
\mathcal M_1^{(j)}(q)
=
\frac{2}{\varphi^*(q)}
\sum_{\chi\in\mathcal X^*(q;j)}
L(1/2,\chi)M^{(j)}(\chi)
$$
and
$$
\mathcal M_2^{(j)}(q)
=
\frac{2}{\varphi^*(q)}
\sum_{\chi\in\mathcal X^*(q;j)}
|L(1/2,\chi)M^{(j)}(\chi)|^2.
$$

For $j\in\{0,1\}$ set
$$
V_j(x)=\frac{1}{2\pi i}\int_{(2)}
\frac{\Gamma(s/2+(2j+1)/4)^2}
{\Gamma((2j+1)/4)^2}
(\pi x)^{-s}\frac{ds}{s}.
$$
The weight $V_0(x)$ is the one used in Qin--Wu \cite[(3.2)]{QW}; $V_1(x)$ is its odd-parity analogue. For a primitive character $\chi\bmod q$ of parity $j$,
$$
|L(1/2,\chi)|^2
=
2\sum_{n_1,n_2\ge1}
\frac{\chi(n_1)\overline\chi(n_2)}{(n_1n_2)^{1/2}}
V_j\left(\frac{n_1n_2}{q}\right).
$$
For every $A,n\ge0$ and both $j\in\{0,1\}$,
\begin{equation}\label{eq:V-derivative-bounds}
x^n V_j^{(n)}(x)\ll_{A,n,j} (1+x)^{-A},
\end{equation}
and the Mellin transform has residue $1$ at $s=0$. More precisely,
$$
V_j(x)=\frac{4\pi^{(2j+1)/2}}{\Gamma((2j+1)/4)^2}
\int_x^\infty t^{(2j-1)/2}K_0(2\pi t)\,dt,
$$
where $K_0$ is the modified Bessel function. Its exponential decay permits truncation at $n_1n_2\le Cq\log q$, with error $O(q^{-100})$ for a sufficiently large fixed $C$.

After dyadic subdivision,
$$
M_1\le q^{\theta_1+o(1)},\qquad
M_2\le q^{\theta_2+o(1)},\qquad
N_1N_2\ll q^{1+\varepsilon_1}.
$$
\begin{lem}\label{lem:primitive-factorisation}
Let $(m,q)=1$ and $j\in\{0,1\}$. Then
$$
\sum_{\chi\in\mathcal X^*(q;j)}\tau_\chi\chi(m)
=
\frac12\sum_{\substack{q_1q_2=q\\ (q_1,q_2)=1}}
\mu^2(q_1)\varphi(q_2)
\left(\e\left({\frac{\overline{mq_1}}{q_2}}\right)
+(-1)^j
\e\left({-\frac{\overline{mq_1}}{q_2}}\right)\right).
$$
Consequently every non-zero term has $q=q_1q_2$, $(q_1,q_2)=1$, and $\mu^2(q_1)=1$. If $\ell^\nu\Vert q$ with $\nu\ge2$, the full prime power $\ell^\nu$ lies in $q_2$.
\end{lem}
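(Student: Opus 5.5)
Write $\sum^*$ for a sum over primitive characters. The plan is to combine the parity projector with the standard formula for primitive characters via M\"obius inversion over induced characters. First I will use the identity
$$
\sum_{\chi\bmod q}^{*}\chi(a)=\sum_{d\mid (q,a-1)}\mu(q/d)\varphi(d),\qquad (a,q)=1,
$$
valid for units $a$ and vanishing otherwise. The parity condition is handled by writing
$$
1_{\chi(-1)=(-1)^j}=\tfrac12\bigl(1+(-1)^j\chi(-1)\bigr).
$$
Opening $\tau_\chi=\sum_{a}^{\times}\chi(a)\e(a/q)$, we get
$$
\sum_{\chi\in\mathcal X^*(q;j)}\tau_\chi\chi(m)=\frac12\sum_{a\bmod q}^{\times}\e(a/q)\sum_{\chi}^{*}\bigl(\chi(am)+(-1)^j\chi(-am)\bigr).
$$
Substituting $a\mapsto -a$ in the second piece converts it into the first with $\e(a/q)$ replaced by $\e(-a/q)$. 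It therefore suffices to prove the even case with a single exponential, namely
$$
\sum_{a}^{\times}\e(\pm a/q)\sum_{\chi}^{*}\chi(am)
=\sum_{q_1q_2=q,\ (q_1,q_2)=1}\mu^2(q_1)\varphi(q_2)\,\e\bigl(\pm\overline{mq_1}/q_2\bigr).
$$

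Applying the primitive orthogonality identity, the condition $d\mid am-1$ means $a\equiv\bar m\pmod d$. Writing $q=de$, the $a$-sum becomes a sum over units $a\bmod q$ with $a\equiv\bar m\bmod d$, weighted by $\mu(e)\varphi(d)$. Next I parametrise $a=\bar m+dt$ with $t\bmod e$, keeping only those $t$ for which $a$ is a unit. The resulting sum is a restricted Ramanujan-type sum. The cleanest route is to detect $(a,q)=1$ only at primes dividing $e$ but not $d$; at primes dividing $d$ the condition is automatic since $a\equiv\bar m$.

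Rather than compute directly, I would exploit multiplicativity. Both sides factor over prime powers $\ell^\nu\Vert q$ by the Chinese remainder theorem: $\e(a/q)$ splits with twists $\overline{q/\ell^\nu}$, and the M\"obius/$\varphi$ weights are multiplicative. So the problem reduces to $q=\ell^\nu$, with $m$ replaced by a unit multiple $m'=m\,(q/\ell^\nu)$. Note that on the right-hand side, $\overline{mq_1}/q_2$ is exactly what this twist produces.

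At $q=\ell^\nu$ there are two cases.
\begin{itemize}
\item For $\nu=1$, the left-hand side is $\varphi(\ell)\e(\overline{m'}/\ell)-\sum_{a}^{\times}\e(a/\ell)=(\ell-1)\e(\overline{m'}/\ell)+1$. This matches the right-hand side: $q_2=\ell$ contributes $(\ell-1)\e(\overline{m'}/\ell)$, and $q_1=\ell$ contributes $\mu^2(\ell)\varphi(1)=1$.
\item For $\nu\ge2$, the $d=\ell^{\nu-1}$ term is a sum of $\e(a/\ell^\nu)$ over $a\equiv\bar m'\bmod \ell^{\nu-1}$. This is $\e(\bar m'/\ell^\nu)$ times a complete additive sum modulo $\ell$, hence zero. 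So only $d=\ell^\nu$ survives, giving $\varphi(\ell^\nu)\e(\overline{m'}/\ell^\nu)$. This is exactly the statement that the full prime power lies in $q_2$, since $\mu^2(\ell^\nu)=0$ kills $q_1=\ell^\nu$.
\end{itemize}

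The main obstacle is bookkeeping in the CRT factorisation: checking that the local twists $\overline{q/\ell^\nu}$ multiply together to give $\overline{mq_1}$ modulo $q_2$. Locally, primes in $q_1$ contribute the constant $1$ with no phase, so only the inverse of $q_1$ (and of the other $q_2$-factors) survives in the phase modulo $q_2$. The two-adic case needs care, since characters modulo $2^\nu$ with $\nu\ge2$ behave the same way. Finally, the consequences follow by reading off which terms are non-zero.
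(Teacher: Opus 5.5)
Your proof is correct and uses the same ingredients as the paper's proof: you open the Gauss sum, apply primitive orthogonality, and handle the parity projector by the substitution $a\mapsto -a$. The only difference is organisational. You factor over prime powers first, so the paper's shift argument that kills the terms with $(q_1,q_2)>1$ becomes your local vanishing of the $d=\ell^{\nu-1}$ term, and the paper's global CRT evaluation $e(\overline{mq_1}/q_2)\,c_{q_1}(\overline{q_2})=\mu(q_1)\,e(\overline{mq_1}/q_2)$ becomes your local Ramanujan sum $-\sum_{a}^{\times}e(a/\ell)=1$ at $\nu=1$. The price is the twist bookkeeping, which you verify correctly.
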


\begin{proof}
Primitive orthogonality, followed by expansion of the Gauss sum, gives
$$
\sum_{\chi\bmod q}^{*}\tau_\chi\chi(m)
=
\sum_{q_1q_2=q}\mu(q_1)\varphi(q_2)
\sum_{\substack{a\bmod q\\(a,q)=1\\ am\equiv1\bmod {q_2}}}
\e(a/q).
$$
If $(q_1,q_2)>1$, summing the lifts at a common prime makes the inner sum zero. Indeed, if a prime $p$ divides $q_1$ and $q_2$, write $a=a_0+q_2t$ with $t\bmod q_1$. The shift $t\mapsto t+q_1/p$ preserves the conditions on $a$ and multiplies $\e(a/q)$ by $\e(1/p)\ne1$. Otherwise CRT evaluates it as $\e(\overline{mq_1}/q_2)c_{q_1}(\bar q_2)$; since the surviving $q_1$ are squarefree, $c_{q_1}(\bar q_2)=\mu(q_1)$.

This gives the unprojected formula underlying \cite[(3.3)]{QW} and \cite[(17)]{MV}. The parity projector takes half its value at $m$ plus $(-1)^j/2$ times its value at $-m$. Squarefreeness of $q_1$ and $(q_1,q_2)=1$ place every repeated prime power in $q_2$. The modulus-one convention includes $q_2=1$.
\end{proof}

\begin{lem}\label{lem:parity-orthogonality}
Let $(a b,q)=1$. Then
$$
\sum_{\chi\in\mathcal X^*(q;j)}\chi(a)\overline{\chi(b)}
=
\frac12\sum_{\chi\bmod q}^{*}\chi(a\bar{b})
+
\frac{(-1)^j}{2}\sum_{\chi\bmod q}^{*}\chi(-a\bar{b}).
$$
Equivalently,
$$
\sum_{\chi\in\mathcal X^*(q;j)}\chi(a)\overline{\chi(b)}
=
\frac12\sum_{\substack{d\mid q\\ a\equiv b\bmod d}}\mu\left(\frac{q}{d}\right)\varphi(d)
+
\frac{(-1)^j}{2}
\sum_{\substack{d\mid q\\ a\equiv -b\bmod d}}\mu\left(\frac{q}{d}\right)\varphi(d).
$$
\end{lem}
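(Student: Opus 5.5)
The plan is to insert the parity projector into the character sum and then apply the standard primitive orthogonality formula to each of the two pieces. Write $\mathbf 1_{\chi(-1)=(-1)^j}=\frac12\bigl(1+(-1)^j\chi(-1)\bigr)$ for primitive $\chi\bmod q$. Since $\chi(-1)=\pm1$, this quantity is $1$ exactly when $\chi\in\mathcal X^*(q;j)$ and $0$ otherwise. Hence
$$
\sum_{\chi\in\mathcal X^*(q;j)}\chi(a)\overline{\chi(b)}
=\frac12\sum_{\chi\bmod q}^{*}\chi(a)\overline{\chi(b)}+\frac{(-1)^j}{2}\sum_{\chi\bmod q}^{*}\chi(-1)\chi(a)\overline{\chi(b)}.
$$
Because $(b,q)=1$, we have $\overline{\chi(b)}=\chi(\bar b)$. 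Using multiplicativity, the two summands become $\chi(a\bar b)$ and $\chi(-a\bar b)$. This gives the first displayed identity.

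For the equivalent form, I would prove the classical formula
$$
\sum_{\chi\bmod q}^{*}\chi(n)=\sum_{d\mid q,\ d\mid n-1}\mu(q/d)\varphi(d)\qquad((n,q)=1)
$$
by M\"obius inversion. Every character modulo $q$ is induced by a unique primitive character modulo some $d\mid q$, and for $(n,q)=1$ the induced character takes the same value at $n$. Thus
$$
\sum_{\chi\bmod q}\chi(n)=\sum_{d\mid q}\sum_{\psi\bmod d}^{*}\psi(n).
$$
Ordinary orthogonality gives the left side as $\varphi(q)\mathbf 1_{n\equiv1\bmod q}$, and the same identity holds at every divisor $d$ in place of $q$, again using $(n,d)=1$. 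M\"obius inversion over the divisor lattice then yields
$$
\sum^{*}_{\chi\bmod q}\chi(n)=\sum_{d\mid q}\mu(q/d)\varphi(d)\mathbf 1_{n\equiv1\bmod d}.
$$
Applying this with $n=a\bar b$ and with $n=-a\bar b$, and noting that $a\bar b\equiv\pm1\bmod d$ is equivalent to $a\equiv\pm b\bmod d$ (since $b$ is a unit modulo $d$), gives the second form.

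The only point needing care is the M\"obius inversion step. One must check that the function $d\mapsto\sum^*_{\psi\bmod d}\psi(n)$ is well defined at each divisor, since $n$ is coprime to every $d\mid q$. One must also handle the modulus $d=1$: there the trivial character counts as primitive and contributes $\varphi(1)=1$, consistent with the paper's convention. Everything else is a direct substitution, which is the same computation already used in Lemma~\ref{lem:parity-count}, where the case $n=-1$ was needed.
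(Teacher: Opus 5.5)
Your proposal is correct and follows the paper's proof: insert the parity projector $\frac12\bigl(1+(-1)^j\chi(-1)\bigr)$ and apply primitive orthogonality. The only difference is that you also derive the primitive orthogonality formula by M\"obius inversion over the divisors of $q$, whereas the paper simply quotes it.
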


\begin{proof}
Insert $(1+(-1)^j\chi(-1))/2$ and use
$$
\sum_{\chi\bmod q}^{*}\chi(u)=\sum_{\substack{d\mid q\\ u\equiv1\bmod d}}\mu(q/d)\varphi(d).
$$

\end{proof}

\begin{lem}\label{lem:square-term-parity-transfer}
Fix $0<\theta<1/2$. For $j\in\{0,1\}$, set
$$
P_\theta(\chi)=
\sum_{m\le q^\theta}\frac{a_\theta(m)\chi(m)}{\sqrt m}.
$$
Then, uniformly for $q>1$ with $q\not\equiv2\bmod4$ and for both parities,
$$
\frac{2}{\varphi^*(q)}
\sum_{\chi\in\mathcal X^*(q;j)}
|L(1/2,\chi)|^2|P_\theta(\chi)|^2
=1+\frac1\theta
+O_{\theta}\left(\frac{\log\log q}{\log q}\right).
$$
\end{lem}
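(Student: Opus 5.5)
We follow the classical Iwaniec--Sarnak computation of the mollified second moment, keeping track of the parity projection and of the primitive-character sieve. First expand
$$
|L(1/2,\chi)|^2|P_\theta(\chi)|^2
=2\sum_{n_1,n_2}\sum_{m_1,m_2\le q^\theta}
\frac{a_\theta(m_1)a_\theta(m_2)\chi(n_1m_1)\overline\chi(n_2m_2)}{(n_1n_2m_1m_2)^{1/2}}V_j\Big(\frac{n_1n_2}{q}\Big).
$$
Truncate at $n_1n_2\le Cq\log q$, and restrict to $(n_1n_2,q)=1$, since $\chi$ vanishes otherwise. Then apply Lemma~\ref{lem:parity-orthogonality} with $a=n_1m_1$ and $b=n_2m_2$. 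This produces two families of terms: $\tfrac12\sum_{d\mid q}\mu(q/d)\varphi(d)$ restricted to $n_1m_1\equiv n_2m_2\bmod d$, and its parity twin restricted to $n_1m_1\equiv-n_2m_2\bmod d$.

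We split these congruences into diagonal and off-diagonal contributions.

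\emph{Diagonal.} The diagonal is $n_1m_1=n_2m_2$ in the first family; the second family has no diagonal since both sides are positive. Its weight is $\sum_{d\mid q}\mu(q/d)\varphi(d)=\varphi^*(q)$, and after the normalisation $2/\varphi^*(q)$ it gives
$$
2\sum_{n_1m_1=n_2m_2}\frac{a_\theta(m_1)a_\theta(m_2)}{n_1m_1}V_j\Big(\frac{n_1n_2}{q}\Big).
$$
This is the standard main term. Write it as a Mellin integral over $s$ of the $V_j$ kernel against the Dirichlet series $\sum_{m_1,m_2}a_\theta(m_1)a_\theta(m_2)(m_1,m_2)^{1+2s}/(m_1m_2)^{1+s}\zeta(1+2s)$. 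The Iwaniec--Sarnak/Michel--VanderKam evaluation with the linear mollifier $1-\log m/(\theta\log q)$ then gives $1+1/\theta$. The Gamma factor enters only through $V_j$, whose Mellin transform has residue $1$ at $s=0$ and decays rapidly, so the same answer holds for both parities. The coprimality condition with $q$ removes the local factors at $p\mid q$. This costs $\prod_{p\mid q}(1+O(1/p))$ only through lower-order corrections that are $O(\sum_{p\mid q}\log p/(p\log q))=O(\log\log q/\log q)$. This is the source of the stated error term.

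\emph{Off-diagonal.} For $d\mid q$ with $d$ small, the congruence modulo $d$ is weak. Here we use $|\mu(q/d)|\varphi(d)\le d$ and count the solutions $n_1m_1\equiv\pm n_2m_2\bmod d$ trivially: the count is $\ll N_1N_2M_1M_2/d+\ldots$. After dividing by $\varphi^*(q)$ and using Lemma~\ref{lem:phistar-loss}, the terms with $d\le q^{1-\delta}$ contribute $O(q^{-c})$. The reason is that $\sum_{d\mid q,\,d\le q^{1-\delta}}d\ll q^{1-\delta+\ep}$, while the trivial sum over $n,m$ with weights $(nm)^{-1/2}$ is $\ll q^{1/2+\theta+\ep}$.

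For large $d$, write $n_1m_1-n_2m_2=\pm dk$ with $k\ne0$. Since $n_1n_2\le q^{1+\ep}$ and $m_i\le q^\theta$, the standard shifted-divisor treatment bounds this by
$$
\frac{1}{\varphi^*(q)}\,d\sum_{k\ne0}\ \sum_{n_1m_1-n_2m_2=\pm dk}(n_1n_2m_1m_2)^{-1/2}\ll q^{\theta-1/2+\ep}.
$$
This is the same computation as in \cite{IS,MV}, and it is where $\theta<1/2$ is used. In the sign-flipped family there is no $k=0$ term, and the same bound applies.

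The main obstacle is the off-diagonal analysis uniform in composite $q$, specifically the interplay between the Möbius weights $\mu(q/d)$ and the small divisors $d$. We would handle it exactly as Qin--Wu \cite{QW} do for $j=0$. Their argument never uses the parity except through $V_0$, and the bounds \eqref{eq:V-derivative-bounds} hold equally for $V_1$. So the plan is to transfer their proof verbatim with $V_j$ in place of $V_0$, and to check that the parity-twin family contributes only off-diagonal terms.
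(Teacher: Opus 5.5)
Your proposal is correct and follows essentially the paper's route: you expand via the approximate functional equation and Lemma~\ref{lem:parity-orthogonality} into the two signed congruence families, then invoke the Qin--Wu evaluation of these sums, noting that parity enters only through $V_j$, whose gamma quotient equals $1$ at the origin and obeys the same Stirling bounds, so the main term $1+1/\theta$ and the $O(\log\log q/\log q)$ error are unchanged. One small point: your shifted-divisor bound for $n_1m_1\mp n_2m_2=dk$, $k\ne0$, already gives $O(q^{\theta-1/2+\varepsilon})$ uniformly for every $d\mid q$ (including the $d\in\{1,2\}$ terms of the sign-flipped family), so you can simply drop the separate small-$d$ step, whose stated justification only yields $q^{1/2+\theta-\delta+\varepsilon}$.
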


\begin{proof}
The approximate functional equation and Lemma \ref{lem:parity-orthogonality} give
\begin{equation}\label{eq:square-parity-divisor-expansion}
\begin{aligned}
&\frac{2}{\varphi^*(q)}
\sum_{\chi\in\mathcal X^*(q;j)}
|L(1/2,\chi)|^2|P_\theta(\chi)|^2\\
&=
\frac{2}{\varphi^*(q)}
\sum_{\delta=\pm1}(-1)^{j(1-\delta)/2}
\sum_{d\mid q}\mu(q/d)\varphi(d)
\sum_{\substack{m,m'\le q^\theta,\ n_1,n_2\ge1\\
(mm'n_1n_2,q)=1\\ n_1m\equiv\delta n_2m'\bmod d}}
\frac{a_\theta(m)a_\theta(m')}
{(mm'n_1n_2)^{1/2}}
V_j(n_1n_2/q).
\end{aligned}
\end{equation}
For $j=0$, these congruence sums are evaluated immediately before and in \cite[(3.1), p.~7]{QW}; the odd moment is stated in \cite[p.~5]{QW}. For $j=1$, the gamma quotient is $\Gamma(s/2+3/4)^2/\Gamma(3/4)^2$, with value $1$ at the origin and the same Stirling bounds. The signs above are exactly those of the parity projector, so these evaluations give the assertion.
\end{proof}

In the mixed term formed from the first mollifier piece and the conjugate of the second, the root-number and Gauss-sum factors give
$$
2c_1c_2 q^{-1/2}\mu^2(q_1)\varphi(q_2)
\frac12\Re\left((-i)^j
\left(\e(t)+(-1)^j\e(-t)\right)\right),
\qquad
t=\frac{n_2\overline{n_1m_1m_2q_1}}{q_2}.
$$
The last factor reduces to
$$
\cos(2\pi t)\quad(j=0),
\qquad
\sin(2\pi t)\quad(j=1).
$$
For $j=0$, this is the phase in Qin--Wu \cite[(3.2)--(3.4)]{QW}. The term $m_1m_2n_1=1$ is evaluated after \cite[(3.4)]{QW} as in Michel--VanderKam \cite[Section 6.1]{MV}, and contributes $2c_1c_2+O(q^{-\eta})$.

For $j=1$, we verify the diagonal in the odd moment stated before Qin--Wu's Proposition 2.1 \cite[p.~5]{QW} directly. Write
$$
D(q)=
\sum_{\substack{q_1q_2=q\\(q_1,q_2)=1}}
\mu^2(q_1)\varphi(q_2)
\sum_{\substack{n\ge1\\(n,q)=1}}
\frac{V_1(n/q)}{\sqrt n}
\sin\left(\frac{2\pi n\overline{q_1}}{q_2}\right).
$$
The $q_2=1$ summand vanishes. The two conjugate mixed diagonals contribute
$$
\frac{8c_1c_2D(q)}{\varphi^*(q)\sqrt q}.
$$
Lemma~\ref{lem:primitive-factorisation} and the functional equation give
$$
\frac{D(q)}{\sqrt q}
=\frac1{2\pi i}\int_{(1)}
\frac{\Gamma(3/4+s/2)\Gamma(3/4-s/2)}{\Gamma(3/4)^2}
\sum_{\chi\in\mathcal X^*(q;1)}L(1/2-s,\chi)\frac{ds}{s}.
$$
Shift to $\Re s=-1$. Only the pole at $s=0$ is crossed. The gamma quotient is even and has value $1$ at zero, so its integral divided by $s$ on $\Re s=-1$ is $-1/2$. Expanding the absolutely convergent $L$-series on this line and applying primitive orthogonality to $n\ge2$ gives an error bounded by
$$
\sum_{d\mid q}\varphi(d)
\sum_{\substack{n\ge2\\n\equiv\pm1\bmod d}}n^{-3/2}
\ll_\varepsilon q^\varepsilon,
$$
since the inner sum is $O(d^{-3/2})$ for $d\ge3$ and $O(1)$ for $d=1,2$. Thus
$$
\frac{D(q)}{\sqrt q}
=\sum_{\chi\in\mathcal X^*(q;1)}L(1/2,\chi)
-\frac12\#\mathcal X^*(q;1)+O_\varepsilon(q^\varepsilon).
$$
The single-$L$ approximate functional equation gives
$$
\sum_{\chi\in\mathcal X^*(q;1)}L(1/2,\chi)
=\#\mathcal X^*(q;1)+O_\varepsilon(q^{3/4+\varepsilon}).
$$
Indeed, take its weight
$$
U(x)=\frac{1}{\Gamma(3/4)}\int_{\pi x^2}^{\infty}e^{-t}t^{-1/4}\,dt.
$$
Both sums may be truncated at $n\le q^{1/2+\varepsilon}$. In the first sum, the term $n=1$ has weight $1+O(q^{-3/4})$. Primitive orthogonality bounds the other terms by $O_\varepsilon(q^{1/4+\varepsilon})$, using
$$
\sum_{\substack{2\le n\le N\\ n\equiv\pm1\bmod d}}n^{-1/2}\ll\frac{\sqrt N}{d}.
$$
In the dual sum, Lemma~\ref{lem:primitive-factorisation} gives
$$
\biggl|\sum_{\chi\in\mathcal X^*(q;1)}\epsilon_\chi\overline\chi(n)\biggr|
\le q^{-1/2}\sum_{q_1q_2=q}\varphi(q_2)\ll q^{1/2},
$$
so the dual sum is $O_\varepsilon(q^{3/4+\varepsilon})$. Consequently Lemma~\ref{lem:parity-count} gives
$$
D(q)=\frac14\varphi^*(q)\sqrt q
+O_\varepsilon(q^{5/4+\varepsilon}),
$$
and Lemma~\ref{lem:phistar-loss} gives the mixed main term $2c_1c_2+O(q^{-\eta})$ for some $\eta>0$.

Thus the two conjugate mixed terms contribute $2c_1c_2$ in either parity.

\subsection{Parity-separated first and second moments}
\label{sec:fixed-parity-qw-reduction}

We isolate the other mixed terms.

\begin{prop}\label{prop:fixed-parity-qw-setup}
Fix $\sigma>0$, lengths $0<\theta_1,\theta_2<1/2$, and real numbers $c_1,c_2$. Choose $\varepsilon_1>0$ sufficiently small in terms of $\sigma$. Let $q>1$ satisfy $q\not\equiv2\bmod4$, and let $j\in\{0,1\}$. Let the dyadic weights be supported in a fixed compact subinterval of $(0,\infty)$ and have sufficiently many uniformly bounded derivatives. Assume
$$
M_1\le q^{\theta_1+o(1)},\qquad
M_2\le q^{\theta_2+o(1)},\qquad
N_1N_2\le q^{1+o(1)},
$$
where the three $o(1)$-terms tend to zero uniformly and are bounded in absolute value by a sufficiently small constant depending on $\theta_1,\theta_2,\sigma$. Then
$$
\mathcal M_1^{(j)}(q)=c_1+c_2+O(q^{-\eta})
$$
for some $\eta>0$, depending only on $\sigma,\theta_1,\theta_2$ and the prescribed support and derivative bounds of the weights, but not on $q$; the implied constant may also depend on $c_1,c_2$.

Moreover,
$$
\begin{aligned}
\mathcal M_2^{(j)}(q)
&=
c_1^2\left(1+\frac{1}{\theta_1}\right)
+c_2^2\left(1+\frac{1}{\theta_2}\right)
+2c_1c_2+\mathcal R^{(j)}(q)
+O_{\theta_1,\theta_2}\left(\frac{\log\log q}{\log q}\right).
\end{aligned}
$$
Here $2c_1c_2$ is the contribution from $m_1m_2n_1=1$ in the two cross terms. Up to $O(q^{-\eta})$, $\mathcal R^{(j)}(q)$ is the sum over both signs, all coprime factorisations $q=q_1q_2$ with $\mu^2(q_1)=1$, and the finite dyadic subdivision. Each summand is a coefficient $O_{c_1,c_2}(1)$ times
$$
\begin{aligned}
&\frac{\mu^2(q_1)\varphi(q_2)}
{\varphi^*(q)q^{1/2}(M_1M_2N_1N_2)^{1/2}}
\sum_{\substack{m_1,m_2,n_1,n_2\\ (m_1m_2n_1n_2,q)=1\\ m_1m_2n_1>1}}
\alpha_{m_1}\beta_{m_2}
W_1\left(\frac{n_1}{N_1}\right)W_2\left(\frac{n_2}{N_2}\right)V_j(n_1n_2/q)\\
&\qquad\times\e\left(\pm \frac{n_2\overline{n_1m_1m_2q_1}}{q_2}\right).
\end{aligned}
$$
The coefficients are divisor-bounded, supported on $m_1\asymp M_1$ and $m_2\asymp M_2$, and the variables are coprime to $q$.
\end{prop}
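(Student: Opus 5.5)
We follow the Michel--VanderKam computation, separated by parity as in Qin--Wu. The plan has three parts: the first moment, the diagonal pieces of the second moment, and the isolation of the remaining mixed terms as $\mathcal R^{(j)}(q)$.

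\emph{First moment.} Expand $M^{(j)}(\chi)$ and use the single-$L$ approximate functional equation for $L(1/2,\chi)$ with the parity-$j$ weight, truncating at $n\le q^{1/2+\varepsilon}$.

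For the $c_1$ piece, apply Lemma~\ref{lem:parity-orthogonality} with $b=1$. The diagonal $nm=1$ gives $c_1\cdot 2\#\mathcal X^*(q;j)/\varphi^*(q)=c_1+O(q^{-1})$ by Lemma~\ref{lem:parity-count}. The off-diagonal terms satisfy $nm\equiv\pm1\bmod d$ with $nm>1$. Bounding them as in the $j=1$ diagonal computation above gives $O_\varepsilon(q^{\theta_1/2+1/4+\varepsilon}\cdot q^\varepsilon)$ after normalisation by $\varphi^*(q)$ via Lemma~\ref{lem:phistar-loss}. This is a power saving since $\theta_1<1/2$.

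For the $c_2$ piece, the dual side of the functional equation produces $\epsilon_\chi\overline{\epsilon_\chi}=1$ paired with $\chi(n)\overline\chi(m)$, and the same argument gives $c_2+O(q^{-\eta})$. The cross combination, in which a root number survives, is treated through Lemma~\ref{lem:primitive-factorisation}: it is a sum of $\varphi(q_2)$ times additive characters, and the trivial bound $q^{-1/2}\sum\varphi(q_2)\ll q^{1/2+\varepsilon}$ against lengths $q^{1/4+\theta/2}$ gives a power saving. The resulting $\eta$ depends only on $\theta_1,\theta_2$ and $\sigma$.

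\emph{Second moment: squares.} Expand $|LM|^2$ into the two squares and the two cross terms. The square $c_1^2|L|^2|P_{\theta_1}|^2$ is exactly Lemma~\ref{lem:square-term-parity-transfer}, giving $c_1^2(1+1/\theta_1)$ with error $O(\log\log q/\log q)$. The second square has $|\epsilon_\chi|^2=1$ and $\overline\chi$ in place of $\chi$. Relabelling $\chi\mapsto\overline\chi$ preserves $\mathcal X^*(q;j)$, so the same lemma gives $c_2^2(1+1/\theta_2)$.

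\emph{Second moment: cross terms.} Each cross term contains $\overline{\epsilon_\chi}$ (or its conjugate) times $\chi(m_1m_2)$ times $|L|^2$. Write $\overline{\epsilon_\chi}=\overline{\tau_\chi}\,i^j/\sqrt q$ and open $|L|^2$ by the displayed approximate functional equation with $V_j$. Insert smooth dyadic partitions $W_1,W_2$ in $n_1,n_2$ and dyadic localisation of $m_1,m_2$; the truncation of $V_j$ gives $N_1N_2\le q^{1+o(1)}$ up to $O(q^{-100})$. The character sum $\sum_{\chi\in\mathcal X^*(q;j)}\tau_\chi\chi(\overline{n_1m_1m_2}\,n_2)$ (or its conjugate) is then evaluated by Lemma~\ref{lem:primitive-factorisation}. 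This yields the coefficient $\tfrac12\mu^2(q_1)\varphi(q_2)$ and the phases $\e(\pm n_2\overline{n_1m_1m_2q_1}/q_2)$, which combine with $(-i)^j$ into the cosine or sine as computed before the proposition.

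We then split off $m_1m_2n_1=1$, which was shown above to give $2c_1c_2+O(q^{-\eta})$ in either parity. What remains is by definition the sum $\mathcal R^{(j)}(q)$ of the displayed dyadic pieces. Their coefficients $\alpha,\beta$ come from $a_\theta(m)\sqrt{M/m}$ times the dyadic weight, so they are divisor-bounded, and the number of pieces is $O(\log^4 q)$.

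\emph{Main obstacle.} The delicate step is the bookkeeping of uniformity. We must make sure that the removal of $(n,q)>1$ terms, the smooth dyadic partition, and the $o(1)$ in the lengths cost only $O(q^{-\eta})$ with $\eta$ independent of $q$. In particular the $q^\varepsilon$ losses from Lemma~\ref{lem:phistar-loss} and from divisor bounds must be absorbed by choosing $\varepsilon_1$ small in terms of $\sigma$. I would handle this by fixing $\varepsilon_1$ at the end, after collecting all the exponents.
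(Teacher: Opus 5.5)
Your proposal is correct and follows the paper's route: self-products from Lemma~\ref{lem:square-term-parity-transfer}, cross terms via Lemma~\ref{lem:primitive-factorisation} with the $m_1m_2n_1=1$ term split off as the previously computed $2c_1c_2$, then dyadic subdivision. The only difference is that you derive the first moment directly from the single approximate functional equation, whereas the paper cites Qin--Wu and reduces the $c_2$ piece to the $c_1$ piece via $\overline{\epsilon_\chi}L(1/2,\chi)=L(1/2,\overline\chi)$.

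There are two index slips to fix. In the $c_2$ piece the dual side gives $\epsilon_\chi\overline{\epsilon_\chi}\,\overline\chi(nm)$, whose diagonal is $nm=1$; it does not give $\chi(n)\overline\chi(m)$, whose diagonal $n=m$ would not produce $c_2$. In the cross terms $\tau_\chi$ pairs with $\chi(n_1m_1m_2\overline{n_2})$, or equivalently $\overline{\tau_\chi}$ pairs with $\chi(\overline{n_1m_1m_2}\,n_2)$; your pairing would give the phase $\e(\pm n_1m_1m_2\overline{n_2q_1}/q_2)$ rather than the stated one.
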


\begin{proof}
For the first piece, \cite[Proposition 2.1 and (2.4)]{QW}, together with the odd-parity statement preceding that proposition, gives the first moment in our normalisation. At the central point, $L(1/2,\chi)\overline{\epsilon_\chi}=L(1/2,\overline\chi)$; conjugation preserves parity, so the second piece has the same first moment. Lemma \ref{lem:square-term-parity-transfer} evaluates both self-products, since $|\bar\epsilon_\chi P_\theta(\bar\chi)|=|P_\theta(\chi)|$.

For the mixed products, \cite[(3.2)--(3.4)]{QW} and Lemma \ref{lem:primitive-factorisation} give the phases and coefficients above; the mixed diagonal contributes $2c_1c_2$ in either parity. Dyadic subdivision gives the remaining blocks, with $N_1N_2\le q^{1+o(1)}$ by decay of $V_j(x)$. The factorisations have squarefree $q_1$ and include $q_2=1$.
\end{proof}

\subsection{Poisson summation in the remaining cross terms}

Fix a factorisation ${q=q_1q_2}$ with $(q_1,q_2)=1$ and $\mu^2(q_1)=1$. For each sign, define
$$
\begin{aligned}
\mathcal B_\pm(q_1,q_2)
&=\frac{\mu^2(q_1)\varphi(q_2)}{q^{3/2}(M_1M_2N_1N_2)^{1/2}}\\
&\quad\times
\sum_{\substack{m_1,m_2\\ (m_1m_2,q)=1}}
\alpha_{m_1}\beta_{m_2}
\sum_{\substack{n_1,n_2\\ (n_1n_2,q)=1\\ m_1m_2n_1>1}}
V_j\left(\frac{n_1n_2}{q}\right)\\
&\quad\times
W_1\left(\frac{n_1}{N_1}\right)
W_2\left(\frac{n_2}{N_2}\right)
\e\left(\pm \frac{n_2\overline{n_1m_1m_2q_1}}{q_2}\right).
\end{aligned}
$$
The coefficients are divisor-bounded on the indicated intervals. Inverses are modulo $q_2$, with the modulus-one convention. Replacing $\varphi^*(q)^{-1}q^{-1/2}$ in Proposition \ref{prop:fixed-parity-qw-setup} by $q^{-3/2}$ costs $q/\varphi^*(q)$, absorbed by Lemma \ref{lem:phistar-loss}.

\begin{lem}\label{lem:fixed-factorisation-large-ratio}
For either sign, set
$$
\mathcal B_\pm^{\rm sum}(q)=
\sum_{\substack{q_1q_2=q\\ (q_1,q_2)=1\\ \mu^2(q_1)=1}}
\mathcal B_\pm(q_1,q_2).
$$
\looseness=-1 The weights are smooth and satisfy the derivative bounds in Proposition \ref{prop:fixed-parity-qw-setup}, and the coefficients are divisor-bounded and supported on $m_1\asymp M_1$ and $m_2\asymp M_2$. Fix $\eta_1,\sigma>0$. If
$$
M_1M_2\le q^{1-\eta_1}
$$
and
$$
\frac{N_2}{N_1}\ge q^\sigma\frac{M_1M_2}{q},
$$
then
$$
\mathcal B_\pm^{\rm sum}(q)
\ll_{\varepsilon_1}
q^{\varepsilon_1}\left(\frac{M_1M_2N_1}{qN_2}\right)^{1/2}+q^{-\eta},
$$
where $\eta>0$ depends only on $\eta_1,\sigma$ and the prescribed support and derivative bounds of the weights. The estimate concerns the sum over the coprime factorisations $q=q_1q_2$.
\end{lem}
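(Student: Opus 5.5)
Since $N_2$ is large compared with $N_1$, I would apply Poisson summation in the long variable $n_2$ modulo $q_2$, for each fixed factorisation $q=q_1q_2$, and then sum the resulting bounds over the $\ll q^{\varepsilon_1}$ factorisations.

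First I remove the coprimality condition $(n_2,q)=1$. The condition $(n_2,q_2)=1$ is handled by Möbius inversion over $d\mid q_2$, or equivalently by keeping it and completing to a Ramanujan-type sum. The condition $(n_2,q_1)=1$ is harmless: it is a congruence condition modulo the squarefree $q_1$, and it can be detected by Möbius over $d\mid q_1$ with $n_2=dn_2'$. Then I fix $m_1,m_2,n_1$, write $a=\overline{n_1m_1m_2q_1}\bmod q_2$, and set
$$
G(x)=W_2(x/N_2)\,V_j(n_1x/q).
$$
Poisson summation modulo $q_2$ gives
$$
\sum_{n_2}G(n_2)\,\e\!\left(\pm\frac{an_2}{q_2}\right)
=\frac{N_2}{q_2}\sum_{k\in\Z}\widehat{G}\!\left(\frac{kN_2}{q_2}\right)\cdot
\sum_{x\bmod q_2}\e\!\left(\frac{(\pm a+k)x}{q_2}\right),
$$
in shorthand. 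The complete sum equals $q_2$ exactly when $k\equiv\mp a\bmod q_2$. The Möbius-twisted version produces a Ramanujan sum, bounded by divisors. The derivative bounds \eqref{eq:V-derivative-bounds} make $\widehat G$ decay rapidly beyond $|k|\le q^{\varepsilon_1}q_2/N_2$. Hence only residues $k\equiv\mp\overline{n_1m_1m_2q_1}$ with $|k|\le K:=q^{\varepsilon_1}q_2/N_2$ survive, each with weight $\ll N_2$.

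The key step is the count of solutions. The surviving congruence is equivalent to $kn_1m_1m_2q_1\equiv\mp1\bmod q_2$.

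If $k=0$, this is possible only when $q_2=1$. That is the $q_2=1$ factorisation, where $\varphi(q_2)=1$, and it is bounded trivially:
$$
\frac{M_1M_2N_1N_2}{q^{3/2}(M_1M_2N_1N_2)^{1/2}}=\frac{(M_1M_2N_1N_2)^{1/2}}{q^{3/2}}\le q^{-\eta}.
$$

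If $k\ne0$, the integer $\ell=kn_1m_1m_2q_1\pm1$ is nonzero (since $m_1m_2n_1>1$ or $|k|\ge1$) and divisible by $q_2$. Its size is
$$
|\ell|\ll KN_1M_1M_2q_1=q^{\varepsilon_1}\,\frac{qN_1M_1M_2}{N_2}.
$$
So $\ell=q_2r$ with $|r|\ll q^{\varepsilon_1}q_1N_1M_1M_2/N_2$. For each $r$, the factorisation $kn_1m_1m_2q_1=q_2r\mp1$ is recovered with $\ll q^{\varepsilon_1}$ choices by the divisor bound. The count of $(k,n_1,m_1,m_2)$ is therefore
$$
\ll q^{\varepsilon_1}\left(1+\frac{q_1N_1M_1M_2}{N_2}\right).
$$
The hypothesis $N_2/N_1\ge q^{\sigma}M_1M_2/q$ gives $q_1N_1M_1M_2/N_2\le q_1q^{1-\sigma}$. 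I would rather not rely on that bound, and instead use the count directly.

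The total contribution is
$$
\frac{q_2}{q^{3/2}(M_1M_2N_1N_2)^{1/2}}\cdot N_2\cdot q^{\varepsilon_1}\left(1+\frac{q_1N_1M_1M_2}{N_2}\right).
$$
The second term equals $q^{\varepsilon_1}q^{-1/2}(M_1M_2N_1/N_2)^{1/2}$, since $q_1q_2=q$. This is precisely $q^{\varepsilon_1}(M_1M_2N_1/(qN_2))^{1/2}$, the claimed main term.

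The main obstacle is the ``$1$'' term, $q_2N_2^{1/2}q^{-3/2}(M_1M_2N_1)^{-1/2}$. It must be shown to be small, and here I would use the hypotheses together with the counting restriction. A solution with $r\ne0$ needs $|r|\ge1$, which forces $q_1N_1M_1M_2/N_2\gg q^{-\varepsilon_1}$; otherwise there are no solutions at all. When that size condition holds, the ``$1$'' term is dominated by the second term. Thus the ``$1$'' must be replaced by the indicator of $q_1N_1M_1M_2\gg q^{-\varepsilon_1}N_2$, which is the step I would check carefully. The hypothesis $M_1M_2\le q^{1-\eta_1}$ is then used to make the truncation of the $k$-sum and the Poisson tails $O(q^{-\eta})$: it ensures $K$ is a genuine power-saving cutoff relative to $q_2$ uniformly over factorisations, and it guarantees that the $q_2=1$ term is $\le q^{-\eta}$. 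Summing over $\ll q^{\varepsilon_1}$ factorisations finishes the proof.
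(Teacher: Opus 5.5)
Your argument is correct, but it is not the paper's route. The paper does not prove this bound itself. It quotes \cite[Lemma~3.1]{QW} for the positive phase, whose block already contains the sum over factorisations with coefficient $\mu^2(q_1)\varphi(q_2)$ and the same normalisation. It then checks that the hypotheses $M_1M_2<q^{1-\ep}$ and $N_1N_2\le q^{1+\ep}$ survive the dyadic losses, obtains the negative phase by complex conjugation, and covers $j=1$ through the derivative bounds \eqref{eq:V-derivative-bounds}. The hypothesis $N_2/N_1\ge q^\sigma M_1M_2/q$ is used only to note that the first term is a power saving.

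You instead give a self-contained proof: Poisson in $n_2$ modulo $q_2$ for each of the $q^{o(1)}$ factorisations, reduction to $kn_1m_1m_2q_1\equiv\mp1\bmod q_2$, and a divisor-bound count of nonzero multiples of $q_2$. This handles both signs and both parities at once. It also shows that the displayed bound needs neither the ratio hypothesis nor any cancellation between factorisations, with $\eta$ depending only on $\eta_1$. The paper's version is shorter but rests entirely on Qin--Wu.

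Two details need correcting.

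First, once $(n_2,q_2)=1$ is removed by M\"obius inversion with $n_2=en$, $e\mid q_2$, the dual condition is $k\equiv\mp a\bmod q_2/e$ with weight $N_2/e$. It is not $k\equiv\mp a\bmod q_2$ with weight $N_2$. For $e=q_2$ the condition is vacuous, so $k=0$ survives for every $q_2$, contrary to your claim that it occurs only when $q_2=1$. This is harmless:
\begin{itemize}
\item the $e=q_2$, $k=0$ term is $\ll q^{\varepsilon_1}(M_1M_2N_1N_2)^{1/2}q^{-3/2}$, exactly your $q_2=1$ bound;
\item for $k\ne0$, the count of nonzero multiples of $q_2/e$ gains a factor $e$ that cancels the weight $1/e$, so the main term is unchanged;
\item the divisor $d\mid q_1$ coming from $(n_2,q_1)=1$ is treated the same way: the congruence becomes $kn_1m_1m_2q_1\equiv\mp d\bmod q_2$, and $\ell\ne0$ still follows from $d\mid q_1$ and $m_1m_2n_1>1$.
\end{itemize}

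Second, $M_1M_2\le q^{1-\eta_1}$ is needed only for these zero-frequency terms. The truncation of the $k$-sum costs $O(q^{-A})$ by the rapid decay of $\widehat G$ regardless.

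Your worry about the ``$1$'' term has a simple resolution: there are at most $2R$ nonzero integers $r$ with $|r|\le R$.
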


\begin{proof}
For the positive phase this is \cite[Lemma 3.1, with $\mathcal B$ defined on p.~8]{QW}. Their block includes the sum over coprime factorisations, with coefficient $\mu^2(q_1)\varphi(q_2)$ and normalisation $q^{-3/2}(M_1M_2N_1N_2)^{-1/2}$. Its hypotheses, which include $M_1M_2<q^{1-\ep}$ and $N_1N_2\le q^{1+\ep}$, follow by taking the dyadic losses small relative to $\eta_1,\sigma$. For real smooth weights, the negative phase is the conjugate of the positive phase with conjugated coefficients. The derivative bounds \eqref{eq:V-derivative-bounds} treat both parities. Finally $M_1M_2N_1/(qN_2)\le q^{-\sigma}$, so $\varepsilon_1<\sigma/4$ leaves a power saving.
\end{proof}

\begin{lem}\label{lem:corrected-complete-sum}
Let $q_2\ge1$, let $(rm_1m_2n_2q_1,q_2)=1$, and let $k\in\mathbb Z$, with no restriction on $(k,q_2)$. With the convention $S(a,b;1)=1$, one has, for each sign,
$$
\sum_{x\bmod q_2}^{\times}
\e\left(\frac{kx}{q_2}\pm \frac{n_2\overline{rxm_1m_2q_1}}{q_2}\right)
=
S(\pm kn_2\overline{rm_1q_1},\overline{m_2};q_2).
$$
\end{lem}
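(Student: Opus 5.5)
The plan is a direct change of variables in the unit sum, which puts it into the Kloosterman-sum shape. I would treat the degenerate modulus $q_2=1$ first. There the left side has one term, $x=0$, and every exponential equals $1$. The right side is $S(\cdot,\cdot;1)=1$ by the stated convention, so the identity holds trivially.

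For $q_2>1$, all of $r,m_1,m_2,n_2,q_1$ are units modulo $q_2$ by hypothesis. Put $u=rm_1m_2q_1$, a unit, and write the second phase as $\pm n_2\bar u\,\bar x/q_2$. Then substitute $x=\overline{rm_1q_1}\,y$, where $y$ runs over units modulo $q_2$. This is a bijection of $(\Z/q_2\Z)^\times$ because $rm_1q_1$ is a unit.

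Under this substitution the two phases become
$$
\frac{kx}{q_2}=\frac{k\,\overline{rm_1q_1}\,y}{q_2},
\qquad
\pm\frac{n_2\,\overline{rm_1m_2q_1}\,\bar x}{q_2}
=\pm\frac{n_2\,\overline{m_2}\,\bar y}{q_2}.
$$
In the second phase the factor $rm_1q_1$ from $\bar x=rm_1q_1\bar y$ cancels the same factor inside $\overline{rm_1m_2q_1}$. The sum is therefore
$$
\sum_{y}^{\times}\e\left(\frac{k\overline{rm_1q_1}\,y\pm n_2\overline{m_2}\,\bar y}{q_2}\right)
=S(k\overline{rm_1q_1},\pm n_2\overline{m_2};q_2).
$$

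To reach the stated normalisation, I would apply the change-of-variables identity from Section~\ref{sec:preliminaries},
$$
S(a,b;c)=S(au,b\bar u;c),
$$
with the unit $u=\pm n_2$. This moves $\pm n_2$ from the second argument to the first and gives $S(\pm kn_2\overline{rm_1q_1},\overline{m_2};q_2)$. For the sign, it is enough to note that $\pm1$ is a unit and $\overline{\pm1}=\pm1$.

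The identity imposes no condition on $(k,q_2)$, since $k$ only enters as a coefficient and Kloosterman sums are defined for any integer first argument. The only delicate point is bookkeeping: all inverses are taken modulo $q_2$, and $n_2$ must be a unit for the final transfer, which the hypothesis $(n_2,q_2)=1$ guarantees. I do not expect a genuine obstacle.
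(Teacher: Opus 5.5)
Your proposal is correct and follows the paper's proof. The paper also handles $q_2=1$ separately and otherwise applies the scaling identity $S(a,b;c)=S(au,b\bar u;c)$ with the units $u=\overline{rm_1q_1}$ and $u=\pm n_2$; your substitution $x=\overline{rm_1q_1}\,y$ is just the first of these scalings carried out by hand.
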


\begin{proof}
At $q_2=1$ both sides are $1$. Otherwise apply the scaling identity to $S(k,\pm n_2\overline{rm_1m_2q_1};q_2)$ with the units $u=\pm n_2$ and $u=\overline{rm_1q_1}$; $k$ need not be a unit.
\end{proof}

\begin{prop}\label{prop:qw-preinsertion}
Fix $\eta_1,\eta_2>0$. Let $M_1,M_2,N_1,N_2$ be one off-diagonal dyadic block from Proposition \ref{prop:fixed-parity-qw-setup}, and assume
$$
M_1\le q^{\theta_1+o(1)},\qquad
M_2\le q^{\theta_2+o(1)},\qquad
N_1N_2\ll q^{1+\varepsilon_1},\qquad
M_1M_2\le q^{1-\eta_1}.
$$
Set $X=M_1N_2/N_1$. For a fixed factorisation $q=q_1q_2$ with $\mu^2(q_1)=1$, a fixed sign, $q_2>1$, and
$$
X\le q^{-\eta_2}
$$
with fixed $\eta_2>0$, the $k=0$ term contributes
$$
\ll_{\varepsilon_1} q^{\varepsilon_1}\frac{(M_1M_2)^{1/2}}{q}.
$$
The terms with $k\ne0$ contribute
$$
\ll_{\varepsilon_1}
q^{\varepsilon_1} q^{-3/2}M_2^{-1/2}X^{-1/2}
(Xq_2(1+X))^{3/4}A_{q_2}(2M_2)^{1/4}.
$$
If $q_2=1$, the sum over all $k\in\mathbb Z$ contributes
$$
\ll_{\varepsilon_1} q^{\varepsilon_1}\frac{(M_1M_2)^{1/2}}{q}.
$$
\end{prop}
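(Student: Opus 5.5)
Fix the factorisation $q=q_1q_2$, the sign and the dyadic block. I apply Poisson summation to the $n_1$-variable modulo $q_2$, keeping $m_1,m_2,n_2$ fixed, which follows Qin--Wu. I write $n_1=x+q_2r'$ with $x$ running over units mod $q_2$. The smooth weight in $n_1$ is $W_1(n_1/N_1)V_j(n_1n_2/q)$, and by \eqref{eq:V-derivative-bounds} its derivatives are controlled. Poisson then gives
$$
\sum_{n_1}(\cdots)=\frac{N_1}{q_2}\sum_{k\in\Z}\widehat{F}\Bigl(\frac{kN_1}{q_2}\Bigr)\sum_{x\bmod q_2}^{\times}\e\Bigl(\frac{kx}{q_2}\pm\frac{n_2\overline{xm_1m_2q_1}}{q_2}\Bigr).
$$
Two bookkeeping points: the coprimality $(n_1,q_1)=1$ would be removed first by Möbius inversion over $r\mid q_1$ (this is the $r$ in Lemma~\ref{lem:corrected-complete-sum}), and the condition $m_1m_2n_1>1$ changes only $O(1)$ terms. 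By Lemma~\ref{lem:corrected-complete-sum} the complete sum equals $S(\pm kn_2\overline{rm_1q_1},\overline{m_2};q_2)$. Integrating by parts shows that $\widehat F$ is negligible unless $|k|\le K:=q^{\varepsilon_1}q_2/N_1$.

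\emph{The term $k=0$.} Here the complete sum is the Ramanujan-type sum $S(0,\overline{m_2};q_2)=\mu(q_2)$, so it is $O(1)$. Multiplying by the normalisation $\varphi(q_2)q^{-3/2}(M_1M_2N_1N_2)^{-1/2}$ and the factor $N_1/q_2$, and summing trivially over $m_1,m_2,n_2$, gives
$$
\ll q^{\varepsilon_1}q^{-3/2}(M_1M_2N_1N_2)^{-1/2}N_1M_1M_2N_2=q^{\varepsilon_1}q^{-3/2}(M_1M_2N_1N_2)^{1/2},
$$
and $N_1N_2\ll q^{1+\varepsilon_1}$ turns this into the claimed $(M_1M_2)^{1/2}/q$. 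When $q_2=1$ the phase is trivial and every Kloosterman sum equals $1$. Poisson then simply recovers the smooth sum, and the same trivial count over all $k$ (or no Poisson at all) gives the same bound.

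\emph{The terms $k\ne0$.} I group the variables as $h=kn_2\overline{rm_1q_1}$. Since $|k|\le K$, $n_2\asymp N_2$ and $m_1\asymp M_1$, the product $kn_2$ up to $m_1$ has size about $KN_2M_1\asymp q_2X$ after dividing by $q_2$-scaled quantities. The sum then takes the shape
$$
\sum_{m_2\asymp M_2}\beta_{m_2}\sum_{h}\nu(h)S(h,\overline{m_2};q_2),
$$
where $\nu(h)$ counts representations and has total mass about $KN_2M_1$.

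I then apply Hölder with exponents $(4/3,4)$ in the $h$-variable, after first putting $m_2$ inside. The $\ell^{4/3}$ norm of $\nu$ contributes the factor $(Xq_2(1+X))^{3/4}$: the term $Xq_2$ is the number of $h$ residues, and $1+X$ bounds the multiplicity. For the fourth power I extend the $h$-sum to all residues mod $q_2$ by positivity and expand
$$
\sum_h\Bigl|\sum_{m_2}\beta_{m_2}S(h,\overline{m_2};q_2)\Bigr|^4.
$$
Because the Kloosterman sums are real, this is bounded by $\sum_{b_1,\dots,b_4\le 2M_2}\bigl|\sum_h\prod_i S(h,\overline{b_i};q_2)\bigr|=A_{q_2}(2M_2)$, giving the factor $A_{q_2}(2M_2)^{1/4}$. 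Collecting the normalisations and using $X=M_1N_2/N_1\le q^{-\eta_2}$ should produce $q^{-3/2}M_2^{-1/2}X^{-1/2}$.

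The main obstacle is this Hölder step. The multiplicity of $\nu(h)$ modulo $q_2$ must be controlled by $1+X$ uniformly, including when $h$ is not a unit. Those $k$ with $(k,q_2)>1$ must be handled, which Lemma~\ref{lem:corrected-complete-sum} allows. The powers of $M_1,N_1,N_2$ must also be tracked exactly so that they collapse into $X$.
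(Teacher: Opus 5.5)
Your outline follows the paper's route: M\"obius over $r\mid q_1$, Poisson in $n_1$ modulo $q_2$, Lemma~\ref{lem:corrected-complete-sum}, grouping by $h=kn_2\overline{rm_1q_1}$, H\"older with exponents $(4/3,4)$, and expansion of the fourth power into $A_{q_2}(2M_2)$. The problem is in the step you identify as the main obstacle. You plan to bound the representation function $\nu(h)$ by $1+X$ \emph{uniformly} in $h$, and that bound is false. For fixed $h$ and each of the $\asymp M_1$ values of $m_1$, the product $kn_2$ lies in one residue class modulo $q_2$. So the true pointwise bound is only $\nu(h)\ll q^{\varepsilon_1}(M_1+X)$, and it is attained.

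For a concrete case, take $q_1=1$, $N_1=q^{9/10}$, $N_2=q^{1/10}$ and $M_1=q^{3/20}$. Then $X=q^{-13/20}$ and $|k|$ runs up to about $q^{1/10+\varepsilon_1}$. At $h\equiv1$ the congruence $kn_2\equiv m_1\pmod q$ relates integers smaller than $q/2$, so it forces $kn_2=m_1$. This has $\asymp q^{3/20}$ solutions with $k\asymp q^{1/20}$, so $\nu(1)\gg q^{3/20}$ while $1+X\asymp1$. If you use $\|\nu\|_\infty$ in the H\"older step, you lose an extra factor of about $M_1^{1/4}$ and do not reach the stated bound.

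The missing idea is to control the multiplicity on average, through the second moment $\sum_h\nu(h)^2$.
\begin{itemize}
\item This second moment counts solutions of $kn_2m_1'\equiv k'n_2'm_1\pmod{q_2}$.
\item After $n_1=rn$ the dual range is $|k|\ll rq_2q^{\varepsilon_1}/N_1$, so both sides are non-zero integers of size $\ll rXq_2q^{\varepsilon_1}$.
\item There are $O(rXq_2q^{\varepsilon_1})$ choices of the left side and $O(1+rXq^{\varepsilon_1})$ choices of the right side in its residue class. By the divisor bound, each side has $q^{\varepsilon_1}$ factorisations.
\item With the normalisation $\nu=\nu_r$ carrying a factor $1/r$, this gives $\sum_h\nu(h)^2\ll q^{\varepsilon_1}Xq_2(1+X)$.
\end{itemize}
Interpolating,
\[
\sum_h\nu(h)^{4/3}\le\Bigl(\sum_h\nu(h)\Bigr)^{2/3}\Bigl(\sum_h\nu(h)^2\Bigr)^{1/3}\ll q^{\varepsilon_1}Xq_2(1+X)^{1/3}.
\]
With this, the rest of your H\"older argument goes through and actually gives $(Xq_2)^{3/4}(1+X)^{1/4}$, which is stronger than stated.

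A smaller point concerns the excluded term $m_1=m_2=n_1=1$. It is not harmless in general: it carries a full $n_2$-sum and is the source of the main term $2c_1c_2$. It can be ignored here because it cannot occur in this block. Indeed $M_1\asymp N_1\asymp1$ would give $X\asymp N_2\ge1$, contradicting $X\le q^{-\eta_2}$.
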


\begin{proof}
Fix a factorisation $q=q_1q_2$ with $\mu^2(q_1)=1$ and a sign; Lemma \ref{lem:phistar-loss} absorbs the normalisation ratio $q/\varphi^*(q)$. Impose $(n_1,q_1)=1$ by $\mathbf 1_{(n_1,q_1)=1}=\sum_{r\mid(q_1,n_1)}\mu(r)$ and put $n_1=rn$. For each $r\mid q_1$, define
$$
F_r^{(j)}(k)=
\int_{\R}
W_1(u)V_j(N_1un_2/q)
\e\left(-\frac{kN_1u}{rq_2}\right)\,du
\ll_A q^{\varepsilon_1}
\left(1+\frac{|k|N_1}{rq_2}\right)^{-A}.
$$
Integration by parts permits restriction to $|k|\ll rq_2q^{\varepsilon_1}/N_1$, with error $O(q^{-100})$. The excluded term $m_1=m_2=n_1=1$ cannot occur when $X\le q^{-\eta_2}$, since it would give $X\asymp N_2\ge1$. Poisson summation gives
\begin{equation}\label{eq:poisson-block}
\begin{aligned}
\mathcal B_{\pm,r}(q_1,q_2)
&=
\frac{\mu^2(q_1)\varphi(q_2)N_1}{rq_2q^{3/2}(M_1M_2N_1N_2)^{1/2}}\\
&\quad\times
\sum_{\substack{m_1,m_2,n_2\\ (m_1m_2n_2,q)=1}}
\alpha_{m_1}\beta_{m_2}
W_2(n_2/N_2)
\sum_{k\in\Z}F_r^{(j)}(k)
S(\pm kn_2\overline{rm_1q_1},\bar m_2;q_2),
\end{aligned}
\end{equation}
where Lemma \ref{lem:corrected-complete-sum} evaluates the complete sum, and $\mathcal B_\pm=\sum_{r\mid q_1}\mu(r)\mathcal B_{\pm,r}$. For $k=0$, the Kloosterman sum is a Ramanujan sum at a unit, of absolute value at most $1$. Summing the remaining variables, using $N_1N_2\ll q^{1+\varepsilon_1}$ and $\varphi(q_2)/q_2\le1$, gives
$$
q^{\varepsilon_1}q^{-3/2}(M_1M_2N_1N_2)^{-1/2}N_1M_1M_2N_2\ll q^{\varepsilon_1}(M_1M_2)^{1/2}/q,
$$
after renaming the small exponent.

For $k\ne0$, group the triples by their first Kloosterman argument and set
$$
\nu_r(h)=
\frac1r\#\left\{(k,n_2,m_1):
\begin{array}{l}
0<|k|\ll rq_2q^{\varepsilon_1}/N_1,\quad n_2\asymp N_2,\ m_1\asymp M_1,\\
h\equiv kn_2\overline{rm_1q_1}\pmod{q_2}
\end{array}\right\}.
$$

Counting triples gives $\sum_h\nu_r(h)\ll Xq_2q^{\varepsilon_1}$. Two triples have the same $h$ exactly when $kn_2m_1'\equiv k'n_2'm_1\bmod q_2$. Put $u=kn_2m_1'$ and $v=k'n_2'm_1$; then $0<|u|,|v|\ll rXq_2q^{\varepsilon_1}$. There are $O(rXq_2q^{\varepsilon_1})$ choices of $u$ and at most $O(1+rXq^{\varepsilon_1})$ choices of $v\equiv u\bmod q_2$. Each has at most $q^{\varepsilon_1}$ factorisations after choosing the divisor-bound exponent sufficiently small. Including the factors $1/r$ and renaming the small exponent yields
$$
\begin{aligned}
\sum_h\nu_r(h)^2
&\ll q^{\varepsilon_1}\frac{rXq_2(1+rX)}{r^2}
\ll q^{\varepsilon_1}Xq_2(1+X),\\
\sum_h\nu_r(h)^{4/3}
&\le
\left(\sum_h\nu_r(h)\right)^{2/3}
\left(\sum_h\nu_r(h)^2\right)^{1/3}
\ll q^{\varepsilon_1} Xq_2(1+X)^{1/3}.
\end{aligned}
$$

For divisor-bounded $\gamma_m$ supported on $[M_2,2M_2]$, H\"older gives
$$
\sum_h\nu_r(h)
\left|
\sum_m\gamma_mS(\pm h,\overline{m};q_2)
\right|
\le
\left(\sum_h\nu_r(h)^{4/3}\right)^{3/4}
\left(\sum_h\left|\sum_m\gamma_mS(\pm h,\overline{m};q_2)\right|^4\right)^{1/4}.
$$
Expanding the fourth power gives
$$
\sum_{m^{(1)},\ldots,m^{(4)}}\gamma_{m^{(1)}}\overline{\gamma_{m^{(2)}}}\gamma_{m^{(3)}}\overline{\gamma_{m^{(4)}}}\sum_{h\bmod q_2}\prod_{i=1}^4S(\pm h,\overline{m^{(i)}};q_2),
$$
since the Kloosterman sums are real. Bound the coefficients by $q^{\varepsilon_1}$, enlarge each range to $1\le m^{(i)}\le2M_2$ while retaining $(m^{(i)},q_2)=1$, and replace $h$ by $-h$ for the negative sign. Thus the fourth power is at most $q^{\varepsilon_1}A_{q_2}(2M_2)$. The prefactor in \eqref{eq:poisson-block} is at most $r^{-1}q^{-3/2}M_2^{-1/2}X^{-1/2}$, and grouping by $h$ multiplies the sum by $r$. Together with the scalar in \eqref{eq:poisson-block}, this gives
$$
q^{\varepsilon_1}q^{-3/2}M_2^{-1/2}X^{-1/2}(Xq_2)^{3/4}(1+X)^{1/4}A_{q_2}(2M_2)^{1/4},
$$
which is stronger than the stated bound. Summing $r\mid q_1$ costs a divisor factor.

For $q_2=1$, the complete sum is $1$ and decay gives $\sum_k|F_r^{(j)}(k)|\ll rq^{\varepsilon_1}$. The Poisson factor $N_1/r$ then gives the same bound as $k=0$ above.
\end{proof}

\begin{lem}\label{lem:two-term-mixed-block-insertion}
Let $\omega_4,\omega_2\ge0$. Fix $\eta_1,\eta_2,\sigma>0$ and assume that the dyadic block lies in the complementary range
$$
\frac{N_2}{N_1}<q^\sigma\frac{M_1M_2}{q},\qquad
M_1M_2\le q^{1-\eta_1},\qquad
X=\frac{M_1N_2}{N_1}\le q^{-\eta_2}.
$$
Suppose that, for every $\varepsilon_1>0$,
$$
A_{q_2}(2M_2)\ll_{\varepsilon_1} q^{\varepsilon_1}
(q^{\omega_4}M_2^4q_2^{5/2}+q^{\omega_2}M_2^2q_2^3).
$$
If
$$
4\theta_1+6\theta_2+2\sigma+2\omega_4<3,
\qquad
2\theta_1+\theta_2+\sigma+\omega_2<1,
$$
\looseness=-1 then the fixed block $\mathcal B_\pm(q_1,q_2)$ is $O(q^{-\eta})$ for some $\eta>0$, uniformly in $q_2$, including $q_2=1$.
\end{lem}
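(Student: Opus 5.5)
The plan is to substitute the hypothesised fourth-moment bound into Proposition~\ref{prop:qw-preinsertion} and use the complementary-range hypothesis to bound $X$. Each resulting term should then be a power of $q$ whose exponent is negative exactly under the two stated linear inequalities. I would fix the factorisation $q=q_1q_2$ and the sign, and apply Proposition~\ref{prop:qw-preinsertion}. Its hypotheses hold: $M_1M_2\le q^{1-\eta_1}$ and $X\le q^{-\eta_2}$ are assumed, and the length conditions come from the dyadic block. Lemma~\ref{lem:phistar-loss} has already absorbed the normalisation.

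\emph{The $k=0$ term and the case $q_2=1$.} Here Proposition~\ref{prop:qw-preinsertion} gives $q^{\varepsilon_1}(M_1M_2)^{1/2}/q\le q^{\varepsilon_1+(\theta_1+\theta_2)/2-1+o(1)}$. Since $\theta_1,\theta_2<1/2$, this is $O(q^{-\eta})$ once $\varepsilon_1$ and the dyadic $o(1)$ are small. This disposes of $q_2=1$ completely.

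\emph{The terms with $k\ne0$.} Since $X\le q^{-\eta_2}$, the factor $1+X$ is $\asymp1$. The bound of Proposition~\ref{prop:qw-preinsertion} therefore becomes
$$
q^{\varepsilon_1}q^{-3/2}M_2^{-1/2}X^{1/4}q_2^{3/4}A_{q_2}(2M_2)^{1/4}.
$$
Next I use the complementary condition $N_2/N_1<q^\sigma M_1M_2/q$, which gives
$$
X=M_1\frac{N_2}{N_1}<q^{\sigma-1}M_1^2M_2,
\qquad\text{so}\qquad
X^{1/4}<q^{(\sigma-1)/4}M_1^{1/2}M_2^{1/4}.
$$
By subadditivity of the fourth root, the hypothesis gives
$$
A_{q_2}(2M_2)^{1/4}\ll q^{\varepsilon_1}\bigl(q^{\omega_4/4}M_2q_2^{5/8}+q^{\omega_2/4}M_2^{1/2}q_2^{3/4}\bigr).
$$
All powers of $q_2$ that appear are positive, so I bound $q_2\le q$; this makes the estimate uniform in $q_2$.

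The first piece then has $q$-exponent
$$
-\tfrac74+\tfrac{\sigma}{4}+\tfrac{\omega_4}{4}+\tfrac{\theta_1}{2}+\tfrac{3\theta_2}{4}+\tfrac{11}{8}
=\tfrac18\bigl(4\theta_1+6\theta_2+2\sigma+2\omega_4-3\bigr).
$$
The second piece has $q$-exponent
$$
-\tfrac74+\tfrac{\sigma}{4}+\tfrac{\omega_2}{4}+\tfrac{\theta_1}{2}+\tfrac{\theta_2}{4}+\tfrac32
=\tfrac14\bigl(2\theta_1+\theta_2+\sigma+\omega_2-1\bigr).
$$
Both are strictly negative by hypothesis. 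Choosing $\varepsilon_1$ and the dyadic $o(1)$ terms small relative to these gaps therefore gives $O(q^{-\eta})$.

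The only delicate point is the exponent bookkeeping, namely that the bound on $X$ goes the right way. The complementary inequality bounds $X$ from above, and $X$ enters with the positive power $1/4$, so the substitution is legitimate. No step beyond this arithmetic is expected to cause difficulty.
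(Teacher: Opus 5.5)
Your proposal is correct and follows the paper's proof essentially step for step. You insert the assumed moment bound into Proposition~\ref{prop:qw-preinsertion}, use $q_2\le q$ and $X\le1$, then apply $X^{1/4}\le q^{(\sigma-1)/4}M_1^{1/2}M_2^{1/4}$, and you reach the same two exponents $-\frac{1}{8}\bigl(3-(4\theta_1+6\theta_2+2\sigma+2\omega_4)\bigr)$ and $-\frac{1}{4}\bigl(1-(2\theta_1+\theta_2+\sigma+\omega_2)\bigr)$. The only cosmetic difference is that you bound the $k=0$ and $q_2=1$ terms using $\theta_1,\theta_2<1/2$, whereas the paper uses $M_1M_2\le q^{1-\eta_1}$; either works.
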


\begin{proof}
The $k=0$ and $q_2=1$ terms are
$$
\ll q^{\varepsilon_1}\frac{(M_1M_2)^{1/2}}q\le q^{-1/2-\eta_1/2+\varepsilon_1}.
$$
 For $q_2>1$, insertion in Proposition \ref{prop:qw-preinsertion} gives for the quartic and quadratic terms, respectively,
$$
\begin{aligned}
&q^{\varepsilon_1}q^{-3/2+\omega_4/4}M_2^{1/2}X^{1/4}(1+X)^{3/4}q_2^{11/8},\\
&q^{\varepsilon_1}q^{-3/2+\omega_2/4}X^{1/4}(1+X)^{3/4}q_2^{3/2}.
\end{aligned}
$$
Since $q_2\le q$ and $X\le1$, these are at most
$$
q^{\varepsilon_1-1/8+\omega_4/4}M_2^{1/2}X^{1/4}\qquad\text{and}\qquad q^{\varepsilon_1+\omega_2/4}X^{1/4},
$$
and $X<q^{-1+\sigma}M_1^2M_2$ gives $X^{1/4}\le q^{-1/4+\sigma/4}M_1^{1/2}M_2^{1/4}$. The two contributions are therefore at most
$$
q^{\varepsilon_1-3/8+(\omega_4+\sigma)/4}M_1^{1/2}M_2^{3/4}
\qquad\text{and}\qquad
q^{\varepsilon_1-1/4+(\omega_2+\sigma)/4}M_1^{1/2}M_2^{1/4}.
$$
The resulting exponents at $M_i=q^{\theta_i+o(1)}$ are
$$
-\frac{3-(4\theta_1+6\theta_2+2\sigma+2\omega_4)}{8}+o(1),
\qquad
-\frac{1-(2\theta_1+\theta_2+\sigma+\omega_2)}{4}+o(1).
$$
Both are negative. Choose the small exponent and dyadic losses below half the smaller saving.
\end{proof}

The moment estimate must hold uniformly for every auxiliary modulus.

\begin{prop}\label{prop:qw-transfer}
Let $c_1,c_2\in\R$ and $0<\theta_1,\theta_2<1/2$ be fixed. Let $\omega_4,\omega_2\ge0$. Suppose that, for every $\varepsilon_1>0$, every auxiliary modulus $q_2$, and every dyadic length $M_2$,
$$
A_{q_2}(2M_2)
\ll
q^{\varepsilon_1}
\bigl(q^{\omega_4}(2M_2)^4q_2^{5/2}
+q^{\omega_2}(2M_2)^2q_2^3\bigr).
$$
Assume this estimate is uniform for the auxiliary factorisations and all dyadic boxes occurring in Proposition \ref{prop:fixed-parity-qw-setup}. Suppose that $\sigma,\eta_1,\eta_2>0$ are fixed and that every off-diagonal dyadic block satisfies
$$
M_1M_2\le q^{1-\eta_1},
$$
and that, in the range complementary to the summed large-ratio estimate,
$$
\frac{M_1N_2}{N_1}\le q^{-\eta_2}.
$$
Suppose further that
$$
4\theta_1+6\theta_2+2\sigma+2\omega_4<3,
\qquad
2\theta_1+\theta_2+\sigma+\omega_2<1.
$$
Then, for each $j\in\{0,1\}$,
$$
\mathcal M_1^{(j)}(q)=c_1+c_2+o(1)
$$
and
$$
\mathcal M_2^{(j)}(q)
=
(c_1+c_2)^2+\frac{c_1^2}{\theta_1}
+\frac{c_2^2}{\theta_2}+o(1).
$$
The implicit constants and $o(1)$-terms may depend on $c_1,c_2,\theta_1,\theta_2,\omega_4,\omega_2,\sigma,\eta_1,\eta_2,\varepsilon_1$ and the fixed smooth-weight seminorms, but not on $q,q_1,q_2$.
\end{prop}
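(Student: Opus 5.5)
The first moment is already given by Proposition~\ref{prop:fixed-parity-qw-setup}, which yields $\mathcal M_1^{(j)}(q)=c_1+c_2+O(q^{-\eta})$ with no further hypotheses beyond the lengths $\theta_1,\theta_2<1/2$. For the second moment I would start from the decomposition in Proposition~\ref{prop:fixed-parity-qw-setup}. It gives
$$
c_1^2\Bigl(1+\frac1{\theta_1}\Bigr)+c_2^2\Bigl(1+\frac1{\theta_2}\Bigr)+2c_1c_2=(c_1+c_2)^2+\frac{c_1^2}{\theta_1}+\frac{c_2^2}{\theta_2}.
$$
Hence it suffices to show $\mathcal R^{(j)}(q)=o(1)$. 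Up to $O(q^{-\eta})$, $\mathcal R^{(j)}(q)$ is a finite sum, over both signs and over $O(\log^4 q)$ dyadic blocks, of bounded coefficients times the summed blocks $\mathcal B^{\rm sum}_\pm(q)$. The renormalisation from $\varphi^*(q)^{-1}q^{-1/2}$ to $q^{-3/2}$ costs only $q^{\varepsilon}$ by Lemma~\ref{lem:phistar-loss}. So the aim is a bound $O(q^{-\eta'})$ per block, uniformly, with $\eta'$ independent of $q$; this beats the logarithmic number of blocks.

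I would split each off-diagonal block according to the size of $N_2/N_1$.

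In the large-ratio range $N_2/N_1\ge q^\sigma M_1M_2/q$, Lemma~\ref{lem:fixed-factorisation-large-ratio} applies, using the hypothesis $M_1M_2\le q^{1-\eta_1}$. It bounds the whole sum over factorisations $q=q_1q_2$ at once by
$$
q^{\varepsilon_1}(M_1M_2N_1/(qN_2))^{1/2}+q^{-\eta}\ll q^{\varepsilon_1-\sigma/2}+q^{-\eta},
$$
which is a power saving once $\varepsilon_1<\sigma/4$.

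In the complementary range, the hypothesis gives $X=M_1N_2/N_1\le q^{-\eta_2}$. Lemma~\ref{lem:two-term-mixed-block-insertion} then applies to each fixed factorisation, including $q_2=1$:
\begin{itemize}
\item the assumed fourth-moment bound for $A_{q_2}(2M_2)$ matches its hypothesis after absorbing the constants $2^4,2^2$;
\item the two linear inequalities on $\theta_1,\theta_2,\sigma,\omega_4,\omega_2$ are exactly those assumed.
\end{itemize}
Each $\mathcal B_\pm(q_1,q_2)$ is therefore $O(q^{-\eta})$ uniformly in $q_2$. Summing over the at most $\tau(q)\ll q^{\varepsilon}$ coprime factorisations retains a power saving if $\varepsilon$ is taken small relative to $\eta$.

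The main obstacle is bookkeeping rather than new analysis, in two places.

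First, the saving exponents must be uniform. Each $\eta$ must depend only on $\theta_i,\sigma,\eta_1,\eta_2,\omega_i$ and the weight seminorms. This forces the $o(1)$ slack in the block lengths, namely $M_i\le q^{\theta_i+o(1)}$ and $N_1N_2\le q^{1+\varepsilon_1}$, to be chosen smaller than half the savings in Lemma~\ref{lem:two-term-mixed-block-insertion}. Since the strict inequalities leave a fixed positive gap, such a choice exists.

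Second, the two ranges must be handled compatibly. Lemma~\ref{lem:fixed-factorisation-large-ratio} is stated for the full sum over factorisations, while Lemma~\ref{lem:two-term-mixed-block-insertion} is stated per factorisation. I would apply the dichotomy at the level of the dyadic block, before splitting into factorisations, so that each block falls entirely into one case.

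The diagonal term $m_1m_2n_1=1$ has already been removed. In the small-$X$ range it cannot occur, as noted in Proposition~\ref{prop:qw-preinsertion}. Collecting both ranges, the first-moment result, and the self-product and mixed-diagonal evaluations gives both asserted moments with $o(1)$ errors independent of $q,q_1,q_2$.
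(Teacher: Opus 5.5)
Your proposal is correct and follows the paper's own proof. Proposition~\ref{prop:fixed-parity-qw-setup} supplies the first moment, the self-products and the mixed diagonal; Lemma~\ref{lem:fixed-factorisation-large-ratio} treats the large-ratio range summed over factorisations; Lemma~\ref{lem:two-term-mixed-block-insertion} treats each complementary block, including $k=0$ and $q_2=1$; and the strict exponent gaps absorb the logarithmically many dyadic boxes, the $q^{o(1)}$ factorisations and the $q/\varphi^*(q)$ loss. Your remarks on choosing the dyadic slack below the savings, and on splitting each dyadic block by the large-ratio condition before summing over factorisations, fill in bookkeeping that the paper leaves implicit.
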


\begin{proof}
Proposition \ref{prop:fixed-parity-qw-setup} gives the first moment, self-products and mixed diagonal. Lemma \ref{lem:fixed-factorisation-large-ratio} treats the large-ratio range summed over factorisations; Lemma \ref{lem:two-term-mixed-block-insertion} treats each complementary block, including $k=0$ and $q_2=1$. The strict exponent gaps absorb the $O((\log q)^C)$ dyadic boxes, $2^{\omega(q)}=q^{o(1)}$ factorisations and normalisation loss. The remaining error is $o(1)$, including $O(\log\log q/\log q)$.
\end{proof}

\begin{cor}\label{cor:qw-nv}
Let $0<\theta_1,\theta_2<1/2$. Suppose that, for $c_1=\theta_1$, $c_2=\theta_2$, and each $j\in\{0,1\}$, the mollifier in \eqref{eq:two-piece-mollifier} satisfies
$$
\mathcal M_1^{(j)}(q)=c_1+c_2+o(1)
$$
and
$$
\mathcal M_2^{(j)}(q)
=(c_1+c_2)^2+\frac{c_1^2}{\theta_1}
+\frac{c_2^2}{\theta_2}+o(1).
$$
Then
$$
\kappa(q)\ge
\frac{\theta_1+\theta_2}{1+\theta_1+\theta_2}+o(1).
$$
The same lower bound holds for each $\kappa_j(q)$ whose parity class is nonempty.
\end{cor}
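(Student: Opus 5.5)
The argument is the standard mollified Cauchy--Schwarz inequality, applied within each parity class. Fix $j$ with $\mathcal X^*(q;j)$ nonempty, and abbreviate $M(\chi)=M^{(j)}(\chi)$ with $c_i=\theta_i$. Cauchy--Schwarz over $\chi\in\mathcal X^*(q;j)$ gives
$$
\Bigl|\sum_{\chi\in\mathcal X^*(q;j)}L(1/2,\chi)M(\chi)\Bigr|^2
\le
\#\{\chi\in\mathcal X^*(q;j):L(1/2,\chi)\ne0\}\sum_{\chi\in\mathcal X^*(q;j)}|L(1/2,\chi)M(\chi)|^2,
$$
since only characters with $L(1/2,\chi)\ne0$ contribute to the left side. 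In the normalised moments this becomes
$$
\frac{2}{\varphi^*(q)}\#\{\chi\in\mathcal X^*(q;j):L(1/2,\chi)\ne0\}\ge\frac{|\mathcal M_1^{(j)}(q)|^2}{\mathcal M_2^{(j)}(q)}.
$$

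Next I insert the hypotheses. With $c_1=\theta_1$ and $c_2=\theta_2$, the first moment is $\theta_1+\theta_2+o(1)$. The second moment is
$$
(\theta_1+\theta_2)^2+\theta_1+\theta_2+o(1)=(\theta_1+\theta_2)(1+\theta_1+\theta_2)+o(1).
$$
Since $\theta_1+\theta_2>0$ is fixed, the ratio equals $\frac{\theta_1+\theta_2}{1+\theta_1+\theta_2}+o(1)$, and the denominator stays bounded away from zero, so the $o(1)$ terms pass through the quotient. (This is also the optimal choice of $c_i$, since it minimises the quadratic form subject to fixed $c_1+c_2$, but optimality is not needed here.)

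To convert to $\kappa_j(q)$, Lemma~\ref{lem:parity-count} gives $\#\mathcal X^*(q;j)=\varphi^*(q)/2+O(1)$. When the class is nonempty and $q$ is large, $\varphi^*(q)\to\infty$ by Lemma~\ref{lem:phistar-loss}, so $2/\varphi^*(q)=(1+o(1))/\#\mathcal X^*(q;j)$. This yields $\kappa_j(q)\ge\frac{\theta_1+\theta_2}{1+\theta_1+\theta_2}+o(1)$. The only subtle point is a parity class that is nonempty but bounded in size. Lemma~\ref{lem:parity-count} rules this out for large $q$ because both classes have size $\varphi^*(q)/2+O(1)$, and if $\varphi^*(q)$ is bounded then $q$ is bounded.

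Finally, for $\kappa(q)$ I would sum over the two parities. The count of nonvanishing characters is the sum of the two parity counts, each at least $\left(\frac{\theta_1+\theta_2}{1+\theta_1+\theta_2}+o(1)\right)\varphi^*(q)/2$. If one class is empty for small $q$, the large-$q$ observation above excludes this. Dividing by $\varphi^*(q)$ gives the claim. There is no real obstacle here: the whole analytic content is in the moment hypotheses, and the only points needing care are uniformity of the $o(1)$ terms and nondegeneracy of the denominator.
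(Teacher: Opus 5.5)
Your proposal is correct and follows the paper's proof: Cauchy's inequality within each parity class, substitution of $c_i=\theta_i$ to get $(\theta_1+\theta_2)/(1+\theta_1+\theta_2)$, and summation over the two parity classes to bound $\kappa(q)$. As in the paper, Lemma~\ref{lem:parity-count} then replaces $2/\varphi^*(q)$ by $1/\#\mathcal X^*(q;j)$ at a cost of $o(1)$.
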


\begin{proof}
Cauchy's inequality in each parity gives
$$
\frac{2}{\varphi^*(q)}
\#\{\chi\in\mathcal X^*(q;j):L(1/2,\chi)\ne0\}
\ge
\frac{|\mathcal M_1^{(j)}(q)|^2}{\mathcal M_2^{(j)}(q)}
=
\frac{(c_1+c_2)^2}{(c_1+c_2)^2+c_1^2/\theta_1+c_2^2/\theta_2}+o(1).
$$
Set $c_i=\theta_i$. The quotient is $(\theta_1+\theta_2)/(1+\theta_1+\theta_2)+o(1)$. Summing over the two parity classes gives $\kappa(q)$; Lemma \ref{lem:parity-count} replaces $2/\varphi^*(q)$ by $1/\#\mathcal X^*(q;j)$ with an $o(1)$ change.
\end{proof}

\section{Squarefree moduli}\label{sec:squarefree}

\subsection{Prime-local estimates and multiplicativity}

\begin{lem}\label{lem:kloo-mult}
If $(r,s)=1$ and $(b_1b_2b_3b_4,rs)=1$, then
$$
G_{rs}(b_1,b_2,b_3,b_4)=G_r(b_1,b_2,b_3,b_4)G_s(b_1,b_2,b_3,b_4),
$$
where
$$
G_q(b_1,b_2,b_3,b_4)=
\sum_{h\bmod q}\prod_{i=1}^4S(h,\overline{b_i};q).
$$
\end{lem}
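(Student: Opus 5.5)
The plan is to combine the twisted multiplicativity of Kloosterman sums with the Chinese remainder theorem in the variable $h$. Throughout, the bar in $G_{rs}$ means the inverse modulo $rs$, while $G_r$ and $G_s$ use inverses modulo $r$ and $s$ respectively. The hypothesis $(b_1b_2b_3b_4,rs)=1$ guarantees that all these inverses exist. Since $(r,s)=1$, every $h\bmod rs$ can be written uniquely as
$$
h\equiv h_1s+h_2r\pmod{rs},\qquad h_1\bmod r,\ h_2\bmod s.
$$
As $h_1$ runs over $\Z/r\Z$ and $h_2$ over $\Z/s\Z$, this parametrises $\Z/rs\Z$ exactly once. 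No unit condition is imposed on $h$, which matches the full sum over $h\bmod q$ in the definition of $G_q$.

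First apply the identity from Section~\ref{sec:preliminaries},
$$
S(a,b;rs)=S(a\bar s,b\bar s;r)\,S(a\bar r,b\bar r;s),
$$
with $a=h$ and $b=\overline{b_i}$, the inverse taken modulo $rs$.

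Next simplify the modulus-$r$ factor. Modulo $r$ we have $h\equiv h_1s$, and $\overline{b_i}\bmod rs$ reduces to the inverse of $b_i$ modulo $r$. The factor therefore becomes
$$
S(h_1s\bar s,\ \overline{b_i}\,\bar s;r)=S(h_1,\overline{b_i}\bar s;r).
$$
By the scaling identity $S(a,b;c)=S(au,b\bar u;c)$ with $u=\bar s$ (applied in reverse), this equals $S(h_1\bar s,\overline{b_i};r)$.

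The modulus-$s$ factor is handled the same way and becomes $S(h_2\bar r,\overline{b_i};s)$.

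Taking the product over $i=1,\dots,4$ and summing over $h$ gives
$$
G_{rs}=\Bigl(\sum_{h_1\bmod r}\prod_{i=1}^4 S(h_1\bar s,\overline{b_i};r)\Bigr)\Bigl(\sum_{h_2\bmod s}\prod_{i=1}^4 S(h_2\bar r,\overline{b_i};s)\Bigr).
$$
Finally substitute $h_1\mapsto h_1s$ in the first factor. This is a bijection of $\Z/r\Z$ because $s$ is a unit modulo $r$, and it turns the first factor into $G_r(b_1,b_2,b_3,b_4)$. The substitution $h_2\mapsto h_2r$ likewise turns the second factor into $G_s(b_1,b_2,b_3,b_4)$.

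The main point needing care is bookkeeping rather than analysis. One must check that the bars in the statement refer to the correct moduli, and that the reduction of $\overline{b_i}\bmod rs$ modulo $r$ is the inverse modulo $r$. One must also confirm that the same twist by $\bar s$ is absorbed uniformly across all four factors, which holds since $h_1$ appears identically in each. The degenerate cases $r=1$ or $s=1$ follow from the convention $S(a,b;1)=1$.
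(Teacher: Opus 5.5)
Your proposal is correct and follows the paper's own route: apply the twisted multiplicativity $S(a,b;rs)=S(a\bar s,b\bar s;r)S(a\bar r,b\bar r;s)$ to each factor, use the scaling identity to move the unit twist onto $h$, and use the CRT decomposition of $h\bmod rs$ together with the bijections $h_1\mapsto h_1s$, $h_2\mapsto h_2r$. Your write-up is simply a more detailed version of the paper's three-line argument; the bookkeeping of inverses and the scaling steps all check out.
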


\begin{proof}
Apply CRT multiplicativity of $S(a,b;c)$. Scaling removes the local unit factors, and the induced maps on $h\bmod r$ and $h\bmod s$ are permutations.
\end{proof}

\begin{lem}\label{lem:prime-local}
Let $p$ be prime and suppose $p\nmid b_1b_2b_3b_4$. If at least one of $b_1,b_2,b_3,b_4\bmod p$ occurs with multiplicity one, then
$$
|G_p(b_1,b_2,b_3,b_4)|\ll p^{5/2}.
$$
If no residue has multiplicity one, then
$$
|G_p(b_1,b_2,b_3,b_4)|\ll p^3.
$$
\end{lem}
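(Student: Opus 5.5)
This is essentially a restatement of Proposition~\ref{prop:fgkm-prime-local} in the normalisation of $G_p$. The plan is to rewrite $G_p$ as an unnormalised fourth correlation $\sum_h\prod_i S(h,a_i;p)$ with units $a_i=\overline{b_i}\bmod p$, and then read off the two bounds.

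First I will dispose of $p=2$. There, and for any bounded $p$, the trivial bound $|S(h,\overline{b_i};p)|\le p$ gives $|G_p|\le p^5\ll 1$, which is consistent with both claims because the constants are absolute. So assume $p$ is odd.

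Next, set $a_i=\overline{b_i}\bmod p$. These lie in $\F_p^\times$ because $p\nmid b_1b_2b_3b_4$. Since inversion is a bijection on $\F_p^\times$, the multiplicity pattern of $(b_1,\dots,b_4)\bmod p$ is the same as that of $(a_1,\dots,a_4)$. In particular, some $b_i$ occurs exactly once if and only if some $a_i$ occurs exactly once. Then
$$
G_p(b_1,b_2,b_3,b_4)=\sum_{h\bmod p}\prod_{i=1}^4S(h,a_i;p),
$$
which is exactly the unnormalised sum treated in Proposition~\ref{prop:fgkm-prime-local}.

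If some residue occurs with multiplicity one, Proposition~\ref{prop:fgkm-prime-local} gives $O(p^{5/2})$. Otherwise the four residues form two equal pairs (either $a,a,a',a'$ or all four equal), and the proposition gives $O(p^3)$. As a sanity check, the $O(p^3)$ bound also follows directly from Weil, $|S(h,a;p)|\le 2p^{1/2}$ for $p\nmid a$, summed over the $p$ values of $h$.

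The only point needing care is the correspondence between "some residue has multiplicity one" and "the residues do not form two equal pairs." For four objects, failing to pair up means that some value appears an odd number of times. That value then appears once or three times, and if it appears three times, the fourth element is a different value appearing once. So in every non-paired case some residue has multiplicity one, and the hypothesis of the sharper bound is exactly the non-paired case of Proposition~\ref{prop:fgkm-prime-local}. There is no genuine obstacle here; the substantive input is the cited Fouvry--Ganguly--Kowalski--Michel correlation bound already packaged in that proposition.
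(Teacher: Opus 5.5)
Your proposal is correct and follows the paper's proof: the case $p=2$ is trivial, and for odd $p$ one applies Proposition~\ref{prop:fgkm-prime-local} with $a_i=\overline{b_i}$. Your check that inversion preserves the multiplicity pattern, and that ``no singleton'' coincides with ``two equal pairs'', is a correct detail that the paper leaves implicit.
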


\begin{proof}
For $p=2$ this is immediate. For odd $p$, apply Proposition \ref{prop:fgkm-prime-local} with $a_i=\overline{b_i}$.
\end{proof}

For squarefree $q$, define
$$
D_q(\mathbf b)=
\prod_{\substack{p\mid q\\
\{b_1,b_2,b_3,b_4\}\bmod p\text{ has no singleton}}}p.
$$

\begin{lem}\label{lem:sqfree-pointwise}
Let $q$ be squarefree and $(b_1b_2b_3b_4,q)=1$. Then
$$
|G_q(b_1,b_2,b_3,b_4)|
\ll q^{5/2+o(1)}D_q(\mathbf b)^{1/2}.
$$
\end{lem}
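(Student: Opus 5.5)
The plan is to factor $G_q$ over the primes of $q$ and combine the prime-local bounds multiplicatively. Since $q$ is squarefree, write $q=p_1\cdots p_k$ with distinct primes. Because $(b_1b_2b_3b_4,q)=1$, Lemma~\ref{lem:kloo-mult} applies repeatedly and gives
$$
G_q(b_1,b_2,b_3,b_4)=\prod_{p\mid q}G_p(b_1,b_2,b_3,b_4).
$$
This induction on the number of prime factors is legitimate because each partial product $r$ is coprime to the next prime. At $q=1$ the convention $G_1=1$ is consistent with the claim.

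Next, split the primes dividing $q$ into two classes.
\begin{itemize}
\item If the residues $b_i\bmod p$ contain a singleton, Lemma~\ref{lem:prime-local} gives $|G_p|\le Cp^{5/2}$.
\item If there is no singleton, that is, $p\mid D_q(\mathbf b)$, the same lemma gives $|G_p|\le Cp^{3}=Cp^{5/2}\cdot p^{1/2}$.
\end{itemize}
Here $C$ is an absolute constant, as guaranteed by Proposition~\ref{prop:fgkm-prime-local}; for $p=2$ the sums are trivially bounded. Multiplying over $p\mid q$ yields
$$
|G_q|\le C^{\omega(q)}\,q^{5/2}\,D_q(\mathbf b)^{1/2}.
$$

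It remains to absorb the accumulated constant. Since $\omega(q)\ll\log q/\log\log q$, we have $C^{\omega(q)}=q^{o(1)}$, which completes the bound.

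The only point needing care is that the implied constants in the prime-local bounds are absolute, uniform in $p$ and in the $b_i$. Without this, the product over up to $\omega(q)$ primes could not be controlled. Proposition~\ref{prop:fgkm-prime-local} asserts exactly this uniformity, so I expect no genuine obstacle.
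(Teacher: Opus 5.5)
Your proposal is correct and follows the paper's proof, which factors $G_q$ prime by prime via Lemma~\ref{lem:kloo-mult} and multiplies the local bounds of Lemma~\ref{lem:prime-local}. Your absorption of $C^{\omega(q)}$ into $q^{o(1)}$, using that the constants in Proposition~\ref{prop:fgkm-prime-local} are absolute, is the step the paper leaves implicit.
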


\begin{proof}
Multiply the local bounds of Lemma \ref{lem:prime-local} using Lemma \ref{lem:kloo-mult}.
\end{proof}

\subsection{Counting the pairing congruences}

\begin{prop}\label{prop:D-count}
Let $q$ be squarefree. For every $B\ge1$,
$$
\sum_{1\le b_1,b_2,b_3,b_4\le B}D_q(b_1,b_2,b_3,b_4)^{1/2}
\ll q^{o(1)}(B^4+B^2q^{1/2}).
$$
\end{prop}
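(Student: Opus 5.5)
The plan is to decompose according to which pairing pattern occurs at each prime. If the residues $b_1,\dots,b_4\bmod p$ have no singleton, then they split into at most two classes, each of size at least two. Hence one of the three pairings $\{\{1,2\},\{3,4\}\}$, $\{\{1,3\},\{2,4\}\}$, $\{\{1,4\},\{2,3\}\}$ satisfies $p\mid b_i-b_j$ and $p\mid b_k-b_l$ simultaneously. Attach to each prime $p\mid D_q(\mathbf b)$ one such pairing $\pi(p)$. Then
$$
D_q(\mathbf b)=d_1d_2d_3,
$$
where $d_\pi$ is the product of those primes assigned the pairing $\pi$. The three factors are pairwise coprime squarefree divisors of $q$. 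Since $D^{1/2}=\prod d_\pi^{1/2}$, I will bound
$$
\sum_{\mathbf b}D_q(\mathbf b)^{1/2}\le\sum_{d_1d_2d_3\mid q}(d_1d_2d_3)^{1/2}\,\#\{\mathbf b: d_\pi\mid(b_i-b_j),\ d_\pi\mid(b_k-b_l)\ \text{for each }\pi\}.
$$
There are at most $3^{\omega(q)}=q^{o(1)}$ triples of divisors, so it suffices to bound each term by $q^{o(1)}(B^4+B^2q^{1/2})$.

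Fix a triple. For the pairing $\{\{1,2\},\{3,4\}\}$, the conditions say $b_1\equiv b_2$ and $b_3\equiv b_4$ modulo $d_1$. The other pairings give analogous congruences modulo $d_2$ and $d_3$. Combining by CRT:
\begin{itemize}
\item $b_1\equiv b_2\pmod{d_1}$ and $b_1\equiv b_3\pmod{d_2}$, $b_1\equiv b_4\pmod{d_3}$;
\item $b_3\equiv b_4\pmod{d_1}$, $b_2\equiv b_4\pmod{d_2}$, $b_2\equiv b_3\pmod{d_3}$.
\end{itemize}
Choose $b_1$ freely, giving $B$ choices. Then $b_2$ is constrained modulo $d_1$, $b_3$ modulo $d_2$, and $b_4$ modulo $d_3$, so they contribute $(1+B/d_1)(1+B/d_2)(1+B/d_3)$ choices. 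This ignores the second set of relations. Those relations matter, because the naive count gives e.g. $B\cdot B^3\,(d_1d_2d_3)^{1/2}$ when all $d_\pi>B$, which is too big. So I will use all six congruences.

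The key observation is that once $b_1,b_2$ are fixed, the congruences modulo $d_2d_3$ determine $b_3$ and $b_4$ modulo $d_2d_3$:
$$
b_3\equiv b_1\pmod{d_2},\quad b_3\equiv b_2\pmod{d_3},\qquad
b_4\equiv b_2\pmod{d_2},\quad b_4\equiv b_1\pmod{d_3}.
$$
Moreover $b_3\equiv b_4\pmod{d_1}$. The count therefore becomes
$$
N\le\sum_{\substack{b_1,b_2\le B\\ b_1\equiv b_2\,(d_1)}}(1+B/(d_2d_3))\,(1+B/d_1+B/(d_2d_3)).
$$
Here the last factor counts $b_4$ given $b_3$, in the case of a class modulo $d_1d_2d_3$ or modulo $d_2d_3$. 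This is at most $\ll B(1+B/d_1)(1+B/d_2d_3)\min(1+B/d_1,\,1+B/d_2d_3)$, with the order of the variables chosen by symmetry to optimise. Writing $D=d_1d_2d_3\le q$ and multiplying by $D^{1/2}$, I will do a case analysis on the sizes of $d_1$ and $e=d_2d_3$ relative to $B$.

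By symmetry (any pairing can play the role of $d_1$), assume $d_1$ is the largest. If $d_1,e\le B$, then $N\ll B^4/(d_1e)\cdot\min(\cdot)$, and $D^{1/2}N\ll B^4$. If $d_1>B\ge e$, then $N\ll B\cdot B/e$ and $D^{1/2}N\ll B^2 d_1^{1/2}e^{-1/2}\le B^2q^{1/2}$. If both exceed $B$, then $N\ll B$, with $b_1=b_2$ forced for $d_1>B$, and the congruences modulo $e>B$ force $b_3=b_4=b_1$. This gives $D^{1/2}N\le Bq^{1/2}$. The mixed case $d_1\le B<e$ gives $N\ll B\cdot B/d_1$, hence $D^{1/2}N\ll B^2(e/d_1)^{1/2}\le B^2q^{1/2}$.

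The main obstacle is organising the count so that the joint congruences modulo $D$, not just a single pairing, are exploited. This is where I expect care to be needed, in particular with the CRT consistency and with the choice of which pairing to isolate. Finally, summing over the $q^{o(1)}$ triples gives the stated bound.
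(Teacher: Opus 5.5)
Your proposal is correct and is essentially the paper's proof. It uses the same assignment of a pairing to each prime, and the same CRT count $B(1+B/d_1)(1+B/d_2d_3)(1+B/d_1d_2d_3)$ obtained by fixing $b_1$, then $b_2$ modulo $d_1$, $b_3$ modulo $d_2d_3$ and $b_4$ modulo $d_1d_2d_3$ (the paper's $u,v,w$); your case analysis replaces the paper's expansion into $B^4/d^2+B^2$ followed by divisor summation.

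The one slip is in the factor counting $b_4$: it should be $1+B/(d_1d_2d_3)$, since $b_4$ is fixed modulo the full product, not your displayed $1+B/d_1+B/(d_2d_3)$, which is of the size of a maximum and would fail when $d_1>B\ge d_2d_3$. The minimum you actually use follows from the correct factor, so the argument stands. The number of triples is $4^{\omega(q)}$ rather than $3^{\omega(q)}$, which is still $q^{o(1)}$.
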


\begin{proof}
Majorise $D_q(\mathbf b)^{1/2}$ by $\sum_{d\mid q,\ d\mid D_q(\mathbf b)}d^{1/2}$. At each $p\mid d$, the absence of a singleton is covered by the pairings $(12)(34)$, $(13)(24)$ and $(14)(23)$. Assign one to each prime and write $d=d_{12}d_{13}d_{14}$, with pairwise coprime factors. For $u=b_2-b_1$, $v=b_3-b_1$, $w=b_4-b_1$, the conditions are
$$
\begin{gathered}
d_{12}\mid u,\quad d_{13}\mid v,\quad d_{14}\mid w,\\
d_{12}\mid v-w,\quad d_{13}\mid u-w,\quad d_{14}\mid u-v.
\end{gathered}
$$
Choose $b_1$, then $u$ in the class $0$ modulo $d_{12}$, $v$ in one class modulo $d_{13}d_{14}$, and $w$ in one class modulo $d$, by the Chinese remainder theorem. This gives
$$
\ll B\left(\frac{B}{d_{12}}+1\right)
\left(\frac{B}{d_{13}d_{14}}+1\right)
\left(\frac{B}{d}+1\right)
\ll\frac{B^4}{d^2}+B^2.
$$
For the last inequality, the cubic terms have denominators $d,d_{12}d,d_{13}d_{14}d$; compare respectively $d,d_{13}d_{14},d_{12}$ with $B$ to bound each by $B^4/d^2+B^2$. Lower-degree terms are $O(B^2)$. Summing over assignments and divisors gives
$$
B^4\sum_{d\mid q}\frac{3^{\omega(d)}}{d^{3/2}}
+
B^2\sum_{d\mid q}3^{\omega(d)}d^{1/2}
\ll q^{o(1)}(B^4+B^2q^{1/2}).
$$
\end{proof}

\subsection{Completion of the squarefree argument}

\begin{proof}[Proof of Theorem~\ref{thm:sqfree-average}]
By Lemma \ref{lem:sqfree-pointwise} and Proposition \ref{prop:D-count},
$$
A_q(B)\ll q^{5/2+o(1)}(B^4+B^2q^{1/2})
=q^{o(1)}(B^4q^{5/2}+B^2q^3).
$$
Replacing $q^{o(1)}$ by $q^\ep$ with the constant depending on $\ep$ gives the stated estimate.
\end{proof}

\begin{prop}\label{prop:sqfree-moment}
Let $q$ be squarefree and $q\not\equiv2\bmod4$. Let $\sigma>0$. If
$$
0<\theta_1,\theta_2<1/2,
\qquad
4\theta_1+6\theta_2+2\sigma<3,
\qquad
2\theta_1+\theta_2+\sigma<1,
$$
then the moment formulae of Proposition \ref{prop:qw-transfer} hold for each parity.
\end{prop}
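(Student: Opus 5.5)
The plan is to apply Proposition~\ref{prop:qw-transfer} with $\omega_4=\omega_2=0$, so that its two exponent conditions become exactly the hypotheses $4\theta_1+6\theta_2+2\sigma<3$ and $2\theta_1+\theta_2+\sigma<1$. Three inputs have to be checked: a uniform fourth-moment bound for every auxiliary modulus $q_2$; the length condition $M_1M_2\le q^{1-\eta_1}$; and the small-$X$ condition in the range complementary to Lemma~\ref{lem:fixed-factorisation-large-ratio}.

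\textbf{Fourth-moment input.} Since $q$ is squarefree, every divisor $q_2$ arising in Lemma~\ref{lem:primitive-factorisation} is squarefree. Theorem~\ref{thm:sqfree-average} then gives
$$
A_{q_2}(2M_2)\ll_{\varepsilon_1} q_2^{\varepsilon_1}\bigl((2M_2)^4q_2^{5/2}+(2M_2)^2q_2^3\bigr)
\le q^{\varepsilon_1}\bigl((2M_2)^4q_2^{5/2}+(2M_2)^2q_2^3\bigr).
$$
The implied constant depends only on $\varepsilon_1$, so the bound is uniform in the factorisation and the dyadic box. The case $q_2=1$ is covered by $A_1(B)\ll B^4$.

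\textbf{Length condition.} Because $\theta_1,\theta_2<1/2$, we have $\theta_1+\theta_2<1$, so any fixed $0<\eta_1<1-\theta_1-\theta_2$ gives $M_1M_2\le q^{1-\eta_1}$ once the dyadic $o(1)$ losses are small. For comparison, the second exponent condition alone gives $\theta_1+\theta_2<1$ whenever $\theta_1>0$.

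\textbf{Small-$X$ condition (main obstacle).} Outside the large-ratio range we have $N_2/N_1<q^{\sigma}M_1M_2/q$, hence
$$
X=\frac{M_1N_2}{N_1}<q^{\sigma-1}M_1^2M_2\le q^{\sigma-1+2\theta_1+\theta_2+o(1)}.
$$
The condition $2\theta_1+\theta_2+\sigma<1$ therefore gives $X\le q^{-\eta_2}$ for any fixed $\eta_2$ smaller than $1-(2\theta_1+\theta_2+\sigma)$, after shrinking the $o(1)$ terms.

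With all hypotheses verified, Proposition~\ref{prop:qw-transfer} yields the stated moment formulae for each parity $j\in\{0,1\}$. The hard work has already been done in Theorem~\ref{thm:sqfree-average} and the reduction of Section~\ref{sec:qw-reduction}, so the proof is bookkeeping. The one point needing care is that $\eta_1,\eta_2$ and the dyadic losses are fixed in terms of $\theta_1,\theta_2,\sigma$ alone, so that the error terms are independent of $q$.
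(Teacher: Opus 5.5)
Your proposal is correct and follows the paper's proof essentially verbatim: every auxiliary $q_2$ is squarefree, so Theorem~\ref{thm:sqfree-average} supplies $\omega_4=\omega_2=0$, and both transfer hypotheses follow from $2\theta_1+\theta_2+\sigma<1$. The paper simply takes $\eta_1=\eta_2=\Delta/2$ with $\Delta=1-(2\theta_1+\theta_2+\sigma)$, and your separate choice of $\eta_1$ via $\theta_1+\theta_2<1$ works equally well.
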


\begin{proof}
Every auxiliary $q_2$ is squarefree, so Theorem \ref{thm:sqfree-average} gives $\omega_4=\omega_2=0$ in Proposition \ref{prop:qw-transfer}. Writing $\Delta=1-(2\theta_1+\theta_2+\sigma)>0$, small dyadic losses give $M_1M_2\le q^{1-\Delta/2}$ and, outside the large-ratio range, $X<q^{-1+\sigma}M_1^2M_2\le q^{-\Delta/2}$. All transfer hypotheses hold.
\end{proof}

\begin{proof}[Proof of Theorem~\ref{thm:sqfree-nonvanishing}]
Take $\theta_1=3/8-\rho$, $\theta_2=1/4-\rho$ and $0<\sigma<3\rho$. Then ${4\theta_1+6\theta_2=3-10\rho}$ and $2\theta_1+\theta_2=1-3\rho$. Proposition \ref{prop:sqfree-moment} and Corollary \ref{cor:qw-nv} give
$$
\kappa(q)\ge\frac{5/8-2\rho}{13/8-2\rho}+o(1).
$$
Take $\rho$ sufficiently small.
\end{proof}

\section{Prime-square Kloosterman estimates}\label{sec:p2-kloosterman}

\subsection{Stationary phase and Hensel roots}

Fix an odd prime $p$ and a nonsquare class $\nu\bmod p$, represented modulo $p^2$. A unit modulo $p^2$ is a square exactly when its reduction modulo $p$ is a square, so $C_p=\{1,\nu\}$ represents the two square classes of $(\Z/p^2\Z)^\times$. For $c\in C_p$ and $1\le B<p$, put
$$
\begin{aligned}
A_c(B)&=\{1\le b\le B:cb^{-1}\hbox{ is a square modulo }p\},\\
S_c(B)&=\{x\bmod p^2:x^2\equiv c\bar{b}\bmod {p^2}\hbox{ for some }b\in A_c(B)\},\\
\mathcal W_c(B)&=
\sum_{x_1,x_2,x_3,x_4\in S_c(B)}
|c_{p^2}(x_1+x_2+x_3+x_4)|,
\end{aligned}
$$
where $c_{p^2}(n)$ is the Ramanujan sum.

\begin{lem}\label{lem:p2-kloosterman-eval}
Let $p$ be an odd prime and $(ab,p)=1$. Then
$$
S(a,b;p^2)
=
p\sum_{\substack{x\bmod p^2\\ x^2\equiv ab\bmod {p^2}}}
\e\left(\frac{2x}{p^2}\right).
$$
\looseness=-1 The sum is empty if $ab$ is a quadratic nonresidue modulo $p$, and otherwise has two terms.
\end{lem}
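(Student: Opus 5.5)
The plan is a direct stationary-phase computation modulo $p^2$, splitting the unit $y$ according to its residue modulo $p$. I will write every unit modulo $p^2$ uniquely as $y=y_0+pt$, with $y_0$ running over a fixed set of representatives of the units modulo $p$ and $t\bmod p$. The expansion used in the proof of Proposition~\ref{prop:weil-estermann} gives
$$
\overline{y_0+pt}\equiv \bar y_0-pt\bar y_0^2\pmod{p^2}.
$$
Hence
$$
\frac{ay+b\bar y}{p^2}\equiv\frac{ay_0+b\bar y_0}{p^2}+\frac{t(a-b\bar y_0^2)}{p}\pmod 1 .
$$
The sum over $t\bmod p$ equals $p$ when $ay_0^2\equiv b\bmod p$ and vanishes otherwise. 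This yields
$$
S(a,b;p^2)=p\sum_{\substack{y_0\bmod p\\ ay_0^2\equiv b\bmod p}}\e\left(\frac{ay_0+b\bar y_0}{p^2}\right).
$$
Here $ay_0+b\bar y_0$ is well defined modulo $p^2$ only after a lift of $y_0$ is fixed. Under a change of lift the phase changes by $e(t(a-b\bar y_0^2)/p)=1$, so the expression does not depend on the lift.

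Next, I match the two sides of the lemma. The congruence $ay_0^2\equiv b\bmod p$ is solvable exactly when $ab$ is a square modulo $p$, and then it has two solutions $\pm y_0$. So the sum is empty for a nonresidue and has two terms otherwise. Since $p$ is odd, a square unit modulo $p$ lifts by Hensel to exactly two square roots modulo $p^2$. For each root $y_0$ modulo $p$, I take the Hensel lift $Y$ modulo $p^2$ satisfying $aY^2\equiv b\bmod p^2$, which exists because $2aY\not\equiv0\bmod p$. With this lift, $b\bar Y\equiv aY\bmod p^2$, so the phase becomes
$$
\frac{aY+b\bar Y}{p^2}=\frac{2aY}{p^2}.
$$
Setting $x=aY$ gives
$$
x^2=a^2Y^2\equiv ab\pmod{p^2}.
$$
The map $Y\mapsto aY$ is a bijection from the two solutions of $aY^2\equiv b\bmod p^2$ onto the two solutions of $x^2\equiv ab\bmod p^2$. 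The term attached to $y_0$ is then $e(2x/p^2)$, so summing over both roots gives the stated identity.

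The only real obstacle is the lift issue above: making sure that replacing an arbitrary lift $y_0$ by the Hensel lift $Y$ leaves the phase unchanged. This holds because the phase depends only on $y_0\bmod p$ once the stationarity congruence holds. Equivalently, one can restrict the original sum to $y\equiv Y\bmod p$ from the outset and evaluate it with the exact Hensel root, which avoids the issue entirely. As a consistency check, the result agrees with \eqref{eq:reciprocal-odd-branch-sum} at $\nu=2$, where $\tau_p=1$.
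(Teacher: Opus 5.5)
Your proof is correct and complete. The decomposition $y=y_0+pt$ and the linearisation of $\overline{y_0+pt}$ modulo $p^2$ go through. So do the orthogonality in $t$, the check that the phase does not depend on the lift, the Hensel root $Y$ with $b\bar Y\equiv aY$, and the bijection $Y\mapsto aY$ onto the roots of $x^2\equiv ab\bmod p^2$. The nonresidue case is also covered, since both sides then vanish.

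The paper does not compute anything here. It specialises the general odd-prime-power branch formula \eqref{eq:reciprocal-odd-branch-sum} to $\nu=2$, where the Gauss factor $\tau_p(A;p^2)$ equals $1$, and relies on \cite[Lemmas~12.2 and~12.3]{IK} and \cite[Lemma~22]{BM2015}.

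Your argument is the $\nu=2$ case of that stationary-phase evaluation, carried out by hand. It is self-contained and avoids the root-labelling conventions of \eqref{eq:reciprocal-odd-branch}. It also shows directly why no Gauss sum appears at exponent two: the $t$-dependence of the phase is purely linear, because the next term $p^2t^2\bar y_0^3$ of the inverse vanishes modulo $p^2$. The paper's route costs one line only because it needs the general formula anyway. That formula is used for arbitrary exponents, where odd $\nu$ produces a quadratic Gauss sum and hence $\tau_p$. It is also used for the two-adic branches in Sections~\ref{sec:small-nsf} and~\ref{sec:allq-three-eighths}.
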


\begin{proof}
Take $\nu=2$ in \eqref{eq:reciprocal-odd-branch-sum}, where $\tau_p(A;p^\nu)=1$; compare \cite[Lemma~22]{BM2015}.
\end{proof}

\begin{lem}\label{lem:p2-local-ramanujan}
Let $p$ be an odd prime and $p\nmid b$. Then $S(h,b;p^2)=0$ whenever $p\mid h$. If $p\nmid h$, Lemma \ref{lem:p2-kloosterman-eval} gives
$$
S(h,b;p^2)
=
p\sum_{\substack{x\bmod p^2\\ x^2\equiv hb\bmod {p^2}}}
\e\left(\frac{2x}{p^2}\right),
$$
with two roots when $hb$ is a square modulo $p$ and no roots otherwise. Also
$$
c_{p^2}(n)=
\begin{cases}
p(p-1),&p^2\mid n,\\
-p,&p\mid n,\ p^2\nmid n,\\
0,&p\nmid n.
\end{cases}
$$
\end{lem}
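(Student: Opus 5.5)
The lemma has three parts: the vanishing of $S(h,b;p^2)$ when $p\mid h$, the root-sum formula when $p\nmid h$, and the Ramanujan-sum evaluation. I would prove each directly from earlier results.

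\emph{Vanishing.} Since $p^2$ is powerful and $(b,p^2)=1$, Proposition~\ref{prop:weil-estermann} already says that $S(h,b;p^2)=0$ unless $(h,p^2)=1$, which covers the case $p\mid h$. For a self-contained check, take $\nu=2$ in that proof. Write each unit as $y=y_0+pt$ with $t\bmod p$; then $h y\equiv h y_0\bmod p^2$ because $p\mid h$. Also $\bar y\equiv \bar y_0-pt\bar y_0^2\bmod p^2$. So the sum over $t$ is $\sum_{t\bmod p}\e(-bt\bar y_0^2/p)$, which vanishes because $p\nmid b\bar y_0^2$.

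\emph{Root-sum formula.} When $p\nmid h$, the pair $(h,b)$ is a pair of units, and Lemma~\ref{lem:p2-kloosterman-eval} with $a=h$ gives the displayed formula. For the count of roots: the congruence $x^2\equiv hb\bmod p^2$ is solvable exactly when $hb$ is a square modulo $p$. In that case Hensel lifting, valid for odd $p$ because the derivative $2x$ is a unit, gives exactly two roots $\pm x_0$. Otherwise there are no roots.

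\emph{Ramanujan sum.} Use $c_{p^2}(n)=\sum_{d\mid(n,p^2)}\mu(p^2/d)\,d$. Only $d=p$ and $d=p^2$ contribute, with $\mu(p)=-1$ and $\mu(1)=1$; the term $d=1$ has $\mu(p^2)=0$.
\begin{itemize}
\item If $p^2\mid n$, both terms occur and $c_{p^2}(n)=p^2-p=p(p-1)$.
\item If $p\Vert n$, only $d=p$ occurs and $c_{p^2}(n)=-p$.
\item If $p\nmid n$, no term occurs and $c_{p^2}(n)=0$.
\end{itemize}
Equivalently, one can sum $\e(an/p^2)$ over all $a\bmod p^2$ and subtract the sum over $a\equiv 0\bmod p$.

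None of the three steps is a real obstacle. The only point needing care is that the Hensel root count uses $p$ odd.
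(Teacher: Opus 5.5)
Your proposal is correct and follows the paper's proof: the vanishing for $p\mid h$ comes from Proposition~\ref{prop:weil-estermann}, and the root-sum formula comes from Lemma~\ref{lem:p2-kloosterman-eval}. The only difference is cosmetic. The paper evaluates $c_{p^2}(n)$ by grouping units modulo $p^2$ according to their reduction modulo $p$, whereas you use $c_{p^2}(n)=\sum_{d\mid(n,p^2)}\mu(p^2/d)\,d$ (or, equivalently, subtract the sum over multiples of $p$), which gives the same three cases.
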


\begin{proof}
Use Lemma \ref{lem:p2-kloosterman-eval} and Proposition \ref{prop:weil-estermann}. Grouping units modulo $p^2$ by their reductions modulo $p$ gives the Ramanujan sum.
\end{proof}

\begin{lem}\label{lem:p2-root-reduction}
Let $\mathbf b=(b_1,b_2,b_3,b_4)$ with $p\nmid b_1b_2b_3b_4$. Then one has the identity
$$
G_{p^2}(\mathbf b)
=
\frac{p^4}{2}\sum_{c\in C_p}
\sum_{\substack{z_1,\ldots,z_4\bmod p^2\\
z_i^2\equiv c\overline{b_i}\bmod {p^2}\ (1\le i\le4)}}
c_{p^2}\left(2\sum_{i=1}^4z_i\right).
$$
Consequently, for $1\le B<p$,
$$
\begin{aligned}
A_{p^2}(B)
&=\frac{p^4}{2}\sum_{c\in C_p}
\sum_{(b_1,b_2,b_3,b_4)\in A_c(B)^4}
\left|
\sum_{\substack{z_1,\ldots,z_4\bmod p^2\\
z_i^2\equiv c\overline{b_i}\bmod {p^2}\ (1\le i\le4)}}
c_{p^2}(z_1+z_2+z_3+z_4)
\right|\\
&\le \frac{p^4}{2}\sum_{c\in C_p}\mathcal W_c(B).
\end{aligned}
$$
\end{lem}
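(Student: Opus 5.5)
The plan is to evaluate $G_{p^2}(\mathbf b)$ directly from Lemma~\ref{lem:p2-local-ramanujan}, by parametrising the admissible $h$ through their square class. Everything else is bookkeeping.

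First I would discard the terms with $p\mid h$, since $p\nmid b_i$ makes $S(h,\overline{b_i};p^2)=0$ there. For a unit $h$, Lemma~\ref{lem:p2-local-ramanujan} gives
$$
S(h,\overline{b_i};p^2)=p\sum_{x_i^2\equiv h\overline{b_i}}\e(2x_i/p^2).
$$
The product over $i$ vanishes unless every $h\overline{b_i}$ is a square. Every unit $h$ can be written as $h=cy^2$ with a unique $c\in C_p$ and a unit $y\bmod p^2$. Exactly two values of $y$, namely $\pm y$, give the same $h$. Hence
$$
\sum_{h\bmod p^2}^{\times}F(h)=\frac12\sum_{c\in C_p}\sum_{y\bmod p^2}^{\times}F(cy^2).
$$
For $h=cy^2$, the substitution $x_i=yz_i$ is a bijection between roots of $x_i^2\equiv h\overline{b_i}$ and roots of $z_i^2\equiv c\overline{b_i}$. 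The product of the four Kloosterman sums therefore becomes
$$
p^4\sum_{z_1,\ldots,z_4}\e\Bigl(\frac{2y\sum_i z_i}{p^2}\Bigr).
$$
Interchanging the $z$-sum with the unit sum over $y$ produces the Ramanujan sum $c_{p^2}(2\sum_i z_i)$, which gives the stated identity. Because $2$ is a unit modulo $p^2$, Lemma~\ref{lem:p2-local-ramanujan} shows $c_{p^2}(2n)=c_{p^2}(n)$, so the factor $2$ may be dropped.

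For $A_{p^2}(B)$ with $1\le B<p$, the coprimality condition $(b_1\cdots b_4,p)=1$ holds automatically. For a given $\mathbf b$, the inner $z$-sum is empty unless $c\overline{b_i}$ is a square for all $i$, that is, unless $\mathbf b\in A_c(B)^4$. Since $C_p$ represents distinct square classes, at most one $c$ survives. The absolute value of the sum over $c$ therefore equals the sum over $c$ of absolute values restricted to $A_c(B)^4$, which is the asserted equality.

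The inequality follows from the triangle inequality, provided the map from root tuples to $S_c(B)^4$ is injective. The point needing care is that distinct $\mathbf b$ must not reuse the same root tuple. A root $z$ determines $b\equiv c\,\overline{z^2}\bmod p^2$, and distinct $b\in[1,B]$ with $B<p$ are distinct modulo $p^2$. So each $z_i$ lies in $S_c(B)$, and each tuple $(z_1,\ldots,z_4)$ arises from at most one $\mathbf b$. Summing $|c_{p^2}(z_1+\cdots+z_4)|$ over all of $S_c(B)^4$ then majorises the left side, giving $A_{p^2}(B)\le\frac{p^4}{2}\sum_{c\in C_p}\mathcal W_c(B)$.
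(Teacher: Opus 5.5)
Your proof is correct and follows essentially the same route as the paper. You discard non-unit $h$, parametrise $h=cy^2$ two-to-one, rescale the roots and sum over $y$ to get the Ramanujan sum, note that at most one square class survives, and use that a root determines $b$ for the triangle-inequality bound. For that last step you argue via $b\equiv c\,\overline{z^2}\bmod p^2$, whereas the paper uses that $z\bmod p$ determines $b$; either suffices here.
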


\begin{proof}
Only units $h$ contribute, by Lemma~\ref{lem:p2-local-ramanujan}. On each square class, $h=cu^2$ is a two-to-one parametrisation, and the roots in Lemma \ref{lem:p2-kloosterman-eval} are $uz_i$. Summing the product over $u$ gives $c_{p^2}(2\sum_iz_i)$ with prefactor $p^4/2$, proving the identity. Since $2$ is a unit, $c_{p^2}(2n)=c_{p^2}(n)$. A quadruple belongs to at most one $A_c(B)^4$; otherwise some local Kloosterman factor vanishes. This gives the stated equality for $A_{p^2}(B)$. For fixed $c$, distinct $b\le B<p$ have disjoint root sets, because $z\bmod p$ determines $b$. Their union is $S_c(B)$, so the triangle inequality proves the final bound.
\end{proof}

\begin{lem}\label{lem:root-reduction-bijection}
For $1\le B<p$ and $c\in C_p$, reduction modulo $p$ is injective on $S_c(B)$. Its image is the set of $y\in\F_p^\times$ for which $cy^{-2}\bmod p$ has a representative in $[1,B]$.
\end{lem}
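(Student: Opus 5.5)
The plan is to prove injectivity first, then identify the image, using the Hensel-lifting structure of square roots modulo $p^2$.

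\emph{Injectivity.} Let $x,x'\in S_c(B)$ have the same reduction modulo $p$. Choose $b,b'\in A_c(B)$ with
$$
x^2\equiv c\bar b\bmod p^2,\qquad x'^2\equiv c\bar b'\bmod p^2 .
$$
Reducing modulo $p$ gives $c\bar b\equiv x^2\equiv x'^2\equiv c\bar b'\bmod p$. Since $c$ is a unit, $b\equiv b'\bmod p$. Because $1\le b,b'\le B<p$, this forces $b=b'$. So $x$ and $x'$ are both square roots of the same unit $c\bar b$ modulo $p^2$.

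A unit square modulo $p^2$ (with $p$ odd) has exactly two square roots, $\pm x$. These lie over the distinct residues $\pm x\bmod p$. This is the Hensel uniqueness already used in the construction of the root function $z\mapsto z_{1/2}$: the derivative $2x$ is a unit. Since $x'\equiv x\bmod p$, the root $x'$ cannot be $-x$, so $x'=x$. This proves injectivity.

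\emph{Image.} Any $x\in S_c(B)$ is a unit, since $x^2$ is a unit. Its reduction $y$ satisfies $y^2\equiv c\bar b\bmod p$, so $cy^{-2}\equiv b\bmod p$. Thus $cy^{-2}$ has the representative $b\in[1,B]$.

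Conversely, let $y\in\F_p^\times$ be such that $cy^{-2}\equiv b\bmod p$ for some $1\le b\le B$. Then $cb^{-1}\equiv y^2$ is a square modulo $p$, so $b\in A_c(B)$. The unit $c\bar b\bmod p^2$ reduces to $y^2$, hence is a square modulo $p^2$, since a unit mod $p^2$ is a square iff its reduction is. By Hensel's lemma it has a square root $x\bmod p^2$ with $x\equiv y\bmod p$. Then $x\in S_c(B)$ reduces to $y$.

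No step is a real obstacle. The only point needing care is the use of $B<p$, which makes $b$ uniquely determined by its residue; I will state it explicitly. I will also note that $c\in C_p$ is a unit modulo $p^2$, so the inverse $y^{-2}$ and the expression $cy^{-2}$ make sense.
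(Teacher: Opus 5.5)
Your proposal is correct and follows essentially the same route as the paper. The paper also uses $B<p$ to recover $b$ from the residue $y$, and Hensel's lemma (the derivative $2y$ is a unit) for existence and uniqueness of the lift. Your ``two roots $\pm x$ lying over distinct residues'' argument is an equivalent way of stating that Hensel uniqueness.
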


\begin{proof}
If $x^2\equiv c\bar{b}\bmod {p^2}$ and $y=x\bmod p$, then $b\equiv cy^{-2}\bmod p$. Since $1\le b\le B<p$, the residue determines $b$ uniquely. Conversely, every residue $y$ described in the lemma gives this integer $b$. The derivative of $T^2-c\bar{b}$ at $T=y$ is $2y$, a unit modulo $p$, so Hensel's lemma gives a unique lift modulo $p^2$.
\end{proof}

Let $\mathcal W_c^{\rm diag}(B)$ be the part of $\mathcal W_c(B)$ in which some pair satisfies $x_i+x_j\equiv0\bmod {p^2}$, and let $\mathcal W_c^\sharp(B)$ be the complementary part.

\subsection{The exceptional algebraic locus}

\begin{lem}\label{lem:diag}
For $1\le B<p$,
$$
\sum_{c\in C_p}\mathcal W_c^{\rm diag}(B)\ll B^2p^2.
$$
\end{lem}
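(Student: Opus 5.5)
We bound each term of $\mathcal W_c^{\rm diag}(B)$ trivially by $|c_{p^2}(\cdot)|\le p(p-1)<p^2$. The lemma then reduces to showing that the number of quadruples $(x_1,\dots,x_4)\in S_c(B)^4$ with some pair satisfying $x_i+x_j\equiv0\bmod{p^2}$ is $O(B^2)$ for each $c\in C_p$. Since $C_p$ has two elements, summing over $c$ costs only a constant.

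\emph{Size of $S_c(B)$.} By Lemma~\ref{lem:root-reduction-bijection}, reduction modulo $p$ is injective on $S_c(B)$. Every $x\in S_c(B)$ satisfies $x^2\equiv c\bar b\bmod p^2$ for a unique $b\in A_c(B)\subset[1,B]$ (unique because $B<p$). Each such $b$ has at most two square roots modulo $p^2$. Hence
$$
\#S_c(B)\le 2B.
$$

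\emph{Counting quadruples with a vanishing pair.} There are six choices of the pair $\{i,j\}$; fix one. Given $x_i\in S_c(B)$, the condition $x_j\equiv -x_i\bmod p^2$ determines $x_j$ uniquely, whether or not $-x_i$ lies in $S_c(B)$. The two remaining coordinates range freely over $S_c(B)$. The number of such quadruples is therefore at most
$$
\#S_c(B)\cdot 1\cdot \#S_c(B)^2\ll B^3.
$$
This is too weak: we need $B^2$.

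\emph{Recovering the missing factor of $B$.} The trivial count above loses because it ignores the Ramanujan sum. When $x_i+x_j\equiv0\bmod p^2$, the argument of $c_{p^2}$ reduces to $x_k+x_l$ for the complementary pair $\{k,l\}$. Lemma~\ref{lem:p2-local-ramanujan} shows that $c_{p^2}(n)$ vanishes unless $p\mid n$. So the term is nonzero only when $x_k+x_l\equiv0\bmod p$, and by the injectivity in Lemma~\ref{lem:root-reduction-bijection} this fixes $x_l$ given $x_k$. The number of contributing quadruples is then at most
$$
\#S_c(B)\cdot\#S_c(B)\ll B^2,
$$
where the two factors count $x_i$ and $x_k$, and $x_j,x_l$ are determined.

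\emph{Weighting the two cases.} We must combine this count with the size of the weight, since $|c_{p^2}(n)|$ is $p^2$ or $p$ depending on how far $p$ divides $n$.
\begin{itemize}[leftmargin=*]
\item If $p^2\mid x_k+x_l$, the weight is $p(p-1)$ and there are $\ll B^2$ quadruples, giving $B^2p^2$.
\item If $p\,\|\,x_k+x_l$, the weight is only $p$, giving $B^2p$.
\end{itemize}
Summing over the six pairs, any overlaps between them, and $c\in C_p$ yields $\sum_{c}\mathcal W_c^{\rm diag}(B)\ll B^2p^2$.

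The main step is the one just described: noticing that the trivial count gives $B^3$, and that the lost factor of $B$ is recovered from the vanishing of $c_{p^2}$ off multiples of $p$ combined with the reduction-injectivity of Lemma~\ref{lem:root-reduction-bijection}.
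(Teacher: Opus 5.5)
Your proof is correct and follows the paper's argument: bound the Ramanujan weight by $p^2$, let the vanishing pair fix $x_j$ from $x_i$, use $c_{p^2}(n)=0$ unless $p\mid n$ to force $x_k+x_l\equiv0\bmod p$, and apply Lemma~\ref{lem:root-reduction-bijection} to fix $x_l$ from $x_k$, giving $O(B^2)$ quadruples for each of the six pairs and two square classes. Your split into the cases $p^2\mid x_k+x_l$ and $p\Vert x_k+x_l$ is harmless but not needed, since the uniform bound $|c_{p^2}(n)|\le p^2$ already suffices.
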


\begin{proof}
A non-zero Ramanujan weight forces $p\mid\sum_ix_i$ and is at most $p^2$. Fix, for example, $x_1+x_2\equiv 0\bmod {p^2}$. There are $O(B)$ choices for this pair, since $x_1\in S_c(B)$ determines $x_2$, and then $x_3+x_4\equiv 0\bmod p$. Lemma \ref{lem:root-reduction-bijection} gives $O(B)$ choices for the latter pair. Sum over the six pairs and two square classes.
\end{proof}

For $a_1,a_2,a_3,a_4$, define
\begin{multline*}
R(a_1,a_2,a_3,a_4)=\Bigl(\bigl(a_3a_4(a_1+a_2)-a_1a_2(a_3+a_4)\bigr)^2-4a_1a_2a_3a_4(a_1a_2+a_3a_4)\Bigr)^2\\
-64(a_1a_2a_3a_4)^3.
\end{multline*}

Define
$$
N_c(B;p)=
\#\{(a_1,a_2,a_3,a_4)\in A_c(B)^4:
p\mid R(a_1,a_2,a_3,a_4)\}.
$$

\begin{lem}\label{lem:eliminant-identity}
Let $A_i^2=a_i$. Then
$$
R(a_1,a_2,a_3,a_4)
=
\prod_{s_2,s_3,s_4\in\{\pm1\}}
(A_1A_2A_3+s_2A_1A_2A_4
+s_3A_1A_3A_4+s_4A_2A_3A_4).
$$
Consequently $R$ is a non-zero homogeneous integer polynomial of degree $12$. For positive integers $a_i$, $R(a_1,a_2,a_3,a_4)=0$ if and only if there are signs $s_i$ such that
$$
\frac{s_1}{\sqrt{a_1}}+
\frac{s_2}{\sqrt{a_2}}+
\frac{s_3}{\sqrt{a_3}}+
\frac{s_4}{\sqrt{a_4}}=0.
$$
Moreover, over any field of odd characteristic and for non-zero $a_i$, the existence of such a signed reciprocal relation implies $R(a_1,a_2,a_3,a_4)=0$ in that field.
\end{lem}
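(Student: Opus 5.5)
The plan is to prove the factorisation as an identity in $\Z[A_1,A_2,A_3,A_4]$ and then read off the remaining assertions from it. Write $\Pi(A)$ for the right-hand side. Dividing each factor by $A_1A_2A_3A_4$ gives
$$
\Pi(A)=(A_1A_2A_3A_4)^8\prod_{s_2,s_3,s_4}\Bigl(\frac1{A_4}+\frac{s_2}{A_3}+\frac{s_3}{A_2}+\frac{s_4}{A_1}\Bigr).
$$
So $\Pi$ is, up to the monomial, the norm of a signed sum of reciprocals over the eight sign classes, each taken modulo a global sign.

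\emph{Step 1: $\Pi$ is a polynomial in the $a_i=A_i^2$.} Replacing $A_i\mapsto -A_i$ permutes the eight linear factors up to individual signs. Hence $\Pi$ changes at most by a sign. A parity count of the sign changes, or simply the fact that $\Pi$ has even degree in each $A_i$ as a product of factors each of degree $\le1$ in $A_i$, shows $\Pi$ is invariant. So $\Pi\in\Z[a_1,\dots,a_4]$, and it is homogeneous of degree $24/2=12$.

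\emph{Step 2: compute $\Pi$ explicitly.} Set $x_i=1/A_i$ and compute the norm in stages. First, for fixed $u=x_4+s_2x_3$, the product over $s_3,s_4$ is
$$
\prod_{s_3,s_4}(u+s_3y+s_4z)=(u^2-y^2-z^2)^2-4y^2z^2,\qquad y=x_2,\ z=x_1.
$$
Second, multiply the two conjugates under $s_2\mapsto -s_2$. Expanding $u^2=x_4^2+x_3^2+2s_2x_3x_4$ leaves a quartic in $s_2x_3x_4$, and the product of the two conjugates eliminates the odd powers. Third, clear denominators by $(a_1a_2a_3a_4)^4$ and rewrite in the $a_i$. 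The result should match
$$
\bigl((a_3a_4(a_1+a_2)-a_1a_2(a_3+a_4))^2-4a_1a_2a_3a_4(a_1a_2+a_3a_4)\bigr)^2-64(a_1a_2a_3a_4)^3 .
$$
This bookkeeping is the main obstacle. To keep it under control I would organise the computation symmetrically in the pairs $\{1,2\},\{3,4\}$, using $\sigma=a_1+a_2$, $\pi=a_1a_2$ and their analogues $\sigma',\pi'$, and then compare coefficients. As a sanity check, the specialisation $a_1=a_2=a_3=a_4=1$ should give $0$ on both sides, since $1-1+1-1=0$.

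\emph{Step 3: the remaining assertions.} Non-vanishing of $R$ follows because $\Z[A]$ is a domain and each linear factor is non-zero, so $\Pi\ne0$ and hence $R\ne0$.

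For positive integers $a_i$, take $A_i=\sqrt{a_i}>0$. Then $R=0$ exactly when some factor vanishes. Dividing that factor by $A_1A_2A_3A_4$ gives a signed reciprocal relation with $s_1=+1$. Conversely, any relation with arbitrary signs becomes one with $s_1=+1$ after a global sign change, so it is one of the factors.

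Over a field $k$ of odd characteristic with $a_i\ne0$, choose square roots $A_i$ in $\bar k$. The polynomial identity specialises to $\bar k$. A signed reciprocal relation for any choice of roots is a relation for the fixed choice after absorbing sign changes of the roots into the $s_i$. Multiplying that relation by $A_1A_2A_3A_4$ and applying a global sign gives a vanishing factor, so $R(a)=0$ in $k$.
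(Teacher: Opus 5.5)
Your route is the paper's: your reciprocals $x_4,x_3,x_2,x_1$ are its $U,V,W,Z$ divided by $A_1A_2A_3A_4$, the product is taken over $s_3,s_4$ and then over $s_2$ in the same order, and your domain argument for $R\ne0$ is a fine substitute for the paper's evaluation $R(1,1,1,4)=945$. The step you left as ``should match'' closes at once: with $B=x_3^2+x_4^2-x_1^2-x_2^2$ the $s_2$-product is $(B^2+4x_3^2x_4^2-4x_1^2x_2^2)^2-16B^2x_3^2x_4^2$, which by the identity $(\alpha+\beta-\gamma)^2-4\alpha\beta=(\alpha-\beta-\gamma)^2-4\beta\gamma$ equals $(B^2-4x_3^2x_4^2-4x_1^2x_2^2)^2-64x_1^2x_2^2x_3^2x_4^2$, and multiplying by $(a_1a_2a_3a_4)^4$, with $a_1a_2a_3a_4B=a_1a_2(a_3+a_4)-a_3a_4(a_1+a_2)$, gives exactly $R$.
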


\begin{proof}
Put $U=A_1A_2A_3$, $V=A_1A_2A_4$, $W=A_1A_3A_4$, $Z=A_2A_3A_4$ and ${A=U^2+V^2-W^2-Z^2}$. Multiplying first over $s_3,s_4$ gives $(A+2s_2UV)^2-4W^2Z^2$. The product over $s_2$ is therefore
$$
\begin{aligned}
&(A^2+4U^2V^2-4W^2Z^2)^2-16A^2U^2V^2\\
&\qquad=(A^2-4U^2V^2-4W^2Z^2)^2-64U^2V^2W^2Z^2.
\end{aligned}
$$
Substitute
$$
\begin{gathered}A=a_1a_2(a_3+a_4)-a_3a_4(a_1+a_2),\\ U^2V^2=(a_1a_2a_3a_4)a_1a_2,\qquad W^2Z^2=(a_1a_2a_3a_4)a_3a_4,\qquad U^2V^2W^2Z^2=(a_1a_2a_3a_4)^3.\end{gathered}
$$
This is $R$, an integer polynomial of degree $12$, non-zero since $R(1,1,1,4)=945$. Dividing a vanishing factor by $A_1A_2A_3A_4$ gives the reciprocal relation, and conversely. The identity over an algebraic closure also proves the odd-characteristic implication for non-zero $a_i$.
\end{proof}

\begin{lem}\label{lem:surface}
For $1\le B<p$ and $c\in C_p$,
$$
\mathcal W_c^\sharp(B)\le 16p^2N_c(B;p).
$$
The constant is uniform in $p$, $B$, $c$, and the chosen nonsquare representative.
\end{lem}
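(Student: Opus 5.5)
We bound $\mathcal W_c^\sharp(B)$ by reducing each surviving quadruple $(x_1,\dots,x_4)\in S_c(B)^4$ to a quadruple $(a_1,\dots,a_4)\in A_c(B)^4$ with $p\mid R(\mathbf a)$, and bounding the Ramanujan weight and the size of the fibres.

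\emph{The weight forces a signed reciprocal relation.} By Lemma~\ref{lem:p2-local-ramanujan}, $|c_{p^2}(n)|\le p^2$, and it vanishes unless $p\mid n$. So every term of $\mathcal W_c^\sharp(B)$ that contributes has $x_1+x_2+x_3+x_4\equiv0\bmod p$, with weight at most $p^2$. Write $x_i\in S_c(B)$ as the Hensel root attached to $b_i\in A_c(B)$, so $x_i^2\equiv c\overline{b_i}\bmod{p^2}$. Reducing modulo $p$ gives $y_i=x_i\bmod p$ with $y_i^2=c/b_i$ in $\F_p$, and $y_1+y_2+y_3+y_4=0$. Fix $\gamma\in\overline{\F}_p$ with $\gamma^2=c$ and put $A_i=y_i/\gamma$. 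Then $A_i^2=1/b_i$, so each $A_i$ is a square root of $a_i^{-1}$, where $a_i:=b_i$. The relation $\sum_i A_i=0$ is exactly a signed reciprocal relation $\sum_i s_i/\sqrt{a_i}=0$ over $\overline{\F}_p$ (all signs $+1$ for this choice of roots). The $a_i$ are non-zero modulo $p$ because $B<p$, so the final assertion of Lemma~\ref{lem:eliminant-identity} gives $R(a_1,a_2,a_3,a_4)\equiv0\bmod p$. Hence $(b_1,\dots,b_4)$ is counted by $N_c(B;p)$.

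\emph{Fibre count.} Consider the map sending $(x_i)\in S_c(B)^4$ to $(b_i)$, where $b_i$ is the unique integer in $[1,B]$ with $b_i\equiv c x_i^{-2}\bmod p$ (Lemma~\ref{lem:root-reduction-bijection}). Each $b_i$ has exactly two square roots $\pm x$ modulo $p^2$, both lying in $S_c(B)$. So every fibre has at most $2^4=16$ elements. Combining this with the weight bound $p^2$ gives
\[
\mathcal W_c^\sharp(B)\le 16p^2N_c(B;p).
\]
The constant depends only on the fibre size and on $|c_{p^2}|\le p^2$, so it is uniform in $p$, $B$, $c$ and the chosen representative $\nu$.

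\emph{Where the care is needed.} The delicate point is applying Lemma~\ref{lem:eliminant-identity} correctly over $\F_p$ when $c$ is a nonsquare: the roots $y_i$ then do not lie in $\F_p$ as roots of $1/b_i$. Passing to $\overline{\F}_p$ and dividing by the common factor $\gamma$ handles this, because $R$ involves only the $a_i$ and the identity in that lemma is polynomial. The restriction to $\mathcal W_c^\sharp$ (no pair with $x_i+x_j\equiv0\bmod{p^2}$) is not needed for this inequality. It only matters later, when $N_c$ is estimated away from the degenerate locus.
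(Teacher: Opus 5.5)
Your proof is correct and follows the paper's argument. The Ramanujan weight forces $\sum_i y_i=0$ in $\F_p$; dividing by a square root of $c$ in $\overline{\F}_p$ and applying the odd-characteristic assertion of Lemma~\ref{lem:eliminant-identity} gives $p\mid R(\mathbf b)$; a fibre count of $2^4$ and the weight bound $p^2$ then finish. You count each fibre as the two square roots of $c\overline{b_i}$ modulo $p^2$, whereas the paper counts sign choices modulo $p$ and lifts them by Lemma~\ref{lem:root-reduction-bijection}, but this difference is only cosmetic.
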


\begin{proof}
A non-zero weight forces $\sum_iy_i=0$ in $\F_p$, where $y_i=x_i\bmod p$. Put $a_i\equiv cy_i^{-2}\bmod p$, $1\le a_i\le B$, and choose $u^2=c$ in an algebraic closure. Writing ${y_i=s_iu/\sqrt{a_i}}$ for signs $s_i$ and dividing by $u$, Lemma \ref{lem:eliminant-identity} gives $p\mid R(\mathbf a)$. Here $a_i\ne0$ since $B<p$. Each tuple has at most $2^4$ reductions, each lifting uniquely by Lemma \ref{lem:root-reduction-bijection}, and each weight is at most $p^2$.
\end{proof}

\begin{lem}\label{lem:squarefree-radical-independence}
Let $r_1,\ldots,r_t$ be distinct squarefree positive integers. Then $\sqrt{r_1},\ldots,\sqrt{r_t}$ are linearly independent over $\Q$.
\end{lem}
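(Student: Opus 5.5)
We prove the classical statement by induction on the primes involved, via Galois theory of multiquadratic fields. Let $p_1,\ldots,p_k$ be the distinct primes dividing $r_1\cdots r_t$, and put $K=\Q(\sqrt{p_1},\ldots,\sqrt{p_k})$. Every $\sqrt{r_i}$ is a product of some of the $\sqrt{p_j}$. It therefore suffices to show that the $2^k$ products $\sqrt{d}$, for $d$ ranging over the squarefree divisors of $p_1\cdots p_k$, are linearly independent over $\Q$. Distinct $r_i$ are distinct such $d$, so they form a subfamily of these products.

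The key step is to show that $[K:\Q]=2^k$. We argue by induction on $k$, proving the stronger claim that $\sqrt{p_{k+1}}\notin\Q(\sqrt{p_1},\ldots,\sqrt{p_k})$ for any further prime $p_{k+1}$.

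\begin{itemize}
\item When $k=0$ the claim is simply that $\sqrt{p}$ is irrational.
\item For the inductive step, suppose $K_k$ has degree $2^k$ with Galois group $(\Z/2\Z)^k$. The automorphism $\sigma_j$ negates $\sqrt{p_j}$ and fixes the other square roots.
\item Suppose that $\sqrt{m}\in K_k$ with $m$ squarefree and coprime to $p_1\cdots p_k$, and $m\ne1$.
\item Then each $\sigma_j$ sends $\sqrt m$ to $\pm\sqrt m$, because $\sigma_j(\sqrt m)^2=m$. So $\sqrt m$ is a common eigenvector of the group, with some character.
\item Since $[K_k:\Q]=2^k$ and the $2^k$ products $\sqrt d$ are eigenvectors with pairwise distinct characters, they span $K_k$. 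Each character eigenspace is therefore one-dimensional, spanned by the corresponding $\sqrt d$.
\item Hence $\sqrt m=\lambda\sqrt d$ with $\lambda\in\Q$. This gives $md=\lambda^2 d^2$, so $md$ is a square.
\item But $m$ and $d$ are coprime and squarefree, so $md$ is a square only if $m=d=1$. This is a contradiction.
\end{itemize}

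Applying the claim to $m=p_{k+1}$ gives $[K_{k+1}:K_k]=2$, and $K_{k+1}$ is Galois over $\Q$ with group $(\Z/2\Z)^{k+1}$.

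Once the degree is $2^k$, independence follows from the eigenspace decomposition. The $\sqrt d$ are eigenvectors for the group $(\Z/2\Z)^k$, and their characters $\sigma_j\mapsto(-1)^{[p_j\mid d]}$ are pairwise distinct. Eigenvectors with distinct characters are linearly independent: apply the projector $2^{-k}\sum_\sigma\chi(\sigma)\sigma$ to a vanishing linear combination, and it isolates a single coefficient.

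The main obstacle is the bootstrapping of the inductive step. The eigenvector argument needs the degree count at level $k$, and that count is itself what the previous step supplies, so the induction must carry both statements together. An alternative is to note directly that $\sqrt{p_{k+1}}\notin K_k$ because $p_{k+1}$ ramifies in $\Q(\sqrt{p_{k+1}})$ but is unramified in $K_k$ when $p_{k+1}$ is odd. The prime $2$ would need separate care, so I will use the eigenvector induction instead.
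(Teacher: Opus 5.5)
Your proof is correct and follows the paper's argument essentially step for step. Both proofs run an induction up the multiquadratic tower and observe that a putative $\sqrt{p_{k+1}}\in K_k$ would be a common eigenvector of the sign-change automorphisms, hence a rational multiple of a single monomial $\sqrt d$; both derive a contradiction from squarefreeness (the paper phrases this via an odd $\ell$-adic valuation) and then read off independence from the distinct characters. Your explicit point that the induction must carry the degree count and the full group $(\mathbb{Z}/2\mathbb{Z})^k$ together is the same bookkeeping the paper compresses into ``adjoining $\sqrt{\ell_j}$ doubles the degree \ldots\ and extends each sign change in both ways.''
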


\begin{proof}
Let $\ell_1,\ldots,\ell_n$ be the primes dividing the $r_i$. Inductively, $K_j=\Q(\sqrt{\ell_1},\ldots,\sqrt{\ell_j})$ has the monomial basis $\prod_{\ell\in E}\sqrt\ell$, $E\subseteq\{\ell_1,\ldots,\ell_j\}$, and all independent sign changes as automorphisms. If $\sqrt{\ell_j}\in K_{j-1}$, its rational square makes it an eigenvector for every sign change. These act on the basis monomials by distinct characters, so it is a rational multiple of one monomial. Squaring contradicts the odd $\ell_j$-adic valuation. Thus adjoining $\sqrt{\ell_j}$ doubles the degree, gives the stated basis, and extends each sign change in both ways. The $\sqrt{r_i}$ are distinct basis monomials.
\end{proof}

\begin{lem}\label{lem:integer-zero-count}
For every $\ep>0$,
$$
\#\{(a_1,a_2,a_3,a_4)\in[1,B]^4:
R(a_1,a_2,a_3,a_4)=0\}
\ll_\ep B^{2+\ep}.
$$
\end{lem}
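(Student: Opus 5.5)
By Lemma~\ref{lem:eliminant-identity}, a zero of $R$ in $[1,B]^4$ is exactly a quadruple with a signed relation
$$
\sum_{i=1}^4\frac{s_i}{\sqrt{a_i}}=0 .
$$
The plan is to read off its structure from Lemma~\ref{lem:squarefree-radical-independence}, and then count.

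First, write $a_i=k_i^2r_i$ with $r_i$ squarefree. Then $1/\sqrt{a_i}=\sqrt{r_i}/(k_ir_i)$, so the relation reads
$$
\sum_{i=1}^4\frac{s_i}{k_ir_i}\sqrt{r_i}=0 .
$$
Group the indices by the value of $r_i$. By Lemma~\ref{lem:squarefree-radical-independence}, for each squarefree class $r$ the rational coefficient must vanish:
$$
\sum_{i:\,r_i=r}\frac{s_i}{k_i}=0 .
$$
Since every $k_i>0$, a class containing a single index is impossible. The class structure is therefore either two pairs, $r_1=r_2$ and $r_3=r_4$ up to relabelling, or a single class containing all four indices.

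In the two-pair case, $\pm1/k_1\pm1/k_2=0$ forces $k_1=k_2$, hence $a_1=a_2$, and likewise $a_3=a_4$. Summing over the three pairings gives $O(B^2)$ quadruples.

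In the single-class case all four $a_i$ equal $k_i^2r$ for one common squarefree $r$, and we need
$$
\sum_{i=1}^4\frac{s_i}{k_i}=0 ,
$$
with $k_i\le K:=\sqrt{B/r}$. The sign patterns are of two types. In the type $(+,+,-,-)$ we have $1/k_1+1/k_2=1/k_3+1/k_4$. In the type $(+,-,-,-)$ we have $1/k_1=1/k_2+1/k_3+1/k_4$. The all-equal-sign pattern is impossible.

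The key step, and the one I expect to be the main obstacle, is the bound
$$
\#\{\mathbf k\in[1,K]^4:\text{the relation holds}\}\ll_\ep K^{2+\ep}.
$$
Granting it, the single-class case contributes
$$
\sum_{r\le B}\Bigl(\frac Br\Bigr)^{1+\ep}\ll B^{1+2\ep}.
$$
Together with the two-pair case this gives the required $B^{2+\ep}$, in fact with room to spare, so even a cruder bound such as $K^{3}$ would nearly suffice. Indeed, $\sum_{r\le B}(B/r)^{3/2}\ll B^{3/2}$.

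For the cruder route, fix three of the $k_i$; the fourth is then determined, giving $O(K^3)$ solutions and a total of $O(B^{3/2})$. This is already $\ll B^{2}$, so the sharper divisor-type count is unnecessary. The final total is therefore
$$
O(B^2)+O(B^{3/2})\ll B^{2}\ll_\ep B^{2+\ep}.
$$

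The only care needed is in applying the independence lemma. The relation must be regrouped so that distinct squarefree parts $r$ carry distinct radicals. The coefficients $s_i/(k_ir_i)$ are rational, and within one class they share the common factor $1/r$, which can be dropped before concluding that $\sum s_i/k_i=0$.
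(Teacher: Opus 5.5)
Your proposal is correct and is essentially the paper's argument: the same reduction via Lemma~\ref{lem:eliminant-identity}, the same decomposition $a_i=k_i^2r_i$ with Lemma~\ref{lem:squarefree-radical-independence} forcing either two pairs with equal $a_i$ or a single common squarefree part, and the same count in which three of the $k_i$ determine the fourth, giving $O(B^{3/2})$ after summing $(B/r)^{3/2}$ over $r$. The sharper $K^{2+\ep}$ count you flagged as the main obstacle is never needed, and the paper does not use it either.
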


\begin{proof}
By Lemma \ref{lem:eliminant-identity}, an integer zero satisfies $\sum_is_i/\sqrt{a_i}=0$ for some signs. Write $a_i=r_ix_i^2$ with $r_i$ squarefree. Lemma \ref{lem:squarefree-radical-independence} gives $\sum_{i:r_i=r}s_i/x_i=0$ for each $r$. Thus the indices form two pairs with equal squarefree part, or all four parts coincide. In the first case each pair has opposite signs and equal $x_i$, hence equal $a_i$; the three pairings give $O(B^2)$ tuples. In the second case put $a_i=rx_i^2$. For fixed signs and $x_1,x_2,x_3\le(B/r)^{1/2}$ the relation determines at most one positive $x_4$. Summing gives $O(B^{3/2}\sum_{r\ge1}r^{-3/2})=O(B^{3/2})$. The finitely many sign patterns therefore give $O(B^2)$ in total.
\end{proof}

\subsection{Averaging over primes}

\begin{prop}\label{prop:prime-average}
For every $\ep>0$, uniformly for $P\ge2$ and $1\le B\le P^{1/2}$,
$$
\sum_{P<p\le2P}\sum_{c\in C_p}N_c(B;p)
\ll_\ep \pi(P)B^{2+\ep}+B^4.
$$
\end{prop}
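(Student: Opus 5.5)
Since $N_c(B;p)\le\#\{\mathbf a\in[1,B]^4:p\mid R(\mathbf a)\}$ (dropping the square-class condition) and $|C_p|=2$, it suffices to bound
$$
\sum_{P<p\le 2P}\#\{\mathbf a\in[1,B]^4:\ p\mid R(\mathbf a)\}
=\sum_{\mathbf a\in[1,B]^4}\#\{P<p\le2P:\ p\mid R(\mathbf a)\}.
$$
The plan is to split the tuples $\mathbf a$ according to whether $R(\mathbf a)=0$ as an integer. This swaps the order of summation, so that instead of counting points on the reduction of the hypersurface $R=0$ modulo each prime, we count, for each integer tuple, the primes in the dyadic range that divide the fixed integer $R(\mathbf a)$.

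\emph{Integer zeros.} If $R(\mathbf a)=0$, every prime $p\in(P,2P]$ divides $R(\mathbf a)$. Such a tuple therefore contributes at most $\pi(2P)\ll\pi(P)$. By Lemma~\ref{lem:integer-zero-count} there are $\ll_\ep B^{2+\ep}$ such tuples, so this part contributes $\ll_\ep\pi(P)B^{2+\ep}$, which is the first term of the bound.

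\emph{Non-zero values.} If $R(\mathbf a)\ne0$, then $R$ is homogeneous of degree $12$ with integer coefficients, so $|R(\mathbf a)|\ll B^{12}$. The number of primes exceeding $P$ that divide a non-zero integer $n$ is at most $\log|n|/\log P$. Hence each such tuple contributes $\ll \log(B^{12})/\log P+1$. The hypothesis $B\le P^{1/2}$ gives $\log B\le\tfrac12\log P$, so this is $O(1)$. Summing over at most $B^4$ tuples gives $O(B^4)$, the second term. This is the step where the range restriction $B\le P^{1/2}$ is used: it keeps the number of large prime divisors bounded uniformly.

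I do not expect a real obstacle once the summation order is swapped. The only delicate points are that Lemma~\ref{lem:integer-zero-count} is applied to tuples in $[1,B]^4$, with no dependence on $p$, and that the divisor count for non-zero $R(\mathbf a)$ stays bounded uniformly. Both are immediate from the statements above. The resulting estimate is uniform in the choice of nonsquare representative, since that choice was discarded in the first step.
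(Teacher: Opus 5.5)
Your proposal is correct and matches the paper's argument. Integer zeros of $R$ are counted by Lemma~\ref{lem:integer-zero-count}, and each contributes at most $\pi(2P)\ll\pi(P)$ primes. A non-zero value satisfies $0<|R(\mathbf a)|\ll B^{12}\le P^6$, so it has $O(1)$ prime divisors in $(P,2P]$, which gives $O(B^4)$ after summing over tuples and the two square classes. Dropping the square-class restriction and swapping the order of summation is exactly what the paper's short proof does implicitly.
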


\begin{proof}
Integer zeros contribute $\ll_\ep \pi(P)B^{2+\ep}$ by Lemma \ref{lem:integer-zero-count}. Otherwise
$$
0<|R(\mathbf a)|\ll B^{12}\le P^6,
$$
and hence has only $O(1)$ prime divisors in $(P,2P]$. Summing over $B^4$ tuples and two square classes gives the remaining $O(B^4)$ term.
\end{proof}

\begin{prop}\label{prop:almost-all-N}
Fix $\lambda,\delta>0$ and $0<\xi<\min(\lambda,1)$. There is a set of primes $\mathcal E$ such that
$$
\#\{p\le X:p\in\mathcal E\}
\ll_{\lambda,\delta,\xi}\frac{X^{1-\xi}}{\log X},
$$
and, for every prime $p\notin\mathcal E$, uniformly for all $1\le B\le p^{1/2-\delta}$,
$$
\sum_{c\in C_p}N_c(B;p)
\ll_{\lambda,\delta,\xi} p^\lambda\left(B^2+\frac{B^4}{p}\right).
$$
\end{prop}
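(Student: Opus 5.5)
The plan is to derive Proposition~\ref{prop:almost-all-N} from the averaged bound of Proposition~\ref{prop:prime-average} by a Markov-type argument. We discretise $B$ dyadically and use a union bound over dyadic ranges of $p$. The uniformity in $B$ comes from monotonicity: $N_c(B;p)$ is non-decreasing in $B$, since $A_c(B)$ grows with $B$. It therefore suffices to control $N_c(B;p)$ for $B$ in the finite set $B=2^k$ with $2^k\le 2p^{1/2-\delta}$. For general $B$ we apply the bound at the next power of two $B'\le 2B$, which costs only a constant factor in $B^2+B^4/p$.

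Fix a dyadic range $P<p\le 2P$. For each $k$ with $2^k\le 2(2P)^{1/2-\delta}\le P^{1/2}$ (for $P$ large), Proposition~\ref{prop:prime-average} gives
$$
\sum_{P<p\le 2P}\sum_{c\in C_p}N_c(2^k;p)\ll_\ep \pi(P)2^{k(2+\ep)}+2^{4k}.
$$
Call $p$ bad for $(P,k)$ if $\sum_cN_c(2^k;p)> P^{\lambda}(2^{2k}+2^{4k}/P)$. Markov's inequality bounds the number of bad primes by
$$
\ll_\ep \frac{\pi(P)2^{k(2+\ep)}+2^{4k}}{P^{\lambda}(2^{2k}+2^{4k}/P)}\le \pi(P)P^{-\lambda}2^{k\ep}+P^{1-\lambda}.
$$
With $2^k\le P^{1/2}$ and $\ep$ small in terms of $\lambda-\xi$, this is $\ll P^{1-\lambda+\ep/2}$. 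Summing over the $O(\log P)$ values of $k$ costs a logarithm. The result is $\ll P^{1-\lambda'}$ for any $\lambda'<\lambda$, in particular $\ll P^{1-\xi}/\log P$ once $\lambda'>\xi$; this is where $\xi<\lambda$ is used. We let $\mathcal E$ be the union over dyadic $P$ of all bad primes. Summing geometrically over $P=2^j\le X$ gives $\#\{p\le X:p\in\mathcal E\}\ll X^{1-\xi}/\log X$, where the finitely many small $P$ are absorbed into the constant. The condition $\xi<1$ ensures the target count is meaningful.

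For $p\notin\mathcal E$ with $P<p\le2P$ and $1\le B\le p^{1/2-\delta}$, take $2^k$ the least power of two with $2^k\ge B$. Then $2^k\le 2p^{1/2-\delta}$, and since $p$ is not bad we get
$$
\sum_cN_c(B;p)\le\sum_cN_c(2^k;p)\le P^\lambda(2^{2k}+2^{4k}/P)\ll p^\lambda(B^2+B^4/p).
$$
Note that $\mathcal E$ is defined independently of $B$.

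The only delicate point is the range check that $2^k\le P^{1/2}$, as required by Proposition~\ref{prop:prime-average}. This holds for large $P$ because $2(2P)^{1/2-\delta}\le P^{1/2}$ once $P^{\delta}\gg1$; the remaining bounded $P$ go into $\mathcal E$ or the implied constant. One also needs the sub-power loss $2^{k\ep}\le P^{\ep/2}$ to be absorbed by the gap $\lambda-\xi$, which it is.
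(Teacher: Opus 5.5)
Your proposal is correct and follows the paper's proof essentially step for step. Both arguments apply Markov's inequality to Proposition~\ref{prop:prime-average} on each dyadic range of primes and at each dyadic length, absorb the union bound over the $O(\log P)$ lengths into the gap $\lambda-\xi$, sum geometrically over $P$, and use monotonicity of $N_c(B;p)$ in $B$ to reach every $B\le p^{1/2-\delta}$ with an exceptional set independent of $B$. The only cosmetic differences are that the paper uses an intermediate exponent $\xi<\xi_0<\min(\lambda,1)$ and caps the dyadic lengths at $(2P)^{1/2-\delta/2}$, whereas you use $\lambda'$ and the cap $2(2P)^{1/2-\delta}$.
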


\begin{proof}
Choose $\xi<\xi_0<\min(\lambda,1)$. On $(P,2P]$, for a dyadic endpoint $H\le (2P)^{1/2-\delta/2}$, Proposition \ref{prop:prime-average} and Markov's inequality bound the number of failures of $\sum_cN_c(H;p)\le P^\lambda(H^2+H^4/P)$ by
$$
\ll_\ep P^{-\lambda}
\frac{\pi(P)H^{2+\ep}+H^4}{H^2+H^4/P}
\ll_{\lambda,\xi_0} \frac{P^{1-\xi_0}}{\log P},
$$
with $\ep<\lambda-\xi_0$. Indeed the quotient is at most $\pi(P)H^\ep+P\ll P^{1+\ep/2}/\log P$, since $H\le P^{1/2}$, and $\lambda-\ep/2>\xi_0$. For sufficiently large $P$ the endpoint range is within $H\le P^{1/2}$, as required by that proposition. Take the union over $O_\delta(\log P)$ endpoints; absorbing this factor into $P^{\xi_0-\xi}$ gives $O(P^{1-\xi}/\log P)$ exceptional primes. Sum over dyadic prime intervals, using $\sum_{2^j\le X}2^{j(1-\xi)}/j\ll X^{1-\xi}/\log X$; the finitely many small intervals are absorbed into the constant.

For arbitrary $B\le p^{1/2-\delta}$, choose a dyadic $B\le H<2B$. For large $p$ it lies in the endpoint range; monotonicity and $H^2+H^4/P\ll B^2+B^4/p$ give the bound simultaneously for all $B$.
\end{proof}

\begin{proof}[Proof of Theorem~\ref{thm:p2-average}]
Choose $\mathcal E$ from Proposition \ref{prop:almost-all-N} with $\lambda=\ep$ and the given $\delta,\xi$. Lemmas \ref{lem:p2-root-reduction}, \ref{lem:diag} and \ref{lem:surface} give, simultaneously for all permitted $B$,
$$
A_{p^2}(B)\ll p^4\left(B^2p^2+p^2\sum_cN_c(B;p)\right)\ll p^{\ep}(B^4p^5+B^2p^6),
$$
where the last step uses Proposition~\ref{prop:almost-all-N}. Here $B<p$, as Lemma~\ref{lem:p2-root-reduction} requires, since $B\le p^{1/2-\delta}$.
\end{proof}

\begin{lem}\label{lem:p2-range-dependencies}
Let $\rho>0$. If $q=p^2$, $\theta_2\le1/4-\rho$, and $B=2M_2\ll q^{\theta_2+o(1)}$, then, for every fixed $0<\delta_0<2\rho$,
$$
B\le p^{1/2-\delta_0}
$$
for all sufficiently large $p$. If $q=mp^2$ with $m$ fixed, the same conclusion holds with constants depending on $m$.

If $q=rp^2$, $r\le p^\alpha$, and $\theta_2\le1/4-\rho$, then
$$
B\ll p^{(2+\alpha)(1/4-\rho)+o(1)}.
$$
Thus $B\le p^{1/2-\delta_0}$ whenever
$$
(2+\alpha)\left(\frac{1}{4}-\rho\right)<\frac{1}{2}-\delta_0.
$$
\end{lem}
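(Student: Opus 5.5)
The statement is elementary bookkeeping of exponents, so the plan is to unwind the definitions and compare exponents of $p$ directly.

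\emph{Case $q=p^2$.} We have $B=2M_2\ll q^{\theta_2+o(1)}=p^{2\theta_2+o(1)}$. Since $\theta_2\le1/4-\rho$, this gives
$$
B\ll p^{1/2-2\rho+o(1)}.
$$
Fix $0<\delta_0<2\rho$. Then $1/2-2\rho<1/2-\delta_0$, with a fixed positive gap $2\rho-\delta_0$. For all sufficiently large $p$ this gap absorbs both the $o(1)$ term in the exponent and the implied constant. Hence $B\le p^{1/2-\delta_0}$.

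\emph{Case $q=mp^2$ with $m$ fixed.} Here $q^{\theta_2}=m^{\theta_2}p^{2\theta_2}$, and the factor $m^{\theta_2}$ is bounded in terms of $m$. The $o(1)$ is measured in powers of $q$, so $q^{o(1)}=p^{o(1)}$ with the rate depending on $m$. The same gap argument therefore applies, with ``sufficiently large'' now depending on $m$.

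\emph{Case $q=rp^2$ with $r\le p^\alpha$.} Now $q\le p^{2+\alpha}$. Since $\theta_2\le1/4-\rho$, we get
$$
B\ll q^{\theta_2+o(1)}\le p^{(2+\alpha)(1/4-\rho)+o(1)}.
$$
We need $q\to\infty$ for the $o(1)$ to make sense. This holds because $q\ge p^2$, so $q^{o(1)}=p^{o(1)}$ as $p\to\infty$, uniformly in $r$.

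If $(2+\alpha)(1/4-\rho)<1/2-\delta_0$, the strict inequality leaves a positive gap. For $p$ large this gap absorbs the $o(1)$ and the constant, giving $B\le p^{1/2-\delta_0}$.

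The only point needing care, though it is not a real obstacle, is uniformity. In the third case the $o(1)$ must be uniform over $r\le p^\alpha$. This follows from the uniformity of the dyadic $o(1)$ terms in Proposition~\ref{prop:fixed-parity-qw-setup}, which tend to zero with $q$ and hence with $p$.
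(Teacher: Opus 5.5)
Your proposal is correct and is essentially the paper's argument: substitute $q=p^2$, $q=mp^2$, or $q=rp^2\le p^{2+\alpha}$ into $B\ll q^{\theta_2+o(1)}\le q^{1/4-\rho+o(1)}$, and let the strict exponent gap absorb the fixed constants and the $o(1)$ loss. Your remark on uniformity in $r$, using $p^2\le q\le p^{2+\alpha}$, makes explicit what the paper leaves implicit.
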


\begin{proof}
Substitute $q=p^2$, $mp^2$, or $rp^2\le p^{2+\alpha}$ into $B\ll q^{1/4-\rho+o(1)}$; fixed constants and the $o(1)$ loss are absorbed by the strict exponent gaps.
\end{proof}

\section{Prime-square conductors}\label{sec:p2-nonvanishing}

\begin{proof}[Proof of Theorem~\ref{thm:p2-nonvanishing}]
With $\rho>0$ small, take
$$
\theta_1=3/8-\rho,\quad \theta_2=1/4-\rho,\quad \lambda=\sigma=\rho,\quad \delta=\rho/2,\quad \xi=\min(\rho/2,1/2).
$$
 Lemma \ref{lem:p2-range-dependencies} gives $2M_2\le p^{1/2-\delta}$. Choose the exceptional set from Theorem~\ref{thm:p2-average} with $\ep=\lambda$ and these $\delta,\xi$, and put $\xi_1=\xi$. The only auxiliary modulus is $q_2=p^2$, for which the transfer estimate has $\omega_4=\omega_2=\lambda/2$. After dyadic losses, $M_1M_2\le q^{5/8-3\rho/2}$ and in the complementary range $X<q^{-1+\sigma}M_1^2M_2\le q^{-3\rho/2}$. The two inequalities are
$$
4\theta_1+6\theta_2+2\sigma+\lambda=3-7\rho<3,
\qquad
2\theta_1+\theta_2+\sigma+\lambda/2=1-3\rho/2<1.
$$
Proposition \ref{prop:qw-transfer} and Corollary \ref{cor:qw-nv} give $\kappa(p^2)\ge(5/8-2\rho)/(13/8-2\rho)+o(1)$. Choose $\rho$ sufficiently small in terms of $\ep$.
\end{proof}

\section{Fixed multiples and moduli with a small powerful part}
\label{sec:small-nsf}

\subsection{Fixed multiples of prime squares}

\begin{lem}\label{lem:crt-stability}
\looseness=-1 Fix $m\ge1$ and let $p\nmid m$. There is a constant $C_m$ such that for every $B<p$,
$$
\sum_{\substack{1\le b_1,b_2,b_3,b_4\le B\\ (b_1b_2b_3b_4,mp^2)=1}}
|G_{mp^2}(b_1,b_2,b_3,b_4)|
\le C_m A_{p^2}(B).
$$
\end{lem}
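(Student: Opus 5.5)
The plan is to factor the correlation $G_{mp^2}$ by Lemma~\ref{lem:kloo-mult} and to bound the factor at modulus $m$ trivially. Since $p\nmid m$, we have $(m,p^2)=1$. Each admissible quadruple has $(b_1b_2b_3b_4,mp^2)=1$, so Lemma~\ref{lem:kloo-mult} gives
$$
G_{mp^2}(\mathbf b)=G_m(\mathbf b)\,G_{p^2}(\mathbf b).
$$
The first factor is a complete correlation modulo the fixed integer $m$. Its absolute value is at most $m\cdot m^4=m^5$, since each Kloosterman sum modulo $m$ has at most $m$ unit terms of modulus one. Hence
$$
|G_{mp^2}(\mathbf b)|\le m^5|G_{p^2}(\mathbf b)|.
$$
For $m=1$ this reduces to the convention $S(a,b;1)=1$, which gives $G_1=1$.

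Next we sum over $1\le b_i\le B$ with $(b_1b_2b_3b_4,mp^2)=1$. Dropping the coprimality condition to $m$ only enlarges the sum of non-negative terms. The remaining condition is $(b_1b_2b_3b_4,p^2)=1$, and the sum of $|G_{p^2}(\mathbf b)|$ over these quadruples is exactly $A_{p^2}(B)$ as defined in the Introduction. This gives the lemma with $C_m=m^5$, uniformly in $p$ and $B$.

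There is one point to check. Lemma~\ref{lem:kloo-mult} requires the CRT scaling to preserve the arguments $\overline{b_i}$. In its proof the inverse is taken modulo $mp^2$, and this inverse reduces to the inverses modulo $m$ and modulo $p^2$. The local unit factors $\bar s,\bar r$ are then removed by the permutation of $h$, so the factorisation is exact and not merely up to relabelling. The hypothesis $B<p$ is not needed for this bound. It is recorded only so that the lemma can be combined with Lemma~\ref{lem:p2-root-reduction} and Theorem~\ref{thm:p2-average} later. No step here looks like an obstacle; the only care needed is in checking the factorisation.
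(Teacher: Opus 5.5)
Your proof is correct and is essentially the paper's argument: you factor $G_{mp^2}=G_mG_{p^2}$ by Lemma~\ref{lem:kloo-mult}, bound the fixed-modulus factor by a constant, and drop the coprimality condition to $m$ after taking absolute values. The paper takes $C_m=m\max_{a,\,(b,m)=1}|S(a,b;m)|^4$, and your trivial choice $C_m=m^5$ is an upper bound for that constant.
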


\begin{proof}
Lemma \ref{lem:kloo-mult} gives $G_{mp^2}(\mathbf b)=G_m(\mathbf b)G_{p^2}(\mathbf b)$. The bound follows with
$$
C_m=m\max_{\substack{a,b\bmod m\\ (b,m)=1}}|S(a,b;m)|^4,
$$
after dropping $(b_i,m)=1$ following absolute values.
\end{proof}

\begin{proof}[Proof of Theorem~\ref{thm:fixed-m-main}]
Use the parameters, the exceptional set $\mathcal E$ and the exponent $\xi_1$ of the proof of Theorem \ref{thm:p2-nonvanishing}. Let $p\notin\mathcal E$ with $p>m$, so that $p\nmid m$. Every auxiliary modulus is $q_2=dp^2$, with $d\mid m$ and $(d,m/d)=1$. Lemma \ref{lem:crt-stability} transfers Theorem~\ref{thm:p2-average}, applied with $\ep=\lambda$, to these finitely many moduli. Since $p^2\le q_2$ and $p^\lambda\le q^{\lambda/2}$, this gives
$$
A_{q_2}(2M_2)\ll_m q^{\lambda/2}
\bigl((2M_2)^4q_2^{5/2}+(2M_2)^2q_2^3\bigr).
$$
Since $m$ is fixed, Lemma \ref{lem:p2-range-dependencies} gives the same length range. The bounds for $M_1M_2$ and $X$, and the two exponent inequalities, are unchanged. Proposition \ref{prop:qw-transfer} and Corollary \ref{cor:qw-nv} conclude.
\end{proof}

\subsection{A sharper bound for the powerful part}
\label{sec:noloss}

We now count root configurations to retain a cubic term for short lengths, using the local prime-power formulae \eqref{eq:reciprocal-gauss-sign}--\eqref{eq:reciprocal-two-branch}.

\begin{lem}\label{lem:global-root}
Let $s$ be odd and powerful and $(am,s)=1$. Write $s=\prod_jq_j$ with $q_j=p_j^{\nu_j}$, let $R_j=s/q_j$ and let $t_j\equiv\overline{R_j}\bmod{q_j}$. Then
$$
S(a,m;s)=s^{1/2}\sum_{\substack{y\bmod s\\ y^2\equiv am}}\chi_s(y)\,\e\left(\frac{2y}{s}\right),
\qquad
\chi_s(y)=\prod_j\tau_{p_j}(t_jy;q_j),
$$
with $\tau_p(A;p^\nu)$ as in \eqref{eq:reciprocal-gauss-sign}. Here $|\chi_s(y)|=1$; the sum is empty unless $am$ is a square modulo $s$.
\end{lem}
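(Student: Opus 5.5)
We combine the local stationary-phase evaluations with CRT multiplicativity. Write $s=\prod_j q_j$ and use the twisted multiplicativity
$$
S(a,m;s)=\prod_j S(a\overline{R_j},m\overline{R_j};q_j)=\prod_j S(at_j,mt_j;q_j),
$$
obtained by iterating the identity $S(a,b;rs)=S(a\bar s,b\bar s;r)S(a\bar r,b\bar r;s)$ from Section~\ref{sec:preliminaries}. Each $q_j=p_j^{\nu_j}$ has $\nu_j\ge2$, because $s$ is powerful, and $p_j$ is odd. So \eqref{eq:reciprocal-odd-branch-sum} applies and gives
$$
S(at_j,mt_j;q_j)=q_j^{1/2}\sum_{\substack{y_j\bmod q_j\\ y_j^2\equiv t_j^2am}}\tau_{p_j}(y_j;q_j)\,\e\left(\frac{2y_j}{q_j}\right).
$$
The two branches $\epsilon(at_j\cdot mt_j)_{1/2}$ are exactly the two square roots of $t_j^2am$ modulo $q_j$, by Hensel lifting. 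The sum is empty when $am$ is a nonresidue modulo $p_j$.

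Next reparametrise each local sum. Substitute $y_j=t_jy'_j$ with $y_j'^2\equiv am\bmod q_j$; this is a bijection on the roots because $t_j$ is a unit. The summand becomes $\tau_{p_j}(t_jy'_j;q_j)\,\e(2t_jy'_j/q_j)$. By CRT, a tuple $(y'_j)_j$ of local roots corresponds to a unique $y\bmod s$ with $y^2\equiv am\bmod s$ and $y\equiv y'_j\bmod q_j$.

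The key step is the additive identity
$$
\sum_j \frac{t_jy}{q_j}\equiv \frac{y}{s}\pmod 1 .
$$
This holds because $\sum_j t_jR_j\equiv1\bmod s$: reduce modulo each $q_i$, where only the term $j=i$ survives and equals $t_iR_i\equiv1$. Then $\sum_j t_jy/q_j=\sum_j t_jR_jy/s\equiv y/s \pmod 1$. Doubling gives $\prod_j\e(2t_jy/q_j)=\e(2y/s)$.

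The Gauss factors $\tau_{p_j}(t_jy;q_j)$ depend only on $y\bmod p_j$. Their product over $j$ is $\chi_s(y)$, and the prefactors multiply to $\prod_j q_j^{1/2}=s^{1/2}$. Since each $\tau_p$ is $1$, $\pm1$ or $\pm i$, we get $|\chi_s(y)|=1$.

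Finally, the root set modulo $s$ is empty exactly when some local root set is empty, that is, unless $am$ is a square modulo $s$.

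The only delicate point is the branch bookkeeping. One must check that the labelling $\epsilon(am)_{1/2}$ in \eqref{eq:reciprocal-odd-branch} really ranges over both square roots of $at_j\cdot mt_j$. One must also check that $\tau_p$ is evaluated at the root itself, namely $t_jy$ after the substitution, rather than at some rescaled quantity. Both follow from the definitions, since \eqref{eq:reciprocal-odd-branch-sum} is a sum over the two roots $\pm(am)_{1/2}$ with $\tau_p$ applied to that root.
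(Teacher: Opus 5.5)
Your proposal is correct and follows the paper's proof essentially step for step. Both arguments use twisted multiplicativity into the local factors $S(a\overline{R_j},m\overline{R_j};q_j)$, recombine the two branches of \eqref{eq:reciprocal-odd-branch-sum} into a sum over the roots of $x_j^2\equiv amt_j^2$, substitute $x_j=t_jy_j$, apply CRT to the roots, and use the identity $\sum_j\overline{R_j}/q_j\equiv1/s \bmod 1$; your checks that $\tau_{p_j}$ is evaluated at the root and depends only on $y\bmod p_j$ are details the paper leaves implicit.
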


\begin{proof}
\looseness=-1 Twisted multiplicativity gives $S(a,m;s)=\prod_jS(a\overline{R_j},m\overline{R_j};q_j)$, whose two arguments have product $am\,t_j^2$. Recombining the two branches of \eqref{eq:reciprocal-odd-branch-sum} expresses each local factor as
$$
q_j^{1/2}\sum_{x_j^2\equiv am t_j^2}\tau_{p_j}(x_j;q_j)\e\Bigl(\frac{2x_j}{q_j}\Bigr);
$$
substituting $x_j=t_jy_j$ and using the Chinese remainder theorem for the roots together with $\sum_j\overline{R_j}/q_j\equiv1/s\bmod1$ gives the statement.
\end{proof}

\begin{lem}\label{lem:chi-cancel}
\looseness=-1 For $s$ odd and powerful and $u$ a unit modulo $s$, $\chi_s(uy)=\psi_s(u)\chi_s(y)$, where
$$
\psi_s(u)=\prod_{\substack{p^{\nu}\Vert s\\ 2\nmid\nu}}\left(\frac up\right)
$$
is a real character. In particular $\psi_s(u)^4=1$.
\end{lem}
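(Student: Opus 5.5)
The claim reduces to a computation at each prime power factor of $s$. By the definition in Lemma~\ref{lem:global-root},
$$
\chi_s(uy)=\prod_j\tau_{p_j}(t_juy;q_j).
$$
So it suffices to show, for each odd prime power $q_j=p_j^{\nu_j}$ with $\nu_j\ge2$ and all units $A,u$ modulo $q_j$, that
$$
\tau_{p_j}(uA;q_j)=\epsilon_j(u)\,\tau_{p_j}(A;q_j),
$$
where $\epsilon_j(u)=1$ if $\nu_j$ is even and $\epsilon_j(u)=\left(\frac u{p_j}\right)$ if $\nu_j$ is odd. Taking $A=t_jy$ and multiplying over $j$ then gives $\chi_s(uy)=\psi_s(u)\chi_s(y)$. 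The right-hand product is exactly the $\psi_s(u)$ of the statement, because the factors with even $\nu_j$ are $1$.

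The local identity follows directly from the three-case definition \eqref{eq:reciprocal-gauss-sign}. For even $\nu$, $\tau_p$ is identically $1$, so the identity is trivial. For odd $\nu$, $\tau_p(A;p^\nu)$ is a constant ($1$ or $i$, depending only on $p\bmod4$) times the Legendre symbol $\left(\frac Ap\right)$. Multiplicativity of the Legendre symbol gives $\left(\frac{uA}p\right)=\left(\frac up\right)\left(\frac Ap\right)$, and the constant is unchanged. One detail to check is that $\tau_p(A;p^\nu)$ depends only on $A\bmod p$, so evaluating it at the residue $t_juy\bmod q_j$ is well defined. The other is that $t_juy$ is a unit whenever $y$ is, which holds because the roots $y$ in Lemma~\ref{lem:global-root} are units since $(am,s)=1$.

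For the final assertions, $\psi_s$ is a product of real Legendre characters, so it is a real character with values in $\{\pm1\}$. Hence $\psi_s(u)^2=1$, and in particular $\psi_s(u)^4=1$; for $s=1$ the product is empty and equals $1$. There is no real obstacle here. The only care needed is to keep the CRT weights $t_j$ attached to $y$ rather than to $u$: since $t_j(uy)=u(t_jy)$, the factor $u$ pulls out cleanly at each prime.
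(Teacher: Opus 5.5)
Your proposal is correct and follows essentially the same route as the paper: factor $\chi_s$ over the prime powers $q_j$, note that $\tau_p(\cdot\,;p^\nu)$ is identically $1$ for even $\nu$ and is a constant (depending only on $p$) times $\left(\frac{\cdot}{p}\right)$ for odd $\nu$, and use complete multiplicativity of the Legendre symbol. Your extra remarks — that $\tau_p$ depends only on the residue modulo $p$, that $t_j(uy)=u(t_jy)$, and that $\psi_s(u)^2=1$ — are accurate details that the paper leaves implicit.
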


\begin{proof}
At a prime with $\nu$ even $\tau_p(A;p^\nu)=1$; at a prime with $\nu$ odd, $\tau_p(A;p^\nu)$ is $\left(\frac Ap\right)$ times a constant depending only on $p$, and the Legendre symbol is completely multiplicative.
\end{proof}

\begin{prop}\label{prop:general-root-reduction}
Let $s$ be odd and powerful and $(b_1b_2b_3b_4,s)=1$, and let $C_s$ be a set of representatives for the square classes of $(\Z/s\Z)^\times$. Then
$$
G_s(\mathbf b)=\frac{s^2}{2^{\omega(s)}}\sum_{c\in C_s}
\sum_{\substack{z_1,\ldots,z_4\bmod s\\ z_i^2\equiv c\overline{b_i}}}
\left(\prod_{i=1}^4\chi_s(z_i)\right)c_s\left(2\sum_{i=1}^4z_i\right),
$$
and at most one $c$ contributes. For $s=p^2$ this is Lemma \ref{lem:p2-root-reduction}.
\end{prop}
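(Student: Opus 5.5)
We follow the proof of Lemma~\ref{lem:p2-root-reduction}, with Lemma~\ref{lem:global-root} in place of Lemma~\ref{lem:p2-kloosterman-eval}. First, by the vanishing statement in Proposition~\ref{prop:weil-estermann} ($s$ powerful, $(\overline{b_i},s)=1$), only units $h$ contribute to $G_s(\mathbf b)=\sum_h\prod_iS(h,\overline{b_i};s)$. Next we parametrise the units. Every unit $h$ lies in exactly one square class, so $h=cu^2$ for a unique $c\in C_s$. The map $u\mapsto cu^2$ from $(\Z/s\Z)^\times$ onto that class is $2^{\omega(s)}$-to-one, since $s$ is odd and each prime power contributes two square roots of unity. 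Hence
$$
G_s(\mathbf b)=\frac1{2^{\omega(s)}}\sum_{c\in C_s}\sum_{u\bmod s}^{\times}\prod_{i=1}^4S(cu^2,\overline{b_i};s).
$$

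We then insert Lemma~\ref{lem:global-root} with $a=cu^2$ and $m=\overline{b_i}$. The roots of $y^2\equiv cu^2\overline{b_i}$ are exactly $y=uz_i$ with $z_i^2\equiv c\overline{b_i}$, so
$$
S(cu^2,\overline{b_i};s)=s^{1/2}\sum_{z_i^2\equiv c\overline{b_i}}\chi_s(uz_i)\,\e\!\left(\frac{2uz_i}{s}\right).
$$
By Lemma~\ref{lem:chi-cancel}, $\chi_s(uz_i)=\psi_s(u)\chi_s(z_i)$. Taking the product over $i$ gives the factor $\psi_s(u)^4$. Here one should note that $\psi_s$ is real with values $\pm1$, so $\psi_s(u)^4=1$, and the twist by $u$ disappears. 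The product of the four sums becomes $s^2\sum_{z_1,\ldots,z_4}\prod_i\chi_s(z_i)\,\e(2u\sum_iz_i/s)$. Interchanging the sums and summing $u$ over units produces the Ramanujan sum $c_s(2\sum_iz_i)$, which is exactly the stated identity.

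Finally, to see that at most one $c$ contributes, note that the $z_i$-sum is empty unless every $c\overline{b_i}$ is a square modulo $s$. This condition fixes the square class of $c$ as that of $b_1$, and if the $b_i$ lie in different square classes no $c$ contributes at all. The case $s=p^2$ is consistent with Lemma~\ref{lem:p2-root-reduction}: there $\omega(s)=1$, $s^2=p^4$, and $\chi_s\equiv1$ because $\nu=2$ is even.

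The main point needing care is the bookkeeping of the $\chi_s$ factors. Lemma~\ref{lem:global-root} defines $\chi_s$ through the CRT coefficients $t_j$. We must check that the roots of $y^2\equiv cu^2\overline{b_i}$ are exactly the $uz_i$, and that Lemma~\ref{lem:chi-cancel} applies with the same $t_j$ for every $i$ and every $u$. Both hold because $t_j$ depends only on $s$. Everything else is a direct substitution.
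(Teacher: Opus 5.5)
Your proposal is correct and follows essentially the same route as the paper's proof. You restrict to unit $h$ via Proposition~\ref{prop:weil-estermann}, parametrise $h=cu^2$ ($2^{\omega(s)}$-to-one), and substitute $y_i=uz_i$ in Lemma~\ref{lem:global-root}; then you remove the Gauss factors via $\psi_s(u)^4=1$ from Lemma~\ref{lem:chi-cancel} and sum over $u$ to obtain $c_s(2\sum_i z_i)$. Your explicit argument that only the $c$ in the square class of $b_1$ can contribute fills in a step the paper leaves implicit.
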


\begin{proof}
Only units $h$ contribute, by Proposition \ref{prop:weil-estermann}. Write $h=cu^2$, a $2^{\omega(s)}$-to-one parametrisation of the square class of $c$, and $y_i=uz_i$ in Lemma \ref{lem:global-root}. By Lemma \ref{lem:chi-cancel} the four Gauss factors contribute $\psi_s(u)^4=1$, so $u$ appears only in the additive character and the sum over $u$ is the Ramanujan sum $c_s(2\sum z_i)$.
\end{proof}

\begin{prop}\label{prop:general-fourth}
For $s$ odd and powerful and every $B\ge1$,
$$
\sum_{\substack{\mathbf b\in[1,B]^4\\(b_1b_2b_3b_4,s)=1}}|G_s(\mathbf b)|
\ \ll\ s^{o(1)}\left(s^2B^4+s^3B^3\right)
\ \le\ s^{5/2+o(1)}B^4
\qquad\text{whenever } B\ge s^{1/2}.
$$
\end{prop}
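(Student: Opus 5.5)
We will start from Proposition~\ref{prop:general-root-reduction}. For each $\mathbf b$ at most one square class $c\in C_s$ contributes, and $|\chi_s|=1$. The triangle inequality therefore gives
$$
|G_s(\mathbf b)|\le \frac{s^2}{2^{\omega(s)}}\sum_{c\in C_s}\sum_{\substack{z_i^2\equiv c\overline{b_i}}}\bigl|c_s\bigl(2\textstyle\sum_i z_i\bigr)\bigr|.
$$
We use the standard bound $|c_s(n)|\le (n,s)$. This turns the problem into counting quadruples of roots $(z_1,\dots,z_4)$, attached to $b_i\le B$, by the divisor $g=(z_1+z_2+z_3+z_4,s)$, with weight $g$.

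Writing $N(g)$ for the number of root quadruples with $g\mid\sum_i z_i$, the target reduces to showing
$$
\sum_{g\mid s}g\,N(g)\ll s^{o(1)}(B^4+sB^3).
$$
For fixed $c$, each $b_i$ has at most $2^{\omega(s)}=s^{o(1)}$ roots modulo $s$. For $g\mid s$, we choose $b_1,b_2,b_3$ and their roots freely, which gives $\ll s^{o(1)}B^3$ choices. The congruence $z_4\equiv -(z_1+z_2+z_3)\bmod g$ then fixes $z_4\bmod g$. Since $z_4^2\equiv c\overline{b_4}$, this determines $b_4\bmod g$ via $b_4\equiv c\overline{z_4}^{\,2}\bmod g$. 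So $b_4$ has at most $1+B/g$ choices, and each has $s^{o(1)}$ roots. Hence
$$
g\,N(g)\ll s^{o(1)}B^3(g+B).
$$
Summing over $g\mid s$, with $g\le s$ and $\tau(s)=s^{o(1)}$, gives $s^{o(1)}(B^4+sB^3)$. Multiplying by $s^2$ yields $s^{o(1)}(s^2B^4+s^3B^3)$. The final inequality is immediate: if $B\ge s^{1/2}$ then $s^3B^3\le s^{5/2}B^4$, and $s^2B^4\le s^{5/2}B^4$.

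The step needing care is the bookkeeping of roots and square classes. Three points must be checked. First, we must confirm that the number of $z$ with $z^2\equiv a$ modulo an odd powerful $s$ is at most $2^{\omega(s)}$ for units $a$. Second, we must check that recovering $b_4\bmod g$ from $z_4\bmod g$ is legitimate, which holds because $c$ and $z_4$ are units modulo $g$. Third, the sum over $c\in C_s$ must not cost more than $s^{o(1)}$. For the third point, we note that $\#C_s=2^{\omega(s)}$ for odd $s$, and the factor $2^{-\omega(s)}$ in Proposition~\ref{prop:general-root-reduction} compensates for it anyway.

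We expect no genuine obstacle beyond these checks. If the crude bound $|c_s(n)|\le(n,s)$ were too lossy, we would use the exact prime-power values of $c_s$, as in Lemma~\ref{lem:p2-local-ramanujan}. For the stated bound, however, the gcd bound already suffices.
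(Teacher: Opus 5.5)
Your proposal is correct and follows essentially the same route as the paper: triangle inequality in Proposition~\ref{prop:general-root-reduction}, majorising the Ramanujan sum by divisor levels, then counting three free integers with their roots and the fourth integer in one residue class modulo the level. The only cosmetic difference is that you pass through $|c_s(n)|\le(n,s)\le\sum_{D\mid s}D\,1_{D\mid n}$ (using that $s$ is odd, so the factor $2$ in $c_s(2\sum_i z_i)$ is harmless), whereas the paper writes the majorant $\sum_{D\mid s}D\,1_{D\mid\sum_i z_i}$ directly.
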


\begin{proof}
\looseness=-1 The case $s=1$ is immediate. For odd $s>1$, $|c_s(2\sum_iz_i)|\le\sum_{D\mid s}D\,1_{D\mid\sum_iz_i}$. Fix $D\mid s$ and a square class. The first three integers and roots have at most $s^{o(1)}B^3$ choices. Their sum determines $z_4\bmod D$; if this is a unit it determines $b_4\bmod D$, giving at most $B/D+1$ integers, each with $s^{o(1)}$ roots. Otherwise there is no solution. Proposition \ref{prop:general-root-reduction} therefore gives
$$
\sum_{\mathbf b}|G_s(\mathbf b)|
\ll s^{2+o(1)}\sum_{D\mid s}(B^4+DB^3)
\ll s^{o(1)}(s^2B^4+s^3B^3).
$$
For $B\ge s^{1/2}$ both terms are at most $s^{5/2}B^4$.
\end{proof}

\begin{prop}\label{prop:nsf-noloss}
Let $q=sr$ with $(s,r)=1$, $r$ squarefree and $s$ powerful. Then
$$
A_q(B)\ll_\ep q^\ep
\left(B^4q^{5/2}+B^3q^{5/2}s^{1/2}+B^2q^3\right)
$$
for every $B\ge1$ and every $\ep>0$. In particular, if $B\ge s^{1/2}$, then
$$
A_q(B)\ll_\ep q^\ep\left(B^4q^{5/2}+B^2q^3\right),
$$
with no factor $s^{1/2}$.
\end{prop}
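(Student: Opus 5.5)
By Lemma~\ref{lem:kloo-mult}, every coprime quadruple satisfies $G_q(\mathbf b)=G_s(\mathbf b)G_r(\mathbf b)$. On the squarefree side, Lemma~\ref{lem:sqfree-pointwise} gives
$$
|G_r(\mathbf b)|\ll r^{5/2+o(1)}D_r(\mathbf b)^{1/2}.
$$
The powerful side may contain a power of $2$. Its $2$-primary factor $2^\nu$ is treated either as a bounded modulus absorbed into constants, when $\nu\in\{2,3,4,5,7\}$, or by the two-branch formula \eqref{eq:reciprocal-two-branch}. With the latter, the root reduction of Proposition~\ref{prop:general-root-reduction} and the counting of Proposition~\ref{prop:general-fourth} go through verbatim with a Gauss factor of modulus one. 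So I will assume Proposition~\ref{prop:general-fourth} for all powerful $s$, at the cost of $s^{o(1)}$.

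The task is to combine the two counts without losing independence. I would not simply multiply pointwise bounds. Instead I expand both weights as divisor sums:
$$
D_r(\mathbf b)^{1/2}\le\sum_{d\mid r,\ d\mid D_r(\mathbf b)}d^{1/2},
\qquad
|c_s(2\textstyle\sum z_i)|\le\sum_{D\mid s}D\,1_{D\mid\sum z_i}.
$$
The second uses the root parametrisation of Proposition~\ref{prop:general-root-reduction}, which gives $|G_s(\mathbf b)|\le s^2\sum_{c}\sum_{\mathbf z}|c_s(2\sum z_i)|$. For fixed $d\mid r$, a pairing assignment $d=d_{12}d_{13}d_{14}$, fixed $D\mid s$ and a square class $c$, I count quadruples $\mathbf b\in[1,B]^4$ together with roots $\mathbf z$ satisfying two sets of conditions: the six divisibility conditions of Proposition~\ref{prop:D-count} modulo the factors of $d$, and $D\mid\sum z_i$. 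Since $(d,D)=1$, these conditions combine by the Chinese remainder theorem.

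The count proceeds as follows. Choose $b_1$, then $b_2$ in a fixed class modulo $d_{12}$, then $b_3$ in a fixed class modulo $d_{13}d_{14}$, each with its $s^{o(1)}$ roots. The conditions then fix $b_4$ modulo $d$ and, when the required $z_4$ is a unit, modulo $D$, so $b_4$ lies in one class modulo $dD$. This gives
$$
\ll s^{o(1)}B\left(\frac B{d_{12}}+1\right)\left(\frac B{d_{13}d_{14}}+1\right)\left(\frac B{dD}+1\right).
$$
Weighted by $d^{1/2}D$, summed over divisors, and multiplied by $s^2r^{5/2}$, I expect the bound
$$
q^{o(1)}s^2r^{5/2}\Bigl(B^4+B^3s+B^2r^{1/2}s+\cdots\Bigr).
$$

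The main obstacle is controlling the cross terms in this expansion. I want the monomials to reduce to $B^4$, $B^3s^{1/2}$ (after comparing with $q^{5/2}=s^{5/2}r^{5/2}$), and $B^2 s r^{1/2}\le B^2 q^{1/2}s^{1/2}$. The last of these is too large unless it is handled carefully. There are two routes. The first is to compare $D$ with $B$: when $D>B$ I would use the $B^2$-type count for $b_4$, and when $D\le B$ the $B^4/(dD)$ term. The second, which I would try first, is the pointwise fallback on the $s$-side. Proposition~\ref{prop:weil-estermann} gives $|G_s|\ll s^{3+o(1)}$, and combined with Proposition~\ref{prop:D-count} on the $r$-side this yields $s^3r^{5/2}(B^4+B^2r^{1/2})\le q^{o(1)}(B^4q^{5/2}s^{1/2}+B^2q^3)$. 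I would then interpolate: take the minimum of the two bounds term by term, using the factored count for the $B^4$ and $B^3$ terms and the pointwise $s^3$ bound for the $B^2$ terms. This yields $B^4q^{5/2}+B^3q^{5/2}s^{1/2}+B^2q^3$.

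For the final claim, when $B\ge s^{1/2}$ we have $B^3s^{1/2}\le B^4$, so the middle term is absorbed into $B^4q^{5/2}$.
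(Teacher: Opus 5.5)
Your counting is the paper's argument. You expand $D_r(\mathbf b)^{1/2}$ over $d\mid r$ with a pairing assignment $d=d_{12}d_{13}d_{14}$, expand the Ramanujan weight over levels $D\mid s$, and use $(d,D)=1$ to place $b_4$ in a single class modulo $dD$. This gives the count $s^{o(1)}\bigl(B^4/(d^2D)+B^3/d+B^2\bigr)$, and the bound you then display already proves the proposition.

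The obstacle you describe comes from inconsistent normalisation. The prefactor is $s^2r^{5/2}=q^{5/2}s^{-1/2}$. You used this for the cubic term but dropped the $s^{-1/2}$ for the quadratic one. With it included,
$$
s^2r^{5/2}\cdot B^2sr^{1/2}=s^3r^3B^2=q^3B^2,
$$
which is exactly the third target term. Likewise $s^2r^{5/2}B^4\le q^{5/2}B^4$ and $s^2r^{5/2}\cdot B^3s=q^{5/2}s^{1/2}B^3$. Your own pointwise computation produces the same monomial $s^3r^{5/2}\cdot B^2r^{1/2}$ for the quadratic term, so the two routes never differed there.

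The remedy in your last paragraph should be deleted, because as stated it is not a valid inference. From $A\le X_1+Y_1+Z_1$ and $A\le X_2+Y_2+Z_2$ you cannot conclude $A\le\min(X_1,X_2)+\min(Y_1,Y_2)+\min(Z_1,Z_2)$ unless both bounds come from one common decomposition of $A$. Here the monomials of the factored count arise from expanding a product separately for each configuration $(d,D,c)$. The pointwise bound $|G_s(\mathbf b)|\ll s^{3+o(1)}$ respects no such splitting. Taking the minimum separately for each $d$ is legitimate, but it gains nothing, and nothing needs to be gained. The alternative of comparing $D$ with $B$ is likewise unnecessary.

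On the $2$-part, the claim that everything goes through verbatim is not accurate, for two reasons.
\begin{itemize}
\item For odd $\nu$ the factor $\tau_2(\epsilon_iuz_i;2^\nu)=\e(\epsilon_iuz_i/8)$ depends on $u$. So it does not cancel the way $\psi_s(u)^4=1$ does at odd primes.
\item The parametrisation $h=cu^2$ runs over $u\equiv1\bmod4$ rather than over all units, so the $u$-sum is not a Ramanujan sum.
\end{itemize}
The paper absorbs the Gauss factors into the phase, obtaining $\e\bigl((2^{\nu-4}+1)u\sum_i\epsilon_iz_i/2^{\nu-1}\bigr)$ with an odd multiplier. Summing over $u=1+4v$ then imposes $2^{\nu-3}\mid\sum_i\epsilon_iz_i$, with weight $\ll 2^{2\nu}\cdot2^{\nu-3}$. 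This reproduces the structure your count needs: a level $D\mid s$ of weight $\ll s^2D$ that fixes $b_4$ modulo $D$. What must be extended to even $s$ is this root-level structure, not Proposition~\ref{prop:general-fourth} itself, and that extension has to be checked rather than assumed.
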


\begin{proof}
First suppose $s$ is odd. By multiplicativity and Lemma \ref{lem:sqfree-pointwise},
$$
|G_q(\mathbf b)|\ll r^{5/2+o(1)}|G_s(\mathbf b)|
\sum_{\substack{d\mid r\\d\mid D_r(\mathbf b)}}d^{1/2}.
$$
For each $d=d_{12}d_{13}d_{14}$, retain the pairing assignment of Proposition \ref{prop:D-count}; retain also each Ramanujan level $D\mid s$. Fix a square class. The first three integers and their roots have at most $s^{o(1)}B(B/d_{12}+1)(B/(d_{13}d_{14})+1)$ choices. The root condition fixes $b_4\bmod D$, and the remaining pairing conditions fix $b_4\bmod d$. Since $(d,D)=1$, the fourth integer has at most $B/(dD)+1$ choices, each with $s^{o(1)}$ roots. Thus the count is
$$
s^{o(1)}B\left(\frac B{d_{12}}+1\right)
\left(\frac B{d_{13}d_{14}}+1\right)
\left(\frac B{dD}+1\right)\ll s^{o(1)}
\left(\frac{B^4}{d^2D}+\frac{B^3}{d}+B^2\right).
$$
With cubic coefficients at most $1/d$ and lower-degree terms at most $B^2$, Proposition \ref{prop:general-root-reduction} gives
$$
\begin{aligned}
A_q(B)&\ll q^{o(1)}s^2r^{5/2}
\sum_{D\mid s}D\sum_{d\mid r}3^{\omega(d)}d^{1/2}
\left(\frac{B^4}{d^2D}+\frac{B^3}{d}+B^2\right)\\
&\ll q^{o(1)}(s^2r^{5/2}B^4+s^3r^{5/2}B^3+q^3B^2).
\end{aligned}
$$
Here $\sum_{D\mid s}1\ll s^{o(1)}$, $\sum_{D\mid s}D\ll s^{1+o(1)}$, and the three $d$-sums, with exponents $-3/2,-1/2,1/2$, are respectively $O(1)$, $r^{o(1)}$ and $r^{1/2+o(1)}$. This proves the claim for odd $s$, including $s=1$ by retaining only $D=1$.

If $2^\nu\Vert s$ with $\nu\ge8$, use \eqref{eq:reciprocal-two-gauss-sign}--\eqref{eq:reciprocal-two-branch}. Only odd $h$ contribute. In a square class supporting all four factors, $h=cu^2$ with $u\equiv1\bmod4$ is two-to-one. Write $z_i=(c\overline{b}_i)_{1/2}\bmod2^{\nu-1}$, congruent to $1\bmod4$; the corresponding root for $h\overline{b}_i$ is $uz_i$. For even $\nu$ the four Gauss factors have product $\e(\sum_i\epsilon_i/8)$, independent of $u$. For odd $\nu$ their product combines with the phases as
$$
\e\left(\frac{(2^{\nu-4}+1)u\sum_i\epsilon_i z_i}{2^{\nu-1}}\right),
$$
since $\tau_2(\epsilon_iuz_i;2^\nu)=\e(\epsilon_iuz_i/8)$ and the phase of the branch is $\e(\epsilon_iuz_i/2^{\nu-1})$. The coefficient is odd (and is $1$ for even $\nu$, apart from the constant). Putting $u=1+4v$ with $v\bmod2^{\nu-2}$, the $u$-sum vanishes unless $2^{\nu-3}\mid\sum_i\epsilon_iz_i$, and otherwise has size $2^{\nu-2}$. Thus, since each branch has modulus $2^{(\nu+1)/2}$, each square class and sign pattern has weight at most $(\sqrt{2^{\nu+1}})^4\,2^{\nu-2}/2=4(2^\nu)^22^{\nu-3}$.

Combine this condition with each odd level by CRT. The resulting level $D\mid s$ has weight at most $4s^2D$, and the first three roots again fix $b_4\bmod D$. The preceding pairing count applies, with only four square classes and sixteen sign patterns at two; the level sums have the same bounds, $\sum_D1\ll s^{o(1)}$ and $\sum_DD\ll s^{1+o(1)}$. For $\nu\le7$, use $|G_{2^\nu}(\mathbf b)|\le2^{5\nu}\le128^32^{2\nu}$ and impose no condition at two; this gives the same estimates with an absolute constant. This proves the estimate for all powerful $s$. Finally $B^3s^{1/2}\le B^4$ for $B\ge s^{1/2}$.
\end{proof}

We apply the next proposition with $\gamma=1/2$ for Theorem~\ref{thm:density-one-main}, and with $\gamma$ just above $1/5$ in Section~\ref{sec:allq-three-eighths}.

\begin{prop}\label{prop:small-nsf-moment}
Fix $0\le\gamma\le1$ and $\sigma>0$, and let $0<\theta_1,\theta_2<1/2$ satisfy
$$
4\theta_1+6\theta_2+2\sigma<3,\qquad
4\theta_1+4\theta_2+2\sigma+\gamma<3,\qquad
2\theta_1+\theta_2+\sigma<1.
$$
Then, for every sufficiently large admissible modulus $q$ with $P(q)\le q^\gamma$, the moment formulae of Proposition~\ref{prop:qw-transfer} hold in each nonempty parity class.
\end{prop}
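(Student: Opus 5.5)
The plan is to run the argument of Proposition~\ref{prop:qw-transfer}, with the fourth-moment input supplied by Proposition~\ref{prop:nsf-noloss}. That bound carries an extra cubic term, and I will treat it by a third case in Lemma~\ref{lem:two-term-mixed-block-insertion}. Fix a parity $j$ and a coprime factorisation $q=q_1q_2$ with $\mu^2(q_1)=1$, as in Proposition~\ref{prop:fixed-parity-qw-setup}. By Lemma~\ref{lem:primitive-factorisation}, every repeated prime power of $q$ lies in $q_2$. Hence $q_2=s\,r'$ with $s=P(q)$ powerful, $r'$ squarefree and $(s,r')=1$. Also $P(q_2)=s\le q^\gamma$ uniformly over the $2^{\omega(q)}=q^{o(1)}$ factorisations, including $q_2=1$. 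Proposition~\ref{prop:nsf-noloss}, applied with modulus $q_2$ and $B=2M_2$, then gives
$$
A_{q_2}(2M_2)\ll q^{\varepsilon_1}\bigl(M_2^4q_2^{5/2}+M_2^3q_2^{5/2}q^{\gamma/2}+M_2^2q_2^3\bigr).
$$

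The quartic and quadratic terms enter exactly as in Lemma~\ref{lem:two-term-mixed-block-insertion} with $\omega_4=\omega_2=0$. Their admissibility conditions are the first and third hypotheses, $4\theta_1+6\theta_2+2\sigma<3$ and $2\theta_1+\theta_2+\sigma<1$. The preliminary hypotheses of Proposition~\ref{prop:qw-transfer} are checked as in Proposition~\ref{prop:sqfree-moment}. Put $\Delta=1-(2\theta_1+\theta_2+\sigma)>0$. After small dyadic losses we get $M_1M_2\le q^{1-\Delta/2}$. Outside the large-ratio range of Lemma~\ref{lem:fixed-factorisation-large-ratio} we also get $X<q^{-1+\sigma}M_1^2M_2\le q^{-\Delta/2}$. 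The large-ratio range, the $k=0$ term and the $q_2=1$ term need no change, since they do not use the fourth moment.

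The new step is the cubic term. I insert $A^{1/4}\ll q^{\varepsilon_1}M_2^{3/4}q_2^{5/8}q^{\gamma/8}$ into the $k\ne0$ bound of Proposition~\ref{prop:qw-preinsertion}, namely
$$
q^{\varepsilon_1}q^{-3/2}M_2^{-1/2}X^{-1/2}(Xq_2(1+X))^{3/4}A^{1/4}.
$$
This gives $q^{\varepsilon_1-3/2+\gamma/8}M_2^{1/4}X^{1/4}q_2^{11/8}$. Using $q_2\le q$ and $X\le1$, this is at most $q^{\varepsilon_1-1/8+\gamma/8}M_2^{1/4}X^{1/4}$. In the complementary range, $X^{1/4}\le q^{-1/4+\sigma/4}M_1^{1/2}M_2^{1/4}$, so the term is bounded by
$$
q^{\varepsilon_1-3/8+(\gamma+2\sigma)/8}M_1^{1/2}M_2^{1/2},
$$
whose exponent at $M_i=q^{\theta_i+o(1)}$ is $-\bigl(3-(4\theta_1+4\theta_2+2\sigma+\gamma)\bigr)/8+o(1)$. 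This is negative exactly by the middle hypothesis, giving a power saving for each block. Absorbing the $O((\log q)^C)$ dyadic boxes, the factorisations and the loss $q/\varphi^*(q)$ of Lemma~\ref{lem:phistar-loss} into the strict exponent gaps yields the conclusion of Proposition~\ref{prop:qw-transfer} in each parity.

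The main obstacle is uniformity. The bound of Proposition~\ref{prop:nsf-noloss} must hold with constants independent of the powerful part, including the two-adic factor with $\nu\le7$ or $\nu\ge8$. It must also hold uniformly over all auxiliary $q_2$, which is why I rely on that proposition being stated for arbitrary powerful $s$. The cubic term is then just a third case added to the computation of Lemma~\ref{lem:two-term-mixed-block-insertion}.
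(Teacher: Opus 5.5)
Your proposal is correct and follows the paper's proof essentially step for step. The paper also observes that every auxiliary $q_2$ contains $P(q)$ with a squarefree coprime complement, inserts the three terms of Proposition~\ref{prop:nsf-noloss} (using $s^{1/2}\le q^{\gamma/2}$) separately into the $k\ne0$ bound of Proposition~\ref{prop:qw-preinsertion}, and obtains your three exponents, including $-(3-(4\theta_1+4\theta_2+2\sigma+\gamma))/8$ for the cubic term, while the large-ratio range, the $k=0$ and $q_2=1$ terms, and the $q^{o(1)}$ losses are handled just as you describe.
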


\begin{proof}
Let $s=P(q)$. Every auxiliary modulus $q_2$ contains $s$, as its complement is squarefree and coprime to it (Lemma~\ref{lem:primitive-factorisation}). Proposition~\ref{prop:nsf-noloss} and $s\le q^\gamma$ give, for every dyadic length $M_2$,
$$
A_{q_2}(2M_2)\ll_{\varepsilon_1}q^{\varepsilon_1}\bigl(q_2^{5/2}M_2^4+q^{\gamma/2}q_2^{5/2}M_2^3+q_2^3M_2^2\bigr).
$$

Let $\Delta=1-(2\theta_1+\theta_2+\sigma)>0$. Since $\theta_1+\theta_2<1-\Delta$, after the dyadic losses $M_1M_2\le q^{1-\Delta/2}$ for large $q$. Lemma~\ref{lem:fixed-factorisation-large-ratio} saves a power in the summed large-ratio range. In its complement,
$$
X=\frac{M_1N_2}{N_1}<q^{-1+\sigma}M_1^2M_2\le q^{-\Delta+o(1)},\qquad\text{so}\qquad X^{1/4}\le q^{-1/4+\sigma/4}M_1^{1/2}M_2^{1/4}.
$$

Insert the three terms of the moment bound separately into the bound for $k\ne0$ in Proposition~\ref{prop:qw-preinsertion}, and use $q_2\le q$ and $X\le1$. For example, the cubic term gives
$$
q^{-3/2}M_2^{-1/2}X^{-1/2}(Xq_2(1+X))^{3/4}\bigl(q^{\gamma/2}q_2^{5/2}M_2^3\bigr)^{1/4}\ll q^{-1/8+\gamma/8}M_2^{1/4}X^{1/4}.
$$
Apart from arbitrarily small powers of $q$, the three contributions are therefore at most
$$
q^{-3/8+\sigma/4}M_1^{1/2}M_2^{3/4},\qquad
q^{-3/8+\gamma/8+\sigma/4}M_1^{1/2}M_2^{1/2},\qquad
q^{-1/4+\sigma/4}M_1^{1/2}M_2^{1/4}.
$$

All three increase with both lengths, so it is enough to take $M_1=q^{\theta_1}$ and $M_2=q^{\theta_2}$. Their exponents are then
$$
-\frac{3-(4\theta_1+6\theta_2+2\sigma)}{8},\qquad
-\frac{3-(4\theta_1+4\theta_2+2\sigma+\gamma)}{8},\qquad
-\frac{1-(2\theta_1+\theta_2+\sigma)}{4},
$$
which are negative by hypothesis.

By Proposition~\ref{prop:qw-preinsertion}, the $k=0$ term and the modulus $q_2=1$ contribute
$$
\ll q^{\varepsilon_1}\frac{(M_1M_2)^{1/2}}q\le q^{-1/2-\Delta/4+\varepsilon_1}.
$$
Choose $\varepsilon_1$ and the dyadic losses below a fixed fraction of the least of these savings and of the savings in the preliminary remainder estimates. The $q^{o(1)}$ dyadic boxes, divisors and factorisations, and the factor $q/\varphi^*(q)$, are then absorbed, and Proposition~\ref{prop:fixed-parity-qw-setup} gives both moment formulae in each nonempty parity class.
\end{proof}

\subsection{Density among admissible moduli}

\begin{lem}\label{lem:large-nsf-part-density}
For $0<\gamma<1$ and all $Q\ge2$,
$$
\#\{q\le Q:P(q)>q^\gamma\}\ll_\gamma Q^{1-\gamma/2}.
$$
\end{lem}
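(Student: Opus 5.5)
We count $q\le Q$ according to their powerful part. Each $q$ factors uniquely as $q=P(q)\,r$ with $r$ squarefree and $(r,P(q))=1$. If $P(q)=s>q^\gamma$ and $q\le Q$, then $r=q/s<q^{1-\gamma}\le Q^{1-\gamma}$. It also gives $r< s^{(1-\gamma)/\gamma}$. We therefore bound the count by
$$
\sum_{\substack{s\le Q\\ s\text{ powerful}}}\#\{r\le \min(Q/s,\,s^{(1-\gamma)/\gamma})\}
\le\sum_{\substack{s\le Q\\ s\text{ powerful}}}\min\Bigl(\frac Qs,\ s^{(1-\gamma)/\gamma}\Bigr).
$$

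The only input needed is the classical count of powerful numbers: $\#\{s\le x: s\text{ powerful}\}\ll x^{1/2}$. This holds because every powerful $s$ can be written as $a^2b^3$, and $\sum_{b}(x/b^3)^{1/2}\ll x^{1/2}$. Partial summation then gives
$$
\sum_{\substack{s\le x\\ s\text{ powerful}}}s^{\beta}\ll x^{1/2+\beta}\quad(\beta>-1/2),
\qquad
\sum_{\substack{s>y\\ s\text{ powerful}}}s^{-1}\ll y^{-1/2}.
$$

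Next we split at the crossover $s_0=Q^\gamma$, where $Q/s=s^{(1-\gamma)/\gamma}$.
\begin{itemize}
\item For $s\le s_0$ we use the bound $s^{(1-\gamma)/\gamma}$. Its sum is $\ll s_0^{1/2+(1-\gamma)/\gamma}=Q^{\gamma/2+1-\gamma}=Q^{1-\gamma/2}$.
\item For $s>s_0$ we use $Q/s$. Its sum is $\ll Q\,s_0^{-1/2}=Q^{1-\gamma/2}$.
\end{itemize}
Together these give the claimed bound $\ll_\gamma Q^{1-\gamma/2}$.

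The only delicate point is applying the $s\le s_0$ bound correctly. The restriction $r<s^{(1-\gamma)/\gamma}$ comes from $s>q^\gamma=(sr)^\gamma$, and it must be used alongside the trivial bound $r\le Q/s$; using only $r\le Q/s$ with $s>q^\gamma$ would lose the saving for small $s$. Beyond that, the argument needs only the elementary powerful-number count; no step is expected to be a real obstacle.
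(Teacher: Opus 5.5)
Your argument is correct and complete. It organises the count differently from the paper.

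The paper first localises $q$ dyadically to $(X,2X]$. There the condition $P(q)>q^\gamma$ forces a powerful divisor $n>X^\gamma$, so the number of such $q$ is at most $\sum_{n>X^\gamma,\ n\text{ powerful}}2X/n\ll X^{1-\gamma/2}$. Summing over $X=Q2^{-k}$ finishes.

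In that route the small-$s$ range you treat separately never arises. The dyadic localisation replaces the moving threshold $q^\gamma$ by the fixed threshold $X^\gamma$. The paper also needs only that $P(q)$ is some powerful divisor of $q$, so it simply counts multiples of $n$ in $(X,2X]$ rather than exact cofactors.

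You keep $q$ global instead. You use the unique factorisation $q=P(q)r$ and the coupling $r<s^{(1-\gamma)/\gamma}$ that comes from $s>(sr)^\gamma$, and split at the crossover $s_0=Q^\gamma$. This costs one extra estimate, $\sum_{s\le s_0}s^{(1-\gamma)/\gamma}\ll s_0^{1/2+(1-\gamma)/\gamma}$, which is trivial since the exponent is positive. In return you avoid the dyadic decomposition and the final geometric sum.

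Both routes rest on the same input: $\#\{s\le x:s\text{ powerful}\}\ll x^{1/2}$ and its consequence $\sum_{s>y,\ s\text{ powerful}}s^{-1}\ll y^{-1/2}$.
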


\begin{proof}
The estimate $\#\{n\le X:n\hbox{ powerful}\}\ll X^{1/2}$ gives $\sum_{n>Y,\ n\textrm{ powerful}}n^{-1}\ll Y^{-1/2}$ by dyadic summation. For $X<q\le2X$, the condition forces a powerful divisor exceeding $X^\gamma$, so the number of such $q$ is
$$
\ll X\sum_{\substack{n>X^\gamma\\ n\textrm{ powerful}}}\frac1n\ll X^{1-\gamma/2}.
$$
 Sum over dyadic intervals.
\end{proof}

\begin{proof}[Proof of Theorem~\ref{thm:density-one-main}]
Take $0<\rho<1/16$, $\theta_1=3/8-\rho$, $\theta_2=1/4-\rho$ and $\sigma=\rho$. With $\gamma=1/2$,
$$
4\theta_1+6\theta_2+2\sigma=3-8\rho,\qquad
4\theta_1+4\theta_2+2\sigma+\gamma=3-6\rho,\qquad
2\theta_1+\theta_2+\sigma=1-2\rho,
$$
so Proposition~\ref{prop:small-nsf-moment} gives both moment formulae, in each nonempty parity class, for every sufficiently large admissible $q$ with $P(q)\le q^{1/2}$. Corollary \ref{cor:qw-nv} gives
$$
\kappa(q)\ge\frac{5/8-2\rho}{13/8-2\rho}+o(1);
$$
choosing $\rho$ small proves the pointwise assertion, also parity-wise. Finally Lemma \ref{lem:large-nsf-part-density} with $\gamma=1/2$ gives $O(X^{3/4})$ exceptional moduli, and the finitely many small conductors contribute $O_\ep(1)$.
\end{proof}

\section{The uniform $3/8$ theorem for all admissible moduli}
\label{sec:allq-three-eighths}

\subsection{A reciprocal bilinear estimate}
Proposition~\ref{prop:reciprocal-factorable} and its mean-square form \eqref{eq:reciprocal-mean-square} are reciprocal-variable analogues of \cite[Theorems~2.1 and~3.1]{MQW}; the latter refines \cite[Theorem~5]{BM2015}. We follow the differencing argument of \cite[Section~9]{BM2015} and \cite[Section~3.2]{MQW}. Beyond \cite[Section~3.2]{MQW}, the inverted variable requires one observation. Inversion changes the local labels to $k_v\equiv c_1\bar v$, whose differences $c_1\bar v_1\bar v_2(v_2-v_1)$ have the same common divisors with $r_1$ as $v_2-v_1$. The diagonal in $q$-Weyl differencing gives $V^{-1/2}s^{1/2}$ in \eqref{eq:reciprocal-factorable}.

For $Q\ge2$, we use the normalisation
$$
\operatorname{Kl}_2(n;Q)=Q^{-1/2}S(1,n;Q).
$$

\begingroup
\tolerance=1000
\emergencystretch=3em
\begin{prop}\label{prop:reciprocal-factorable}
Let $Q\ge2$, let $U,V\ge1$, let $s\mid Q$, and let $(c,Q)=1$. Let $\mathcal I$ be an integer interval of length $O(U)$ on which $|u|\asymp U$. Let $(a_u)$ be supported in $\mathcal I$, and let $(b_v)$ be supported in an integer interval of length at most $V<Q$. Assume that $(v,Q)=1$ whenever $b_v\ne0$. Then, for every $\ep>0$,
\begin{equation}\label{eq:reciprocal-factorable}
\begin{aligned}
\left|\sum_u\sum_v a_ub_v\operatorname{Kl}_2(cu\bar v;Q)\right|
&\ll_\ep (QUV)^\ep\|a\|_2\|b\|_2(UV)^{1/2}\\
&\quad\times\left(
 V^{-1/2}s^{1/2}
+Q^{-1/4}s^{1/4}
+U^{-1/2}Q^{1/4}s^{-1/4}\right).
\end{aligned}
\end{equation}
As in \cite[Theorem~2.1]{MQW}, the row variable $u$ need not be a unit modulo $Q$, and the estimate is uniform when $Q$ contains an arbitrary power of $2$.
\end{prop}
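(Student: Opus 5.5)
We would follow the differencing argument of \cite[Section~9]{BM2015} and \cite[Section~3.2]{MQW}, adapted to the inverted variable $\bar v$. First apply Cauchy--Schwarz in $u$. This leaves $\|a\|_2$ times the square root of
$$
\sum_{u\in\mathcal I}\Bigl|\sum_v b_v\operatorname{Kl}_2(cu\bar v;Q)\Bigr|^2 .
$$
Next perform $q$-Weyl differencing with respect to the divisor $s\mid Q$. Fix a parameter $H\le U/s$ and shift $u\mapsto u+shk$ for $1\le h\le H$. Averaging over $h$ costs an error of relative size $O(sH/U)$. Since $u$ need not be a unit, we do not invert $u$; instead we shift and keep track of $u$ modulo $Q$.

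A second Cauchy--Schwarz, in $u$ together with the residue of the shift, then reduces matters to correlation sums of the shape
$$
\sum_{v_1,v_2}b_{v_1}\overline{b_{v_2}}\sum_{h_1,h_2}\sum_{u\bmod Q}\operatorname{Kl}_2(c(u+sh_1)\bar v_1;Q)\operatorname{Kl}_2(c(u+sh_2)\bar v_2;Q),
$$
after completing the sum over $u$ to a complete sum modulo $Q$. By the Chinese remainder theorem we factor $Q=\prod \ell^\nu$. At each prime power we use the stationary-phase evaluations \eqref{eq:reciprocal-odd-branch}--\eqref{eq:reciprocal-two-branch}, and on the bounded set of exceptional $2$-powers we use Weil--Estermann directly. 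Since $s\mid Q$ and the shift is a multiple of $s$, the modulus-$s$ part only involves the labels $k_v\equiv c\bar v$. Following the observation made before the proposition, the differences $c\bar v_1\bar v_2(v_2-v_1)$ have the same gcd with each local modulus as $v_2-v_1$. The local complete sums then have square-root cancellation modulo $Q/(Q,v_1-v_2,\dots)$, and the correlation is governed by $\gcd(v_1-v_2,Q)$.

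The diagonal $v_1\equiv v_2$ modulo the relevant part of $Q$ contributes $\|b\|_2^2$ times a factor which, once $H$ is optimised, produces the term $V^{-1/2}s^{1/2}$. The off-diagonal terms have complete sums bounded by $Q^{1/2}$ times gcd factors. Averaging $\gcd(v_1-v_2,Q)$ over an interval of length $V<Q$ costs only $(QV)^\ep$, which gives the term $Q^{-1/4}s^{1/4}$. The completion error, coming from the incompleteness of the $u$-interval relative to $Q/s$, gives $U^{-1/2}Q^{1/4}s^{-1/4}$. Choosing $H\asymp U/s$, truncated when it exceeds the natural range, balances these terms and yields \eqref{eq:reciprocal-factorable}.

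The main obstacle is the complete correlation sum modulo $Q$ at primes where $\ell^2\mid Q$, and particularly at $2$. There the Gauss factors $\tau_\ell$ depend on the square-root branch, and the shifted phases $\e(2\epsilon\sqrt{(u+sh)\bar v})$ must be expanded to second order to extract the cancellation. We would handle this as in \cite[Section~3.2]{MQW}, using the compatible Hensel square roots fixed in Section~\ref{sec:preliminaries}, and accept an absolute constant at the exceptional $2$-powers $\nu\in\{2,3,4,5,7\}$.
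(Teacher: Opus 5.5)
There is a genuine gap. Your differencing step does not produce the correlation sums you display, and the mechanism that actually brings $s$ into the bound is missing.

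\textbf{The reduction step.} Before the first Cauchy--Schwarz the coefficients $a_u$ are arbitrary, so $u$ cannot be shifted. After it you face $\sum_u W(u/U)|F(u)|^2$ with $F(u)=\sum_v b_v\operatorname{Kl}_2(cu\bar v;Q)$. Shifting $u\mapsto u+sh$ and then applying a second Cauchy--Schwarz produces $|F(u+sh_1)|^2|F(u+sh_2)|^2$, i.e.\ products of four Kloosterman sums. Your two-fold correlation with shifts $sh_1,sh_2$ is what $q$-van der Corput gives for $\sum_uF(u)$ with no row coefficients, which is not the present situation.

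Your bookkeeping also fails on its own terms. With $H\asymp U/s$, the shift error of relative size $sH/U$ is as large as the main term. For $U<s$ no shift is available at all, yet the estimate is still nontrivial there, for instance when $(Q/s)^{1/2}<U<s$.

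If instead you simply open $\sum_uW|F|^2$ and complete in $u$ with $v_1,v_2$ unrestricted, square-root cancellation and gcd averaging give only the $s=1$ case, $\|b\|_2^2(U+UVQ^{-1/2}+VQ^{1/2})$. Nothing in your sketch justifies the factors $s^{1/4}$ and $s^{-1/4}$ that you attribute to the off-diagonal and completion terms.

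\textbf{The paper's mechanism.} The paper differences in the column variable.
\begin{itemize}
\item Cauchy's inequality over the $s$ residue classes of $v$ costs a factor $s$. This is exactly the diagonal $Us\|b\|_2^2$, i.e.\ the term $V^{-1/2}s^{1/2}$.
\item The same step restricts the off-diagonal to $v_1\equiv v_2\bmod s$, i.e.\ $v_2-v_1=\ell s$ with $O(V/s)$ values of $\ell$.
\item Split $Q=r_1r_2$, where $r_2$ collects the full prime powers of $Q$ dividing $s$, and write $s=Hr_2$. The $r_2$-factor of $\operatorname{Kl}_2$ is constant on each class and is discarded.
\item On $r_1$, the labels $k_v\equiv c_1\bar v$ satisfy $H\mid\Delta=k_{v_1}-k_{v_2}$.
\end{itemize}
After Poisson summation in $u$, the essential local input is a correlation bound with a frequency support condition:
\[
\Bigl|\sum_{\boldsymbol\epsilon}\mathcal C_{\boldsymbol\epsilon}(h;k_1,k_2;r_1)\Bigr|\ll r_1^{3/2+\ep}(\Delta,h,r_1)^{1/2}1_{G\mid h},\qquad G\ge G_0/64\ge H/64 .
\]
It is proved at each prime power by pairing the two stationary-phase branches before taking absolute values.

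The condition $G\mid h$ shortens the dual sum by a factor $\gg H$. This turns $VQ^{1/2}$ into $V(r_1/H)^{1/2}=VQ^{1/2}s^{-1/2}$. Meanwhile the forced common factor $(\Delta,r_1)\ge H$ is what makes the zero frequency contribute $UV(s/Q)^{1/2}$.

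Your proposal mentions square-root cancellation modulo prime powers, but not this vanishing of the correlation off multiples of $G$. That is the new ingredient whenever $s$ contains only part of a prime power of $Q$.

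Non-unit rows also need a separate argument, which tracking $u$ modulo $Q$ does not supply. The local factor vanishes at $p^N$ with $N\ge2$, and one then reduces to modulus $Q/d$ with aggregated columns.
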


\begin{proof}
After separating signs of $u$, it suffices to prove
\begin{equation}\label{eq:reciprocal-mean-square}
\sum_{u\in\mathcal I}
 \left|\sum_v b_v\operatorname{Kl}_2(cu\bar v;Q)\right|^2
\ll_\ep (QUV)^\ep\|b\|_2^2
 \left(Us+UVQ^{-1/2}s^{1/2}
       +VQ^{1/2}s^{-1/2}\right).
\end{equation}
First restrict to $(u,Q)=1$. Choose a nonnegative smooth function $W(x)$, of fixed compact support and bounded derivatives, majorising the row interval after scaling by $U$. Write $\widehat W(\xi)=\int_\R W(x)\e(-x\xi)\,dx$ and
$$
 \mathcal T_W^\times=
 \sum_{(u,Q)=1}W(u/U)
 \left|\sum_vb_v\operatorname{Kl}_2(cu\bar v;Q)\right|^2.
$$
Put in $r_2$ the full prime powers of $Q$ that divide $s$, and put the others in $r_1$. Then
\begin{equation}\label{eq:reciprocal-r1-r2-H}
 Q=r_1r_2,\quad (r_1,r_2)=1,\quad s=Hr_2,\quad
 H\mid \frac{r_1^\sharp}{\operatorname{rad}(r_1^\sharp)},
 \qquad
 r_1^\sharp=\prod_{p^\nu\Vert r_1,\ \nu\ge2}p^\nu.
\end{equation}
Here $\operatorname{rad}(n)=\prod_{p\mid n}p$, and $\operatorname{Kl}_2(\cdot;1)=1$. Twisted multiplicativity gives
\begin{equation}\label{eq:reciprocal-Kl-factorisation}
 \operatorname{Kl}_2(cu\bar v;Q)
 =\operatorname{Kl}_2(c_1u\bar v;r_1)
  \operatorname{Kl}_2(c_2u\bar v;r_2),
\end{equation}
where $c_1\equiv c\bar r_2^{2}\bmod {r_1}$ and $c_2\equiv c\bar r_1^{2}\bmod {r_2}$. Write $r_1=\prod_jq_j$, $q_j=p_j^{\nu_j}$, $R_j=r_1/q_j$ and $R_jt_j\equiv1\bmod {q_j}$. The local branches $S^{\epsilon}_{p^\nu}(a,m)$ and $S^{\epsilon}_{2^\nu}(a,m)$ are those of \eqref{eq:reciprocal-odd-branch} and \eqref{eq:reciprocal-two-branch} in Section~\ref{sec:preliminaries}.

Let $\mathcal P$ contain the odd primes with exponent at least two in $r_1$, and also $2$ at exponent $6$ or at least $8$. For $\boldsymbol\epsilon\in\{\pm1\}^{\mathcal P}$, put
$$
 \mathcal B_{q_j,\boldsymbol\epsilon}(a,m)=
 \begin{cases}
  S_{q_j}^{\epsilon_{p_j}}(a,m),&p_j\in\mathcal P,\\
  S(a,m;q_j),&p_j\notin\mathcal P,
 \end{cases}
 \qquad
 \mathcal B_{\boldsymbol\epsilon}(k,m;r_1)
 =\prod_j\mathcal B_{q_j,\boldsymbol\epsilon}(kt_j,mt_j).
$$
Thus
$$
S(k,m;r_1)=\sum_{\boldsymbol\epsilon}\mathcal B_{\boldsymbol\epsilon}(k,m;r_1),
$$
with at most $2^{\omega(r_1)}\ll_\ep r_1^\ep$ branches. For unit $k_1,k_2$ let $\Delta=k_1-k_2$ and
\begin{equation}\label{eq:reciprocal-complete-correlation-definition}
 \mathcal C_{\boldsymbol\epsilon}(h;k_1,k_2;r_1)
 =\sum_{m\bmod r_1}^{\times}
  \mathcal B_{\boldsymbol\epsilon}(k_1,m;r_1)
  \overline{\mathcal B_{\boldsymbol\epsilon}(k_2,m;r_1)}
  \e(-hm/r_1).
\end{equation}
With $\operatorname{ord}_p0=+\infty$, we prove
\begin{equation}\label{eq:reciprocal-global-correlation-bound}
 \left|\sum_{\boldsymbol\epsilon}
  \mathcal C_{\boldsymbol\epsilon}(h;k_1,k_2;r_1)\right|
 \ll_\ep r_1^{3/2+\ep}(\Delta,h,r_1)^{1/2}1_{G\mid h},
 \qquad G\ge G_0/64,
\end{equation}
where $G_0=(\Delta,r_1^\sharp/\operatorname{rad}(r_1^\sharp))$ and $G\mid r_1$. Bounds of this type are proved in \cite[Proposition~23]{BM2015} for odd moduli and in \cite[Lemma~3.4]{MQW} for powers of $2$. We prove \eqref{eq:reciprocal-global-correlation-bound} in the form needed here, with its frequency condition, to keep the paper self-contained.

For $p\Vert r_1$, let
$$
C_p(h;a_1,a_2)=\sum_{m\bmod p}^{\times}S(a_1,m;p)\overline{S(a_2,m;p)}\e(-hm/p)
$$
and $R_p(a)=\sum_{x\bmod p}^{\times}\e(ax/p)$. Opening the sums gives
$$
\begin{aligned}
 C_p(h;a_1,a_2)&=pT_p(h;a_1,a_2)
  -R_p(a_1)\overline{R_p(a_2)},\\
 T_p(h;a_1,a_2)&=
 \sum_{\substack{x\bmod p\\x\ne0,\ 1-hx\ne0}}
 \e\left(\frac{a_1x-a_2x/(1-hx)}p\right)\quad(h\ne0),\\
 C_p(0;a_1,a_2)&=pR_p(a_1-a_2)
  -R_p(a_1)\overline{R_p(a_2)}.
\end{aligned}
$$
For $h\ne0$, $a_1=a_2=0$ gives $C_p=-1$; otherwise the phase is nonconstant and Weil's bound applies, with linear cases summed directly. In detail, if $a_2\not\equiv0$ the phase is a rational function with a pole at $x=\bar h$, so $|T_p|\le2p^{1/2}+1$ by Weil's bound, after omitting $x=0$; if $a_2\equiv0\not\equiv a_1$ the sum is linear and $|T_p|\le2$; and $|R_p(a_1)R_p(a_2)|\le p$ unless $a_1\equiv a_2\equiv0$. For $h=0$ the last formula gives $|C_p(0;a_1,a_2)|\le p^2$, and $|C_p(0;a_1,a_2)|\le2p$ when $a_1\not\equiv a_2$. Together with the zero-frequency formula, this gives
\begin{equation}\label{eq:reciprocal-squarefree-bound}
 |C_p(h;a_1,a_2)|\ll
 p^{3/2}(a_1-a_2,h,p)^{1/2}.
\end{equation}
At $p=2$ use direct summation. This also covers $3^1$; the following odd-prime-power argument starts at exponent two.

For a branched $q=p^\nu$ define
$$
 C_{q,\epsilon}(h;a_1,a_2)=
 \sum_{m\bmod q}^{\times}S_q^\epsilon(a_1,m)
 \overline{S_q^\epsilon(a_2,m)}\e(-hm/q);
$$
for unbranched powers use the full factors and write $C_q$. Let $p$ be odd and $\nu\ge2$. If $a_1/a_2$ is nonsquare modulo $p$, the supports are disjoint. In the aligned case $a_1\equiv a_2\bmod p$, choose a representative $u_0$ of their common supporting square class and put $\alpha_i=(a_iu_0)_{1/2}$. Compatibility gives $\alpha_1\equiv\alpha_2\bmod p$. On the support write $m=u_0x_m^2$, with $x_m=(a_1m)_{1/2}\bar\alpha_1$; the chosen roots are $\alpha_ix_m$. The product of the two conjugate Gauss factors is $\Phi_{p,\nu}(\alpha_1,\alpha_2)$, where $\Phi_{p,\nu}(A,B)=1$ for even $\nu$ and $(AB/p)$ for odd $\nu$. It is independent of $m,\epsilon$, including when $p\equiv3\bmod4$, since the factors $i$ cancel. Pairing $x_m$ and $-x_m$ therefore gives
\begin{equation}\label{eq:reciprocal-paired-odd}
 C_{q,+}+C_{q,-}
 =q\Phi_{p,\nu}(\alpha_1,\alpha_2)\sum_{x\bmod q}^{\times}
  \e\left(\frac{2(\alpha_1-\alpha_2)x-hu_0x^2}{q}\right).
\end{equation}

For
$$
T_\nu(A,B)=\sum_{x\bmod p^\nu}^{\times}\e\Bigl(\frac{Ax-Bx^2}{p^\nu}\Bigr),
$$
translation by $p^{\nu-\operatorname{ord}_pB-1}$ in each non-zero class modulo $p$ gives zero if $\operatorname{ord}_pB$ is less than both $\operatorname{ord}_pA$ and $\nu-1$. Also
\begin{equation}\label{eq:reciprocal-restricted-gauss-bound}
 |T_\nu(A,B)|\le
 2p^{\nu/2}(A,B,p^\nu)^{1/2}.
\end{equation}
Indeed, remove $p^\delta$, where $\delta=\min(\operatorname{ord}_pA,\operatorname{ord}_pB,\nu)$. The case $\delta=\nu$ is trivial. Otherwise the residual sum has a unit coefficient. If its quadratic coefficient is a unit, subtract the multiples of $p$ from the full quadratic Gauss sum; each is bounded by the square root of the residual modulus, since the full sum has absolute value $p^{(\nu-\delta)/2}$. The sum over multiples of $p$ is $1$ when $\nu-\delta=1$; at larger exponents it vanishes if the residual linear coefficient is a unit, and otherwise is $p$ times a quadratic Gauss sum modulo $p^{\nu-\delta-2}$, of absolute value $p^{(\nu-\delta)/2}$. If only its linear coefficient is a unit, translation makes the sum zero at exponent at least two; at exponent one it is $-1$. Multiplication by $p^\delta$ gives the bound, including vanishing coefficients.

Since $\alpha_1+\alpha_2$ is a unit, the congruence
$$
(\alpha_1-\alpha_2)(\alpha_1+\alpha_2)\equiv u_0(a_1-a_2)\bmod {p^\nu}
$$
identifies the truncated valuations of $\alpha_1-\alpha_2$ and $a_1-a_2$. Thus \eqref{eq:reciprocal-paired-odd} and \eqref{eq:reciprocal-restricted-gauss-bound} give
\begin{equation}\label{eq:reciprocal-odd-local-bound}
 |C_{q,+}+C_{q,-}|
 \ll p^{3\nu/2}(a_1-a_2,h,p^\nu)^{1/2}
 1_{p^{\min\{\operatorname{ord}_p(a_1-a_2),\nu-1\}}\mid h}.
\end{equation}
For the unaligned case $a_1\not\equiv a_2\bmod p$, choose $\beta^2\equiv a_2\bar a_1\bmod q$. There is a sign $\sigma(m)$, depending only on $m\bmod p$, with $(a_2m)_{1/2}=\sigma(m)\beta(a_1m)_{1/2}$. The map ${(m,\epsilon)\mapsto x=\epsilon(a_1m)_{1/2}}$ is a bijection onto the units, with inverse $m\equiv \bar a_1x^2$. Hence
$$
 C_{q,+}+C_{q,-}
 =q\sum_{\sigma=\pm1}\Phi_{p,\nu}(1,\sigma\beta)
  \sum_{x\in X_\sigma}
  \e\left(\frac{2(1-\sigma\beta)x-h\bar a_1x^2}{q}\right),
$$
where $X_+,X_-$ partition the units into unions of classes modulo $p$. Both $1\pm\beta$ are units. For each sign the derivative $2(1-\sigma\beta)-2h\bar a_1x$ vanishes on at most one class modulo $p$; all other classes sum to zero by translation by $p^{\nu-1}$. On a surviving class, $h$ is a unit; writing $x=x_0+py$ gives $p$ times a quadratic Gauss sum modulo $p^{\nu-2}$, bounded by $p^{\nu/2}$, also for $\nu=2$. Hence $|C_{q,+}+C_{q,-}|\le2q^{3/2}$. This is \eqref{eq:reciprocal-odd-local-bound}, since $\operatorname{ord}_p(a_1-a_2)=0$, and includes $p=3$.

For $q=2^\nu$, $\nu=6$ or $\nu\ge8$, the supports are disjoint unless $a_1\equiv a_2\bmod8$. Define $r$ by $2^{r}=(a_1-a_2,2^\nu)$, so $3\le r\le\nu$. If $r=\nu$, the chosen branches agree and have squared modulus $2^{\nu+1}$ on $m\equiv a_1\bmod8$. Direct summation gives
\begin{equation}\label{eq:reciprocal-two-diagonal-endpoint}
 C_{2^\nu,\epsilon}(h;a_1,a_1)
 =2^{2\nu-2}\e(-ha_1/2^\nu)
   1_{2^{\nu-3}\mid h}.
\end{equation}

For $r<\nu$ we use the paired root expansion of \cite[Lemma~3.2]{MQW}. If $u,u'\equiv1\bmod8$, $u\equiv u'\bmod {2^{\lambda+1}}$, $t\equiv t'\bmod {2^{\lambda}}$ and $j\ge3$, then
\begin{align}\label{eq:reciprocal-paired-root}
 &(u+2^jt)_{1/2}-(u'+2^jt')_{1/2}\notag\\
 &\quad\equiv u_{1/2}-u'_{1/2}
 +2^{j-1}(\bar u_{1/2}t-\bar u'_{1/2}t')
 \pmod{2^{2j+\lambda-3}}.
\end{align}
For completeness, put
$$
F_n(u,t)=\sum_{\ell=0}^n\binom{1/2}{\ell}u_{1/2}^{1-2\ell}(2^jt)^\ell.
$$
  The binomial identity and $\operatorname{ord}_2(\ell!)\le\ell-1$ give
$$
 \operatorname{ord}_2(F_n(u,t)^2-u-2^jt)\ge(n+1)(j-2)+2,\quad
 (u+2^jt)_{1/2}\equiv F_n(u,t)\pmod{2^{(n+1)(j-2)+1}}.
$$
The hypotheses give $u_{1/2}\equiv u'_{1/2}\bmod {2^{\lambda}}$. The difference of degree-$\ell$ terms, $\ell\ge2$, has valuation at least $(j-2)\ell+\lambda+1\ge2j+\lambda-3$; the linear difference is the term shown in \eqref{eq:reciprocal-paired-root}. Take $n$ sufficiently large.

Set
\begin{equation}\label{eq:reciprocal-two-Tj}
 T=\left\lfloor\frac{\nu+r}{2}\right\rfloor-3,
 \qquad j=\nu-T;\qquad j\ge4,\quad 2j+r-4>\nu.
\end{equation}
Write $m=m_1+2^{j}m_2$ with $m_1\bmod2^{j}$, $m_2\bmod2^{T}$, and put $\rho_i=(m_1a_i)_{1/2}$. Apply \eqref{eq:reciprocal-paired-root} with $(u,u',t,t',\lambda)=(m_1a_1,m_1a_2,a_1m_2,a_2m_2,r-1)$. On the common support $m_1\equiv a_1\equiv a_2\bmod8$, so all hypotheses hold. The error is beyond the phase modulus and the Gauss factors are independent of $m_2$ since $j\ge4$. Orthogonality makes the $m_2$-sum zero unless
\begin{equation}\label{eq:reciprocal-two-orthogonality}
 L(m_1):=a_1\bar\rho_1-a_2\bar\rho_2
 \equiv\epsilon h\pmod{2^{T}},
\end{equation}
in which case its size is $2^{T}$. Since $a_i\bar\rho_i\equiv \rho_i\bar m_1\bmod {2^\nu}$ and $\operatorname{ord}_2(\rho_1+\rho_2)=1$, the identity
$$
(\rho_1-\rho_2)(\rho_1+\rho_2)\equiv m_1(a_1-a_2)\bmod {2^\nu}
$$
gives $\operatorname{ord}_2L(m_1)=r-1$.

\looseness=-1 If $3\le r\le\nu-6$, then $r-1<T$ and $h=2^{r-1}h'$ with $h'$ odd. Choose odd $w\equiv a_1\bmod8$ and set $A_i=(a_iw)_{1/2}$, $x=(m_1w)_{1/2}$. Comparison of squares and residues modulo $4$ gives
$$
a_i\overline{(m_1a_i)_{1/2}}=\sigma A_i\bar x
$$
with the same sign $\sigma$ for both $i$, and $\operatorname{ord}_2(A_1-A_2)=r-1$. For $\beta=2^{-(r-1)}(A_1-A_2)$ and $M=T-r+1$, \eqref{eq:reciprocal-two-orthogonality} becomes $h'x\equiv\sigma\epsilon\beta\bmod {2^{M}}$. Here $1\le M\le j$, since $2T\le\nu+r-6$. At most two root classes, hence at most two classes of $m_1\bmod2^{M}$, contribute. Thus there are $O(2^{j-M})=O(2^{\nu-2T+r-1})$ choices and
$$
 |C_{2^\nu,\epsilon}|
 \ll2^{\nu+1}2^{\nu-2T+r-1}2^{T}
 \ll2^{3\nu/2}(2^{r-1})^{1/2}.
$$
On this support $(a_1-a_2,h,2^\nu)=2^{r-1}$.

If $\nu-5\le r<\nu$, then $r-3\le T\le r-1$ and $T\ge\nu-6$. The same orthogonality forces $2^{T}\mid h$. Trivial counting gives
$$
|C_{2^\nu,\epsilon}|\ll2^{2\nu}\le8\cdot2^{3\nu/2}(a_1-a_2,h,2^\nu)^{1/2},
$$
and $2^{r}\mid8h$. Together with \eqref{eq:reciprocal-two-diagonal-endpoint}, this proves
\begin{equation}\label{eq:reciprocal-two-local-bound}
 |C_{2^\nu,\epsilon}(h;a_1,a_2)|
 \ll 2^{3\nu/2}(a_1-a_2,h,2^\nu)^{1/2}
 1_{(a_1-a_2,2^\nu)\mid8h}.
\end{equation}

For $\nu\in\{2,3,4,5,7\}$ retain the full factor. Then $|C_{2^\nu}|\le2^{3\nu}\ll2^{3\nu/2}(a_1-a_2,h,2^\nu)^{1/2}$ with an absolute constant, and we impose no frequency condition. The exponent-one factor contributes nothing to $G_0$. Define
$$
 G_{\rm odd}=\prod_{\substack{p^\nu\Vert r_1\\p>2,\ \nu\ge2}}
 p^{\min(\operatorname{ord}_p\Delta,\nu-1)}.
$$
At a branched two-power multiply this by $2^{\max\{\min(\operatorname{ord}_2\Delta,\nu)-3,0\}}$; at other two-powers use no extra factor. The resulting $G$ satisfies $G\ge G_0/64$, since the exceptional two-power loses at most $2^6$, and a branched one at most $2^3$. CRT, with $m\mapsto t_jm$ locally, gives
\begin{equation}\label{eq:reciprocal-correlation-product}
 \sum_{\boldsymbol\epsilon}
 \mathcal C_{\boldsymbol\epsilon}
 =\prod_{\substack{p^\nu\Vert r_1\\p\in\mathcal P}}
  (C_{p^\nu,+}+C_{p^\nu,-})
  \prod_{\substack{p^\nu\Vert r_1\\p\notin\mathcal P}}
   C_{p^\nu},
\end{equation}
where each local correlation has arguments $(h;k_1t_p,k_2t_p)$. The units $t_p$ preserve every valuation and gcd. The bounds \eqref{eq:reciprocal-squarefree-bound}, \eqref{eq:reciprocal-odd-local-bound} and \eqref{eq:reciprocal-two-local-bound} therefore prove \eqref{eq:reciprocal-global-correlation-bound}. The simultaneous branch sum is kept until this product, since taking absolute values branch by branch loses \eqref{eq:reciprocal-paired-odd}.

For unit $k_1,k_2$, put
$$
 \Sigma^{\boldsymbol\epsilon}(k_1,k_2)
 =\sum_{(u,Q)=1}W(u/U)
  \mathcal B_{\boldsymbol\epsilon}(k_1,u;r_1)
  \overline{\mathcal B_{\boldsymbol\epsilon}(k_2,u;r_1)}.
$$
Invert only $(u,r_2)=1$, retaining $(u,r_1)=1$. Writing $u=dn$ for $d\mid\operatorname{rad}(r_2)$ gives
$$
\Sigma^{\boldsymbol\epsilon}(k_1,k_2)=\sum_{d\mid\operatorname{rad}(r_2)}\mu(d)\Sigma_d^{\boldsymbol\epsilon}(k_1,k_2),
$$
where
$$
\Sigma_d^{\boldsymbol\epsilon}(k_1,k_2)
=\sum_{(n,r_1)=1}W(dn/U)
\mathcal B_{\boldsymbol\epsilon}(k_1,dn;r_1)
\overline{\mathcal B_{\boldsymbol\epsilon}(k_2,dn;r_1)}.
$$
Poisson summation gives
\begin{equation}\label{eq:reciprocal-poisson-completion}
 \Sigma_d^{\boldsymbol\epsilon}
 =\frac{U}{dr_1}\sum_{h\in\Z}
  \widehat W\left(-\frac{hU}{dr_1}\right)
 \mathcal C_{\boldsymbol\epsilon}(\bar d h;k_1,k_2;r_1).
\end{equation}
Since $d$ is a unit modulo $r_1$, the gcd and the condition $G\mid h$ are unchanged by $\bar d$. Put $D=(\Delta,r_1)$. The zero frequency contributes $\ll(U/d)r_1^{1/2+\ep}D^{1/2}$; for the other frequencies rapid decay gives $\sum_{h\ne0,\ G\mid h}|\widehat W(-hU/(dr_1))|\ll dr_1/(UG)$. Using $G\ge G_0/64$ and the divisor bounds for $\sum_d1$ and $\sum_d1/d$, we obtain
\begin{equation}\label{eq:reciprocal-completed-correlation}
 \left|\sum_{\boldsymbol\epsilon}
  \Sigma^{\boldsymbol\epsilon}(k_1,k_2)\right|
 \ll_\ep (Qr_1)^\ep
 \left(Ur_1^{1/2}D^{1/2}
       +\frac{r_1^{3/2}D^{1/2}}{G_0}\right).
\end{equation}

We now bound the unit-row mean square. Weil--Estermann gives $|\operatorname{Kl}_2(n;M)|\ll_\ep M^\ep$ for $(n,M)=1$, including all two-powers. If $V<s$, Cauchy's inequality gives $\ll Q^\ep UV\|b\|_2^2\le Q^\ep Us\|b\|_2^2$. Otherwise apply Cauchy's inequality in the $s$ residue classes of $v$. Within each class the $r_2$-factor in \eqref{eq:reciprocal-Kl-factorisation} is constant in $v$; its square costs $r_2^\ep$. For $k_v\equiv c_1\bar v\bmod {r_1}$, the remaining factor is $r_1^{-1/2}S(k_v,u;r_1)$. Cauchy's inequality over branch vectors, before opening the square, costs $r_1^\ep$.

The diagonal contributes $\ll(QUV)^\ep Us\|b\|_2^2$, and the off-diagonal is at most
$$
 \frac{s}{r_1}(QUV)^\ep
 \sum_{\substack{v_1\ne v_2\\v_1\equiv v_2\bmod s}}
 |b_{v_1}b_{v_2}|
 \left|\sum_{\boldsymbol\epsilon}
 \Sigma^{\boldsymbol\epsilon}(k_{v_1},k_{v_2})\right|.
$$
For every $E\mid r_1$, inversion gives $(k_{v_1}-k_{v_2},E)=(v_2-v_1,E)$. Write $v_2-v_1=\ell s=\ell Hr_2$, $0<|\ell|\ll V/s$, and $R=r_1/H$. Then $D=H(\ell,R)$ and $G_0\ge H$ by \eqref{eq:reciprocal-r1-r2-H}. For each $\ell$, $\sum_v|b_vb_{v+\ell s}|\le\|b\|_2^2$, while
$$
 \sum_{0<|\ell|\ll V/s}(\ell,R)^{1/2}
 \le\sum_{d\mid R}d^{1/2}\#\{0<|\ell|\ll V/s:d\mid\ell\}
 \ll_\ep (V/s)r_1^\ep.
$$
Insert these bounds in \eqref{eq:reciprocal-completed-correlation}. The off-diagonal becomes
$$
 \ll_\ep (QUV)^\ep\|b\|_2^2 V
 \left(U\sqrt{H/r_1}+\sqrt{r_1/H}\right)
 =(QUV)^\ep\|b\|_2^2
 \left(UVQ^{-1/2}s^{1/2}
       +VQ^{1/2}s^{-1/2}\right),
$$
since $H/r_1=s/Q$. Together with the diagonal, this proves \eqref{eq:reciprocal-mean-square} for unit rows.

To remove the unit-row restriction, note that if $p^N\Vert Q$, $N\ge2$, and $p\mid u$, the local factor vanishes. Indeed, in $S(1,n;p^N)$ with $p\mid n$, translation $x\mapsto x+tp^{N-1}$ changes the phase by $tp^{N-1}(1-n\bar x^2)$, a nontrivial character in $t\bmod p$. This also treats $p=2$ by pairing the two translates. Thus only squarefree unitary $d=(u,Q)$ survive. With $Q_d=Q/d$ and $u=du'$, CRT gives a unit $c_d\bmod Q_d$ and
\begin{equation}\label{eq:reciprocal-nonunit-scaling}
 \operatorname{Kl}_2(cdu'\bar v;Q)
 =\frac{\mu(d)}{d^{1/2}}
  \operatorname{Kl}_2(c_du'\bar v;Q_d),
\end{equation}
since each removed prime factor is $p^{-1/2}S(1,0;p)=-p^{-1/2}$.

If $Q_d=1$, Cauchy's inequality gives $(UV/Q^2+V/Q)\|b\|_2^2$, absorbed by the first and third terms of \eqref{eq:reciprocal-mean-square}. If $U/d<1$, there are $O(1)$ scaled rows; Weil and Cauchy give $\ll Q^\ep V\|b\|_2^2$, absorbed by the third term since $s\le Q$. Otherwise $U/d\ge1$.

Aggregate columns by $b_d(r)=\sum_{v\equiv r\bmod Q_d}b_v$ and split a cyclic support into at most two ordinary intervals of length $V_d<Q_d$. Then
$$
 V_d\ll\min(V,Q_d),\quad
 \|b_d\|_2^2\le(1+V/Q_d)\|b\|_2^2,\quad
 (1+V/Q_d)V_d\ll V.
$$
Apply the unit estimate at modulus $Q_d$ with lengths $U/d,V_d$ and $s_d=s/(s,d)$. Since $V<Q$ and $Q=dQ_d$, inclusion of the factor $d^{-1}$ from the square of \eqref{eq:reciprocal-nonunit-scaling} bounds the ratios of its three terms to the desired terms by
$$
 \frac{1+V/Q_d}{d^2(s,d)},\qquad
 \frac{2}{d^{3/2}(s,d)^{1/2}},\qquad
 \frac{2(s,d)^{1/2}}{d^{3/2}},
$$
respectively. These are bounded, and divisor summation is absorbed by $(QUV)^\ep$. This proves \eqref{eq:reciprocal-mean-square} for all rows.

Finally apply Cauchy's inequality against $a_u$ and take square roots of the three terms in \eqref{eq:reciprocal-mean-square} to obtain \eqref{eq:reciprocal-factorable}.
\end{proof}
\endgroup

\subsection{A complete ratio operator bound}
All matrix norms below are operator norms on $\ell^2$ with counting measure. For a finite matrix $T$, we write
$$
\|T\|_{S^4}^4=\operatorname{tr}\bigl((TT^*)^2\bigr).
$$
In particular, $\|T\|\le\|T\|_{S^4}$.
\begin{lem}\label{lem:complete-ratio-operator}
Let $\mathcal U,\mathcal V$ be subsets of integer intervals of lengths $U,V$, let $(c,Q)=1$, and suppose that every $v\in\mathcal V$ is a unit modulo $Q$. The integer-indexed matrix
$$
A_Q(u,v)=\operatorname{Kl}_2(cu\bar v;Q)
$$
satisfies
\begin{equation}\label{eq:complete-ratio-operator}
\|A_Q\|
\le Q^{1/2}
\left(1+\frac UQ\right)^{1/2}
\left(1+\frac VQ\right)^{1/2}.
\end{equation}
\end{lem}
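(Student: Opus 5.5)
The plan is to factor $A_Q$ through the complete matrix indexed by residues modulo $Q$, bound that complete matrix by a Fourier factorisation, and pay the factors $(1+U/Q)^{1/2}$ and $(1+V/Q)^{1/2}$ for the multiplicity with which the integer intervals cover residue classes.

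\emph{Step 1 (reduction to residues).} Let $K(x,y)=\operatorname{Kl}_2(cx\bar y;Q)$ for $x\bmod Q$ and $y\in(\Z/Q\Z)^\times$. Let $P_{\mathcal U}$ be the $0/1$ matrix from $\mathcal U$ to $\Z/Q\Z$ sending $u$ to $u\bmod Q$, and define $P_{\mathcal V}$ from $\mathcal V$ to the units in the same way. Then
$$
A_Q=P_{\mathcal U}^{\,T}KP_{\mathcal V}.
$$
The matrix $P_{\mathcal U}^{\,T}P_{\mathcal U}$ is diagonal, and its entry at $x$ is the number of $u\in\mathcal U$ with $u\equiv x\bmod Q$. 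An interval of length $U$ meets each class at most $1+U/Q$ times, so $\|P_{\mathcal U}\|\le(1+U/Q)^{1/2}$. Likewise $\|P_{\mathcal V}\|\le(1+V/Q)^{1/2}$. It therefore suffices to prove $\|K\|\le Q^{1/2}$.

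\emph{Step 2 (factorisation of $K$).} Open the Kloosterman sum:
$$
\operatorname{Kl}_2(cx\bar y;Q)=Q^{-1/2}\sum_{t\bmod Q}^{\times}\e\left(\frac{t+cx\bar y\bar t}{Q}\right).
$$
Since $y$ is a unit, we may substitute $t=s\bar y$, which permutes the units. This gives
$$
K(x,y)=Q^{-1/2}\sum_{s\bmod Q}^{\times}\e\left(\frac{cx\bar s}{Q}\right)\e\left(\frac{s\bar y}{Q}\right),
$$
so $K=Q^{-1/2}FG$ with $F(x,s)=\e(cx\bar s/Q)$ and $G(s,y)=\e(s\bar y/Q)$, where $s$ runs over units. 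The map $s\mapsto c\bar s$ is injective on units, so the columns of $F$ are distinct columns of the full additive Fourier matrix $\bigl(\e(xz/Q)\bigr)_{x,z\bmod Q}$. That matrix is $Q^{1/2}$ times a unitary matrix, and a column submatrix cannot have larger norm, so $\|F\|\le Q^{1/2}$. Similarly $y\mapsto\bar y$ is injective, so $G$ is a submatrix of the Fourier matrix and $\|G\|\le Q^{1/2}$. Hence
$$
\|K\|\le Q^{-1/2}\cdot Q^{1/2}\cdot Q^{1/2}=Q^{1/2}.
$$
This holds for every $Q\ge1$, including powers of $2$, with the convention $\operatorname{Kl}_2(\cdot;1)=1$. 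Non-unit rows $x$ cause no difficulty, because only the $y$-variable is inverted.

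\emph{Step 3 (conclusion).} Combining the two steps gives
$$
\|A_Q\|\le\|P_{\mathcal U}\|\,\|K\|\,\|P_{\mathcal V}\|\le Q^{1/2}\left(1+\frac UQ\right)^{1/2}\left(1+\frac VQ\right)^{1/2},
$$
which is \eqref{eq:complete-ratio-operator}.

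The only step needing care is Step 2: the substitution $t=s\bar y$ must be legitimate, which holds because every column index is a unit, and the normalisation must be exactly $Q^{-1/2}$ so that no logarithmic or divisor loss appears. I do not expect a genuine obstacle beyond checking the multiplicity count at the interval endpoints, where $\lceil U/Q\rceil\le1+U/Q$ for an interval of length $U$.
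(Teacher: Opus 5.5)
Your proof is correct and is essentially the paper's argument. Both factor the complete matrix through the unit variable as one Fourier block times another, with your inner variable $s$ becoming the paper's $x$ after $x=c\bar s$, and both then pay $(1+U/Q)^{1/2}(1+V/Q)^{1/2}$ for aggregating the integer intervals into residue classes. One small slip: with your convention, $P_{\mathcal U}P_{\mathcal U}^{T}$ (not $P_{\mathcal U}^{T}P_{\mathcal U}$) is the diagonal matrix of class counts, but both products have norm $\|P_{\mathcal U}\|^2$, so the bound is unaffected.
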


\begin{proof}
On complete residue spaces, changing the Kloosterman variable gives
$$
A_Q(u,v)=Q^{-1/2}\sum_{x\bmod Q}^{\times}
\e(ux/Q)\e(c\bar x\bar v/Q).
$$
The first factor has orthonormal columns; the second is a submatrix of the unnormalised additive Fourier matrix. Thus the complete operator norm is at most $Q^{1/2}$, including nonunit $u$. Aggregation into residue classes has norms at most $(1+U/Q)^{1/2}$ and ${(1+V/Q)^{1/2}}$ on the two sides, giving \eqref{eq:complete-ratio-operator}.
\end{proof}

\subsection{The local prime-square estimate}

\begin{lem}\label{lem:p2-short-root-difference}
Let $p$ be an odd prime, let $1\le H<p$, and let $I$ be a cyclic interval modulo $p^2$ of length at most $CpH$, where $C$ is fixed. Let $c$ be a unit modulo $p^2$, and let
$$
X=\{x\bmod p^2:(x,p)=1,\ cx^2\in I\bmod {p^2}\}.
$$
For $d\bmod p^2$, write
$$
r_X(d)=\#\{(x_1,x_2)\in X^2:x_1-x_2=d\bmod {p^2}\}.
$$
If $p\nmid d$, then
\begin{equation}\label{eq:p2-short-root-difference}
r_X(d)\ll_{C,\ep}p^\ep
\left(H^2+p^{1/2}H^{3/2}\right).
\end{equation}
\end{lem}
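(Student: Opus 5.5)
The plan is to count directly:
$$
r_X(d)=\sum_{\substack{x\bmod p^2\\ (x(x+d),p)=1}}\mathbf 1_I(cx^2)\,\mathbf 1_I(c(x+d)^2),
$$
and detect each interval condition by additive characters modulo $p^2$. Since we only need an upper bound, first majorise $\mathbf 1_I$ by a smooth periodic bump $\Psi$ at scale $pH$, with $\Psi\ge\mathbf 1_I$ and Fourier coefficients
$$
\widehat\Psi(h)\ll pH\,(1+|h|H/p)^{-A}.
$$
This truncates frequencies to $|h|\le p^{1+\ep}/H<p^2$, with error $O(p^{-100})$. Drop the unit condition on $x$ (positivity), expand both factors, and exchange sums:
$$
r_X(d)\le p^{-4}\sum_{h_1,h_2}\widehat\Psi(h_1)\widehat\Psi(h_2)\,\Sigma(h_1,h_2),\qquad
\Sigma(h_1,h_2)=\sum_{x\bmod p^2}\e\Bigl(\frac{c(h_1+h_2)x^2+2ch_2dx+ch_2d^2}{p^2}\Bigr).
$$
The term $h_1=h_2=0$ gives $p^{-4}(pH)^2p^2\ll H^2$, which is the main term.

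Next classify $(h_1,h_2)$ by $\operatorname{ord}_p(h_1+h_2)$ and evaluate $\Sigma$ with the restricted Gauss sum bound \eqref{eq:reciprocal-restricted-gauss-bound}; its full-sum version is the same.
\begin{itemize}
\item If $h_1+h_2\equiv0\bmod{p^2}$, then $h_2=-h_1$ in the truncated range. $\Sigma$ is a linear sum, which vanishes unless $p^2\mid 2ch_2d$. Since $p\nmid d$, this forces $h_1=h_2=0$.
\item If $p\Vert(h_1+h_2)$, the sum vanishes unless $p\mid h_2$, hence $p\mid h_1$. Then $|\Sigma|\ll p^{3/2}$, and the number of such pairs is $\ll(p^{\ep}/H)^2$. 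This contributes $\ll p^{-4}(pH)^2p^{3/2}p^{2\ep}H^{-2}\ll p^{-1/2+2\ep}$.
\item If $p\nmid(h_1+h_2)$, completing the square gives $|\Sigma|=p$ and
$$
\Sigma(h_1,h_2)=\varepsilon\,p\,\e\Bigl(\frac{cd^2h_1h_2\overline{(h_1+h_2)}}{p^2}\Bigr),
$$
where $\varepsilon$ is a constant of modulus one ($p^2$ is a square modulus, so the Gauss factor is trivial).
\end{itemize}

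Bounding the last case trivially gives $p^{-4}(pH)^2\,p\,(p/H)^2\approx p$. This is too large, so cancellation in the phase is needed, and this is the main step. Fix $h_1$ and substitute $w=h_1+h_2$. Then
$$
h_1h_2\bar w=h_1-h_1^2\bar w,
$$
so the $h_2$-sum becomes an incomplete sum over $w$, in an interval of length $\ll p^{1+\ep}/H$, of $\e(-cd^2h_1^2\bar w/p^2)$ against the smooth weight $\widehat\Psi(w-h_1)$. Complete this sum modulo $p^2$ (Poisson in $w$, the weight having derivatives at scale $p/H$). The resulting complete sums are Kloosterman sums $S(k,-cd^2h_1^2;p^2)$. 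By Proposition~\ref{prop:weil-estermann} they are bounded by $p^{1+\ep}(h_1^2,p^2)^{1/2}$, and at most $p^{\ep}$ dual frequencies survive after completion. So for each $h_1$ the inner sum is $\ll p^{1+\ep}(h_1,p^2)\,H$, the factor $H$ being the sup of the normalised weight. Summing over $|h_1|\le p^{1+\ep}/H$:
\begin{itemize}
\item The terms with $p\nmid h_1$ contribute $\ll p^{-4}\cdot pH\cdot p\cdot(p/H)\cdot p^{1+\ep}H\ll p^{\ep}H$.
\item The $O(p^\ep/H)$ terms with $p\mid h_1$ carry an extra factor $p$ and give at most $p^{2\ep}$.
\end{itemize}
All of these error terms are dominated by $p^\ep(H^2+p^{1/2}H^{3/2})$, which gives \eqref{eq:p2-short-root-difference}.

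The delicate point is the bookkeeping in this completion. One must check the uniformity in $C$, the smooth-majorant tails and the gcd losses when $p\mid h_1$. If the Kloosterman completion is awkward there, the fallback is to bound the $w$-sum by Weyl differencing once. That loses a factor of size roughly $(p/H)^{1/2}$, which is the shape of the stated $p^{1/2}H^{3/2}$ term and is still sufficient.
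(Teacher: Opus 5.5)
\textbf{There is a genuine gap: the cancellation step for the generic terms fails.}

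Your Fourier set-up is sound. The main term $H^2$ is right, as is the vanishing for $h_1+h_2\equiv0\bmod p^2$, the case $p\Vert(h_1+h_2)$, and the evaluation $\Sigma=p\,e(cd^2h_1h_2\overline{(h_1+h_2)}/p^2)$. The trivial bound $p^{1+\ep}$ on the generic part already suffices when $H\ge p^{1/3}$. The gap is the cancellation step for $H<p^{1/3}$.

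Completing a sum of length $\asymp p^{1+\ep}/H$ modulo $p^2$ leaves a dual range of length $\asymp pH$, not $p^{\ep}$. In fact completion simply undoes your first expansion:
\[
\sum_{(w,p)=1}\widehat\Psi(w-h_1)\,e\Bigl(-\frac{a\bar w}{p^2}\Bigr)
=\sum_{y\bmod p^2}\Psi(y)\,e\Bigl(\frac{h_1y}{p^2}\Bigr)S(-y,-a;p^2),\qquad a=cd^2h_1^2,
\]
with $y$ running over the support of $\Psi$, an interval of length $\asymp pH$. Weil's bound then gives $\ll p^2H$ for the inner sum. That is worse than the trivial bound $p^{2+\ep}$, and far from your claimed $p^{1+\ep}H$.

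Your resulting tally $r_X(d)\ll p^{2\ep}(H^2+H)$ is actually false. Take $c=1$, $I=[0,p)$ (so $H=C=1$) and $d=1$. Every $1\le x<\sqrt p-1$ gives a pair $(x+1,x)\in X^2$, so $r_X(1)\gg p^{1/2}$. Thus the term $p^{1/2}H^{3/2}$ is sharp at $H=1$.

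The Weyl-differencing fallback does not close this. At $H=1$ you need square-root cancellation in sums of $e(a\bar w/p^2)$ over intervals of length $\asymp p$. That is exactly the P\'olya--Vinogradov threshold for modulus $p^2$.
\begin{itemize}
\item One differencing step with shifts $r<p$ produces sums $\sum_w e(-ar\overline{w(w+r)}/p^2)$ of the same length, which completion again cannot bound nontrivially.
\item The $p$-adic van der Corput step needs shifts divisible by $p$, i.e.\ intervals longer than $p$.
\end{itemize}

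The missing idea is arithmetic, and it is what the paper uses.
\begin{enumerate}
\item Parametrise a pair by $k=cx_1^2-cx_2^2\in I-I$. Then $x_1+x_2\equiv k\overline{cd}$, and the second condition becomes $(k^2+t^2)\overline{2t}\in I+I$ with $t=cd^2$.
\item Count such $k$ through the top base-$p$ digit and Cauchy--Schwarz. This gives at most $|K|H/p+(HE)^{1/2}$, where $E$ is the digit energy.
\item Bound $E$ by lifting $(k_1-k_2)(k_1+k_2)\equiv2tz\pmod{p^2}$, $|z|<p$, to $AB=2Tz+\ell p^2$ over $\Z$ and applying the divisor bound. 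This gives $E\ll p^{1+\ep}H^2$.
\end{enumerate}
The integer lifting exploits the smallness of $k_1\pm k_2$, which no complete exponential sum detects.
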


\begin{proof}
Replacing $H$ by $\lceil H\rceil\le2H$ changes only the implied constant. If $\lceil H\rceil\ge p$, the trivial bound suffices; otherwise we may assume that $H$ is an integer.

If $H\ge p/(2C)$, then $r_X(d)\le |X|\ll_CpH$ suffices, since then $pH\le2CH^2$. Otherwise $CpH<p^2/2$, after changing fixed constants.

We first show, for unit $t\bmod p^2$, an interval $K\subset[-CpH,CpH]$ and a cyclic interval $L$ of length at most $CpH$,
\begin{equation}\label{eq:modular-parabola-count}
\#\{k\in K:(k^2+t^2)/(2t)\in L\bmod {p^2}\}
\ll_{C,\ep}p^\ep(H^2+p^{1/2}H^{3/2}).
\end{equation}

Split $L$ into $O_C(1)$ arcs and take $L=L_0+[0,pH)$. Let $\lambda(z)=\lfloor[z-L_0]_{p^2}/p\rfloor$, where $[\cdot]_{p^2}$ is the least nonnegative residue. Then $z\in L$ exactly when $\lambda(z)<H$. For $0\le j<p$ let $N_j$ be the number of $k\in K$ with $\lambda((k^2+t^2)/(2t))=j$. Then $\sum_jN_j=|K|$, and $\sum_jN_j^2=E$ is the number of pairs $k_1,k_2\in K$ with equal values of $\lambda((k_i^2+t^2)/(2t))$. Write $N_j=|K|/p+(N_j-|K|/p)$ and apply Cauchy's inequality to the second part over $j<H$. Since $\sum_j(N_j-|K|/p)^2\le\sum_jN_j^2$, this gives
\begin{equation}\label{eq:last-digit-fourier}
\#\{k\in K:(k^2+t^2)/(2t)\in L\}
\le |K|H/p+(H E)^{1/2}.
\end{equation}

Each pair counted by $E$ satisfies $(k_1-k_2)(k_1+k_2)\equiv2t z\bmod {p^2}$ for some $|z|<p$. For the representative $T\in[1,p^2]$ of $t$, put $A=k_1-k_2$, $B=k_1+k_2$ and lift to
\begin{equation}\label{eq:parabola-divisor-lift}
AB=2T z+\ell p^2.
\end{equation}
For fixed $z$, $|A|,|B|\ll_CpH$ restrict $\ell$ to an interval of length $O_C(H^2+1)$. A non-zero right side has $O_\ep(p^\ep)$ factor pairs; if it vanishes, then $z=0$, because $t$ is a unit and $|z|<p$, and $A=0$ or $B=0$, giving $O_C(pH)$ pairs. Summing over the $O(p)$ values of $z$ gives $E\ll_{C,\ep}p^{1+\ep}(H^2+H)$. Since $|K|\ll_CpH$, \eqref{eq:last-digit-fourier} now gives \eqref{eq:modular-parabola-count}.

For a pair counted by $r_X(d)$, choose $n,m\in I$ representing $cx_1^2,cx_2^2$ and put $k=n-m$, $\ell=n+m$, $t=cd^2$. Since $d$ is a unit,
$$
x_1+x_2\equiv k/(cd),\qquad \ell\equiv(k^2+t^2)/(2t)\pmod{p^2}.
$$
The sets $I-I,I+I$ split into $O_C(1)$ intervals of the required form. Given $d,k$, the sum and difference determine $(x_1,x_2)$ uniquely. Thus \eqref{eq:modular-parabola-count} gives the result.
\end{proof}

\begin{prop}\label{prop:p2-unequal-local-operator}
Let $p$ be an odd prime. Let $I_1,I_2$ be cyclic intervals modulo $p^2$ of lengths at most $CpH_1$ and $CpH_2$, where $1\le H_1,H_2<p$ and $C$ is fixed. Let $c_1,c_2$ be units and let
$$
X=\{x:(x,p)=1,\ c_1x^2\in I_1\},
\qquad
Y=\{y^{-1}:(y,p)=1,\ c_2y^2\in I_2\}.
$$
Let $R$ be the exponential matrix
$$
R(x,y)=\e(2xy/p^2),\qquad x\in X,\quad y\in Y.
$$
Then
\begin{equation}\label{eq:p2-unequal-schatten}
\|R\|_{S^4}^4
\ll_{C,\ep}p^\ep
\left(
p^3H_1H_2^2+p^3H_2H_1^2
+p^{7/2}H_2H_1^{3/2}\right).
\end{equation}
Consequently, the compression to $I_1\times I_2$ of the normalised matrix
$$
p^{-1}S(1,cu\bar v;p^2)
$$
has operator norm at most a constant times the fourth root of the right hand side of \eqref{eq:p2-unequal-schatten}, uniformly in the unit $c$ and in multiplicative dilates of the two intervals.
\end{prop}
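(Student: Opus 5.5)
We expand the Schatten norm directly. Since $\|R\|_{S^4}^4=\operatorname{tr}((RR^*)^2)$, we have
$$
\|R\|_{S^4}^4=\sum_{x_1,x_2\in X}\Bigl|\sum_{y\in Y}\e\Bigl(\frac{2(x_1-x_2)y}{p^2}\Bigr)\Bigr|^2
=\sum_{d\bmod p^2}r_X(d)\,|\widehat{1_Y}(2d)|^2,
$$
where $\widehat{1_Y}(t)=\sum_{y\in Y}\e(ty/p^2)$. We split the differences $d$ by their $p$-adic valuation.

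\textbf{The case $d=0$.} This gives $|X|\,|Y|^2$. Since $X$ and $Y$ are the square-root preimages of intervals of lengths $\ll pH_1$ and $\ll pH_2$, each point has at most two roots, so $|X|\ll pH_1$ and $|Y|\ll pH_2$. The contribution is therefore $\ll p^3H_1H_2^2$.

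\textbf{The case $p\nmid d$.} Here Lemma~\ref{lem:p2-short-root-difference}, applied with $H=H_1$, gives $r_X(d)\ll p^\ep(H_1^2+p^{1/2}H_1^{3/2})$ uniformly in $d$. Parseval gives $\sum_d|\widehat{1_Y}(2d)|^2\le p^2|Y|\ll p^3H_2$. The product is $p^\ep(p^3H_2H_1^2+p^{7/2}H_2H_1^{3/2})$, which matches the second and third terms of \eqref{eq:p2-unequal-schatten}.

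\textbf{The case $d=pd'$ with $p\nmid d'$.} This is where I expect the main difficulty. The phase $\e(2d'y/p)$ depends only on $y\bmod p$, so the Fourier mass concentrates. I would handle this by the symmetric argument: exchange the roles and write the same trace as $\sum_{e}r_Y(e)|\widehat{1_X}(2e)|^2$, or bound the contribution directly as follows.

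Because $x_1\equiv x_2\bmod p$ and the reductions of $X$ modulo $p$ are distinct up to signs (compare Lemma~\ref{lem:root-reduction-bijection}), we have $\sum_{d'}r_X(pd')\ll |X|\cdot\#\{x_2\in X: x_2\equiv x_1 \bmod p\}$. The fibre of $X$ over a residue modulo $p$ has size $\ll 1+H_1$, since $cx^2$ moving by $p$-multiples within a residue class sweeps an interval of length $\ll pH_1$. This gives $\sum_{d'}r_X(pd')\ll pH_1(1+H_1)$.

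For the Fourier factor, group $y$ by residue modulo $p$. Then $\sum_{d'\bmod p}|\widehat{1_Y}(2pd')|^2=p\sum_{a\bmod p}N_Y(a)^2\ll p\,|Y|(1+H_2)$, with $N_Y(a)$ the fibre counts. Since a bound uniform in $d'$ would only give $|Y|^2$, I would instead use Cauchy--Schwarz together with the symmetric trace formula. By the symmetry $H_1\leftrightarrow H_2$ of the first two terms, the aim is to bound this case by $p^3H_1H_2^2+p^3H_2H_1^2$. Concretely, this range contributes $\ll p\,H_1\sum_{a}N_Y(a)^2\cdot p\ll p^2H_1\cdot pH_2\cdot H_2$, which is acceptable. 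The point to verify carefully is that the fibre structure gives $r_X(pd')\ll H_1$ on average over $d'$. This is the step I would check first.

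\textbf{The operator-norm consequence.} Use Lemma~\ref{lem:p2-kloosterman-eval}: $p^{-1}S(1,cu\bar v;p^2)$ is a sum over the two roots $z$ of $z^2\equiv cu\bar v$, with phase $\e(2z/p^2)$. Write $z=xy$ with $x^2\equiv c_1u$ and $y^{-2}\equiv c_2v$ after splitting into square classes. The compressed matrix then becomes, up to $O(1)$ branch and square-class pieces, a composition of the bijective root maps with $R$. Each such map has fibres of size at most two, so it preserves the norm up to constants. This gives $\|\cdot\|\ll\|R\|\le\|R\|_{S^4}$. Multiplicative dilates of the intervals only change $c_1,c_2$, so the bound is uniform.
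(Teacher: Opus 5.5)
Apart from one step, this is the paper's proof. The shared parts are the identity $\|R\|_{S^4}^4=\sum_d r_X(d)|\widehat{1_Y}(2d)|^2$, Lemma~\ref{lem:p2-short-root-difference} with Parseval for $p\nmid d$, and the square-class root factorisation for the operator bound.

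The exception is the range $d=pd'$ with $p\nmid d'$. There the claim you single out as the crux, that $r_X(pd')\ll H_1$ on average over $d'$, is false. Suppose $I_1$ has length $\asymp pH_1$. For each unit $x_0\bmod p$, the values $c_1(x_0+pt)^2\equiv c_1x_0^2+2c_1x_0pt$, $t\bmod p$, run through every residue modulo $p^2$ in the class of $c_1x_0^2$ modulo $p$. Hence every fibre of $X$ has size $\asymp H_1$, and $\sum_{d'}r_X(pd')=\sum_a N_X(a)(N_X(a)-1)\asymp pH_1^2$, which is the bound you had already found. If your product is read as $\bigl(\sum_{d'}r_X(pd')\bigr)\bigl(\sum_{d'}|\widehat{1_Y}(2pd')|^2\bigr)$, it therefore only gives $p^3H_1^2H_2^2$.

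Two side remarks in your proposal also fail:
\begin{itemize}
\item The reductions of $X$ modulo $p$ are not distinct up to sign, for the same reason; Lemma~\ref{lem:root-reduction-bijection} concerns a different set.
\item The symmetric expansion $\sum_e r_Y(e)|\widehat{1_X}(2e)|^2$ does not help. It cannot be combined piecewise with the $d$-expansion. Moreover, Lemma~\ref{lem:p2-short-root-difference} does not apply to $Y$, whose elements are inverted roots; this is why the final bound is not symmetric in $H_1,H_2$.
\end{itemize}

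No averaging is needed: put the supremum on $r_X$ rather than on $\widehat{1_Y}$. Since $x_1$ and $d$ determine $x_2$, one has $r_X(d)\le|X|\ll_C pH_1$ for every $d$. So the whole range $p\mid d$, including $d=0$, contributes at most
\[
|X|\sum_{a\bmod p}|\widehat{1_Y}(2pa)|^2=|X|\,p\sum_{s\bmod p}N_Y(s)^2\ll_C pH_1\cdot p\cdot H_2\,|Y|\ll_C p^3H_1H_2^2,
\]
using the fibre bound $N_Y(s)\le CH_2+1$. This is the paper's argument, and numerically it is the product you wrote down. With this justification the Schatten bound is complete.

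In the operator step, add that rows with $p\mid u$ vanish by Proposition~\ref{prop:weil-estermann}, since Lemma~\ref{lem:p2-kloosterman-eval} requires $p\nmid u$. Otherwise your factorisation matches the paper: sum over the two roots in each row (norm at most $\sqrt2$) and fix one root per column (an isometry).
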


\begin{proof}
Write $\widehat{1_Y}(h)=\sum_{y\in Y}\e(hy/p^2)$. Since $(RR^*)(x_1,x_2)=\widehat{1_Y}(2(x_1-x_2))$ and
$$
\operatorname{tr}\bigl((RR^*)^2\bigr)=\sum_{x_1,x_2}|(RR^*)(x_1,x_2)|^2,
$$
grouping the pairs by $d=x_1-x_2$ gives
\begin{equation}\label{eq:p2-schatten-identity}
\|R\|_{S^4}^4
=\sum_{d\bmod p^2}r_X(d)|\widehat{1_Y}(2d)|^2.
\end{equation}
\looseness=-1 Each target in $I_i$ has at most two unit square roots, so $|X|\ll_CpH_1$ and $|Y|\ll_CpH_2$. For $x$ in a fixed class modulo $p$, the values $c_1x^2$ run through distinct elements of one class modulo $p$, and $I_1$ contains $O_C(H_1)$ of them; the same holds for $Y$. Thus the residue fibres of $X$ and $Y$ modulo $p$ have sizes $O_C(H_1)$ and $O_C(H_2)$. For $p\mid d$, use $r_X(d)\le |X|$ and
$$
\sum_{a\bmod p}|\widehat{1_Y}(2pa)|^2
=p\sum_{s\bmod p}\#\{y\in Y:y\equiv s\bmod p\}^2
\ll_Cp^2H_2^2,
$$
\looseness=-1 where the last step bounds one factor of each square by the fibre size and sums the other to $|Y|$. This gives $O_C(p^3H_1H_2^2)$. For unit $d$, Lemma \ref{lem:p2-short-root-difference} and Parseval modulo $p^2$ give
$$
\ll_{C,\ep}p^\ep(H_1^2+p^{1/2}H_1^{3/2})p^2|Y|
\ll_{C,\ep}p^\ep(p^3H_2H_1^2+p^{7/2}H_2H_1^{3/2}).
$$
This proves \eqref{eq:p2-unequal-schatten}.

For the Kloosterman matrix, rows with $p\mid u$ vanish, and $p^{-1}S(1,n;p^2)=\sum_{r^2\equiv n}\e(2r/p^2)$ for units $n$, by Lemma~\ref{lem:p2-kloosterman-eval}. Split the rows and columns into square classes, $u=c_1x^2$ and $v=c_2y^2$. Then $cu\bar v$ is a square only if $cc_1\bar c_2=\gamma^2$ for a unit $\gamma$, and in that case the roots of $r^2\equiv cu\bar v$ are $\pm\gamma x\bar y$. Replacing $x$ by $\gamma x$ and $c_1$ by $c_1\bar\gamma^2$, which is allowed by the uniformity in $c_1$, we may take $\gamma=1$. Fixing one root $y=y(v)$ for each column, the entry is
$$
p^{-1}S(1,cu\bar v;p^2)=\sum_{x:\ c_1x^2=u}\e\Bigl(\frac{2x\bar y}{p^2}\Bigr).
$$

The block is $ARB$: $A$ sends $f$ to $u\mapsto\sum_{c_1x^2=u}f(x)$ and has norm at most $\sqrt2$, while $B$ is the isometric inclusion $v\mapsto\bar y(v)$. Summing the four class pairs and using the fourth Schatten norm gives $\ll\|R\|_{S^4}$.
\end{proof}

\subsection{CRT operator bound for $Q=p^2e$}

\begin{prop}\label{prop:p2-crt-operator}
Let $Q=p^2e$, where $p$ is odd and $(p,e)=1$. Let $\mathcal U,\mathcal V\subset\Z$ be supported in intervals of lengths at most $U$ and $V$, respectively, and suppose that every element of $\mathcal V$ is a unit modulo $Q$. Assume $U\le Q/8$ and $V\le p^2/8$, and let
$$
H_1=1+\frac{U}{ep},
\qquad
H_2=1+\frac{V}{p}.
$$
Replacing $H_1,H_2$ by their ceilings is understood. For any unit $c\bmod Q$, the matrix
$$
T_Q(u,v)=\operatorname{Kl}_2(cu\bar v;Q),
\qquad
u\in\mathcal U,\quad v\in\mathcal V,
$$
satisfies
\begin{equation}\label{eq:p2-crt-operator}
\|T_Q\|
\ll_\ep Q^\ep e^{1/2}
\left(
p^3H_1H_2^2+p^3H_2H_1^2
+p^{7/2}H_2H_1^{3/2}\right)^{1/4}.
\end{equation}
The estimate is valid for arbitrary complex row and column vectors. In particular, no factorability assumption is imposed on their coefficients.
\end{prop}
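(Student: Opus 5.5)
The plan is to factor $T_Q$ by the Chinese remainder theorem into an $e$-part and a $p^2$-part. The $e$-part is then controlled by the complete operator bound of Lemma~\ref{lem:complete-ratio-operator}, and the $p^2$-part by the local Schatten bound of Proposition~\ref{prop:p2-unequal-local-operator}.

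\emph{Step 1: Kloosterman factorisation.} Twisted multiplicativity gives
$$
\operatorname{Kl}_2(cu\bar v;Q)=\operatorname{Kl}_2(c'u\bar v;e)\,\operatorname{Kl}_2(c''u\bar v;p^2)
$$
with units $c'\equiv c\bar p^{4}\bmod e$ and $c''\equiv c\bar e^{2}\bmod p^2$. Thus $T_Q$ is the Hadamard (entrywise) product $T^{(e)}\circ T^{(p^2)}$ of the two local matrices restricted to $\mathcal U\times\mathcal V$.

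\emph{Step 2: splitting into residue classes.} Rather than using a general Hadamard-product inequality, I would split the rows $u$ by their class modulo $e$ and the columns $v$ by their class modulo $e$.

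On each block $(u\equiv\alpha,\ v\equiv\beta \bmod e)$, the $e$-factor is a constant $\operatorname{Kl}_2(c'\alpha\bar\beta;e)$. This writes $T_Q$ as a block matrix whose $(\alpha,\beta)$ block is that scalar times the $p^2$-matrix restricted to
$$
\mathcal U_\alpha=\{u\in\mathcal U:u\equiv\alpha\bmod e\},\qquad \mathcal V_\beta=\{v\in\mathcal V: v\equiv\beta\bmod e\}.
$$
Write $u=\alpha+ew$. Then $\mathcal U_\alpha$ is an arithmetic progression of at most $U/e+1$ terms, and its image modulo $p^2$ is a multiplicative dilate (by $e$) of a cyclic interval of length at most $U/e+1\le pH_1$. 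Since $U\le Q/8$, this image does not wrap around more than once, and the same holds after dilation by the unit $e$.

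The column sets I would not split at all in the $p^2$-aspect, because $V\le p^2/8$ already gives an interval of length at most $pH_2$.

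\emph{Step 3: applying the two operator bounds.} Let $\Omega$ bound the norms of the $p^2$-blocks, uniformly in the class and the dilate. Proposition~\ref{prop:p2-unequal-local-operator} gives
$$
\Omega\ll p^\ep\bigl(p^3H_1H_2^2+p^3H_2H_1^2+p^{7/2}H_2H_1^{3/2}\bigr)^{1/4}.
$$
It is stated uniformly in multiplicative dilates, so sub-progressions of the column interval in a fixed class mod $e$ are also covered, either by regarding them as subsets of the whole interval or as dilated intervals.

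For vectors $f,g$, we then have
$$
|\langle T_Qf,g\rangle|\le\sum_{\alpha,\beta}|\operatorname{Kl}_2(c'\alpha\bar\beta;e)|\,\Omega\,\|f_\beta\|\|g_\alpha\|=\Omega\langle K|\phi|,|\gamma|\rangle,
$$
where $\phi_\beta=\|f_\beta\|$, $\gamma_\alpha=\|g_\alpha\|$, and $K$ is the $e\times e$ matrix with entries $|\operatorname{Kl}_2(c'\alpha\bar\beta;e)|$. This loses the signs of the $e$-Kloosterman values, which is dangerous, so I would not take absolute values there. Instead, I would expand the $p^2$-factor through the square-root evaluation of Lemma~\ref{lem:p2-kloosterman-eval}, i.e.\ through the factorisation $ARB$ of the proof of Proposition~\ref{prop:p2-unequal-local-operator}. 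Conditioning on the classes modulo $e$ then turns $T_Q$ into a tensor-like product in which the $e$-matrix acts on class indices with norm at most $e^{1/2}$, by Lemma~\ref{lem:complete-ratio-operator} on complete residue systems. This is where the factor $e^{1/2}$ in \eqref{eq:p2-crt-operator} comes from.

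The cleanest route is to use the inequality $\|A\circ B\|\le\|A\|\max|B|$-type bounds in reverse. Since the complete $e$-matrix equals $e^{-1/2}\sum_x \e(\alpha x/e)\e(c'\bar x\bar\beta/e)$ (again by Lemma~\ref{lem:complete-ratio-operator}), it is a sum of $\varphi(e)$ rank-one unimodular matrices. Hence
$$
T_Q=e^{-1/2}\sum_x D_x\, T^{(p^2)}E_x
$$
with diagonal unitaries $D_x,E_x$, which gives $\|T_Q\|\le e^{1/2}\|T^{(p^2)}|_{\mathcal U\times\mathcal V}\|$.

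\emph{Step 4: the $p^2$-part on $\mathcal U$.} It remains to bound $T^{(p^2)}$ on all of $\mathcal U$, whose length $U$ may exceed $p^2$ relative to $H_1=1+U/(ep)$. This is the step I expect to be the main obstacle: using the full $\mathcal U$ costs a factor $e$ in $H_1$ compared with the progressions. To recover it, I would first split into classes modulo $e$, so that each progression has length at most $pH_1$, and then apply the diagonal-unitary decomposition within the class structure. The $p^2$-matrix on the progression $\alpha+ew$, viewed modulo $p^2$, is a dilate of an interval, so Proposition~\ref{prop:p2-unequal-local-operator} applies to it with $H_1$. The $e$-phase $\e(\alpha x/e)$ is constant on that class, so the rank-one sum recombines over classes with orthogonality in $\alpha$. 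This keeps the $e^{1/2}$ loss, rather than a loss of $e$, and leaves the $Q^\ep$ from divisor and fibre counts.
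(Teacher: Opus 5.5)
Your final route in Step~4 is correct and is essentially the paper's argument. For $u\equiv\alpha\bmod e$ one has $(T_Qf)(u)=\bigl(T^{(p^2)}|_{\mathcal U_\alpha\times\mathcal V}\,g_\alpha\bigr)(u)$ with $g_\alpha(v)=\operatorname{Kl}_2(c'\alpha\bar v;e)f(v)$, and orthogonality in $\alpha$ gives $\sum_{\alpha\bmod e}\|g_\alpha\|^2=\varphi(e)\|f\|^2$. This is exactly the paper's factorisation $T_Q=\pi_{\mathcal U}(T_{p^2}\otimes I)\Pi_J(I\otimes T_e)\iota_{\mathcal V}$, whose first factor is block-diagonal over row classes modulo $e$, with each block bounded by Proposition~\ref{prop:p2-unequal-local-operator} after a unit dilation. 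The intermediate bound $\|T_Q\|\le e^{1/2}\|T^{(p^2)}|_{\mathcal U\times\mathcal V}\|$ from Step~3 is, as you suspected, too weak on its own and can simply be discarded.
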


\begin{proof}
Let $c_1=c\bar e^{2}\bmod {p^2}$ and $c_2=c\overline{p^2}^{2}\bmod e$, and define the complete local matrices
$$
T_{p^2}(u,v)
=p^{-1}S(1,c_1u\bar v;p^2),
\qquad
T_e(u,v)
=e^{-1/2}S(1,c_2u\bar v;e),
$$
with arbitrary rows and unit columns. CRT identifies $T_Q$ with a compression of $T_{p^2}\otimes T_e$; the unit twists are covered by the uniformity in Proposition \ref{prop:p2-unequal-local-operator}. The complete-space argument of Lemma \ref{lem:complete-ratio-operator} gives
\begin{equation}\label{eq:cofactor-norm}
\|T_e\|\le e^{1/2},
\end{equation}
also for nonunit rows and arbitrary two-powers in $e$.

Let $\iota_{\mathcal V}$ be CRT inclusion on the column support and $\pi_{\mathcal U}$ row restriction. They have norm at most one, since $U,V<Q$ makes both maps injective on the supports. Set $S=(I\otimes T_e)\iota_{\mathcal V}$, so $\|S\|\le e^{1/2}$. This operator leaves the $p^2$-coordinate unchanged. Hence, if $J=\mathcal V\bmod p^2$, the range of $S$ is supported on $J\times\Z/e\Z$, and $J$ lies in a cyclic interval of length $O(V)$. With $\Pi_J$ the corresponding projection,
$$
T_Q=\pi_{\mathcal U}(T_{p^2}\otimes I)\Pi_JS.
$$
Decompose the rows as $\mathcal U=\bigsqcup_{i\bmod e}\mathcal U_i$. The first three factors form the direct sum
$$
\pi_{\mathcal U}(T_{p^2}\otimes I)\Pi_J
=\bigoplus_{i\bmod e}\pi_{\mathcal U_i}T_{p^2}\Pi_J.
$$
Its norm is the largest block norm. Modulo $p^2$, each $\mathcal U_i$ is contained in an arithmetic progression of step $e$ and length $O(1+U/e)$. A unit dilation makes this a cyclic interval, of length $\ll pH_1$; the column length is $\ll pH_2$. Proposition \ref{prop:p2-unequal-local-operator} bounds each block by the fourth root in \eqref{eq:p2-crt-operator}. Multiply by $\|S\|\le e^{1/2}$.
\end{proof}

\subsection{Operator bounds for the Poisson-summed cross term}

\begin{prop}\label{prop:allq-qw-operator-insertion}
Let $q=q_1Q$, with $(q_1,Q)=1$ and $\mu^2(q_1)=1$, be one auxiliary factorisation in Proposition \ref{prop:qw-preinsertion}. Let $d_0\mid q_1$ denote the M\"obius-inversion divisor in
$$
\mathbf 1_{(n_1,q_1)=1}
=\sum_{d_0\mid(n_1,q_1)}\mu(d_0),
$$
so that $n_1=d_0n$. Let
$$
z=\theta_1+\theta_2.
$$
Outside the range of Lemma \ref{lem:fixed-factorisation-large-ratio}, each dyadic block with $k\ne0$ can be grouped into
$$
u=kn_2,\qquad v=m_1m_2,
$$
\looseness=-1 with dyadic size parameters $U\asymp KN_2$, $V\asymp M_1M_2$ on $|k|\asymp K$, chosen so that, after separating signs, the supports lie in intervals of lengths at most $U$ and $V$, with $|u|\asymp U$, and
\begin{equation}\label{eq:allq-grouped-ranges}
U\ll d_0Qq^{-1+z+\sigma},
\qquad
V\ll q^{z}.
\end{equation}
The grouped coefficients have the divisor-bound estimates
$$
\|a\|_2\ll q^\ep U^{1/2},
\qquad
\|b\|_2\ll q^\ep V^{1/2},
$$
and every $v$ is a unit modulo $Q$.

For every $Q>1$, Lemma \ref{lem:complete-ratio-operator} gives
\begin{equation}\label{eq:allq-complete-operator-block}
|\mathcal B_{\pm,d_0}(q_1,Q)|
\ll q^\ep
\left(\frac Qq\right)^{3/2}d_0^{-1/2}
\left(1+\frac UQ\right)^{1/2}
\left(1+\frac VQ\right)^{1/2}.
\end{equation}

If $s\mid Q$ and $V<Q$, Proposition \ref{prop:reciprocal-factorable} gives
\begin{equation}\label{eq:allq-factorable-block}
\begin{aligned}
|\mathcal B_{\pm,d_0}(q_1,Q)|
&\ll q^\ep\frac{Q}{q\sqrt {d_0}}
\sqrt{\frac{UV}{q}}\\
&\quad\times
\left(
V^{-1/2}s^{1/2}
+Q^{-1/4}s^{1/4}
+U^{-1/2}Q^{1/4}s^{-1/4}\right).
\end{aligned}
\end{equation}

Suppose instead that $Q=p^2e$, where $p$ is odd and $(p,e)=1$, and that $U\le Q/8$ and $V\le p^2/8$. Then Proposition \ref{prop:p2-crt-operator} gives
\begin{equation}\label{eq:allq-p2-block}
|\mathcal B_{\pm,d_0}(q_1,Q)|
\ll q^\ep
\frac{Qe^{1/2}}{q^{3/2}d_0^{1/2}}
\left(
p^3H_1H_2^2+p^3H_2H_1^2
+p^{7/2}H_2H_1^{3/2}\right)^{1/4},
\end{equation}
where
$$
H_1=1+\frac{U}{ep},
\qquad
H_2=1+\frac{V}{p}.
$$
\end{prop}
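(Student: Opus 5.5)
Starting from the Poisson-summed block \eqref{eq:poisson-block} of Proposition~\ref{prop:qw-preinsertion}, with $q_2=Q$ and the M\"obius divisor $r=d_0$, we regroup it as a bilinear form in a Kloosterman matrix. By Lemma~\ref{lem:corrected-complete-sum} the complete sum is $S(\pm kn_2\overline{d_0m_1q_1},\overline{m_2};Q)$. Scaling by the unit $m_2$ turns this into $S(1,\pm kn_2\overline{d_0q_1}\,\overline{m_1m_2};Q)=Q^{1/2}\operatorname{Kl}_2(cu\bar v;Q)$, where $c=\pm\overline{d_0q_1}$, $u=kn_2$ and $v=m_1m_2$. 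Every $v$ is a unit modulo $Q$ because $(m_1m_2,q)=1$.

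\textbf{Grouping and the ranges \eqref{eq:allq-grouped-ranges}.} Split $k$ dyadically into $|k|\asymp K$ and separate the signs of $k$. The rapid decay of $F_{d_0}^{(j)}$ restricts to $K\ll d_0Qq^{\varepsilon_1}/N_1$, so $U\asymp KN_2\ll d_0Q q^{\varepsilon}N_2/N_1$. Outside the range of Lemma~\ref{lem:fixed-factorisation-large-ratio} we have $N_2/N_1<q^{\sigma}M_1M_2/q\le q^{-1+z+\sigma+o(1)}$, which gives the bound on $U$ after renaming $\ep$. The bound $V\asymp M_1M_2\ll q^{z}$ is immediate. The grouped coefficients are
\[
a_u=\sum_{kn_2=u}F_{d_0}^{(j)}(k)W_2(n_2/N_2),\qquad b_v=\sum_{m_1m_2=v}\alpha_{m_1}\beta_{m_2}.
\]
These are divisor-bounded on supports of sizes $U$ and $V$, which gives the stated $\ell^2$ norms. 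We use $F\ll q^{\varepsilon}$ here, and the $k$-dependence of $F$ is harmless because $a_u$ is an arbitrary coefficient vector. Strictly, $F$ also depends on $n_2$ through $V_j$, so the separation $a_ub_v$ is not exact. We would remove this by Mellin-separating $V_j(N_1un_2/q)$ in $n_2$ (or by the Fourier integral), at the cost of $q^{\varepsilon}$ and an integral over a parameter of bounded length.

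\textbf{Inserting the three operator bounds.} The prefactor of \eqref{eq:poisson-block} is $\varphi(Q)N_1/(d_0Qq^{3/2}(M_1M_2N_1N_2)^{1/2})\le N_1/(d_0q^{3/2}(VN_1N_2)^{1/2})$. The Kloosterman normalisation contributes $Q^{1/2}$, so
\[
|\mathcal B_{\pm,d_0}|\ll q^\ep\frac{N_1Q^{1/2}}{d_0q^{3/2}(VN_1N_2)^{1/2}}\,|\langle a,\operatorname{Kl}_2 b\rangle|.
\]
Summing the $O(\log q)$ dyadic $K$ pieces costs only $q^\ep$.
\begin{itemize}
\item For \eqref{eq:allq-complete-operator-block}, bound $|\langle a,Tb\rangle|\le\|T\|\|a\|_2\|b\|_2$ using Lemma~\ref{lem:complete-ratio-operator}. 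Then use $(UV)^{1/2}$ together with $U\asymp KN_2\ll d_0QN_2q^{\varepsilon}/N_1$, i.e.\ $N_1U^{1/2}/(N_1N_2)^{1/2}\ll (d_0Q)^{1/2}q^{\ep}$. The powers of $Q$ and $d_0$ then combine to $(Q/q)^{3/2}d_0^{-1/2}$.
\item For \eqref{eq:allq-factorable-block}, insert Proposition~\ref{prop:reciprocal-factorable}. Its hypotheses hold: $u$ ranges over an interval with $|u|\asymp U$, $v$ lies in an interval of length $V<Q$, and $(c,Q)=1$. The factor $(UV)^{1/2}\|a\|_2\|b\|_2\ll q^{\ep}UV$ combines with the prefactor in the same way, and $N_1N_2\asymp q$ turns $U^{1/2}(N_1/N_2)^{1/2}\cdots$ into $\sqrt{UV/q}\,Q/(q\sqrt{d_0})$.
\item For \eqref{eq:allq-p2-block}, insert Proposition~\ref{prop:p2-crt-operator}, which applies to arbitrary vectors under $U\le Q/8$ and $V\le p^2/8$.
\end{itemize}

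\textbf{Main obstacle.} The main risk is the bookkeeping of the normalising powers. One must check that in each case, after using $N_1N_2\asymp q$ (lower bound up to $q^{-\ep}$ only on the main range; for $N_1N_2$ much smaller the bounds only improve) and $U\asymp KN_2$ with the decay cutoff, the stated powers $(Q/q)^{3/2}$, $Q/q$ and $Q/q^{3/2}$ come out exactly. A second point needing care is the separation of variables in $F_{d_0}^{(j)}(k)$ through its $n_2$-dependence, which I would handle by Mellin inversion of $V_j$ as above.
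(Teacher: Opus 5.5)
Your proposal is correct and follows the paper's proof: rewrite the complete sum as $Q^{1/2}\operatorname{Kl}_2(cu\bar v;Q)$ with $u=kn_2$, $v=m_1m_2$, $c=\pm\overline{d_0q_1}$, truncate to $|k|\ll d_0Qq^{\ep}/N_1$, group dyadically, and use $KN_1\ll d_0Qq^{\ep}$ to reduce the prefactor to $q^{\ep}Q/(q^{3/2}(d_0UV)^{1/2})$ before inserting the three operator bounds. Two of your worries are unnecessary. First, $F^{(j)}_{d_0}$ depends only on $k$ and $n_2$, both of which lie in the row variable $u=kn_2$, so it is absorbed exactly into $a_u$ and no Mellin separation is needed. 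Second, $N_1N_2\asymp q$ is never used: the decay cutoff $KN_1\ll d_0Qq^{\ep}$ alone gives all three bounds, exactly as in your first bullet.
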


\begin{proof}
In \eqref{eq:poisson-block}, write the M\"obius divisor as $d_0$. Scaling and symmetry give
$$
S(k,\pm n_2\overline{d_0m_1m_2q_1};Q)
=Q^{1/2}\operatorname{Kl}_2(cu\bar v;Q),
\qquad
u=kn_2,\quad v=m_1m_2,\quad c=\pm\overline{d_0q_1}.
$$
Rapid decay gives $|k|\ll d_0Qq^\ep/N_1$. Separate signs and dyadic $|k|\asymp K$, and group by the integer products. The divisor bound gives the stated coefficient norms; the complementary-range inequality gives \eqref{eq:allq-grouped-ranges}. With $U\asymp KN_2$, $V\asymp M_1M_2$, so that $M_1M_2N_1N_2\asymp UVN_1/K$, and with $\varphi(Q)\le Q$ and $|F^{(j)}_{d_0}(k)|\ll q^\ep$ in \eqref{eq:poisson-block}, the coefficient of the normalised bilinear sum is bounded by
$$
\frac{Q}{q^{3/2}(d_0UV)^{1/2}}
\left(\frac{KN_1}{d_0Q}\right)^{1/2}
\ll\frac{q^\ep Q}{q^{3/2}(d_0UV)^{1/2}}.
$$
The two coefficient norms cancel $(UV)^{-1/2}$. Applying Lemma \ref{lem:complete-ratio-operator}, Proposition \ref{prop:reciprocal-factorable}, or Proposition \ref{prop:p2-crt-operator} then gives respectively \eqref{eq:allq-complete-operator-block}, \eqref{eq:allq-factorable-block}, or \eqref{eq:allq-p2-block}.
\end{proof}

\begin{lem}\label{lem:allq-p2-exponents}
Assume the hypotheses of Proposition \ref{prop:allq-qw-operator-insertion} and that $Q=p^2e$. Write
$$
Q=q^{t},\qquad p=q^{a},\qquad d_0=q^{b}.
$$
Thus $0<2a\le t\le1$ and $0\le b\le1-t$. Assume $a<z$ and use the upper bounds \eqref{eq:allq-grouped-ranges}.

If $U/(ep)\ge1$, the three terms in \eqref{eq:allq-p2-block} have exponents
\begin{align}
E_1&=\frac32t-\frac12a+\frac34z
-\frac74-\frac14b+\frac14\sigma,
\label{eq:allq-p2-e1}\\
E_2&=\frac32t+\frac34z-2+\frac12\sigma,
\label{eq:allq-p2-e2}\\
E_3&=\frac32t+\frac58z-\frac{15}{8}
-\frac18b+\frac38\sigma.
\label{eq:allq-p2-e3}
\end{align}
If $U/(ep)<1$, the corresponding exponents are
\begin{align}
E'_1&=\frac32t-\frac32-\frac12b
-\frac34a+\frac12z,
\label{eq:allq-p2-e4}\displaybreak[0]\\
E'_2&=\frac32t-\frac32-\frac12b
-\frac12a+\frac14z,
\label{eq:allq-p2-e5}\displaybreak[0]\\
E'_3&=\frac32t-\frac32-\frac12b
-\frac38a+\frac14z.
\label{eq:allq-p2-e6}
\end{align}
All six exponents increase with $t=\log_qQ$ and are nonincreasing in $b=\log_qd_0$; hence their maximum occurs at $Q=q$ and $d_0=1$. The first list is therefore power-saving if
\begin{equation}\label{eq:allq-p2-large-row-conditions}
3z+\sigma<1+2a,\qquad
3z+2\sigma<2,\qquad
5z+3\sigma<3,
\end{equation}
and the second is power-saving if
\begin{equation}\label{eq:allq-p2-small-row-conditions}
2z<3a,\qquad
z<2a.
\end{equation}
\end{lem}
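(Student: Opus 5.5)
The plan is a direct exponent bookkeeping: substitute the range bounds \eqref{eq:allq-grouped-ranges} into \eqref{eq:allq-p2-block}, write every quantity as a power of $q$, and read off the exponents. We ignore $q^\ep$ losses throughout.

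\textbf{Step 1: the prefactor.} Since $e=Q/p^2=q^{t-2a}$, the prefactor $Qe^{1/2}q^{-3/2}d_0^{-1/2}$ in \eqref{eq:allq-p2-block} has exponent
$$\tfrac32t-a-\tfrac32-\tfrac12b.$$

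\textbf{Step 2: bounds for $H_1$ and $H_2$.} For $H_2=1+V/p$, the bound $V\ll q^z$ together with $a<z$ gives $H_2\ll q^{z-a}$; the hypothesis $a<z$ is exactly what makes the $1$ in $H_2$ harmless. For $H_1$ there are two cases.
\begin{itemize}
\item If $U/(ep)\ge1$, then $H_1\le 2U/(ep)$, and $U\ll d_0Qq^{-1+z+\sigma}$ gives
$$H_1\ll q^{\,b+t-1+z+\sigma-(t-2a)-a}=q^{\,a+b-1+z+\sigma}.$$
\item If $U/(ep)<1$, then $H_1\le 2$, so $H_1\ll1$.
\end{itemize}

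\textbf{Step 3: the six exponents.} Insert these bounds into the three terms $p^3H_1H_2^2$, $p^3H_2H_1^2$ and $p^{7/2}H_2H_1^{3/2}$, take fourth roots, and add the prefactor exponent from Step 1.

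In the first case, the first term has exponent $(2a+b-1+3z+\sigma)/4$. Added to the prefactor this gives $E_1$; for instance, the $b$-coefficient is $-\tfrac12+\tfrac14=-\tfrac14$. The second term has exponent $a+\tfrac12b+\tfrac34z-\tfrac12+\tfrac12\sigma$, and the $b$ cancels against the prefactor, giving $E_2$. The third term has exponent
$$\tfrac14\Bigl(\tfrac72a+(z-a)+\tfrac32(a+b-1+z+\sigma)\Bigr),$$
which gives $E_3$.

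In the second case only powers of $p$ and $H_2$ remain. The three terms contribute $\tfrac14(a+2z)$, $\tfrac14(2a+z)$ and $\tfrac14(\tfrac52a+z)$ respectively. Adding the prefactor yields $E_1'$, $E_2'$ and $E_3'$.

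\textbf{Step 4: monotonicity and the final inequalities.} Each exponent has $t$-coefficient $\tfrac32>0$ and $b$-coefficient in $\{-\tfrac14,0,-\tfrac18,-\tfrac12\}$. So, for fixed $a$, the maximum over the admissible region $2a\le t\le1$, $b\ge0$ is attained at $t=1$, $b=0$. Evaluating there:
\begin{itemize}
\item $E_1<0$ is equivalent to $3z+\sigma<1+2a$;
\item $E_2<0$ is equivalent to $3z+2\sigma<2$;
\item $E_3<0$ is equivalent to $5z+3\sigma<3$;
\item $E_1'<0$ and $E_3'<0$ are both equivalent to $2z<3a$;
\item $E_2'<0$ is equivalent to $z<2a$.
\end{itemize}
These are exactly \eqref{eq:allq-p2-large-row-conditions} and \eqref{eq:allq-p2-small-row-conditions}. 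Strict inequalities leave a fixed power saving, which absorbs the $q^\ep$ losses.

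\textbf{Main obstacle.} There is no conceptual difficulty; the only delicate point is the treatment of the constants $1$ in $H_1$ and $H_2$. For $H_2$ this relies on $a<z$. For $H_1$ the split at $U/(ep)=1$ is what makes each case a single monomial, so that the six exponents are exact linear forms rather than maxima.
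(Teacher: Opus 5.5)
Your proposal is correct and follows the paper's proof essentially line by line. It uses the same prefactor exponent $\frac32t-a-\frac32-\frac12b$, the same bounds $H_2\ll q^{z-a}$ (via $a<z$) and $H_1\ll q^{a+b-1+z+\sigma}$ or $H_1\ll1$ in the two regimes, and the same monotonicity reduction to $t=1$, $b=0$; your arithmetic for all six exponents and the resulting inequalities checks out.
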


\begin{proof}
Enlarge the column length to $q^{z}$, so $H_2\ll q^{z-a}$ since $a<z$. The scalar in \eqref{eq:allq-p2-block} has exponent $3t/2-a-3/2-b/2$. Writing $h_i=\log_qH_i$, the three terms under the fourth root have exponents $3a+h_1+2h_2$, $3a+2h_1+h_2$ and $\frac72a+\frac32h_1+h_2$. If $U/(ep)\ge1$, use $H_1\ll q^{b+z+a-1+\sigma}$; if $U/(ep)<1$, use $H_1\ll1$. Substitution, division by $4$ and addition of the exponent of the scalar gives the two displayed lists, including their common transition. Every expression increases with $t$ and is nonincreasing in $b$, so setting $t=1$, $b=0$ gives the stated sufficient inequalities.
\end{proof}

\subsection{Cases determined by $P(q)$ and its largest prime divisor}

Recall that $P(q)$ denotes the powerful part of $q$.

\begin{prop}\label{prop:allq-three-eighths-moment}
Fix $\rho>0$ sufficiently small and let
\begin{equation}\label{eq:allq-length-choice}
\theta_1=\frac25-\rho,
\qquad
\theta_2=\frac15-\rho,
\qquad
z=\theta_1+\theta_2=\frac35-2\rho.
\end{equation}
\looseness=-1 Choose $0<\sigma<\rho/4$. Then, for every sufficiently large $q\not\equiv2\bmod4$, the moment formulae of Proposition \ref{prop:qw-transfer} hold, for each nonempty parity class, with the lengths in \eqref{eq:allq-length-choice}.
\end{prop}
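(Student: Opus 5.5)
The plan is to split according to the size of the powerful part $P(q)$ and, when it is large, according to its prime-power structure. Throughout, $\theta_1,\theta_2$ are as in \eqref{eq:allq-length-choice}, so $z=3/5-2\rho$.

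\emph{Small powerful part.} First suppose $P(q)\le q^{\gamma}$ with $\gamma=1/5+\rho$ (any $\gamma<3/5-O(\rho)$ would do). We apply Proposition~\ref{prop:small-nsf-moment}. Its three conditions read
$$4\theta_1+6\theta_2+2\sigma=\tfrac{14}5-10\rho+2\sigma<3,\qquad 4\theta_1+4\theta_2+2\sigma+\gamma=\tfrac{13}5-7\rho+2\sigma<3,$$
and $2\theta_1+\theta_2+\sigma=1-3\rho+\sigma<1$. All hold since $\sigma<\rho/4$, so this case is immediate.

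\emph{Large powerful part: common reduction.} Now let $P(q)>q^{\gamma}$. By Proposition~\ref{prop:fixed-parity-qw-setup}, Lemma~\ref{lem:fixed-factorisation-large-ratio} and the $k=0$, $q_2=1$ parts of Proposition~\ref{prop:qw-preinsertion}, it suffices to save a power in each complementary block $\mathcal B_{\pm,d_0}(q_1,Q)$ with $k\neq0$. Here $Q=q_2$ always contains $P(q)$ by Lemma~\ref{lem:primitive-factorisation}, and we use the grouped form of Proposition~\ref{prop:allq-qw-operator-insertion}, with $U\ll d_0Qq^{-1+z+\sigma}$ and $V\ll q^z$.

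If $Q\le V$, the complete bound \eqref{eq:allq-complete-operator-block} gives at most
$$q^{\ep}(Q/q)^{3/2}(1+U/Q)^{1/2}(V/Q)^{1/2}\ll q^{\ep}\,Q\,q^{-3/2+z/2}\le q^{-3/5+\ep}.$$
So we may assume $V<Q$ and use \eqref{eq:allq-factorable-block} with a divisor $s\mid Q$. As in Lemma~\ref{lem:allq-p2-exponents}, each term there grows with $Q$ and does not increase with $d_0$, so it suffices to check $Q=q$, $d_0=1$. Then $\sqrt{UV/q}\le q^{(2z+\sigma-1)/2}$, and the three terms are bounded by
$$q^{(z+\sigma-1)/2}s^{1/2},\qquad q^{z+\sigma/2-3/4}s^{1/4},\qquad q^{(z-1)/2+1/4}s^{-1/4}.$$
These all save a power provided $q^{1/5-2\rho+\eta}\le s\le q^{2/5+2\rho-\sigma-\eta}$ for some small $\eta>0$. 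That is, the factorable bound works as soon as $Q$ has a divisor in a window $(q^{1/5},q^{2/5})$, slightly shrunk.

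\emph{Producing the divisor.} The divisor $s$ need not be unitary: in the proof of Proposition~\ref{prop:reciprocal-factorable}, $s=Hr_2$ with $H$ dividing $r_1^\sharp/\operatorname{rad}(r_1^\sharp)$. So for each $\ell^\nu\Vert P(q)$ we may use $\ell^\mu$ with $\mu\le\nu-1$, or take the full power $\ell^\nu$.

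If every prime $\ell\mid P(q)$ satisfies $\ell\le q^{1/5-3\rho}$, we build $s$ greedily, multiplying in one prime at a time through the divisor lattice of $P(q)$. Each step multiplies by at most $q^{1/5-3\rho}$, which is less than the width of the window, and $P(q)>q^{\gamma}$ exceeds its lower end. So some partial product lands in the window.

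Otherwise some prime $p$ with $p^2\mid q$ has $p>q^{1/5-3\rho}$. If $p\le q^{2/5}$, take $s=p$ (or $s=p^2$ when $p$ is near the lower end), and we are done. If $p>q^{2/5}$, then $p^3>q$, so $p^2\Vert q$ and $Q=p^2e$ with $(p,e)=1$. Here we use \eqref{eq:allq-p2-block} and Lemma~\ref{lem:allq-p2-exponents} with $a>2/5$. The hypotheses of Proposition~\ref{prop:p2-crt-operator} hold for large $q$, since $V\le q^{z}<p^{3/2}\le p^2/8$ and $U\le Q/8$; we also have $a<z$. The conditions \eqref{eq:allq-p2-large-row-conditions} become $9/5-6\rho+\sigma<1+2a$, $9/5+2\sigma<2$ and $3+3\sigma<3+10\rho$. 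The conditions \eqref{eq:allq-p2-small-row-conditions} become $6/5<3a$ and $3/5<2a$. All of these hold for $a>2/5$ and $\sigma<\rho/4$.

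\emph{Main obstacle.} The delicate step is the boundary bookkeeping between regimes. The factorable window, the greedy step size, and the threshold $a>2/5$ for the prime-square bound must overlap with margins depending only on $\rho$. In particular, when $p$ is very close to $q^{2/5}$ we need either $s=p$ to lie inside the shrunk window (upper end $q^{2/5+2\rho-\sigma}$) or $a>2/5$ strictly for the operator bound. Checking this uses $\sigma<\rho/4$. Once the margins are fixed, the finitely many dyadic boxes, $q^{o(1)}$ factorisations and the divisors $d_0$ are absorbed exactly as in Proposition~\ref{prop:qw-transfer}.
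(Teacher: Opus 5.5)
Your proposal is correct and follows essentially the paper's argument. Both use the same three-way split: a small powerful part via Proposition~\ref{prop:small-nsf-moment}; a divisor of $P(q)$ in a window around $(q^{1/5},q^{2/5}]$, fed into the factorable bound \eqref{eq:allq-factorable-block}; and a prime $p>q^{2/5}$ with $p^2\Vert q$, handled by Proposition~\ref{prop:p2-crt-operator} and Lemma~\ref{lem:allq-p2-exponents}. The only differences are shifted thresholds and your use of the complete bound for $Q\le V$ rather than $Q\le q^{1-\delta}$. In that complete-bound step you dropped the factor $(1+U/Q)^{1/2}$, but this is harmless: reinstating it with $d_0^{-1/2}(U/Q)^{1/2}\ll q^{(z+\sigma-1)/2+o(1)}$ still gives a saving of size $q^{2z-2+\sigma/2+o(1)}$.
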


\begin{proof}
We have
\begin{equation}\label{eq:allq-preliminary-identity}
2\theta_1+\theta_2=1-3\rho.
\end{equation}
The summed large-ratio range saves a power by Lemma \ref{lem:fixed-factorisation-large-ratio}. In its complement,
$$
X\ll q^{-1+\sigma+2\theta_1+\theta_2+o(1)}\ll q^{-2\rho},
$$
so Proposition \ref{prop:allq-qw-operator-insertion} applies to every non-zero frequency. We separate three cases.

\emph{Small powerful part.} If $P(q)\le q^{1/5+\rho/2}$, apply Proposition \ref{prop:small-nsf-moment} with $\gamma=1/5+\rho/2$. Indeed, $4\theta_1+6\theta_2+2\sigma<14/5-19\rho/2<3$, $4\theta_1+4\theta_2+2\sigma+\gamma<13/5-7\rho<3$ and $2\theta_1+\theta_2+\sigma<1-11\rho/4$.

\emph{Intermediate prime factors.} Suppose $P(q)>q^{1/5+\rho/2}$ and every prime factor of $P(q)$ is at most $q^{2/5+\rho}$. Then some $s\mid P(q)$ satisfies
\begin{equation}\label{eq:allq-medium-divisor}
q^{1/5+\rho/2}<s\le q^{2/5+\rho}.
\end{equation}
Take a prime in this interval if one exists. Otherwise all prime factors are at most $q^{1/5+\rho/2}$; multiply them with multiplicity until this endpoint is exceeded. The last factor and the preceding product are each at most the endpoint, so their product is at most its square.

Every auxiliary $Q$ contains $P(q)$, since its complement $q_1$ is squarefree and coprime to $Q$ (Lemma~\ref{lem:primitive-factorisation}), hence $s\mid Q$. Put $\delta=\rho/10$. Since $d_0Q\le q$, because $d_0\mid q_1$, we have $U,V\ll q^{z+\sigma}$. If $Q\le q^{1-\delta}$, the complete operator bound gives
\begin{equation}\label{eq:allq-proper-auxiliary}
|\mathcal B_{\pm,d_0}(q_1,Q)|
\ll q^\ep
\left(\frac Qq\right)^{3/2}
\left(1+\frac{q^{z+\sigma}}Q\right).
\end{equation}
For $Q\ge q^{z+\sigma}$ this saves $q^{-3\delta/2+o(1)}$; otherwise its second term is at most $q^{-3(1-z-\sigma)/2}$ and its first is also power-saving.

\looseness=-1 For $Q>q^{1-\delta}$, we have $U,V<Q$ and apply the factorable estimate with $s=q^{w}$. Write $t=\log_qQ$ and $b=\log_qd_0$. Substituting \eqref{eq:allq-grouped-ranges} in \eqref{eq:allq-factorable-block}, the three terms have exponents
$$
\tfrac32t+\tfrac12z-2+\tfrac12w+\tfrac12\sigma,\qquad
\tfrac54t+z-2+\tfrac14w+\tfrac12\sigma,\qquad
\tfrac54t+\tfrac12z-\tfrac32-\tfrac14w-\tfrac12b,
$$
up to arbitrarily small powers of $q$. These are increasing in $t$ and nonincreasing in $b$; at $t=1$ and $b=0$ they are negative precisely when
\begin{equation}\label{eq:allq-factorable-conditions}
z+w+\sigma<1,\qquad
4z+w+2\sigma<3,\qquad
2z<1+w.
\end{equation}
For $1/5+\rho/2<w\le2/5+\rho$, these follow from
$$
z+w+\sigma<1-3\rho/4,\quad
4z+w+2\sigma<14/5-13\rho/2<3,\quad
1+w-2z>9\rho/2.
$$

\emph{A large prime divisor.} Suppose $p>q^{2/5+\rho}$ divides $P(q)$. Its exponent is two, since $p^3>q$. Write $p=q^{a}$, with $a>2/5+\rho$; every auxiliary modulus is $Q=p^2e$, $(p,e)=1$. The grouped ranges give $V\ll q^{z}$ and $U\ll q^{z+\sigma}$. Since $2a-z>1/5+4\rho$ and $2a-z-\sigma>1/5+15\rho/4$, we have $U\le Q/8$, $V\le p^2/8$ for sufficiently large $q$. Apply Proposition \ref{prop:p2-crt-operator} and Lemma \ref{lem:allq-p2-exponents}. In the large-row regime,
\begin{gather*}
3z+\sigma<9/5-23\rho/4<1+2a,
\qquad
3z+2\sigma<9/5-11\rho/2<2,\\
5z+3\sigma<3-37\rho/4;
\end{gather*}
in the small-row regime, $2z=6/5-4\rho<3a$ and $z<2a$. Thus every auxiliary block saves a fixed power of $q$.

Finally the $k=0$ and $Q=1$ terms are $\ll q^\ep(M_1M_2)^{1/2}/q\le q^{-7/10-\rho+o(1)}$. Choose the exponent and dyadic losses below half the least positive saving. The $q^{o(1)}$ divisors, boxes, signs and factorisations are absorbed. Proposition \ref{prop:fixed-parity-qw-setup} then yields the two moments in both parity classes.
\end{proof}

\subsection{Completion of the proof of Theorem~\ref{thm:allq-three-eighths}}
\label{sec:proof-allq-main}

\begin{proof}[Proof of Theorem~\ref{thm:allq-three-eighths}]
Proposition \ref{prop:allq-three-eighths-moment} and Corollary \ref{cor:qw-nv} give
$$
\kappa(q),\ \kappa_j(q)\ge\frac{3/5-2\rho}{8/5-2\rho}+o(1)
$$
for every nonempty parity class. Choose $\rho$ small in terms of $\ep$ and then $q$ sufficiently large.
\end{proof}

Ignoring the arbitrarily small $\sigma$-loss, the factorable conditions permit $z<f(w)$, where
$$
f(w)=\min\left\{1-w,\frac{3-w}{4},\frac{1+w}{2}\right\}.
$$
They are simultaneously tight at $w=1/3$, giving $z<2/3$. The divisor construction above only guarantees $w\in[\gamma,2\gamma]$, and $\max_{\gamma}\min\{f(\gamma),f(2\gamma)\}=3/5$, attained at $\gamma=1/5$. Thus changing the split point alone cannot improve this divisor argument; a divisor near $q^{1/3}$ gives the following refinement.

\subsection{Moduli whose powerful part has a divisor near $q^{1/3}$}\label{sec:third-divisor}

\begin{cor}\label{cor:third-divisor}
For every $\ep>0$ there is $\eta>0$ such that every sufficiently large admissible modulus $q$ whose powerful part has a divisor $s$ with
$$
q^{1/3-\eta}\le s\le q^{1/3+\eta}
$$
satisfies $\kappa(q)\ge\frac25-\ep$.
\end{cor}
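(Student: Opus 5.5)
We take lengths $\theta_1=1/2-\rho$ and $\theta_2=1/6-\rho$, so that $z=\theta_1+\theta_2=2/3-2\rho$ and $(\theta_1+\theta_2)/(1+\theta_1+\theta_2)\to2/5$. By Corollary~\ref{cor:qw-nv}, it suffices to establish the moment formulae of Proposition~\ref{prop:qw-transfer} in each nonempty parity class. We follow the proof of Proposition~\ref{prop:allq-three-eighths-moment}, replacing the case split by the single divisor $s$. One difficulty appears at once: $\theta_1$ must be strictly below $1/2$, and then
\[
2\theta_1+\theta_2=7/6-3\rho>1.
\]
So the preliminary constraint $2\theta_1+\theta_2+\sigma<1$, which controls $X$ and the large-ratio split, fails for this choice. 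We therefore rebalance. We keep $z=2/3-2\rho$ but take $\theta_1,\theta_2$ with $2\theta_1+\theta_2<1$, for instance $\theta_1=1/3-\rho$ and $\theta_2=1/3-\rho$. Then $2\theta_1+\theta_2=1-3\rho$, and both lengths lie below $1/2$. Only $z$ enters the operator estimates \eqref{eq:allq-grouped-ranges} and the factorable conditions \eqref{eq:allq-factorable-conditions}. Corollary~\ref{cor:qw-nv} depends only on $\theta_1+\theta_2$, and Lemma~\ref{lem:fixed-factorisation-large-ratio} needs only $M_1M_2\le q^{1-\eta_1}$.

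The large-ratio range saves a power by Lemma~\ref{lem:fixed-factorisation-large-ratio}. In the complement we have
\[
X\ll q^{-1+\sigma+2\theta_1+\theta_2+o(1)}\le q^{-2\rho},
\]
so Proposition~\ref{prop:allq-qw-operator-insertion} applies to every nonzero frequency. The $k=0$ and $Q=1$ terms are $\ll q^{\varepsilon}(M_1M_2)^{1/2}/q\le q^{-2/3+o(1)}$. By Lemma~\ref{lem:primitive-factorisation}, every auxiliary modulus $Q$ contains $P(q)$, hence $s\mid Q$.

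We split on the size of $Q$. For $Q\le q^{1-\delta}$ with $\delta$ small, the complete operator bound \eqref{eq:allq-proper-auxiliary} saves a power: if $Q\ge q^{z+\sigma}$ the saving is $q^{-3\delta/2}$, and otherwise the bound is at most $q^{-3(1-z-\sigma)/2}=q^{-1/2+O(\rho)}$. For $Q>q^{1-\delta}$ we have $U,V<Q$, and we apply \eqref{eq:allq-factorable-block} with $s=q^w$, $|w-1/3|\le\eta$. The three exponents are increasing in $t$ and nonincreasing in $b$, so we evaluate them at $t=1$, $b=0$. This reduces to \eqref{eq:allq-factorable-conditions}:
\begin{align*}
z+w+\sigma&<1,\\
4z+w+2\sigma&<3,\\
2z&<1+w.
\end{align*}
At $z=2/3-2\rho$ and $w=1/3+O(\eta)$, the left-minus-right slacks are $2\rho-O(\eta)-\sigma$, $8\rho-O(\eta)-2\sigma$ and $4\rho-O(\eta)$. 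We choose $\eta$ and $\sigma$ small compared with $\rho$, for instance $\eta,\sigma<\rho/4$. Then all three conditions hold with a fixed power saving, uniformly in $Q,d_0$ and the dyadic boxes. The $q^{o(1)}$ divisors and factorisations are then absorbed, and Proposition~\ref{prop:fixed-parity-qw-setup} gives the moments.

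The main obstacle is the tightness at $w=1/3$: all three factorable conditions become equalities simultaneously. The saving is therefore only $O(\rho)$, and $\eta$ must be chosen after $\rho$, hence after $\varepsilon$. We must also check that reducing $\theta_1$ costs nothing. In particular, the transition $Q$ near $q^{1-\delta}$ must be handled with $\delta$ below the slack, and the hypothesis $M_1M_2\le q^{1-\eta_1}$ holds since $z<1$. Finally, we choose $\rho$ so that $(2/3-2\rho)/(5/3-2\rho)\ge2/5-\varepsilon$, and then $\eta=\eta(\rho)$.
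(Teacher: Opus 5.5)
Your argument is correct and is essentially the paper's proof. It uses the same preliminary reduction to the complementary range, the same split at $Q=q^{1-\delta}$ between the complete operator bound \eqref{eq:allq-proper-auxiliary} and the factorable estimate \eqref{eq:allq-factorable-block} evaluated at $t=1$, $b=0$, and the same choice of $\eta,\sigma$ small relative to the length loss. The only difference is cosmetic: the paper splits $z=2/3-2\omega$ as $\theta_1=1/4-\omega$, $\theta_2=5/12-\omega$, while your symmetric choice $\theta_1=\theta_2=1/3-\rho$ works equally well, since only $z$, the bounds $\theta_i<1/2$ and the condition $2\theta_1+\theta_2+\sigma<1$ enter.
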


\begin{proof}
Choose $0<\omega<1/100$ with $(2/3-2\omega)/(5/3-2\omega)>2/5-\ep$, and put
$$
\theta_1=1/4-\omega,\quad
\theta_2=5/12-\omega,\quad
z=2/3-2\omega,\quad
\eta=\sigma=\omega/2,\quad
\delta=\omega/10.
$$
Then $2\theta_1+\theta_2=11/12-3\omega<1$. Write $s=q^{w}$ with $|w-1/3|\le\eta$. Each auxiliary $Q$ contains $s$. The preliminary argument of Proposition \ref{prop:allq-three-eighths-moment} applies with these lengths. The large-ratio range is summed over factorisations, and in the complement ${X\ll q^{-1/12-5\omega/2+o(1)}}$.

For $Q\le q^{1-\delta}$, \eqref{eq:allq-proper-auxiliary} gives savings $q^{-3\omega/20}$ or $q^{-1/2-9\omega/4}$ according as $Q\ge q^{z+\sigma}$ or not. The $k=0$ and $Q=1$ terms are $\ll q^{-2/3-\omega+o(1)}$. For $Q>q^{1-\delta}$, $V<Q$, and the factorable exponents at their worst endpoint $Q=q,d_0=1$ are
$$
\begin{gathered}
-\omega+(w-1/3)/2+\sigma/2\le-\omega/2,\qquad
-2\omega+(w-1/3)/4+\sigma/2\le-13\omega/8,\\
-\omega-(w-1/3)/4\le-7\omega/8.
\end{gathered}
$$
These savings absorb the small exponent losses, preliminary remainders and $q^{o(1)}$ divisors and dyadic boxes as in Proposition~\ref{prop:allq-three-eighths-moment}. Proposition \ref{prop:fixed-parity-qw-setup} and Corollary \ref{cor:qw-nv} give $\kappa(q)\ge z/(1+z)+o(1)>2/5-\ep$ for sufficiently large $q$.
\end{proof}

\begin{proof}[Proof of Theorem~\ref{thm:two-fifths}]
Here $P(q)=p^2$ and $q=p^{2+b}$ with $|b-4|\le\eta$, so $p^2=q^{2/(2+b)}$ with $|2/(2+b)-1/3|\ll\eta$. Apply Corollary \ref{cor:third-divisor} with $s=p^2$.
\end{proof}

These moduli also satisfy $P(q)\le q^{1/2}$ when $\eta<2$, but the present constant improves $5/13$; Theorem~\ref{thm:hybrid-interpolation} does not apply, since it requires $r\le p^a$ with $a<1$. More generally, a divisor $s=q^{w}$ gives $\kappa(q)\ge f(w)/(1+f(w))-\ep$, exceeding $5/13$ for $1/4<w<3/8$. For $q=rp^2$ with $r=p^{b}$ and $s=p^2$, this is $10/3<b<6$.

\section{Mixed conductors}\label{sec:hybrid}

\subsection{A complete moment uniform in the squarefree cofactor}

\begin{prop}\label{prop:hybrid-uniform-exceptional}
Fix $\lambda,\delta_0>0$ and $0<\xi<\min(\lambda,1)$. There is a set of primes $\mathcal E$ satisfying
$$
\#\{p\le X:p\in\mathcal E\}
\ll_{\lambda,\delta_0,\xi}\frac{X^{1-\xi}}{\log X},
$$
such that, for every $\ep>0$, every odd prime $p\notin\mathcal E$, every squarefree $d$ with $p\nmid d$, and every $B\le p^{1/2-\delta_0}$, one has
$$
A_{dp^2}(B)\ll_{\lambda,\delta_0,\xi,\ep}
(dp^2)^\ep\left(d^{1/2}p^\lambda B^4(dp^2)^{5/2}
+p^\lambda B^2(dp^2)^3\right).
$$
\end{prop}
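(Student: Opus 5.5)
Since $(d,p)=1$, Lemma~\ref{lem:kloo-mult} gives $G_{dp^2}(\mathbf b)=G_d(\mathbf b)G_{p^2}(\mathbf b)$. We bound the $d$-factor pointwise by Lemma~\ref{lem:sqfree-pointwise}, $|G_d(\mathbf b)|\ll d^{5/2+o(1)}D_d(\mathbf b)^{1/2}$, and handle the $p^2$-factor through the root reduction of Lemma~\ref{lem:p2-root-reduction}. Thus
$$
A_{dp^2}(B)\ll d^{5/2+o(1)}p^4\sum_{c\in C_p}\sum_{\mathbf b\in A_c(B)^4}D_d(\mathbf b)^{1/2}\Bigl|\sum_{z}c_{p^2}\Bigl(\sum_i z_i\Bigr)\Bigr|.
$$
We split, as in Section~\ref{sec:p2-kloosterman}, into the diagonal part, where some $z_i+z_j\equiv0\bmod p^2$ (which forces $b_i=b_j$), and the generic part. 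By Lemma~\ref{lem:surface}, the generic part forces $p\mid R(\mathbf b)$ with weight $O(p^2)$.

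\emph{Generic part.} Here we lose the $d$-structure entirely and bound $D_d(\mathbf b)^{1/2}\le d^{1/2}$. This gives
$$
\ll d^{3+o(1)}p^6\sum_c N_c(B;p)\ll d^{3+o(1)}p^{6+\lambda}\bigl(B^2+B^4/p\bigr)
$$
on the exceptional set $\mathcal E$ of Proposition~\ref{prop:almost-all-N}, which is independent of $d$ and $B$. The $B^2$ term gives $p^\lambda B^2(dp^2)^3$. The $B^4/p$ term gives $d^{3}p^{5+\lambda}B^4=d^{1/2}p^\lambda B^4(dp^2)^{5/2}$, which is exactly the first term. This explains the $d^{1/2}$ loss in the statement.

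\emph{Diagonal part.} Fix the pairing, say $b_1=b_2$ with $z_2\equiv-z_1$. The Ramanujan weight is at most $p^2$ and forces $z_3+z_4\equiv0\bmod p$. By Lemma~\ref{lem:root-reduction-bijection} this gives $b_3=b_4$ (as $B<p$), and $z_4$ is determined by $z_3$. So the configurations are $\mathbf b$ with $b_1=b_2$, $b_3=b_4$, counted with $O(1)$ root choices. For such $\mathbf b$ the pairing $(12)(34)$ holds at every prime of $d$, so $D_d(\mathbf b)=d$ and we must not lose anything else. The contribution is
$$
\ll d^{5/2+o(1)}p^4\cdot p^2\cdot d^{1/2}B^2=d^{3+o(1)}p^6B^2,
$$
which is the second term. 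Rather than use $D_d\le d$, the alternative is to apply Proposition~\ref{prop:D-count} to the pairs $(b_1,b_3)$, but the trivial bound already suffices.

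Summing over the six pairings and two square classes, together with the requirement $B\le p^{1/2-\delta_0}<p$ for the $p^2$-lemmas, completes the argument. The exceptional set comes from Proposition~\ref{prop:almost-all-N} with the given $\lambda,\delta_0,\xi$.

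The main point needing care is uniformity. Because the $d$-factor is bounded crudely and pointwise, the only arithmetic input at $p$ is $N_c(B;p)$, which does not see $d$, so $\mathcal E$ is independent of $d$. The thing to verify is that the tuples with $p\mid R(\mathbf b)$ really are counted by $N_c$ irrespective of $d$: the coprimality $(b_i,d)=1$ only removes tuples, so this holds.
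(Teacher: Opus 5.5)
Your argument is correct and is essentially the paper's proof. The paper likewise factors $G_{dp^2}=G_dG_{p^2}$ and uses Lemma~\ref{lem:sqfree-pointwise} with the crude bound $D_d(\mathbf b)\le d$ to get $|G_d(\mathbf b)|\ll d^{3+\ep}$; it then drops $(b_i,d)=1$ and simply cites Theorem~\ref{thm:p2-average} (with $\ep=\lambda$) for $A_{p^2}(B)$, whereas you re-run that theorem's diagonal/generic argument inline (Lemmas~\ref{lem:diag}, \ref{lem:surface} and Proposition~\ref{prop:almost-all-N}) while carrying the weight $D_d(\mathbf b)^{1/2}\le d^{1/2}$, which amounts to the same bound.
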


\begin{proof}
Choose $\mathcal E$ from Theorem~\ref{thm:p2-average} with $\ep=\lambda$, $\delta=\delta_0$ and the given $\xi$, before choosing $d,B$ or $\ep$. Lemma \ref{lem:kloo-mult} gives $G_{dp^2}(\mathbf b)=G_d(\mathbf b)G_{p^2}(\mathbf b)$, and Lemma \ref{lem:sqfree-pointwise} with $D_d(\mathbf b)\le d$ gives $|G_d(\mathbf b)|\ll_\ep d^{3+\ep}$. Drop $(b_i,d)=1$, retaining $(b_i,p)=1$, to obtain, simultaneously for all permitted lengths,
$$
A_{dp^2}(B)\ll_\ep d^{3+\ep}A_{p^2}(B)\ll_\ep d^{3+\ep}p^\lambda(B^4p^5+B^2p^6).
$$
Now $d^3p^5=d^{1/2}(dp^2)^{5/2}$ and $d^3p^6=(dp^2)^3$; absorb $d^\ep$ into $(dp^2)^\ep$.
\end{proof}

\begin{lem}\label{lem:hybrid-exponents}
Fix $a_0\ge0$, $\theta_1,\theta_2>0$, and $\lambda,\sigma\ge0$. Let $q=rp^2$ with $r\le p^{a_0}$, let $d\mid r$ and set
$$
s=\frac{\log r}{\log p},\qquad
t=\frac{\log d}{\log p}.
$$
Suppose that, for every sufficiently small $\varepsilon_2>0$,
$$
A_{dp^2}(B)\ll (dp^2)^{\varepsilon_2}
\left(d^{1/2}p^\lambda B^4(dp^2)^{5/2}
+p^\lambda B^2(dp^2)^3\right).
$$
If $M_1\le q^{\theta_1+o(1)}$, $M_2\le q^{\theta_2+o(1)}$, and
$$
X\ll q^\sigma \frac{M_1^2M_2}{q},
$$
then
$$
X\ll q^{-1+2\theta_1+\theta_2+\sigma+o(1)}.
$$
If $2\theta_1+\theta_2+\sigma<1$, define
$$
\begin{aligned}
E_4(s,t)&=
\frac{-(3+7s-6t)
+(2+s)(2\theta_1+3\theta_2+\sigma)+\lambda}{4},\\
E_2(s,t)&=
\frac{-(2+7s-6t)
+(2+s)(2\theta_1+\theta_2+\sigma)+\lambda}{4}.
\end{aligned}
$$
For every $\varepsilon_3>0$, after choosing $\varepsilon_2$ and the dyadic $o(1)$-loss sufficiently small in terms of $\varepsilon_3$, the two contributions from the terms with $k\ne0$ are respectively $O(p^{E_4(s,t)+\varepsilon_3})$ and $O(p^{E_2(s,t)+\varepsilon_3})$. For fixed $s$, both exponents increase with $t$.
\end{lem}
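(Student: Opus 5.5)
The plan is to substitute the hypothesised moment bound into the $k\ne0$ estimate of Proposition~\ref{prop:qw-preinsertion} with auxiliary modulus $q_2=dp^2$, and then rewrite every size as a power of $p$. Throughout I use $q=rp^2=p^{2+s}$, $q_2=dp^2=p^{2+t}$, $d=p^t$, $M_1\le q^{\theta_1+o(1)}$ and $M_2\le q^{\theta_2+o(1)}$.

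\emph{Step 1: the bound for $X$.} The first assertion is immediate from $X\ll q^{\sigma}M_1^2M_2/q$ and the bounds for $M_1,M_2$. When $2\theta_1+\theta_2+\sigma<1$, this gives $X\le1$ for large $q$, once the dyadic loss is small. Hence $(1+X)^{3/4}\ll1$, and the $k\ne0$ bound of Proposition~\ref{prop:qw-preinsertion} becomes
$$
q^{\varepsilon_1}q^{-3/2}M_2^{-1/2}X^{1/4}q_2^{3/4}A_{q_2}(2M_2)^{1/4}.
$$
I then use $X^{1/4}\le q^{-1/4+\sigma/4}M_1^{1/2}M_2^{1/4}$, exactly as in Lemma~\ref{lem:two-term-mixed-block-insertion}.

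\emph{Step 2: the two terms.} I insert the two terms of the hypothesis separately, using $(x+y)^{1/4}\le x^{1/4}+y^{1/4}$.

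For the quartic term, $A^{1/4}\ll q_2^{\varepsilon_2}d^{1/8}p^{\lambda/4}M_2q_2^{5/8}$. The contribution is therefore
$$
q^{-7/4+\sigma/4}M_1^{1/2}M_2^{3/4}q_2^{11/8}d^{1/8}p^{\lambda/4}.
$$
Its $p$-exponent is
$$
\tfrac14\bigl[-7(2+s)+(2+s)(2\theta_1+3\theta_2+\sigma)+\tfrac{11}{2}(2+t)+\tfrac t2+\lambda\bigr].
$$
Since $-14-7s+11+6t=-(3+7s-6t)$, this equals $E_4(s,t)$.

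For the quadratic term, $A^{1/4}\ll q_2^{\varepsilon_2}p^{\lambda/4}M_2^{1/2}q_2^{3/4}$. The contribution is
$$
q^{-7/4+\sigma/4}M_1^{1/2}M_2^{1/4}q_2^{3/2}p^{\lambda/4}.
$$
Its $p$-exponent is
$$
\tfrac14\bigl[-7(2+s)+(2+s)(2\theta_1+\theta_2+\sigma)+6(2+t)+\lambda\bigr]=E_2(s,t).
$$

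\emph{Step 3: absorbing the small losses.} The remaining losses are $q^{\varepsilon_1}$, $q_2^{\varepsilon_2}$ and the dyadic $o(1)$ in the exponents of $M_1,M_2$. Since $q\le p^{2+a_0}$, each of these is at most $p^{(2+a_0)\cdot(\text{small})}$. Choosing them small in terms of $\varepsilon_3$ and $a_0$ gives the stated $O(p^{E_i(s,t)+\varepsilon_3})$.

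\emph{Step 4: monotonicity.} In both $E_4$ and $E_2$ the variable $t$ enters only through $+6t/4$, with a positive coefficient, so both exponents increase with $t$ for fixed $s$.

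The main obstacle is purely bookkeeping. The normalisation of the hypothesised moment bound must be kept consistent, namely the factor $d^{1/2}$ and powers of $q_2$ rather than $q$. The quantities $q$ and $q_2$ must also be kept distinct when converting to $p$-exponents, since unlike Lemma~\ref{lem:two-term-mixed-block-insertion} one must not bound $q_2\le q$ here, because the $t$-dependence has to be retained.
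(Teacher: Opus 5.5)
Your proposal is correct and is essentially the paper's own proof. You insert the hypothesised moment into the $k\ne0$ bound of Proposition~\ref{prop:qw-preinsertion} with $q_2=dp^2$ and $B=2M_2$, use $X<1$ and $X^{1/4}\ll q^{-1/4+\sigma/4}M_1^{1/2}M_2^{1/4}$, and convert via $q=p^{2+s}$, $dp^2=p^{2+t}$. Your exponent arithmetic for $E_4$ and $E_2$, the absorption of the small losses, and the monotonicity in $t$ (coefficient $6/4$) all check out.
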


\begin{proof}
The complementary-range bound gives $X\ll q^{-1+2\theta_1+\theta_2+\sigma+o(1)}$, and hence $X<1$ for large $q$ when $2\theta_1+\theta_2+\sigma<1$. Insert the assumed moment in the bound for $k\ne0$ in Proposition \ref{prop:qw-preinsertion}, with $q_2=dp^2$ and $B=2M_2$. Since $X<1$, the quartic and quadratic terms, before arbitrarily small powers, are
$$
q^{-3/2}X^{1/4}M_2^{1/2}(dp^2)^{11/8}d^{1/8}p^{\lambda/4},
\qquad
q^{-3/2}X^{1/4}(dp^2)^{3/2}p^{\lambda/4}.
$$
For example, the quartic term gives
$$
q^{-3/2}M_2^{-1/2}X^{-1/2}(Xdp^2)^{3/4}\bigl(d^{1/2}p^{\lambda}M_2^4(dp^2)^{5/2}\bigr)^{1/4},
$$
which is the first expression. Use $X^{1/4}\ll q^{-1/4+\sigma/4}M_1^{1/2}M_2^{1/4}$, $q=p^{2+s}$ and $dp^2=p^{2+t}$ to obtain $E_4(s,t),E_2(s,t)$. Choose $\varepsilon_2$ and the dyadic losses so their combined exponent is at most $\varepsilon_3$. Both expressions increase with $t$, whose coefficient is $6/4$.
\end{proof}

\begin{prop}\label{prop:hybrid-transfer}
Let $a_0\ge0$ and let $\theta_1,\theta_2,\lambda,\sigma>0$ be fixed. Let $q=rp^2$, where $r$ is odd and squarefree, $p\nmid r$, and $r\le p^{a_0}$. Suppose that a set of exceptional primes has been fixed independently of $\varepsilon_2$, and that for every sufficiently small $\varepsilon_2>0$ and every auxiliary modulus $dp^2$ with $d\mid r$, and $B=2M_2$, one has
$$
A_{dp^2}(B)\ll (dp^2)^{\varepsilon_2}
\left(d^{1/2}p^\lambda B^4(dp^2)^{5/2}
+p^\lambda B^2(dp^2)^3\right).
$$
Set $s=\log r/\log p$ and $t=\log d/\log p$. Assume that, for some $\delta>0$ independent of $d\mid r$,
$$
\begin{aligned}
2\theta_1+\theta_2+\sigma&\le1-\delta,\\
(2+s)(2\theta_1+3\theta_2+\sigma)+\lambda
&\le3+7s-6t-\delta,\\
(2+s)(2\theta_1+\theta_2+\sigma)+\lambda
&\le2+7s-6t-\delta.
\end{aligned}
$$
Then the moment formulae of Proposition \ref{prop:qw-transfer} hold for each parity. It is enough to verify the last two inequalities at $t=s$.
\end{prop}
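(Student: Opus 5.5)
The argument follows the proof of Proposition~\ref{prop:qw-transfer}, with the uniform moment hypothesis replaced by the $d$-dependent bound and Lemma~\ref{lem:hybrid-exponents} supplying the exponents. By Lemma~\ref{lem:primitive-factorisation}, every auxiliary factorisation $q=q_1q_2$ has $q_1$ squarefree and coprime to $q_2$. Since $p^2\Vert q$, this forces $q_2=dp^2$ with $d\mid r$ and $q_1=r/d$. There are $2^{\omega(r)}=q^{o(1)}$ such factorisations, so it suffices to save a fixed power of $p$ uniformly in $d\mid r$.

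\emph{Preliminary ranges.} The first hypothesis gives $\theta_1+\theta_2<2\theta_1+\theta_2\le1-\delta$. After dyadic losses, $M_1M_2\le q^{1-\delta/2}$, and in the complement of the large-ratio range $X\ll q^{-\delta+o(1)}\le q^{-\delta/2}$. So Lemma~\ref{lem:fixed-factorisation-large-ratio} applies with $\eta_1=\delta/2$ and saves a power on the summed large-ratio range. Proposition~\ref{prop:qw-preinsertion} then applies in the complement with $\eta_2=\delta/2$. Its $k=0$ terms and the $q_2=1$ term (not actually present here, since $p^2\mid q_2$) are $\ll q^{\varepsilon_1}(M_1M_2)^{1/2}/q\le q^{-1/2-\delta/4+\varepsilon_1}$.

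\emph{Nonzero frequencies.} For $k\ne0$, insert the assumed bound for $A_{dp^2}(2M_2)$ into Lemma~\ref{lem:hybrid-exponents}, with $\varepsilon_3$ to be fixed. This bounds the quartic and quadratic contributions by $p^{E_4(s,t)+\varepsilon_3}$ and $p^{E_2(s,t)+\varepsilon_3}$. The second and third hypotheses say exactly that $4E_4(s,t)\le-\delta$ and $4E_2(s,t)\le-\delta$. Taking $\varepsilon_3=\delta/16$, each block is therefore $O(p^{-\delta/8})$, and since $q\le p^{2+a_0}$ this is a fixed power saving in $q$. The exceptional set is fixed before $\varepsilon_2$, so the implied constants are uniform in $d$ and in the dyadic boxes. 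Summing over the $q^{o(1)}$ boxes, signs, factorisations and Möbius divisors, and absorbing $q/\varphi^*(q)$ by Lemma~\ref{lem:phistar-loss}, the remainder $\mathcal R^{(j)}(q)$ is $o(1)$. Proposition~\ref{prop:fixed-parity-qw-setup} then gives both moment formulae in each parity, exactly as in the proof of Proposition~\ref{prop:qw-transfer}.

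\emph{Reduction to $t=s$.} For fixed $s$, both $E_4$ and $E_2$ increase with $t$, with coefficient $6/4$ (Lemma~\ref{lem:hybrid-exponents}). Equivalently, the right-hand sides $3+7s-6t-\delta$ and $2+7s-6t-\delta$ decrease in $t$ while the left-hand sides do not involve $t$. Since $d\mid r$ gives $t\le s$, the inequalities at $t=s$ imply them for every $d$.

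The main obstacle is bookkeeping uniformity. The savings must be uniform in $d$ and $r$, and the dependence of $\varepsilon_2,\varepsilon_3$ and the dyadic losses must be arranged so that they are chosen after $\delta$ but before $q$. The exceptional set must stay independent of $\varepsilon_2$, which the hypothesis provides.
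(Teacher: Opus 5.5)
Your proposal is correct and follows the paper's proof essentially step for step. The first hypothesis gives $M_1M_2\le q^{1-\delta+o(1)}$ and $X\le q^{-\delta+o(1)}$, which is what Lemma~\ref{lem:fixed-factorisation-large-ratio} and Proposition~\ref{prop:qw-preinsertion} need; the last two hypotheses are exactly $E_4(s,t),E_2(s,t)\le-\delta/4$ in Lemma~\ref{lem:hybrid-exponents}; and monotonicity in $t$ together with $t\le s$ gives the reduction to $t=s$. Your extra observation that every auxiliary modulus has the form $q_2=dp^2$, so that the modulus-one term never actually occurs, is also correct.
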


\begin{proof}
The first inequality gives $M_1M_2\le q^{1-\delta+o(1)}$ and $X<q^{-1+\sigma}M_1^2M_2\le q^{-\delta+o(1)}$ in the complementary range. Lemma \ref{lem:fixed-factorisation-large-ratio} treats the summed large-ratio range, and Proposition \ref{prop:qw-preinsertion} treats zero frequencies and modulus one. For non-zero frequencies, Lemma \ref{lem:hybrid-exponents} gives $E_4(s,t),E_2(s,t)\le-\delta/4$. Choose $\varepsilon_3<\delta/8$, and then the smaller losses prescribed there. Each block is $O(p^{-\delta/8})$, uniformly for $0\le t\le s\le a_0$. Since $q=p^{2+s}$ with bounded $s$, this absorbs the $q^{o(1)}$ divisors and dyadic boxes. Proposition \ref{prop:fixed-parity-qw-setup} gives both moments. The case $a_0=0$ has $r=d=1$, $s=t=0$.
\end{proof}

\subsection{Optimisation of the mollifier lengths}

\begin{proof}[Proof of Theorem~\ref{thm:hybrid-interpolation}]
Choose $\omega>0$ small and set
$$
\theta_2=\frac{1}{2(2+a)}-\omega,\qquad
\theta_1=\frac{1}{2}-\frac{1}{4(2+a)}-\omega.
$$
Then $\theta_1+\theta_2=1/2+1/(4(2+a))-2\omega$, $2\theta_1+\theta_2=1-3\omega$ and $2\theta_1+3\theta_2=1+1/(2+a)-5\omega$. The length bound is $B\ll p^{(2+a)\theta_2+o(1)}=p^{1/2-(2+a)\omega+o(1)}$; choose $0<\delta_0<(2+a)\omega/2$. Choose $\lambda,\sigma>0$ with $\lambda+(2+a)\sigma<3\omega$ and $0<\xi_1=\xi<\min(\lambda,1)$; fix $\mathcal E$ from Proposition \ref{prop:hybrid-uniform-exceptional}.

It suffices to check $t=s$, by Lemma \ref{lem:hybrid-exponents}. At $t=s$ the last two inequalities of Proposition~\ref{prop:hybrid-transfer} require
$$
(2+s)(1-3\omega+\sigma)+\lambda<2+s,
\qquad
(2+s)\Bigl(1+\frac1{2+a}-5\omega+\sigma\Bigr)+\lambda<3+s.
$$
\looseness=-1 For $0\le s\le a$, the quadratic inequality follows from $(2+s)(3\omega-\sigma)-\lambda\ge{\Delta_2=6\omega-\lambda-(2+a)\sigma>0}$. For the quartic inequality, the difference between right and left sides is
$$
1-\frac{2+s}{2+a}+(2+s)(5\omega-\sigma)-\lambda.
$$
Since $(2+s)/(2+a)\le1$, it is at least $10\omega-\lambda-(2+a)\sigma>7\omega$. Take the common slack smaller than $\min(3\omega-\sigma,\Delta_2,7\omega)$. Proposition \ref{prop:hybrid-transfer} and Corollary \ref{cor:qw-nv} give, uniformly for $r\le p^a$ outside $\mathcal E$,
$$
\kappa(rp^2)\ge
\frac{\theta_1+\theta_2}{1+\theta_1+\theta_2}+o(1)
=
\frac{2a+5}{6a+13}-O_a(\omega)+o(1).
$$
Take $\omega$ sufficiently small.
\end{proof}

\section*{Acknowledgements}

B.~Durkan and A.~Pearce-Crump gratefully acknowledge support from the Heilbronn Institute for Mathematical Research. B.~Durkan thanks Hung Bui and Xiaosheng Wu for their comments on an earlier version of this paper.


\begin{thebibliography}{99}

\bibitem{BM}
R. Balasubramanian and V. Kumar Murty,
\emph{Zeros of Dirichlet $L$-functions},
Ann. Sci. \'Ecole Norm. Sup. (4) 25 (1992), no.~5, 567--615.

\bibitem{BM2015}
V. Blomer and D. Mili\'cevi\'c,
\emph{The second moment of twisted modular $L$-functions},
Geom. Funct. Anal. 25 (2015), no.~2, 453--516.

\bibitem{Bui}
H. M. Bui,
\emph{Non-vanishing of Dirichlet $L$-functions at the central point},
Int. J. Number Theory 8 (2012), no.~8, 1855--1881.

\bibitem{BPRZ}
H. M. Bui, K. Pratt, N. Robles, and A. Zaharescu, \emph{Breaking the $1/2$-barrier for the twisted second moment of Dirichlet $L$-functions}, Adv. Math. 370 (2020), article~107175, 40 pp.

\bibitem{BPZ}
H. M. Bui, K. Pratt, and A. Zaharescu, \emph{Exceptional characters and nonvanishing of Dirichlet $L$-functions}, Math. Ann. 380 (2021), no.~1--2, 593--642.

\bibitem{Castillo}
C. Castillo, A. de Faveri, and A. Dunn,
\emph{Non-vanishing for quartic Hecke $L$-functions and ranks of elliptic curves},
arXiv:2604.01316 [math.NT], 2026.

\bibitem{CechMat}
M. \v{C}ech and K. Matom\"aki,
\emph{A note on exceptional characters and non-vanishing of Dirichlet
$L$-functions},
Math. Ann. 389 (2024), no.~1, 987--996.

\bibitem{CechMatOptimal}
M. \v{C}ech and K. Matom\"aki, \emph{On optimality of mollifiers}, arXiv:2501.12526v3 [math.NT], 2025.

\bibitem{Conrey}
J. B. Conrey, \emph{More than two fifths of the zeros of the Riemann zeta function are on the critical line}, J. Reine Angew. Math. 399 (1989), 1--26.

\bibitem{David}
C. David, A. de Faveri, A. Dunn, and J. Stucky,
\emph{Non-vanishing for cubic Hecke $L$-functions},
arXiv:2410.03048 [math.NT], 2024.

\bibitem{Diaconu}
A. Diaconu, B. Ion, V. Pa\c{s}ol, and A. Popa,
\emph{On the second moment and non-vanishing of central values of Hecke $L$-functions of $r$-th order characters},
arXiv:2607.27131 [math.NT], 2026.

\bibitem{DPR}
S. Drappeau, K. Pratt, and M. Radziwi\l\l, \emph{One-level density estimates for Dirichlet $L$-functions with extended support}, Algebra Number Theory 17 (2023), no.~4, 805--830.

\bibitem{FGKM}
\'E. Fouvry, S. Ganguly, E. Kowalski, and P. Michel,
\emph{Gaussian distribution for the divisor function and Hecke eigenvalues
in arithmetic progressions},
Comment. Math. Helv. 89 (2014), no.~4, 979--1014.

\bibitem{FKMS}
\'E. Fouvry, E. Kowalski, P. Michel, and W. Sawin, \emph{Lectures on applied $\ell$-adic cohomology}, in \emph{Analytic Methods in Arithmetic Geometry}, Contemp. Math., vol.~740, American Mathematical Society, Providence, RI, 2019, pp.~113--195.

\bibitem{FI}
J. B. Friedlander and H. Iwaniec, \emph{Incomplete Kloosterman sums and a divisor problem}, Ann. of Math. (2) 121 (1985), no.~2, 319--344.

\bibitem{IK}
H. Iwaniec and E. Kowalski,
\emph{Analytic Number Theory},
American Mathematical Society Colloquium Publications, vol.~53,
American Mathematical Society, Providence, RI, 2004.

\bibitem{IS}
H. Iwaniec and P. Sarnak,
\emph{Dirichlet $L$-functions at the central point},
in \emph{Number Theory in Progress}, vol.~2
(Zakopane--Ko\'scielisko, 1997),
de Gruyter, Berlin, 1999, pp.~941--952.

\bibitem{ISAuto}
H. Iwaniec and P. Sarnak, \emph{The non-vanishing of central values of automorphic $L$-functions and Landau--Siegel zeros}, Israel J. Math. 120 (2000), part~A, 155--177.

\bibitem{KSWX}
B. Kerr, I. E. Shparlinski, X. Wu, and P. Xi, \emph{Bounds on bilinear forms with Kloosterman sums}, J. London Math. Soc. (2) 108 (2023), no.~2, 578--621.

\bibitem{KhanNgo}
R. Khan and H. T. Ngo,
\emph{Nonvanishing of Dirichlet $L$-functions},
Algebra Number Theory 10 (2016), no.~10, 2081--2091.

\bibitem{KMN}
R. Khan, D. Mili\'cevi\'c, and H. T. Ngo,
\emph{Nonvanishing of Dirichlet $L$-functions, II},
Math. Z. 300 (2022), no.~2, 1603--1613.

\bibitem{KMS}
E. Kowalski, P. Michel, and W. Sawin, \emph{Bilinear forms with Kloosterman sums and applications}, Ann. of Math. (2) 186 (2017), no.~2, 413--500.

\bibitem{KMV}
E. Kowalski, P. Michel, and J. VanderKam, \emph{Mollification of the fourth moment of automorphic $L$-functions and arithmetic applications}, Invent. Math. 142 (2000), no.~1, 95--151.

\bibitem{Leung}
S.-K. Leung,
\emph{Non-vanishing of Dirichlet $L$-functions with smooth conductors},
Ramanujan J. 66 (2025), article~69.

\bibitem{MV}
P. Michel and J. VanderKam,
\emph{Non-vanishing of high derivatives of Dirichlet $L$-functions at the
central point},
J. Number Theory 81 (2000), no.~1, 130--148.

\bibitem{MQW}
D. Mili\'cevi\'c, X. Qin, and X. Wu,
\emph{Bilinear forms with Kloosterman sums and moments of twisted
$L$-functions},
arXiv:2511.07550v1 [math.NT], 2025.

\bibitem{Paley}
R. E. A. C. Paley,
\emph{On the $k$-analogues of some theorems in the theory of the Riemann
$\zeta$-function},
Proc. London Math. Soc. (2) 32 (1931), no.~1, 273--311.

\bibitem{QW}
X. Qin and X. Wu,
\emph{Non-vanishing of Dirichlet $L$-functions at the Central Point},
arXiv:2504.11916v2 [math.NT], 2025.

\bibitem{Sound}
K. Soundararajan,
\emph{Nonvanishing of quadratic Dirichlet $L$-functions at $s=1/2$},
Ann. of Math. (2) 152 (2000), no.~2, 447--488.

\bibitem{WuTwisted}
X. Wu, \emph{The twisted mean square and critical zeros of Dirichlet $L$-functions}, Math. Z. 293 (2019), no.~1--2, 825--865.

\bibitem{WuCentral}
X. Wu, \emph{The fourth moment of Dirichlet $L$-functions at the central value}, Math. Ann. 387 (2023), no.~3--4, 1199--1248.

\bibitem{WuLine}
X. Wu, \emph{The fourth moment of Dirichlet $L$-functions along the critical line}, Forum Math. 35 (2023), no.~5, 1347--1371.

\bibitem{Young}
M. P. Young, \emph{The fourth moment of Dirichlet $L$-functions}, Ann. of Math. (2) 173 (2011), no.~1, 1--50.

\end{thebibliography}
\end{document}